\def\algbackskip{\hskip-\ALG@thistlm}
\renewcommand{\b}[1]{\mathbb{#1}} 
\newcommand{\m}[1]{\boldsymbol{#1}} 
\newcommand{\s}[1]{\mathscr{#1}} 
\renewcommand{\c}[1]{\mathcal{#1}} 
\newcommand{\tr}{\text{tr}} 
\newcommand{\Cov}{\text{Cov}} 
\newcommand{\Var}{\text{Var}} 
\newcommand*\Laplace{\mathop{}\!\mathbin\bigtriangleup}
\newcommand{\rd}{\mathrm{d}} 
\DeclareSymbolFont{fouriersymbols}{FMS}{futm}{m}{n}
\DeclareSymbolFont{fourierlargesymbols}{FMX}{futm}{m}{n}
\DeclareMathDelimiter{\VERT}{\mathord}{fouriersymbols}{152}{fourierlargesymbols}{147}
\newcommand{\vertiii}[1]{{\left\vert\kern-0.25ex\left\vert\kern-0.25ex\left\vert #1 
    \right\vert\kern-0.25ex\right\vert\kern-0.25ex\right\vert}} 
\newcommand{\bigplus}{%
  \DOTSB\mathop{\mathpalette\mattos@bigplus\relax}\slimits@
}
\newcommand\mattos@bigplus[2]{%
  \vcenter{\hbox{%
    \sbox\z@{$#1\sum$}%
    \resizebox{!}{0.9\dimexpr\ht\z@+\dp\z@}{\raisebox{\depth}{$\m@th#1+$}}%
  }}%
  \vphantom{\sum}%
}
\DeclareMathOperator*{\odiag}{diag}
\DeclareMathOperator*{\spann}{span}
\DeclareMathOperator*{\argmin}{\arg\min}
\theoremstyle{plain}
\newtheorem{theorem}{Theorem}[section]
\newtheorem{lemma}[theorem]{Lemma}
\newtheorem{corollary}[theorem]{Corollary}
\newtheorem{proposition}[theorem]{Proposition}
\newtheorem{definition}[theorem]{Definition}
\newtheorem{remark}[theorem]{Remark}
\newtheorem{example}[theorem]{Example}
\begin{document}

\pagestyle{fancy}
\fancyhead{}
\renewcommand{\headrulewidth}{0pt} 
\fancyhead[CE]{H. Yun \& A. Caponera \& V.~M. Panaretos}
\fancyhead[CO]{Continuous Heterogeneity from Low-Dose Tomography}

\begin{frontmatter}
\title{{\large Low-Dose Tomography of Random Fields\\ and the Problem of Continuous Heterogeneity}}


\begin{aug}
\author[A]{\fnms{Ho} \snm{Yun}\ead[label=e1]{ho.yun@epfl.ch}}
\and
\author[B]{\fnms{Alessia} \snm{Caponera}\ead[label=e3]{acaponera@luiss.it}}
\and
\author[A]{\fnms{Victor M.} \snm{Panaretos}\ead[label=e2,mark]{victor.panaretos@epfl.ch}}

\thankstext{t1}{Research supported by a Swiss National Science Foundation grant to V.M. Panaretos.}
\address[A]{Ecole Polytechnique F\'ed\'erale de Lausanne, \printead{e1,e2}}
\address[B]{LUISS Guido Carli, \printead{e3}}
\end{aug}

\begin{abstract}
We consider the problem of nonparametric estimation of the conformational variability in a population of related structures, based on low-dose tomography of a random sample of representative individuals. In this context, each individual represents a random perturbation of a common template and is imaged noisily and discretely at but a few projection angles. Such problems arise in the cryo Electron Microscopy of structurally heterogeneous biological macromolecules. We model the population as a random field, whose mean captures the typical structure, and whose covariance reflects the heterogeneity. We show that consistent estimation is achievable with as few as two projections per individual, and derive uniform convergence rates reflecting how the various parameters of the problem affect statistical efficiency, and their trade-offs. Our analysis formulates the domain of the forward operator to be a reproducing kernel Hilbert space, where we establish representer and Mercer theorems tailored to question at hand. This allows us to exploit pooling estimation strategies central to functional data analysis,  illustrating their versatility in a novel context. We provide an efficient computational implementation using tensorized Krylov methods and demonstrate the performance of our methodology by way of simulation. 
\end{abstract}

\begin{keyword}[class=MSC]
\kwd[Primary ]{62G08}
\kwd{62R10}
\kwd[; secondary ]{44A12}
\kwd{65F10}
\end{keyword}

\begin{keyword}
\kwd{Inverse Problems}
\kwd{RKHS}
\kwd{Radon Transform}
\kwd{cryo-EM}
\kwd{Krylov}
\end{keyword}
\end{frontmatter}

\maketitle


\section{Introduction}

In the classical problem of tomography \cite{natterer2001mathematics, helgason1965radon, ludwig1966radon, christ1984estimates}, one is interested in reconstructing a fixed three-dimensional structure, given two-dimensional projections thereof at several different orientations. The fixed structure is represented by a probability density function in $\b{R}^3$, and its projections are taken to be two-dimensional marginal distributions, obtained by integrating along the viewing direction. Statistical versions of this problem arise when randomness is introduced ``into the equation'' \cite{yun2025computerized, vardi1985statistical, kak2001principles}. Classically, this happens through measurement error or random sampling: one observes pixelated and noise-corrupted projections at a finite number of randomly sampled orientations.

Yet, randomness can also be inherent at the level of the three-dimensional structure itself: when the structure is not fixed, but can manifest in various different but related configurations \cite{anden2018structural, scheres2012bayesian, sigworth2007cryo}. In this case, the target is not a single fixed structure, but rather a population of related structures that can be conceptualized as perturbations of some common template. The estimand, therefore, is not a three-dimensional density but rather the law of a three-dimensional random field, making the problem intrinsically statistical at the signal level \cite{schwander2014conformations, frank2016continuous}. Correspondingly, the tomographic data are characterized by heterogeneity, as amongst the two dimensional projections appear the marginals of various \emph{different} instantiations of the three dimensional structure -- observed still at random orientations with noise contamination.

This heterogeneous tomography problem arises perhaps most prominently in single-particle cryogenic Electron Microscopy (cryo-EM), a Nobel-prize winning imaging modality for determining the three-dimensional electron density structure of biological macromolecules \cite{frank2006three, kuhlbrandt2014resolution, nogales2016development, cheng2015primer, bartesaghi20152}. In cryo-EM, thousands of macromolecules of identical composition are imaged under a microscope after being vitrified at randomly assumed orientations \cite{dubochet1981vitrification, singer2018mathematics}. If an identical composition always yielded an identical potential density, this would reduce to a standard ``homogeneous'' tomographic problem. However, macromolecules typically exist in varying configurations due to their flexibility and/or their biological functioning. And, increasingly, these configurations are viewed not as a few discrete states but as a continuum characterized by an energy landscape \cite{frank2018new, frank2016continuous}. The challenge is thus to retrieve not a single structure, but rather the probability law of a continuum random field -- the law encoding the said conformational landscape. For biophysicists, this challenge is also a great opportunity, as the ability to experimentally determine the conformational distribution of a biological macromolecule can lead to more systematic functional interpretations.

A concurrent challenge in heterogeneous cryo-EM is the \emph{low-dose} problem \cite{glaeser2021single, singer2018mathematics}: if the conformational landscape is continuous, each manifestation essentially occurs only once in the sample, and so we can have but a few vantage points thereof, limited to the number of tilts taken for imaging. Ideally, one would collect many tilts per particle to gather more data, but exposing biological samples to a high electron dose destroys their structure \cite{hayward1979radiation, grant2015measuring}. One is thus often limited to a low-dose setting, where each realization of the random field is imaged with a minimally sufficient number of tilt angles \cite{baxter2009determination, panaretos2010second}.

Motivated by cryo-EM, this paper aims to develop the statistical foundations related to the problem of (continuously) heterogeneous tomography in a possibly low-dose setting. This includes theory and rates, as well as methodology and implementation. To do so, we formulate the heterogeneity problem as one of inferring the modes of variation of the three-dimensional random object (random field) around a template structure, as one would do in a functional principal component analysis (PCA).
This effectively reduces the problem to that of non-parametrically estimating the random field's mean structure (the template) and the covariance kernel (which encodes the modes of variation around the template).
Though functional PCA is arguably a coarse descriptor of heterogeneity,
it provides a framework that is amenable to theoretical analysis, while retaining enough of the key elements of reality to draw useful, if approximate, conclusions. Indeed, this framework allows us to fruitfully interface with the field of Functional Data Analysis (FDA) \cite{hsing2015theoretical, ramsay2002applied}, where the non-parametric estimation of means and covariances of random processes from sparse and noisy data is a central and well-understood problem \cite{panaretos2012nonparametric, gilliam1993rate, caponera2022functional,caponera2022rate,caponera2025two, cai2011optimal} -- albeit not yet in the complex setting of tomography. It also allows us to develop a rigorous statistical analysis of the problem where the unknowns are in the continuum, but the data are recorded discretely and with measurement error. Our analysis shows the interplay between smoothness conditions and experimental parameters, such as the number of instantiations imaged, and the number of vantage points (tilts) per instantiation, including related phase transitions. Interestingly, it is shown that consistent recovery of both mean (template) and covariance (heterogeneity) is feasible with as few as two vantage points per instantiation, provided the number of instantiations diverges. Parallel to our theoretical results, we also provide complete methodology, including a computational implementation. Our methodology leverages reproducing kernel Hilbert spaces, by establishing bespoke Mercer and representer theorems. Our implementation uses tensorized Krylov methods \cite{yun2025fast} to yield computationally 
efficient approach, despite the dimensionality of the estimands.

\section{Background and Contributions}
Given an integrable $f: \b{B}^{d} \to \b{R}$ on the unit ball, the X-ray transform $f \mapsto \{\s{P}_{R} f : R \in SO(d)\}$ generates its collection of \emph{marginals} along the axis $\m{r}:=R^{\top} e_{d}$ of rotation $R$ from the special orthogonal group $SO(d)$ \cite{christ1984estimates, smith1977practical}.
More generally, the X-ray transform is a special case of the $k$-plane transform, which integrates $f$ over $d_{r}$-dimensional planes \cite{solmon1976x, solmon1979note, sharafutdinov2016reshetnyak}:
\begin{align}\label{eq:intro:xray}
     \s{P}_{R} f(\m{x})\equiv \s{P} f(R, \m{x}) :=
    \int_{(1-\|\m{x}\|^{2})^{1/2} \cdot \b{B}^{d_{r}}} f (R^{-1} [\m{x}:z]) \rd z, \quad \m{x} \in \b{B}^{d-d_{r}}.
\end{align}
where $[\m{x}:z]$  represents the vector obtained by concatenating $\m{x}\in\mathbb{R}^{d-d_{r}}$ and $z \in \b{R}^{d_{r}}$. Different values of $d_{r}$ correspond to different imaging modalities, such as the X-ray transform ($d_{r} = 1$) for CT and cryo-EM, the Radon transform ($d_{r} = d-1$) \cite{natterer2001mathematics}, or nuclear magnetic resonance imaging ($d_{r} = d-2$) \cite{parhi2024distributional}.

The idealized \emph{homogeneous} tomographic reconstruction  consists of recovering $f$ from the complete collection of its projections, $R \mapsto \s{P}_{R} f$, known as the \emph{sinogram} \cite{natterer2001mathematics, helgason1965radon, ludwig1966radon, christ1984estimates}. In practice, however, one only has access to a finite number of ``sinogram values'' $\{\s{P}_{\m{R}_{j}} f\}_{j=1}^{r}$, each evaluated at discrete locations $\m{X}_{jk} \in \b{B}^{d-1}$ and possibly contaminated by noise:
\begin{equation*}
    Z_{jk}= \s{P}_{\m{R}_{j}}f(\m{X}_{jk})+\varepsilon_{jk}.
\end{equation*}
In statistical settings, the rotations $\m{R}_{j}$, locations $\m{X}_{jk}$, and measurement errors $\varepsilon_{jk}$ will be random and completely independent. We refer to this setting as \emph{homogeneous} tomography, because it is always the same object $f$ that is imaged repeatedly, just from different angles. Incidentally, the term \emph{sinogram} arises from the sinusoidal patterns that off-center structures create in the projection data, as seen in \cref{fig:sino}. Accordingly, sinograms must satisfy certain moment constraints, known as the Helgason-Ludwig consistency conditions \cite{ludwig1966radon, helgason1965radon}. 

\begin{figure}[h]
\centering
\includegraphics[width=\textwidth, height=4cm]{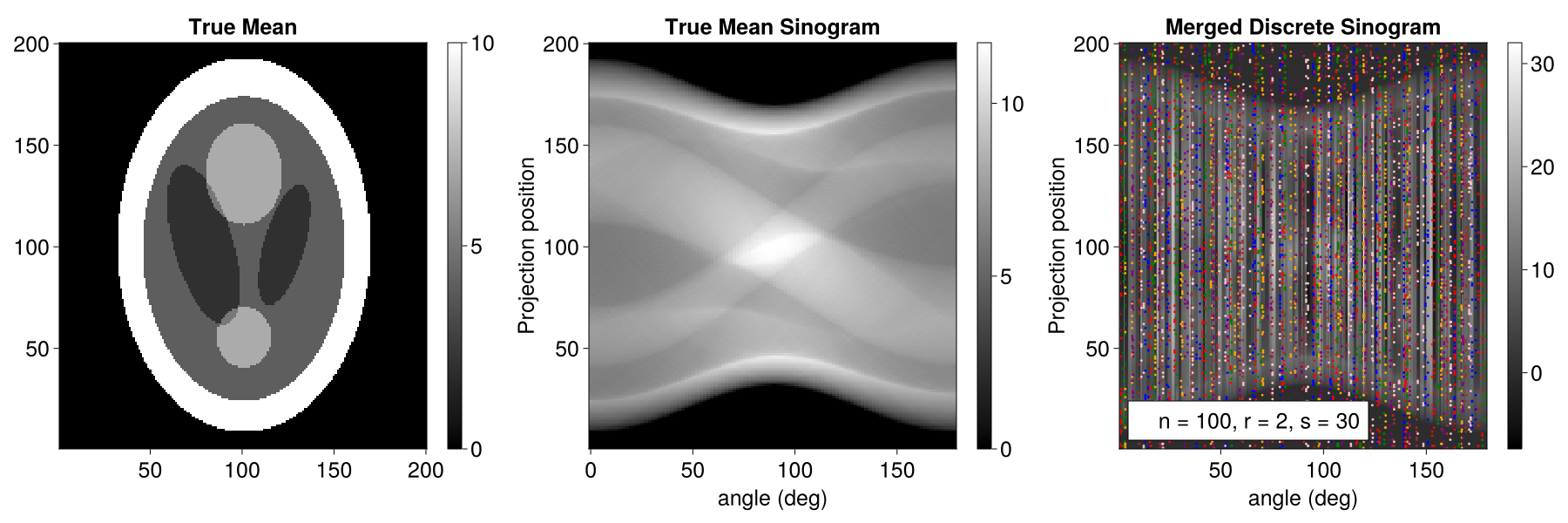}
\caption{\textbf{Left}: The true mean function. \textbf{Center}: Its ideal sinogram over 180$^{\circ}$. \textbf{Right}: Merged and noisy discrete sinogram from a simulation with $n=100$ random functions, each observed at only $r=2$ random angles and $s=30$ locations, where different color corresponds to the observations from a different function.}
\label{fig:sino}
\end{figure}

To formulate the heterogeneous problem \cite{anden2018structural, scheres2012bayesian, katsevich2015covariance, sigworth2007cryo}, we model the object of interest as a second-order random field $\c{Y}$ on the unit ball $\b{B}^{d}$. The law of $\c{Y}$ captures the continuous structural heterogeneity of the object, and we aim to estimate its mean $\m{\mu}$ and covariance $\m{\Sigma}$: 
\begin{align}\label{eq:intro:moments}
    \m{\mu} := \b{E} \c{Y}, \quad
    \m{\Gamma} := \b{E} [\c{Y} \otimes \c{Y}], \quad
    \m{\Sigma} := \b{E} [(\c{Y}-\m{\mu}) \otimes (\c{Y}-\m{\mu})] = \m{\Gamma} - \m{\mu} \otimes \m{\mu},
\end{align}
in order to perform PCA that reveals the main modes of conformational variation around its mean. We assume access only to low-dose tomographic measurements on i.i.d. replicates:
\begin{equation*}
    Z_{ijk} = \s{P}_{\m{R}_{ij}} \c{Y}_{i} (\m{X}_{ijk}) + \varepsilon_{ijk}, \quad 1 \le i \le n, \, 1 \le j \le r_{i}, \, 1 \le k \le s_{ij},
\end{equation*}
where:
\begin{itemize}
\item each replication $\c{Y}_i$ is imaged at replication-specific orientations or \emph{tilts} $\m{R}_{ij}$. 
\item each projection image $\s{P}_{\m{R}_{ij}} \c{Y}_{i}$ is measured at its own evaluation points $\m{X}_{ijk}$.
\item the  errors $\varepsilon_{ijk}$ are independent, zero-mean, with variance  $\sigma^2_{r_i}$  that can depend on $r_i$. 
\item the viewing angles $\m{R}_{ij}$ and evaluation points $\m{X}_{ijk}$ are drawn at random, independently of each other and of the measurement errors.
\end{itemize}
To incorporate the \emph{low-dose} aspect, we allow the noise variance $\sigma^2_{r_i}$ to depend on the number of tilts $r_i$. Concretely, we think of $r\mapsto\sigma^2_{r}$ as an increasing function of $r$ (e.g. $\sigma^2_r \propto r$), reflecting the fact that that there is a \emph{radiation dose restriction}: the total radiation per individual should be limited, so that there is a trade-off between the number of viewing angles and the noise contamination of the corresponding images. In other words, an individual's radiation dose is pre-set, and this fixed budget must be distributed among the $r_i$ viewing angles; more angles result in a lower dose and higher noise for each projection image. 

For estimation, we assume that the fields $\c{Y}_{i}$ are random elements in a Reproducing Kernel Hilbert Space (RKHS) $\b{H}(K)$ \cite{aronszajn1950theory, paulsen2016introduction}. Though there are other possible smoothness conditions \cite{stefanov2022radon, stefanov2020semiclassical, monard2023sampling, katsevich2024analysis, natterer1980sobolev, sharafutdinov2016reshetnyak, sharafutdinov2012integral} to mitigate the inherent ill-posedness, the RKHS setting is particularly well-suited to our context: by deriving bespoke Mercer and representer theorems, we are able to reduce the problem to a ridge regression problem via feature maps $ \{\varphi(\m{R}_{ij}, \m{X}_{ijk}): 1 \le i \le n, 1 \le j \le r_{i}, 1 \le k \le s_{ij} \}$ adapted to the observation scheme, leading to less pronounced artifacts \cite{yun2025computerized}. Crucially, this allows the recovery of both the mean and covariance, as the covariance problem also admits a representer theorem using a dictionary of tensor products of the feature maps used for the mean. This opens the way for using pooling techniques common in FDA \cite{yao2005functional, staniswalis1998nonparametric}. One might suspect that the Gram matrices required for implementing such a strategy are prohibitively large to store or manipulate directly. This is true, and we employ a tensorized Krylov method that bypasses their explicit construction \cite{yun2025fast}. After solving for the coefficient matrix, a generalized eigendecomposition of this matrix yields the sought functional PCA 
of $\hat{\m{\Sigma}}_{\nu, \eta} = \hat{\m{\Gamma}}_{\eta} - \hat{\m{\mu}}_{\nu} \otimes \hat{\m{\mu}}_{\nu}$, which represents the main modes of continuous heterogeneity. 

We then derive uniform convergence rates for both the mean and covariance estimators under a source condition on the distribution of $\c{Y}_{i}$'s. Our results generalize known convergence rates from the standard FDA setting (where $\s{P}_{R} = I$)  \cite{cai2010nonparametric, cai2011optimal, caponera2022functional, caponera2025two} to this considerably more complex setting. Furthermore, our findings reveal several crucial phase transitions that are governed by the number of viewing angles and the low-dose imaging constraint: 
\begin{itemize}
\item We establish the minimum tilts required for consistent estimation as the number of particles $n$ grows. For the mean, as few as one tilt per particle is sufficient. As for the covariance, consistent estimation is possible with just two tilts per particle. Then, the convergence rate depends on the number of tilts, not on the locational sampling frequency.

\item We identify a phase transition based on how the number of tilts compares to the number of particles $n$. In a dense-tilt regime, where the number of tilts is sufficiently large, the error is limited by $n$ and we achieve a parametric rate of convergence.

\item We formally characterize the trade-off imposed by a fixed radiation budget. The convergence rate slows as the noise $\sigma_{r}^{2}$ increases faster with the number of tilts $r$. For mean estimation, the rate actually degrades as more tilts are added, if the noise variance grows faster than linearly ($\sigma_{r}^{2} \propto r$). For covariance estimation, the boundary is much stricter: the rate degrades if noise variance grows faster than the square root of $r$ ($\sigma_{r}^{2} \propto \sqrt{r}$).
\end{itemize}

The remainder of this paper is organized as follows. \cref{sec:related} reviews related work on the heterogeneous tomography problem, and \cref{sec:notation} introduces key notation. We develop our main theoretical contributions in \cref{sec:setup,sec:Mercer:inv,sec:asym}, where we establish our RKHS framework, adapt Mercer's theory for the inverse problem setting, and derive the asymptotic properties of our estimators. We note that our proofs rely on the abstract property of orientational duality, making our results applicable to general $k$-plane transforms (Supplement \ref{sec:sino:op}).
\cref{sec:cryo} discusses how to apply our theory to the cryo-EM problem with unknown orientations, and \cref{sec:simul} presents simulation studies.

\section{Related Work}\label{sec:related}
The numerous methods developed to address heterogeneity analysis in single-particle cryo-EM can be broadly classified into two paradigms. A common prerequisite for these methods is the prior estimation of particle viewing angles, typically achieved through 3D alignment based on the Fourier slice theorem \cite{penczek2006estimation, natterer2001mathematics}. 

The first paradigm, \emph{discrete} heterogeneity, assumes that biomolecules exist in a finite number of stable states and frames the problem as a 3D classification task, where the primary goal is to sort particle images into distinct classes and reconstruct a high-resolution 3D map for each. Early methods like multi-body refinement treated molecules as collections of rigid parts \cite{heymann2004molecular}. This evolved into the unsupervised classification methods, including two highly successful implementations \texttt{RELION} \cite{scheres2012relion, scheres2012bayesian} and \texttt{cryoSPARC} \cite{punjani2017cryosparc}, both of which leverage regularized maximum-likelihood approaches. For numerical optimization, they use expectation-maximization or stochastic gradient descent, respectively. However, these methods tend to struggle when the number of conformations is large or the variations are subtle.

To overcome these limitations, methods that model \emph{continuous} heterogeneity have emerged as an alternative, see \cite{anden2018structural, katsevich2015covariance} and the references therein. These techniques aim to characterize the primary modes of molecular transition within a lower-dimensional space. The main approaches include:
\begin{itemize}
\item \textbf{Linear Subspace Methods}: This approach \cite{punjani20213d,penczek2006estimation,penczek2011identifying,tagare2015directly,katsevich2015covariance,anden2018structural, gilles2025regularized} assumes that
even complex molecular motions can be approximated by a few simple, linear components. The primary approach involves estimating the full mean and covariance of the molecular ensemble, then performing PCA, a methodology that shares a foundation with our paper.

\item \textbf{Manifold Learning Methods}: Rather than assuming motions are linear, these methods posit they follow curved paths or surfaces. They embed the high-dimensional cryo-EM data into a latent space, using kernel functions \cite{frank2016continuous, tang2007eman2} or graph Laplacian \cite{moscovich2020cryo}, and performing PCA thereof for dimension reduction.

\item \textbf{Physics-Inspired Methods}: 
This line of work models structural changes using more direct physical analogies, including methods that fit rigid bodies into a flexible assembly \cite{nakane2018characterisation}, calculate smooth warping fields to morph a consensus structure \cite{punjani20233dflex, schwab2024dynamight}, or incorporate constraints from molecular dynamics \cite{herreros2023estimating}.

\item \textbf{Deep Learning Methods}: These methods leverage neural networks to learn complex, non-linear latent representations of the conformational landscape from the data \cite{zhong2021drng, chen2021deep}.
\end{itemize}
Common to these methods, however, is the \emph{discretization} of the structure into a cubic voxel grid, which is used to estimate the covariance of the 3D volumes that often requires data downsampling to remain numerically tractable \cite{liu1995estimation1, liu1995estimation2}. Furthermore, most of these methods  depend on the Filtered Back-Projection (FBP), and with the exception of PCA-based approaches, often lack formal performance guarantees but rely on numerical evidence \cite{seitz2022recovery}.

Our work, therefore, takes a conceptual step beyond the aforementioned literature 
-- we perceive the underlying biomolecular structure as a continuous random function, approaching the problem from a \emph{functional} viewpoint. Instead of relying on a voxel grid, we encode the smoothness of the conformational space using a RKHS, just as the Nyquist-Shannon sampling frequency \cite{shannon2006communication} dictates the resolution limit of reconstruction. This perspective leads to a reconstruction scheme based on kernel regression that is fundamentally different from FBP and is, crucially, dimension-agnostic -- it does not suffer from FBP's inconsistent behavior in odd versus even dimensions due to the involvement of the Hilbert transform, a global operation \cite{natterer2001mathematics}. By abstracting away from the fine experimental details (e.g., contrast transfer function or translation alignment), our simplified model, much like foundational \emph{stylized models} in statistics like the spiked covariance or stochastic block models, is intended to provide an analytically tractable setting 
providing theoretical insight, despite being a coarse approximation.

\section{Notation}\label{sec:notation}
Throughout the paper, we adhere to the following notation: 
\begin{itemize}
    \item $\b{S}^{d-1}=\{\m{z} \in \b{R}^{d}: \|\m{z}\| = 1\}$ is the unit sphere in $\b{R}^{d}$.
    \begin{equation*}
        |\b{S}^{d-1}|=\frac{2 \pi^{d/2}}{\Gamma(d/2)}, \quad |\b{S}^{0}|=2, \, |\b{S}^{1}|=2 \pi, \, |\b{S}^{2}|=4 \pi.
    \end{equation*}
    \item $\b{B}^{d}=\{\m{z} \in \b{R}^{d}: \|\m{z}\| \le 1\}$ is the closed unit ball in $\b{R}^{d}$. 
    \item For a given linear operator $\s{P}$, the null and range spaces are denoted by $\c{N}(\s{P})$ and $\c{R}(\s{P})$, respectively. $^{\dag}$ symbol denotes the Moore-Penrose generalized inverse.
\end{itemize}

We frequently use tensor products and recall key facts. To make the abstract tensor algebra concrete, it is customary in the FDA literature \cite{hsing2015theoretical} to define the elementary tensor $f_{1} \otimes f_{2}$, for $f_{1} \in \c{H}_{1}$, $f_{2} \in \c{H}_{2}$ in separable $\b{R}$-Hilbert spaces, as
\begin{equation*}
    f_{1} \otimes f_{2} \, : \, f '_{2} \in \c{H}_{2} \quad \mapsto \quad \langle f_{2}, f '_{2} \rangle_{\c{H}_{2}} \, f_{1} \in \c{H}_{1}.
\end{equation*}
In our covariance analysis, we encounter fourth-order tensors $f_{1} \otimes f_{2} \otimes f_{3} \otimes f_{4}$, and the grouping of these terms matters:
\begin{itemize}
    \item Thinking of operators acting on operators, the natural grouping is $(f_{1} \otimes f_{2}) \otimes (f_{3} \otimes f_{4})$.
    \item For spectral analysis, it is more natural to group the vectors and covectors separately \cite{lee2003smooth, hall2013quantum}. This corresponds to a \emph{reshuffling} of the tensor into $(f_{1} \otimes f_{3})$ and $(f_{2} \otimes f_{4})$. To distinguish this second case, we use the special notation $(f_{1} \otimes f_{3}) \otimes_{2} (f_{2} \otimes f_{4})$. 
\end{itemize}

The tensor product space $\c{H}_{1} \otimes \c{H}_{2}$ is formally the metric completion of the vector space spanned by these elementary tensors, and the universal property \cite{kadison1986fundamentals} reveals that $\c{H}_{1} \otimes \c{H}_{2}$ is isometric and linearly isomorphic to the space of Hilbert-Schmidt (HS) operators $\c{B}_{2}(\c{H}_{2}, \c{H}_{1})$. This space is itself a separable Hilbert space with inner product
\begin{equation*}
    \langle \s{K}_{1}, \s{K}_{2} \rangle_{HS} := \sum_{l=1}^{\infty} \langle \s{K}_{1} e_{l}, \s{K}_{2} e_{l} \rangle_{\c{H}_{2}}, 
\end{equation*}
where $\{e_{l}\}_{l=1}^{\infty}$ is any complete orthonormal system (CONS) for $\c{H}_{1}$. Note that for elementary tensors, it holds that $\langle f_{1} \otimes f_{2}, f '_{1} \otimes f '_{2} \rangle_{HS} = \langle f_{1}, f '_{1} \rangle_{\c{H}_{1}} \langle f_{2}, f '_{2} \rangle_{\c{H}_{2}}$.

We denote the Banach space of bounded operators by $\c{B}_{\infty}(\c{H})$, equipped with the operator norm $\VERT \cdot \VERT_{\infty}$. The Banach space of compact operators on $\c{H}$ is denoted by $(\c{B}_{0}(\c{H}), \VERT \cdot \VERT_{\infty})$. For $1 \le p < \infty$, the $p$-Schatten class on $\c{H}$ is a collection of operators as follows:
\begin{equation*}
    \c{B}_{p}(\c{H}) := \left\{ \s{K} \in \c{B}_{0}(\c{H}) : \VERT \s{K} \VERT_{p} := \sum_{l=1}^{\infty} \langle e_{l}, (\s{K}^{*} \s{K})^{p/2} e_{l} \rangle_{\c{H}} < +\infty \right\}, 
\end{equation*}
for any CONS $\{e_{l}\}_{l=1}^{\infty}$ for $\c{H}$. It can be shown that $(\c{B}_{p}(\c{H}), \VERT \cdot \VERT_{p})$ is a Banach space, and the $p$-Schatten norm $\VERT \cdot \VERT_{p}$ is called the trace and the HS norm when $p=1$ and $p=2$, respectively. 

\section{Modeling}\label{sec:setup}
If the support of the $d$-dimensional function is compact, we can simply assume it to be the unit ball $\b{B}^{d}$. 
When the orientation $R$ is also considered as an input in \eqref{eq:intro:xray}, we use the notation $\s{P} f(R, \m{x}) = \s{P}_{R} f(\m{x})$ interchangeably,  and $\s{P}$ is referred to as the sinogram operator in this context.
Although we focus on the X-ray transform in the main body, the theoretical results presented in this paper are not limited to them. We treat the problem more abstractly, as detailed in Supplement \ref{sec:sino:op}. For instance, the orientation group $\b{G} = SO(d)$ can be viewed as a compact Lie group, equipped with the normalized Haar measure $\rd R$ \cite{hall2013lie, faraut2008analysis}. The Lie group acts from the left on a compact domain $\Omega^{d} = \b{B}^{d}$: $\rho(R): \Omega^{d} \rightarrow \Omega^{d}, \m{z} \mapsto R \m{z}$. Moving forward, the domain for the resulting projection images is $\Omega^{d-d_{r}} = \b{B}^{d-d_{r}}$, with $d_{r} = 1$, and we use the same notation to denote the left group action on functions. To be specific, for $f: \Omega^{d} \to \b{R}$ and $g : \b{G} \times \Omega^{d-d_{r}} \to \b{R}$, we have:
\begin{alignat}{2}
    &\rho(R) f (\m{z}) := f(R^{-1} \m{z})&&, \quad \m{z} \in \Omega^{d}, \label{eq:left:reg:def1} \\
    &\rho(R_{1}) g (R_{2}, \m{x}) := g(R_{1} R_{2}, \m{x})&&, \quad R_{1}, R_{2} \in \b{G}, \, \m{x} \in \Omega^{d-d_{r}}. \label{eq:left:reg:def2}
\end{alignat}
Rather than the specific integral formula in \eqref{eq:intro:xray}, the key property we distill from the X-ray transform, or the $k$-plane transform more generally, is \emph{orientational duality}: $\s{P}_{R} = \s{P}_{I} \circ \rho(R)$ for any $R \in \b{G}$, which has a clear physical interpretation. In CT imaging, for instance, scanning an object at $30^{\circ}$ is equivalent to rotating the object by $-30^{\circ}$ and then scanning it at $0^{\circ}$. Because our proofs rely only on this property and some mild smoothness conditions (see \cref{def:sino:op}), our findings naturally extend to any operator that shares this structure.

Consider weight functions $W_{0, d}:\Omega^{d} \rightarrow [0, +\infty)$ and $W_{d_{r}, d}:\Omega^{d-d_{r}} \rightarrow [0, +\infty)$ for input and output functions, which have finite total weight:
\begin{equation}\label{eq:normal:const}
\omega_{0, d} := \int_{\Omega^{d}} W_{0, d}(\m{z}) \rd \m{z} <\infty, \quad \omega_{d_{r}, d} := \int_{\Omega^{d-d_{r}}} W_{d_{r}, d}(\m{x}) \rd \m{x} <\infty,
\end{equation}
where $\rd \m{z}$ and $\rd \m{x}$ represent the standard Lebesgue measures. We assume that the input weight function $W_{0, d}$ is $\b{G}$-invariant, i.e., $W_{0, d}(\m{z}) = W_{0, d}(R \m{z})$ for any $R \in \b{G}$. Consequently, the input space $\c{L}_{2} (\Omega^{d}, W_{0, d})$  is a well-defined Hilbert space containing $\c{C}(\Omega^{d})$, with its inner product given by:
\begin{equation*}
    \langle f_{1}, f_{2} \rangle_{W_{0, d}} = \int_{\Omega^{d}} f_{1}(\m{z}) f_{2}(\m{z}) W_{0, d}(\m{z}) \rd \m{z}.
\end{equation*}
The same holds for the output space $\c{L}_{2} (\Omega^{d-d_{r}}, W_{d_{r}, d})$:
\begin{equation*}
    \langle g_{1}, g_{2} \rangle_{W_{d_{r}, d}} = \int_{\Omega^{d-d_{r}}} g_{1}(\m{x}) g_{2}(\m{x}) W_{d_{r}, d}(\m{x}) \rd \m{x}.
\end{equation*}
Similarly, $\c{L}_{2} (\b{G} \times \Omega^{d-d_{r}}, W_{d_{r}, d})$ forms a Hilbert space with the inner product defined as:
\begin{align*}
    \langle g_{1}, g_{2} \rangle_{W_{d_{r}, d}} 
    := \int_{\b{G}} \int_{\Omega^{d-d_{r}}} g_{1}(R, \m{x}) g_{2}(R, \m{x}) W_{d_{r}, d}(\m{x}) \rd \m{x} \rd R,
\end{align*}

\subsection{RKHS Framework}
To assess reconstruction reliability, it is essential to equip the function space with a smoothness structure. In the literature on the X-ray and Radon transforms, Sobolev spaces \cite{natterer1980sobolev} or their variants \cite{natterer1980sobolev, sharafutdinov2016reshetnyak, sharafutdinov2012integral} have been explored as alternatives to the weighted $\c{L}_{2}$ domain. Building upon this, \cite{yun2025computerized} generalized this concept by framing the function space as an RKHS. RKHSs encode desired levels of smoothness through its reproducing kernel, which is often constructed via Green's functions \cite{hsing2015theoretical, paulsen2016introduction}. The RKHS framework is particularly appealing due to its strong theoretical foundation and the representer theorem, which underpins its widespread use in statistics and machine learning \cite{kimeldorf1970correspondence, kimeldorf1971some, unser2021unifying}.

\begin{definition}
Let $\b{H}$ be a Hilbert space of real-valued functions defined on a set $\Omega$. A bivariate function $K:\Omega \times \Omega \rightarrow \b{R}$ is called a reproducing kernel for $\b{H}$ if
\begin{enumerate}
    \item For any $\m{z} \in \Omega$, a generator $k_{\m{z}}(\cdot):= K(\cdot, \m{z})$ at $\m{z} \in \Omega$ belongs to $\b{H}$.
    \item For any $f \in \b{H}$, the point evaluation at $\m{z} \in \Omega$ is given by $f(\m{z})=\langle f, k_{\m{z}} \rangle_{\b{H}}$.
\end{enumerate}
A Hilbert space equipped with a reproducing kernel is called an RKHS. 
\end{definition}

The Moore-Aronszajn theorem \cite{aronszajn1950theory} states that any reproducing kernel $K$ is symmetric and semi-positive definite (s.p.d.), and conversely, any such a kernel uniquely defines an RKHS, denoted $\b{H} = \b{H}(K)$ \cite{paulsen2016introduction, hsing2015theoretical}. When the kernel is continuous, it is referred to as a Mercer kernel. In our work, it is particularly relevant to
consider a $\b{G}$-invariant Mercer kernel, meaning  $K(\m{z}_{1}, \m{z}_{2}) = K(R \m{z}_{1}, R \m{z}_{2})$ for all $R \in \b{G}$. We call this type of kernel as the sinogram kernel, see \cref{def:sino:ker} for a general definition. For the X-ray transform, the evaluation functional of a projection image at any $\m{x} \in \Omega^{d-d_{r}}$ is bounded by some constant $B > 0$:
\begin{equation*}
    |\s{P}_{R} f ( \m{x})| \le B \|f\|_{\b{H}}, \quad R \in \b{G}, \, f \in \b{H}(K),
\end{equation*}
see \cref{rmk:xray:arbi:kern}. Consequently, the Riesz representation theorem guarantees that there exists a unique feature map $\varphi(R, \m{x}) \in \b{H}$ with $\|\varphi(R, \m{x})\|_{\b{H}} \le B$, yielding the reproducing property:
\begin{equation}\label{eq:repro:prop}
    \langle \varphi(R, \m{x}), f \rangle_{\b{H}} = \s{P} f (R, \m{x}), \quad f \in \b{H}(K).
\end{equation}
The significant advantage of this framework is that we can explicitly construct the feature map using  the reproducing property \eqref{eq:repro:prop}: $\varphi(R, \m{x})(\m{z}) = \langle \varphi(R, \m{x}), k_{\m{z}} \rangle_{\b{H}} = \s{P} k_{\m{z}} (R, \m{x})$, which only requires the forward operator 
$\s{P}$, not its inverse or adjoint.

\subsection{Estimation}\label{sec:scheme}

Let $\c{Y}$ be a second-order random function taking values in the RKHS $\b{H} = \b{H}(K)$, that is $\b{E} \|\c{Y}\|_{\b{H}}^{2} < \infty$. Let $\m{\mu}, \m{\Gamma}, \m{\Sigma}$ denote the mean, second moment, and covariance of the random element $\c{Y}$, respectively, as defined in \eqref{eq:intro:moments}.
Let $\b{P}_{\m{R}}$ and $\b{P}_{\m{X}}$ denote the probability measures on $\b{G}$ and $\Omega^{d-d_{r}}$, respectively, given by
\begin{equation*}
    \rd \b{P}_{\m{R}} (R) = \rd R, \quad \rd \b{P}_{\m{X}} (\m{x}) = \omega_{d_{r}, d}^{-1} W_{d_{r}, d}(\m{x}) \rd \m{x}.
\end{equation*}

We consider $n$ i.i.d. copies $\{\c{Y}_{i}: i=1, \dots, n \}$ of $\c{Y}$. For each sample $\c{Y}_{i}$, we generate projections $\s{P}_{\m{R}_{ij}} \c{Y}_{i}$ at random orientations  $\m{R}_{ij}$. These projections are then observed at random locations $\m{X}_{ijk}$ under the noise $\varepsilon_{ijk}$:
\begin{equation}\label{eq:obs:scheme}
    Z_{ijk} = \s{P} \c{Y}_{i} (\m{R}_{ij}, \m{X}_{ijk}) + \varepsilon_{ijk}
    =\langle \c{Y}_{i}, \varphi(\m{R}_{ij}, \m{X}_{ijk}) \rangle_{\b{H}} + \varepsilon_{ijk},
\end{equation}
where $\m{R}_{ij} \stackrel{iid}{\sim} \b{P}_{\m{R}}$,
$\m{X}_{ijk} \stackrel{iid}{\sim} \b{P}_{\m{X}}$, and $\varepsilon_{ijk} \stackrel{iid}{\sim} \c{N}(0, \sigma_{r_{i}}^{2})$ are all mutually independent, with $1 \le i \le n, 1 \le j \le r_{i}, 1 \le k \le s_{ij}$. For brevity, we denote $\varphi_{ijk}$ instead of $\varphi(\m{R}_{ij}, \m{X}_{ijk})$, an $\b{H}$-valued random element with its norm bounded by $B > 0$. 

To estimate the mean and covariance, we employ Tikhonov regularization, which yields finite-dimensional ridge regressions in our framework, owing to the reproducing property \eqref{eq:repro:prop}, with the normal equation provided by suitable Gram matrices. To formulate this, we first derive the conditional expectation 
given the random orientations and locations:
\begin{equation*}
    \b{E}[Z_{ijk} \vert (\m{R}_{ij}, \m{X}_{ijk}) ] = \s{P} \m{\mu} (\m{R}_{ij}, \m{X}_{ijk}) = \langle \m{\mu}, \varphi_{ijk} \rangle_{\b{H}}.
\end{equation*}
This leads to the empirical risk functional $\hat{L}_{\nu}:\b{H} \rightarrow \b{R}$ for mean estimation with a tuning parameter $\nu > 0$:
\begin{align}\label{eq:emp:risk:mean}
    \hat{L}_{\nu}(f)
    &:= \frac{\omega_{d_{r}, d}}{n} \sum_{i=1}^{n} \frac{1}{r_{i}} \sum_{j=1}^{r_{i}} \frac{1}{s_{ij}} \sum_{k=1}^{s_{ij}} \left(Z_{ijk}-\s{P}_{\m{R}_{ij}} f(\m{X}_{ijk}) \right)^{2} +\nu \|f\|^{2}_{\b{H}}, 
\end{align}
which can viewed as the negative log-posterior density under a white-noise prior on $f$ \cite{yun2025computerized}. We remark that other type of penalties are considered in the discrete and homogeneous tomography setup -- total variation \cite{jia2011gpu, unser2021unifying}, $\c{L}_{1}$-penalty \cite{zhang2016low, jiang2018reconstruction}, or hybrid projection \cite{chung2024computational}.

To estimate the covariance $\m{\Sigma}$, we employ a plug-in approach: first, we estimate the second moment operator $\m{\Gamma}$, and then we define the covariance estimator as $\hat{\m{\Sigma}} := \hat{\m{\Gamma}} - \hat{\m{\mu}} \otimes \hat{\m{\mu}}$. For second-order estimation, note that
\begin{align}\label{eq:WW:cond:exp}
    \b{E}[Z_{ijk} Z_{i'j'k'} \vert (\m{R}_{ij}, \m{X}_{ijk}) , (\m{R}_{i'j'}, \m{X}_{i'j'k'}) ] 
    &=
    \begin{cases}
        \langle \m{\mu}, \varphi_{ijk} \rangle_{\b{H}} \langle \m{\mu}, \varphi_{i'j'k'} \rangle_{\b{H}}, &i \neq i', \\
        \langle \m{\Gamma} \varphi_{ijk}, \varphi_{ij'k'} \rangle_{\b{H}} + \sigma_{r_{i}}^{2} \delta_{jj'} \delta_{kk'}, &i = i',
    \end{cases}
\end{align}
which reveals that the product of observations $Z_{ijk} Z_{ij'k'}$ from any off-diagonal pair $(j,k) \neq (j', k')$ provides a conditionally unbiased estimate of $\m{\Gamma}$ evaluated at the corresponding orientation and location. While this is true, we restrict our analysis to cross-terms where $j \neq j'$, which correspond to products across different projection images of the same object. This technical restriction is crucial for asymptotic analysis, because the feature maps $\varphi_{ijk}$ and $\varphi_{ij'k'}$ are \emph{fully} independent only when $j \ne j'$. In contrast, for $j = j'$ and $k \ne k'$, the feature maps are merely conditionally independent given $\m{R}_{ij}$, thus complicates the asymptotic derivation, because \eqref{eq:emp:tensor:UE} in \cref{sec:asym:cov} no longer hold unconditionally.

To simplify the notation, we merge the multi-index $(j,k)$ into $J$ when there is no risk of confusion. Accordingly, we denote by $\varphi_{iJJ'}^{\otimes} \in \b{H} \otimes \b{H}$ and $Z_{iJJ'} \in \b{R}$, instead of $\varphi_{ijk} \otimes \varphi_{ij'k'}$ and $Z_{ijk} Z_{ij'k'}$, respectively. While the mean can be estimated with just one projection ($r_{i} \ge 1$) per object  is enough for mean estimation, estimating the covariance requires at least two projections ($r_{i} \ge 2$) to capture the second-order interactions within the process $\c{Y}_{i}$. With these considerations, the empirical risk functional for the second moment operator $\hat{L}_{\eta}^{\odot}: \b{H} \otimes \b{H} \rightarrow \b{R}$ with a tuning parameter $\eta > 0$ is given as follow:
\begin{align}\label{eqn:emp:risk:cov}
    &\hat{L}_{\eta}^{\odot}(G)
    := \frac{\omega_{d_{r}, d}^{2}}{n} \sum_{i=1}^{n} \frac{1}{r_{i}(r_{i}-1)} \sum_{1 \le j \neq j' \le r_{i}} \frac{1}{s_{ij}s_{ij'}} \times \nonumber \\
    &\hspace*{6em} \quad \sum_{\substack{1 \le k \le s_{ij} \\ 1 \le k' \le s_{ij'}}} (Z_{ijk} Z_{ij'k'} - (\s{P}_{\m{R}_{ij}} \otimes \s{P}_{\m{R}_{ij'}}) G(\m{X}_{ijk}, \m{X}_{ij'k'}) )^{2} + \eta \|G\|^{2}_{\b{H} \otimes \b{H}}. 
\end{align}

\begin{proposition}\label{prop:repre::proj}
Given random orientations $\m{R}_{ij} \in \b{G}$ and locations $\m{X}_{ijk} \in \Omega^{d-d_{r}}$, define finite-dimensional subspaces of $\b{H}$ and $\b{H} \otimes \b{H}$:
\begin{align*}
   &\b{H}_{\m{F}}:= \text{span} \{\varphi_{ijk}: 1 \le i \le n, 1 \le j \le r_{i}, 1 \le k \le s_{ij} \}, \\
   &(\b{H} \otimes \b{H})_{\m{F}}:= \text{span} \{\varphi_{ijk} \otimes \varphi_{ij'k'}: 1 \le i \le n, 1 \le j \neq j' \le r_{i}, 1 \le k \le s_{ij}, 1 \le k' \le s_{ij'} \},
\end{align*}
respectively. Then, for $f \in \b{H}$ and $G \in \b{H} \otimes \b{H}$,
\begin{align*}
    &f \in \b{H}_{\m{F}}^{\perp} \quad \Leftrightarrow \quad \s{P}_{\m{R}_{ij}} f(\m{X}_{ij})=0, \quad 1 \le i \le n, 1 \le j \le r_{i}, \\
    &G \in (\b{H} \otimes \b{H})_{\m{F}}^{\perp} \quad \Leftrightarrow \quad (\s{P}_{\m{R}_{ij}} \otimes \s{P}_{\m{R}_{ij'}}) G(\m{X}_{ij}, \m{X}_{ij'})=0, \quad 1 \le i \le n, 1 \le j \neq j' \le r_{i}.
\end{align*}
\end{proposition}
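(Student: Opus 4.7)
The plan is to prove both equivalences by leveraging the reproducing property \eqref{eq:repro:prop}, which already reduces the first equivalence to a near-triviality and points the way toward the tensor-product analogue needed for the second. Note first that $f \in \b{H}_{\m{F}}^{\perp}$ is equivalent to $\langle f, \varphi_{ijk} \rangle_{\b{H}}=0$ for every admissible triple $(i,j,k)$, because orthogonality to a spanning set is orthogonality to the span. By \eqref{eq:repro:prop} this inner product equals $\s{P}_{\m{R}_{ij}} f (\m{X}_{ijk})$, so the first equivalence follows immediately. All the work therefore concentrates on the second equivalence, which I would prove by first establishing an auxiliary tensorized reproducing identity.

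Concretely, I would show that for every $G \in \b{H} \otimes \b{H}$ and every pair $(R_{1},\m{x}_{1}),(R_{2},\m{x}_{2}) \in \b{G} \times \Omega^{d-d_{r}}$,
\begin{equation*}
    (\s{P}_{R_{1}} \otimes \s{P}_{R_{2}}) G(\m{x}_{1}, \m{x}_{2}) \;=\; \langle G,\; \varphi(R_{1},\m{x}_{1}) \otimes \varphi(R_{2},\m{x}_{2}) \rangle_{HS}.
\end{equation*}
On an elementary tensor $G = g_{1}\otimes g_{2}$, both sides reduce to $\s{P} g_{1}(R_{1},\m{x}_{1}) \cdot \s{P} g_{2}(R_{2},\m{x}_{2})$: the left-hand side by definition of the tensor product of operators, the right-hand side by the factorization $\langle g_{1} \otimes g_{2}, \varphi_{1} \otimes \varphi_{2}\rangle_{HS} = \langle g_{1},\varphi_{1}\rangle_{\b{H}}\langle g_{2},\varphi_{2}\rangle_{\b{H}}$ combined with two applications of \eqref{eq:repro:prop}. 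Linearity extends the identity to finite sums of elementary tensors, which form a dense subspace of $\b{H} \otimes \b{H}$.

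To pass to arbitrary $G$, I would argue continuity of both sides in the HS norm. The right-hand side is continuous by Cauchy--Schwarz, with bound $\|\varphi(R_{1},\m{x}_{1})\|_{\b{H}}\|\varphi(R_{2},\m{x}_{2})\|_{\b{H}} \le B^{2}$. For the left-hand side, I would use that $\s{P}_{R_{1}} \otimes \s{P}_{R_{2}}$ maps into a Hilbert space of functions whose pointwise evaluation at $(\m{x}_{1},\m{x}_{2})$ is controlled again by $B^{2}$; this can also be seen directly by writing $G$ as the HS operator it represents and using that $(g_{1},g_{2}) \mapsto \s{P} g_{1}(R_{1},\m{x}_{1})\s{P} g_{2}(R_{2},\m{x}_{2})$ is a bounded bilinear form. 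Density of elementary tensors then yields the identity for all $G \in \b{H} \otimes \b{H}$. With this identity in hand, $G \in (\b{H} \otimes \b{H})_{\m{F}}^{\perp}$ is equivalent to the vanishing of $\langle G, \varphi_{ijk}\otimes\varphi_{ij'k'}\rangle_{HS}$ for all admissible $(i,j,j',k,k')$ with $j\neq j'$, which by the identity is exactly $(\s{P}_{\m{R}_{ij}} \otimes \s{P}_{\m{R}_{ij'}}) G(\m{X}_{ijk},\m{X}_{ij'k'})=0$.

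The only non-routine step is the tensorized reproducing identity, and within it the continuity argument used to pass from elementary tensors to the full completion; this is where one must be careful that the feature-map norm bound $\|\varphi(R,\m{x})\|_{\b{H}} \le B$ from \eqref{eq:repro:prop} is really being used to get a bound uniform in $(R_{1},\m{x}_{1},R_{2},\m{x}_{2})$ so that the density argument is legitimate. Everything else is bookkeeping involving orthogonality to a spanning set.
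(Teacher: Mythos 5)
Your proof is correct and follows essentially the same approach as the paper: both translate orthogonality to the spanning sets into vanishing sinogram evaluations via the reproducing property and its tensorization. The paper asserts the tensorized reproducing identity $\langle G, \varphi_{ijk}\otimes\varphi_{ij'k'}\rangle_{\b{H}\otimes\b{H}} = (\s{P}_{\m{R}_{ij}}\otimes\s{P}_{\m{R}_{ij'}})G(\m{X}_{ijk},\m{X}_{ij'k'})$ without comment, where you supply the density-and-continuity argument justifying it -- a useful elaboration, but not a different route.
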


The above proposition immediately leads us to the following representer theorem.

\begin{proposition}\label{prop:repre}
Given $\nu, \eta > 0$, let the empirical risk functionals $\hat{L}_{\nu}:\b{H} \rightarrow \b{R}$ and $\hat{L}_{\eta}^{\odot}: \b{H} \otimes \b{H} \rightarrow \b{R}$ be defined as in \eqref{eq:emp:risk:mean} and \eqref{eqn:emp:risk:cov}. 
Then, both the Tikhonov regularizers:
\begin{equation*}
    \hat{\m{\mu}}_{\nu} := \argmin_{f \in \b{H}} \hat{L}_{\nu}(f), \quad
    \hat{\m{\Gamma}}_{\eta} := \argmin_{G \in \b{H} \otimes \b{H}} \hat{L}_{\eta}^{\odot}(G).
\end{equation*}
are unique, and belong to $\b{H}_{\m{F}}$ and $(\b{H} \otimes \b{H})_{\m{F}}$, respectively. Additionally, $\hat{\m{\Gamma}}_{\eta}$ is self-adjoint.
\end{proposition}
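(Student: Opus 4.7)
The plan is to deploy the orthogonal-decomposition recipe standard for representer theorems, using the preceding proposition to absorb the data-fidelity terms into the finite-dimensional subspaces $\b{H}_{\m{F}}$ and $(\b{H}\otimes\b{H})_{\m{F}}$, and then to invoke strict convexity of the Tikhonov penalty to secure uniqueness. The only genuinely new ingredient will be a symmetrization step for the self-adjointness claim.

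For the mean estimator, I will split any candidate as $f = f_{\parallel} + f_{\perp}$ relative to $\b{H} = \b{H}_{\m{F}} \oplus \b{H}_{\m{F}}^{\perp}$. The preceding proposition yields $\s{P}_{\m{R}_{ij}} f_{\perp}(\m{X}_{ijk}) = 0$ at every sampled index, so the data-fidelity portion of $\hat{L}_{\nu}(f)$ depends only on $f_{\parallel}$. Combined with $\|f\|_{\b{H}}^{2} = \|f_{\parallel}\|_{\b{H}}^{2} + \|f_{\perp}\|_{\b{H}}^{2}$ and $\nu > 0$, this forces any minimizer to have $f_{\perp} = 0$. Restricted to the finite-dimensional $\b{H}_{\m{F}}$, $\hat{L}_{\nu}$ is a continuous, coercive, strictly convex quadratic, so existence and uniqueness of $\hat{\m{\mu}}_{\nu}$ follow at once. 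The corresponding argument for $\hat{\m{\Gamma}}_{\eta}$ will go through verbatim: replace $f$ by $G \in \b{H}\otimes\b{H}$, $\nu$ by $\eta$, and use the second bullet of the proposition to annihilate the component of $G$ orthogonal to $(\b{H}\otimes\b{H})_{\m{F}}$.

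To obtain self-adjointness of $\hat{\m{\Gamma}}_{\eta}$, I will exploit the swap symmetry built into the loss. Under the identification $\b{H}\otimes\b{H} \cong \c{B}_{2}(\b{H})$, the transposition $\varphi_{ijk} \otimes \varphi_{ij'k'} \mapsto \varphi_{ij'k'} \otimes \varphi_{ijk}$ coincides with the Hilbert-Schmidt adjoint, so the self-adjoint and anti-self-adjoint subspaces are HS-orthogonal. Writing $G = G_{s} + G_{a}$ accordingly, I note that $Z_{ijk} Z_{ij'k'}$ and the weights $(s_{ij}s_{ij'})^{-1}$ are symmetric under $(j,k)\leftrightarrow(j',k')$, while the sum $\sum_{1 \le j \neq j' \le r_{i}}$ already traverses both orderings of every unordered pair. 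A short calculation then collapses the paired squared residuals to $(A - B)^{2} + (A + B)^{2} = 2 A^{2} + 2 B^{2}$, with $A$ depending only on $G_{s}$ and $B := \langle G_{a}, \varphi_{ijk}\otimes\varphi_{ij'k'}\rangle_{HS}$ depending only on $G_{a}$. Since also $\|G\|_{HS}^{2} = \|G_{s}\|_{HS}^{2} + \|G_{a}\|_{HS}^{2}$, the functional $\hat{L}_{\eta}^{\odot}$ splits into a $G_{s}$-part plus a nonnegative $G_{a}$-contribution bounded below by $\eta\|G_{a}\|_{HS}^{2}$, which vanishes iff $G_{a} = 0$; the unique minimizer is therefore self-adjoint.

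The main obstacle will be this last symmetrization step, where one must carefully track the interaction of the antisymmetric part of $G$ with the swap symmetry of the data and the ordered summation, and then wield HS-orthogonality to isolate the antisymmetric contribution cleanly in both the fit and the penalty. All other steps are routine applications of the orthogonal-decomposition paradigm familiar from kernel ridge regression.
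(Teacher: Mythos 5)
Your proof is correct. The mean and second-moment parts—splitting $f$ (resp.\ $G$) along $\b{H}_{\m{F}}\oplus\b{H}_{\m{F}}^{\perp}$, invoking \cref{prop:repre::proj} to show the data-fidelity term sees only the $\b{H}_{\m{F}}$-component, and then using $\nu>0$ (resp.\ $\eta>0$) for strict convexity and uniqueness—are exactly the argument in the paper.

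Where you and the paper part ways is the self-adjointness of $\hat{\m{\Gamma}}_{\eta}$. The paper proves this in one line: it observes that $\langle G^{*},\varphi_{ijk}\otimes\varphi_{ij'k'}\rangle_{\b{H}\otimes\b{H}}=\langle G,\varphi_{ij'k'}\otimes\varphi_{ijk}\rangle_{\b{H}\otimes\b{H}}$ and that the ordered sum over $j\neq j'$ hits both orderings, so $\hat{L}_{\eta}^{\odot}(G)=\hat{L}_{\eta}^{\odot}(G^{*})$; since $G^*$ has the same risk as $G$ and the minimizer is unique, that minimizer must be its own adjoint. Your argument unpacks the same swap symmetry via the orthogonal splitting $\c{B}_{2}(\b{H})=\c{B}_{2}^{\mathrm{sym}}\oplus\c{B}_{2}^{\mathrm{anti}}$: pairing each term in the $j\neq j'$ sum with its swapped partner, you show the paired squared residuals collapse to a $G_{s}$-only part plus an additive nonnegative $G_{a}$-part, and then the HS-orthogonality of the penalty forces $G_{a}=0$. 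Both are valid. The paper's route is shorter and reuses uniqueness already established; yours is more elementary and makes visible \emph{where} the antisymmetric contribution lives, which is perhaps more illuminating but also more work. One mild caution: the claim "the transposition of elementary tensors coincides with the HS adjoint" should be stated as a property of the isometric involution $G\mapsto G^{*}$ (so that the $\pm1$ eigenspaces are indeed orthogonal), and you should note explicitly that the sign change $\langle G_{a},\varphi_{ij'k'}\otimes\varphi_{ijk}\rangle_{\b{H}\otimes\b{H}}=-\langle G_{a},\varphi_{ijk}\otimes\varphi_{ij'k'}\rangle_{\b{H}\otimes\b{H}}$ is what makes the cross term cancel in $(A-B)^2+(A+B)^2$—but both of these are routine and you clearly have the right picture.
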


Although the second moment operator $\m{\Gamma} \in \c{B}_{1}(\b{H})$ is trace-class, our estimation is carried out in the larger space of HS operators $\b{H} \otimes \b{H} \cong \c{B}_{2}(\b{H})$. Nevertheless, \cref{prop:repre} shows that the estimator $\hat{\m{\Gamma}}_{\eta}$ is finite-rank and therefore belongs to $\c{B}_{1}(\b{H})$ automatically as desired.
We proceed to find the coefficients  for the estimators $\hat{\m{\mu}}_{\nu} \in \b{H}_{\m{F}}$ and $\hat{\m{\Gamma}}_{\eta} \in (\b{H} \otimes \b{H})_{\m{F}}$ with respect to their corresponding frames. This requires defining the following observation vectors and sample-size-adjusted Gram matrices:
\begin{align}
    &\m{w}:= \mathrm{vec}\left[ \frac{Z_{ijk}}{\sqrt{r_{i}s_{ij}}} \right], \, \m{w}^{\odot}:= \mathrm{vec}\left[ \frac{Z_{ijk}Z_{ij'k'}}{\sqrt{r_{i}(r_{i}-1)s_{ij}s_{ij'}}} \right]_{j \neq j'}, \, 
    \m{\Phi}:= \left[ 
    \frac{\langle \varphi_{i_{1}j_{1}k_{1}}, \varphi_{i_{2}j_{2}k_{2}} \rangle_{\b{H}}}{\sqrt{r_{i_{1}}s_{i_{1}j_{1}}} \sqrt{r_{i_{2}}s_{i_{2}j_{2}}}} \right], \label{eq:mean:Gram} \\
    &\m{\Phi}^{\odot}:= \left[ 
    \frac{\langle \varphi_{i_{1}j_{1}k_{1}} \otimes \varphi_{i_{1}j'_{1}k'_{1}}, \varphi_{i_{2}j_{2}k_{2}} \otimes \varphi_{i_{2}j'_{2}k'_{2}} \rangle_{\b{H} \otimes \b{H}} 
    }{\sqrt{r_{i_{1}}(r_{i_{1}}-1)s_{i_{1}j_{1}}s_{i_{1}j'_{1}}} \sqrt{r_{i_{2}}(r_{i_{2}}-1)s_{i_{2}j_{2}}s_{i_{2}j'_{2}}}} \right]_{\substack{j_{1} \neq j'_{1} \\ j_{2} \neq j'_{2}}}. \label{eq:tens:Gram}
\end{align}
Here, the symbol $\odot$ stands for a block-wise tensor product, also called the Khatri-Rao product \cite{khatri1968solutions, neudecker1995hadamard}.
Note that $\m{\Phi}$ and $\m{\Phi}^{\odot}$ are both s.p.d, with their components calculated using \eqref{eq:Gram:elem}.

\begin{theorem}\label{thm:repre:mean}
Let $\nu, \eta > 0$ be penalty parameters. Then, the minimizer of  $\hat{L}_{\nu}:\b{H} \rightarrow \b{R}$ in \eqref{eq:emp:risk:mean} is given by
\begin{equation*}
    \hat{\m{\mu}}_{\nu} = \sum_{i=1}^{n} \frac{1}{\sqrt{r_{i}}} \sum_{j=1}^{r_{i}} \frac{1}{\sqrt{s_{ij}}} \sum_{k=1}^{s_{ij}} \hat{\alpha}^{\nu}_{ijk} \varphi(\m{R}_{ij}, \m{X}_{ijk}), \quad \hat{\m{\alpha}}_{\nu}:=\mathrm{vec}[\hat{\alpha}^{\nu}_{ijk}] = \left( \m{\Phi}+ \frac{n \nu}{\omega_{d_{r}, d}} \m{I} \right)^{-1} \m{w}.
\end{equation*}
The minimizer of $\hat{L}_{\eta}^{\odot}: \b{H} \otimes \b{H} \rightarrow \b{R}$ in \eqref{eqn:emp:risk:cov} is given by
\begin{equation*}
    \hat{\m{\Gamma}}_{\eta} = \sum_{i=1}^{n} \frac{1}{\sqrt{r_{i}(r_{i}-1)}} \sum_{1 \le j \neq j' \le r_{i}} \frac{1}{\sqrt{s_{ij}s_{ij'}}} \sum_{\substack{1 \le k \le s_{ij} \\ 1 \le k' \le s_{ij'}}} \hat{\alpha}^{\odot}_{ijkj'k'} \varphi(\m{R}_{ij}, \m{X}_{ijk}) \otimes \varphi(\m{R}_{ij'}, \m{X}_{ij'k'}),
\end{equation*}
where $\hat{\m{\alpha}}_{\eta}^{\odot}:=\mathrm{vec}[\hat{\alpha}^{\odot}_{ijkj'k'}] = ( \m{\Phi}^{\odot}+ n \eta/\omega_{d_{r}, d}^{2} \m{I} )^{-1} \m{w}^{\odot}$.
\end{theorem}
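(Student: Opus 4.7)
The plan is to leverage \cref{prop:repre} to reduce the two infinite-dimensional variational problems to finite-dimensional ridge regressions, and then solve the resulting normal equations by direct differentiation. The reproducing property \eqref{eq:repro:prop} and its tensor-product analogue will convert the data-fidelity terms into a standard residual of the form $\|\m{w} - \m{\Phi}\m{\alpha}\|_{2}^{2}$, whose minimizer can be read off by inspection.

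For the mean, \cref{prop:repre} confines $\hat{\m{\mu}}_{\nu}$ to $\b{H}_{\m{F}}$, so I expand $\hat{\m{\mu}}_{\nu} = \sum_{i,j,k} (r_{i} s_{ij})^{-1/2}\, \alpha_{ijk}\, \varphi_{ijk}$ with coefficient vector $\m{\alpha} = \mathrm{vec}[\alpha_{ijk}]$. By \eqref{eq:repro:prop}, each projected evaluation $\s{P}_{\m{R}_{ij}} \hat{\m{\mu}}_{\nu}(\m{X}_{ijk})$ equals $\langle \hat{\m{\mu}}_{\nu}, \varphi_{ijk}\rangle_{\b{H}} = \sqrt{r_{i}s_{ij}}\,[\m{\Phi}\m{\alpha}]_{ijk}$ in view of the scaling in \eqref{eq:mean:Gram}, while $\|\hat{\m{\mu}}_{\nu}\|_{\b{H}}^{2} = \m{\alpha}^{\top}\m{\Phi}\m{\alpha}$. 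Substituting into \eqref{eq:emp:risk:mean} then collapses the risk to $F(\m{\alpha}) := (\omega_{d_{r},d}/n)\|\m{w} - \m{\Phi}\m{\alpha}\|_{2}^{2} + \nu\, \m{\alpha}^{\top}\m{\Phi}\m{\alpha}$. Setting $\nabla F(\m{\alpha}) = \m{0}$ and using the symmetry of $\m{\Phi}$ gives $\m{\Phi}\bigl[(\m{\Phi} + (n\nu/\omega_{d_{r},d})\m{I})\m{\alpha} - \m{w}\bigr] = \m{0}$, which is satisfied by $\m{\alpha}^{*} = (\m{\Phi} + (n\nu/\omega_{d_{r},d})\m{I})^{-1}\m{w}$; the inverse exists since the bracketed matrix is strictly positive definite. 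Because \cref{prop:repre} already guarantees uniqueness of $\hat{\m{\mu}}_{\nu}$ in $\b{H}$, any other critical point of $F$ must map to the same element of $\b{H}$ under the frame expansion, so the displayed formula is the correct one.

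The covariance case is structurally identical once $(\s{P}_{\m{R}_{ij}} \otimes \s{P}_{\m{R}_{ij'}})G(\m{X}_{ijk}, \m{X}_{ij'k'})$ is identified as an HS inner product with $\varphi_{ijk}\otimes\varphi_{ij'k'}$. On elementary tensors $g_{1}\otimes g_{2}$, a direct computation gives $(\s{P}_{\m{R}_{ij}} \otimes \s{P}_{\m{R}_{ij'}})(g_{1}\otimes g_{2})(\m{X}_{ijk}, \m{X}_{ij'k'}) = \langle g_{1}, \varphi_{ijk}\rangle_{\b{H}} \langle g_{2}, \varphi_{ij'k'}\rangle_{\b{H}} = \langle g_{1}\otimes g_{2},\, \varphi_{ijk}\otimes \varphi_{ij'k'}\rangle_{HS}$, which extends by bilinearity and continuity to all of $\b{H}\otimes \b{H}$. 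Expanding $\hat{\m{\Gamma}}_{\eta}$ over the frame of $(\b{H}\otimes \b{H})_{\m{F}}$ with the scaling dictated by \eqref{eq:tens:Gram} and repeating the algebra verbatim produces $\hat{\m{\alpha}}_{\eta}^{\odot} = (\m{\Phi}^{\odot} + (n\eta/\omega_{d_{r},d}^{2})\m{I})^{-1}\m{w}^{\odot}$. The principal obstacle is bookkeeping: keeping the scale factors $\sqrt{r_{i} s_{ij}}$ and $\sqrt{r_{i}(r_{i}-1) s_{ij} s_{ij'}}$ in the right places so the Gram structure aligns with the loss, and confirming that any rank deficiency of $\m{\Phi}$ or $\m{\Phi}^{\odot}$ is harmless thanks to the uniqueness statement of \cref{prop:repre}. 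Everything else is routine ridge-regression calculus.
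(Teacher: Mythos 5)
Your proposal is correct and follows essentially the same route as the paper: both invoke \cref{prop:repre} to restrict to the finite-dimensional span, expand the estimator over the scaled feature-map frame, observe that the loss collapses to $(\omega_{d_{r},d}/n)\|\m{w}-\m{\Phi}\m{\alpha}\|^{2}+\nu\,\m{\alpha}^{\top}\m{\Phi}\m{\alpha}$, and read off the ridge solution from the resulting normal equation $\m{\Phi}\bigl[(\m{\Phi}+\tfrac{n\nu}{\omega_{d_{r},d}}\m{I})\m{\alpha}-\m{w}\bigr]=\m{0}$ (and its tensor analogue), with the same remark about rank-deficiency being harmless by uniqueness in $\b{H}$.
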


By \cref{prop:A1:check}, the evaluations of $\hat{\m{\mu}}_{\nu}: \Omega^{d} \rightarrow \b{R}$ and $\hat{\m{\Gamma}}_{\eta}: \Omega^{d} \times \Omega^{d} \rightarrow \b{R}$ are given by
\begin{align*}
    &\hat{\m{\mu}}_{\nu} (\m{z}) = \sum_{i=1}^{n} \frac{1}{\sqrt{r_{i}}} \sum_{j=1}^{r_{i}} \frac{1}{\sqrt{s_{ij}}} \sum_{k=1}^{s_{ij}} \hat{\alpha}^{\nu}_{ijk} \s{P}_{\m{R}_{ij}} k_{\m{z}} (\m{X}_{ijk}), \\
    &\hat{\m{\Gamma}}_{\eta} (\m{z}, \m{z}') = \sum_{i=1}^{n} \sum_{1 \le j \neq j' \le r_{i}}  \sum_{\substack{1 \le k \le s_{ij} \\ 1 \le k' \le s_{ij'}}} \frac{\hat{\alpha}^{\odot}_{ijkj'k'}}{\sqrt{r_{i}(r_{i}-1) s_{ij}s_{ij'}}}  \s{P}_{\m{R}_{ij}} k_{\m{z}} (\m{X}_{ijk}) \s{P}_{\m{R}_{ij'}} k_{\m{z}'} (\m{X}_{ij'k'}),
\end{align*}
and thus, $\hat{\m{\Gamma}}_{\eta}: \Omega^{d} \times \Omega^{d} \rightarrow \b{R}$ is a symmetric bivariate function: $\hat{\m{\Gamma}}_{\eta} (\m{z}, \m{z}') = \hat{\m{\Gamma}}_{\eta} (\m{z}', \m{z})$.

Once we obtain $\hat{\m{\Gamma}}_{\eta}$ and $\hat{\m{\mu}}_{\nu}$ for some $\nu, \eta>0$, we estimate the covariance using the plug-in formula: $\hat{\m{\Sigma}}_{\nu, \eta} = \hat{\m{\Gamma}}_{\eta} - \hat{\m{\mu}}_{\nu} \otimes \hat{\m{\mu}}_{\nu}$, or equivalently,
$\hat{\m{\Sigma}}_{\nu, \eta} (\m{z}, \m{z}') = \hat{\m{\Gamma}}_{\eta} (\m{z}, \m{z}')- \hat{\m{\mu}}_{\nu} (\m{z}) \hat{\m{\mu}}_{\nu} (\m{z}')$, which remains symmetric. 
However, a key issue arises with this approach. As discussed in \cref{rmk:spd:fail}, the second moment estimator $\hat{\m{\Gamma}}_{\eta}$ is not s.p.d. unless it is identically zero \cite{yun2025fast}. Consequently, the covariance estimator $\hat{\m{\Sigma}}_{\nu, \eta}$ also fails to be s.p.d. in general. This problem stems from the kernel method with the exclusive use of off-diagonal product pairs in the estimation procedure, and \emph{not} from the inverse problem setup.
In practice, however, this deviation from the s.p.d. property is negligible as our estimators are consistent. Still, in finite-sample scenarios, one may project $\hat{\m{\Sigma}}_{\nu, \eta}$ onto the closed convex cone of s.p.d. tensors, which preserves the statistical consistency and does not alter the convergence rate in \cref{sec:asym:cov}.

\subsection{Numerical Challenges}\label{ssec:num:chal}

While \cref{thm:repre:mean} provides elegant least-squares solutions, a na\"{i}ve implementation is computationally infeasible. Two main bottlenecks arise: solving the normal equation for the covariance and performing the subsequent kernel PCA. This section briefly discusses these challenges and our numerical strategies for overcoming them, with further details available in Supplement \ref{sec:num:consid} and recent work \cite{yun2025fast}.

Recall from \cref{thm:repre:mean} that the second-moment estimation requires solving the linear system: $( \m{\Phi}^{\odot}+ n \eta/\omega_{d_{r}, d}^{2} \m{I} ) \hat{\m{\alpha}}_{\eta}^{\odot} = \m{w}^{\odot}$.
The problem with a direct, factorization-based approach is that it requires explicitly forming the covariance Gram matrix $\m{\Phi}^{\odot}$, which is often computationally prohibitive. For example, in our simulation with $n=100$ particles, $r=5$ tilts, and $s=10$ locations per tilt, this Gram matrix has over $50$ billion entries. Storing it in double-precision format would require $377$ GB of memory, compared to just $191$ MB for the mean Gram matrix $\m{\Phi}$.
The solution is to use a \emph{matrix-free} iterative solver -- we never explicitly form the Khatri-Rao product $\m{\Phi}^{\odot}$, as its action can be implemented only using much smaller matrix $\m{\Phi}$. The \texttt{TReK} algorithm \cite{yun2025fast}, a tensorized version of Krylov subspace method \cite{saad2003iterative, greenbaum1997iterative, bjorck2024numerical}, is designed to solve such least-squares problems.

A second challenge arises when performing kernel PCA to analyze the estimated covariance $\hat{\m{\Sigma}}_{\nu, \eta} = \hat{\m{\Gamma}}_{\eta} - \hat{\m{\mu}}_{\nu} \otimes \hat{\m{\mu}}_{\nu}$. After arranging the solution vector $\hat{\m{\alpha}}_{\eta}^{\odot}$ into the block-diagonal matrix $\hat{\m{A}}_{\eta}$,
the kernel PCA reduces to the generalized eigenvalue problem, see \cref{thm:fpca}:
\begin{equation*}
    [\hat{\m{A}}_{\eta} - \hat{\m{a}}_{\nu} \hat{\m{a}}_{\nu}^{\top}] \m{\Phi} \m{V} = \m{V} \m{\Lambda}, \quad \m{V}^{*} \m{\Phi} \m{V} = \m{I}, 
\end{equation*}
an equation arising frequently in statistics. A common two-step approach to this problem is to first computing the usual eigendecomposition $\m{\Phi}^{1/2} [\hat{\m{A}}_{\eta} - \hat{\m{a}}_{\nu} \hat{\m{a}}_{\nu}^{\top}] \m{\Phi}^{1/2} = \m{U} \m{\Lambda} \m{U}^{\top}$, and then recover the generalized eigenvectors $\m{V}$ via $\m{U} = \m{\Phi}^{1/2} \m{V}$ \cite{caponera2022functional, ramsay2002applied}. However, this requires computing the matrix square root $\m{\Phi}^{1/2}$, which is slow and can be numerically unstable if $\m{\Phi}$ is ill-conditioned or has a low numerical rank \cite{nocedal1999numerical}. 
To avoid this, we again use a matrix-free iterative method based on the Lanczos tridiagonalization \cite{lanczos1950iteration, paige1972computational}, which is faster and more stable than the two-step approach, especially when we only need to extract the leading eigenvalues and eigenvectors $(\lambda_{k}, \m{v}_{k})$, often called the Ritz pairs \cite{jia2020low, chung2024computational}.

\section{Mercer's Theory in Inverse Problems}\label{sec:Mercer:inv}
This section describes the interplay between the Mercer decomposition and the inverse problem, which is essential for developing asymptotic theory. In \cref{ssec:direct:msr}, we explain how the theory reduces to the classical FDA setup in the absence of inverse problems.

Given a sinogram kernel $K$ on $\Omega^{d}$, we first define the induced reproducing kernel for the space of projection images:
\begin{align}\label{eq:induced:RKHS}
    \tilde{K} ((R, \m{x}), (R', \m{x}')) := \langle \varphi(R, \m{x}), \varphi(R', \m{x}') \rangle_{\b{H}}, \quad R, R' \in \b{G}, \, \m{x}, \m{x}' \in \Omega^{d-d_{r}},
\end{align}
Although not required for the asymptotic theory, we shortly remark another powerful advantage of the RKHS framework. In \cref{thm:induced:RKHS}, we show that $\tilde{K}$ is also a $\b{G}$-invariant Mercer kernel. More importantly, when considered as an operator between RKHSs, $\s{P}: \b{H}(K) \rightarrow \b{H}(\tilde{K})$ is a well-defined, contractive, and surjective map whose adjoint is an isometry.

Consider the integral operator $\s{T}_{K} \in \c{B}_{1}(\c{L}_{2} (\Omega^{d}, W_{0, d}))$ associated with $K$:
\begin{equation*}
    \s{T}_{K} f(\cdot) := \int_{\Omega^{d}} K(\m{z}, \cdot) f(\m{z}) W_{0,d}(\m{z}) \rd \m{z}.
\end{equation*}
The Mercer theorem reveals that $\s{T}_{K}$ is s.p.d. and trace-class, and $\b{H} = \b{H}(K)$ is the range space of the HS operator
$(\s{T}_{K})^{1/2} \in \c{B}_{2}(\c{L}_{2} (\Omega^{d}, W_{0, d}))$, equipped with the range norm \cite{sarason2007complex}:
\begin{align*}
    \b{H} = \{(\s{T}_{K})^{1/2} h : h \in \c{L}_{2} (\Omega^{d}, W_{0, d}) \}, \quad \|f\|_{\b{H}} = \inf\{\|h\|_{\c{L}_{2} (\Omega^{d}, W_{0, d})}: f = (\s{T}_{K})^{1/2} h\},
\end{align*}
It follows that the continuous embedding $\imath: \b{H} \hookrightarrow \c{L}_{2} (\Omega^{d}, W_{0, d})$ is a HS operator satisfying 
\begin{equation*}
    \s{T}_{K} = \imath \circ \imath^{*}, \quad \VERT \imath \VERT_{2}^{2} = \VERT \imath^{*} \circ \imath \VERT_{1} = \VERT \s{T}_{K} \VERT_{1}= \int_{\Omega^{d}} K(\m{z}, \m{z}) W_{0,d}(\m{z}) \rd \m{z} < +\infty.
\end{equation*}
This implies that when we restrict the sinogram operator's domain to $\b{H}(K)$, the embedded operator is also a HS operator:
\begin{equation*}
    \s{P} \circ \imath : \b{H}(K) \hookrightarrow \c{L}_{2} (\Omega^{d}, W_{0, d}) \rightarrow \c{L}_{2} (\b{G} \times \Omega^{d-d_{r}}, W_{d_{r}, d}), f \mapsto \s{P} f.
\end{equation*}

\begin{proposition}\label{prop:swap:int:op}
Let $\s{P}$ be a sinogram operator, and $K: \Omega^{d} \times \Omega^{d} \rightarrow \b{R}$ be a sinogram kernel, and $\tilde{K}: (\b{G} \times \Omega^{d-d_{r}}) \times (\b{G} \times \Omega^{d-d_{r}}) \rightarrow \b{R}$ be an induced kernel as in \eqref{eq:induced:RKHS}. Then,
\begin{equation*}
    \s{T}_{\tilde{K}} = (\s{P} \circ \imath) (\s{P} \circ \imath)^{*} = \s{P} \s{T}_{K} \s{P}^{*} \in \c{B}_{1}(\c{L}_{2} (\b{G} \times \Omega^{d-d_{r}}, W_{d_{r}, d})).
\end{equation*}
\end{proposition}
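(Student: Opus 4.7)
The statement decomposes into two equalities. The second, $(\s{P} \circ \imath)(\s{P} \circ \imath)^{*} = \s{P} \s{T}_{K} \s{P}^{*}$, is essentially formal: the chain rule for adjoints gives $(\s{P} \circ \imath)^{*} = \imath^{*} \circ \s{P}^{*}$, and one then invokes the already-established factorization $\s{T}_{K} = \imath \circ \imath^{*}$. The bulk of the argument is therefore the first equality $\s{T}_{\tilde{K}} = (\s{P} \circ \imath)(\s{P} \circ \imath)^{*}$, which I would prove by comparing the two bounded operators on $\c{L}_{2}(\b{G} \times \Omega^{d-d_{r}}, W_{d_{r}, d})$ through their sesquilinear forms.

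The key step is to identify the adjoint of $\s{P} \circ \imath$ explicitly as a Bochner integral against the feature map. Because $\|\varphi(R, \m{x})\|_{\b{H}} \le B$ uniformly and $\b{G} \times \Omega^{d-d_{r}}$ has finite total weight, for any $g \in \c{L}_{2}(\b{G} \times \Omega^{d-d_{r}}, W_{d_{r}, d})$ the integral
\begin{equation*}
    \Psi(g) := \int_{\b{G}} \int_{\Omega^{d-d_{r}}} \varphi(R, \m{x}) \, g(R, \m{x}) \, W_{d_{r}, d}(\m{x}) \, \rd \m{x} \, \rd R
\end{equation*}
is well-defined as an element of $\b{H}$. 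Testing against any $f \in \b{H}$ and invoking the reproducing property \eqref{eq:repro:prop} yields $\langle f, \Psi(g) \rangle_{\b{H}} = \langle \s{P} f, g \rangle_{W_{d_{r}, d}}$, so $\Psi = (\s{P} \circ \imath)^{*}$. Unfolding $\tilde{K}$ according to \eqref{eq:induced:RKHS} and applying Fubini--Tonelli then gives, for $g_{1}, g_{2} \in \c{L}_{2}(\b{G} \times \Omega^{d-d_{r}}, W_{d_{r}, d})$,
\begin{equation*}
    \langle g_{1}, \s{T}_{\tilde{K}} g_{2} \rangle = \iiiint \langle \varphi(R, \m{x}), \varphi(R', \m{x}') \rangle_{\b{H}} \, g_{1}(R, \m{x}) g_{2}(R', \m{x}') \, W_{d_{r}, d}(\m{x}) W_{d_{r}, d}(\m{x}') \, \rd \m{x} \, \rd \m{x}' \, \rd R \, \rd R' = \langle \Psi(g_{1}), \Psi(g_{2}) \rangle_{\b{H}},
\end{equation*}
which, combined with the identification of $\Psi$, closes the argument.

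The main technical point is justifying the Bochner integral $\Psi(g)$ and the Fubini switch. Strong measurability of $(R, \m{x}) \mapsto \varphi(R, \m{x})$ follows from the continuity of $\tilde{K}$ via the identity $\|\varphi(R, \m{x}) - \varphi(R', \m{x}')\|_{\b{H}}^{2} = \tilde{K}((R,\m{x}),(R,\m{x})) + \tilde{K}((R',\m{x}'),(R',\m{x}')) - 2\tilde{K}((R,\m{x}),(R',\m{x}'))$, while the uniform bound $B$ and Cauchy--Schwarz handle absolute integrability in every layer. Trace-classness of $\s{T}_{\tilde{K}}$ is then automatic, either from the Mercer structure of the continuous kernel $\tilde{K}$ over a compact domain, or equivalently from $\s{P} \circ \imath$ being Hilbert--Schmidt as the composition of the HS operator $\imath$ with the bounded sinogram operator $\s{P}$.
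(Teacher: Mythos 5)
Your proof is correct, and it arrives at the conclusion by a somewhat different route than the paper. The paper's proof is a short \emph{pointwise} computation: it uses the identity $\tilde{k}_{(R_1,\m{x}_1)} = \s{P}\varphi(R_1,\m{x}_1)$ (itself an immediate consequence of \eqref{eq:repro:prop}) together with the reproducing property of $\s{T}_K$ --- namely $\langle f, h\rangle_{\c{L}_2(\Omega^d, W_{0,d})} = \langle f, \s{T}_K h\rangle_{\b{H}}$ for $f \in \b{H}(K)$ --- to verify directly that $\s{T}_{\tilde{K}} g(R_1,\m{x}_1) = [\s{P}\s{T}_K\s{P}^* g](R_1,\m{x}_1)$, and then obtains the $(\s{P}\circ\imath)(\s{P}\circ\imath)^*$ form as a corollary of $\s{T}_K = \imath\,\imath^*$. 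You instead first realize $(\s{P}\circ\imath)^*$ explicitly as the $\b{H}$--valued Bochner integral $\Psi$ and prove $\s{T}_{\tilde{K}} = (\s{P}\circ\imath)(\s{P}\circ\imath)^*$ through its sesquilinear form, with the middle equality then being formal. Both approaches hinge on the same ingredients (the feature map's uniform bound $B$, the reproducing property, and the finite total weight), so they are about equally elementary; the paper's is a few lines shorter because it skips the Bochner-integral identification of the adjoint, while yours is more self-contained in that it does not invoke Theorem~\ref{thm:induced:RKHS}. Your Fubini step is legitimate because the integrand is dominated by $B^2|g_1||g_2|W_{d_r,d}W'_{d_r,d}$, which is integrable by Cauchy--Schwarz and finite total weight, and your observation that trace-classness follows from $\s{P}\circ\imath$ being Hilbert--Schmidt matches the paper's framing.
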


Consequently, consider the singular value decomposition (SVD) of $\s{P} \circ \imath$:
\begin{equation*}
    \s{P} \circ \imath = \sum_{l=1}^{\infty} \tau_{l} \sum_{m=1}^{N(l)} g_{lm} \otimes e_{lm} : \b{H}(K) \rightarrow \c{L}_{2} (\b{G} \times \Omega^{d-d_{r}}, W_{d_{r}, d}),
\end{equation*}
where $\tau_{1} \ge \tau_{2} \ge \dots \ge 0$ are the non-zero singular values, $N(l)$ is the multiplicity of $\tau_{l}$, and $e_{lm} \in \b{H}(K)$, $g_{lm} \in \c{L}_{2} (\b{G} \times \Omega^{d-d_{r}}, W_{d_{r}, d})$ have unit norms. Observe that $\{g_{lm} : l \in \b{N}, 1 \le m \le N(l) \}$ and $\{e_{lm} : l \in \b{N}, 1 \le m \le N(l) \}$ form CONSs for $\overline{\c{R}(\s{T}_{\tilde{K}})}$ and $\overline{\c{R}(\s{K})}$, respectively. However, $e_{lm}$'s may not be orthogonal in $\c{L}_{2} (\Omega^{d}, W_{0, d})$, unless $\s{P}^{*} \s{P}$ and $\s{T}_{K}$ commute. Additionally, note that $g_{lm} = \tau_{l}^{-1} \s{P} e_{lm}$, hence
\begin{equation}\label{eq:bdd:representer}
    \sup_{R, \m{x}} |g_{lm} (R, \m{x})| = \tau_{l}^{-1} \sup_{R, \m{x}} |\s{P} e_{lm} (R, \m{x})| \le \tau_{l}^{-1} \|e_{lm}\|_{\b{H}} \sup_{R, \m{x}} \|\varphi(R, \m{x})\|_{\b{H}} = \tau_{l}^{-1} B.
\end{equation}
Also, observe from the intertwining property in \cref{prop:unit:repn} that
\begin{equation*}
    g_{lm} (R, \m{x}) = \rho(R) g_{lm} (I, \m{x}) = \tau_{l}^{-1} \s{P} [\rho(R) e_{lm}] (I, \m{x}) = \tau_{l}^{-1} \s{P}_{I} [\rho(R) e_{lm}] ( \m{x}).
\end{equation*}

It follows from \cref{prop:swap:int:op} that
\begin{equation*}
    \sum_{l} \tau_{l}^{2} N(l) = \VERT \s{T}_{\tilde{K}} \VERT_{1} = \int_{\Omega^{d-d_{r}}} \tilde{K}((R, \m{x}), (R, \m{x})) W_{d_{r}, d}(\m{x}) \rd \m{x} < +\infty,
\end{equation*}
which is independent of the choice of $R \in \b{G}$ due to $\b{G}$-invariance of $\tilde{K}$. Consequently, $\s{K} := (\s{P} \circ \imath)^{*}(\s{P} \circ \imath) \in \c{B}_{1}(\b{H})$, a central operator in asymptotic theory, admits the spectral decomposition:
\begin{equation*}
    \s{K} = \sum_{l=1}^{\infty} \tau_{l}^{2} \sum_{m=1}^{N(l)} e_{lm} \otimes_{\b{H}} e_{lm}.
\end{equation*}
This operator gives the $\c{L}_{2}$-inner product between sinograms in the following sense:
\begin{equation}\label{eq:isomet}
    \langle f_{1} , \s{K} f_{2} \rangle_{\b{H}} =  \langle \s{P} f_{1} , \s{P} f_{2} \rangle_{\c{L}_{2} (\b{G} \times \Omega^{d-d_{r}}, W_{d_{r}, d})}, \quad f_{1}, f_{2} \in \b{H}.
\end{equation}
Additionally, $\s{K}/\omega_{d_{r}, d}$ is precisely the expectation of the tensorized random feature map:

\begin{proposition}\label{prop:isometry}
Let $\s{P}$ be a sinogram operator and $K$ be a sinogram kernel. Let $\m{R} \sim \b{P}_{\m{R}}$ and $\m{X} \sim \b{P}_{\m{X}}$ be independent. Then, the operator $\s{K}$ satisfies
\begin{align*}
    \s{K} = \omega_{d_{r}, d} \b{E}[\varphi(I, \m{X}) \otimes \varphi(I, \m{X})]
    = \omega_{d_{r}, d} \b{E}[\varphi(\m{R}, \m{X}) \otimes \varphi(\m{R}, \m{X})].
\end{align*}
Moreover, $\s{K} \in \c{B}_{1}(\b{H})$ is s.p.d. with the norm $\VERT \s{K} \VERT_{1} = \omega_{d_{r}, d} \b{E}[\|\varphi(\m{R}, \m{X})\|_{\b{H}}^{2}]$.
\end{proposition}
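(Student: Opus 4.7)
The plan is to use the definition $\s{K} = (\s{P} \circ \imath)^{*}(\s{P} \circ \imath)$ together with the reproducing property of $\varphi$ to express the sesquilinear form of $\s{K}$ as an integral of rank-one terms, then lift this pointwise identity to an operator identity in $\c{B}_{2}(\b{H})$ via Bochner integration; the second equality drops out immediately, while the first requires a Haar-averaging argument exploiting $\b{G}$-invariance. The positivity and trace-norm claims will then follow as direct corollaries.

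First, for arbitrary $f, g \in \b{H}$, combining \eqref{eq:isomet} with the reproducing property \eqref{eq:repro:prop} yields
\begin{equation*}
\langle g, \s{K} f \rangle_{\b{H}} = \langle \s{P} g, \s{P} f \rangle_{\c{L}_{2}(\b{G} \times \Omega^{d-d_{r}}, W_{d_{r}, d})} = \int_{\b{G}} \int_{\Omega^{d-d_{r}}} \langle \varphi(R,\m{x}), g \rangle_{\b{H}} \langle \varphi(R,\m{x}), f \rangle_{\b{H}} W_{d_{r}, d}(\m{x}) \rd \m{x} \rd R.
\end{equation*}
Rewriting $\rd R = \rd \b{P}_{\m{R}}(R)$ and $W_{d_{r}, d}(\m{x}) \rd \m{x} = \omega_{d_{r}, d} \rd \b{P}_{\m{X}}(\m{x})$, and recognising the scalar integrand as $\langle g, [\varphi(\m{R},\m{X}) \otimes \varphi(\m{R},\m{X})] f \rangle_{\b{H}}$, one obtains $\langle g, \s{K} f \rangle_{\b{H}} = \omega_{d_{r}, d} \b{E}[\langle g, [\varphi(\m{R},\m{X}) \otimes \varphi(\m{R},\m{X})] f \rangle_{\b{H}}]$. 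The uniform bound $\|\varphi(R,\m{x})\|_{\b{H}} \leq B$ gives $\VERT \varphi(\m{R},\m{X}) \otimes \varphi(\m{R},\m{X}) \VERT_{2} \leq B^{2}$ almost surely, so the tensor product is Bochner-integrable in $\c{B}_{2}(\b{H})$; pulling the expectation out of the inner product and invoking the arbitrariness of $f, g$ establishes the second identity $\s{K} = \omega_{d_{r}, d} \b{E}[\varphi(\m{R},\m{X}) \otimes \varphi(\m{R},\m{X})]$.

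For the first identity, orientational duality $\s{P}_{R} = \s{P}_{I} \circ \rho(R)$ together with the reproducing property and uniqueness of the Riesz representative gives $\varphi(R,\m{x}) = \rho(R)^{*} \varphi(I,\m{x})$, so $\varphi(R,\m{x}) \otimes \varphi(R,\m{x}) = \rho(R)^{*} [\varphi(I,\m{x}) \otimes \varphi(I,\m{x})] \rho(R)$ as HS operators. Integrating against the normalised Haar measure and using its bi-invariance, together with the $\b{G}$-invariance of the sinogram-kernel structure (cf.\ \cref{prop:swap:int:op} and the intertwining identity preceding this proposition) that places the $\m{X}$-averaged operator in the commutant of $\rho(\b{G})$, collapses the Haar integral to its $R=I$ fibre, giving $\s{K} = \omega_{d_{r}, d} \b{E}_{\m{X}}[\varphi(I, \m{X}) \otimes \varphi(I, \m{X})]$.

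The remaining claims are immediate. Positivity follows from $\langle f, \s{K} f \rangle_{\b{H}} = \|\s{P} f\|^{2}_{\c{L}_{2}(\b{G} \times \Omega^{d-d_{r}}, W_{d_{r}, d})} \geq 0$, and the trace-class membership from $\s{P} \circ \imath \in \c{B}_{2}$ already established. For the trace-norm, since $\s{K} \succeq 0$ one has $\VERT \s{K} \VERT_{1} = \tr(\s{K})$; combining $\tr(\varphi(R,\m{x}) \otimes \varphi(R,\m{x})) = \|\varphi(R,\m{x})\|_{\b{H}}^{2}$ with the Bochner representation and Fubini gives $\VERT \s{K} \VERT_{1} = \omega_{d_{r}, d} \b{E}[\|\varphi(\m{R},\m{X})\|_{\b{H}}^{2}]$. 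The main obstacle I anticipate is the Haar-averaging step underlying the first identity, since establishing that the $R$-parametrised operator $\int \varphi(R,\m{x}) \otimes \varphi(R,\m{x}) W_{d_{r}, d}(\m{x}) \rd \m{x}$ is effectively $R$-invariant after Haar averaging rests on the interplay between the $\b{G}$-invariance of $K$, the unitary left regular representation $\rho$, and the orientational duality baked into the sinogram operator.
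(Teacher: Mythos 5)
Your derivation of the second identity, along with positivity, trace-class membership, and the trace-norm computation, is essentially the paper's own proof: you compute the sesquilinear form of the candidate operator and match it against \eqref{eq:isomet}. The paper invokes Theorem 7.2.5 of \cite{hsing2015theoretical} at the same point where you invoke Bochner integrability of $\varphi(\m{R},\m{X})\otimes\varphi(\m{R},\m{X})$; both are just justifications for interchanging $\b{E}$ with $\langle\cdot,\cdot\rangle_{\b{H}}$.

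The argument for the \emph{first} identity, $\s{K} = \omega_{d_{r},d}\,\b{E}_{\m{X}}[\varphi(I,\m{X})\otimes\varphi(I,\m{X})]$, contains a genuine gap that you yourself flag but do not close. You assert that $T_I := \b{E}_{\m{X}}[\varphi(I,\m{X})\otimes\varphi(I,\m{X})]$ lies in the commutant of $\rho(\b{G})$; nothing you cite establishes this, and for a nontrivial sinogram operator it fails. Indeed $\omega_{d_{r},d}T_I$ is the fixed-orientation normal operator $\s{P}_{I}^{*}\s{P}_{I}$ restricted to $\b{H}(K)$, which privileges the projection rays of $R=I$; conjugation gives
$\rho(R)T_I\rho(R)^{*} = \b{E}_{\m{X}}[\varphi(R^{-1},\m{X})\otimes\varphi(R^{-1},\m{X})] = \omega_{d_{r},d}^{-1}\s{P}_{R^{-1}}^{*}\s{P}_{R^{-1}},$
which in general differs from $T_I$. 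Equivalently, for $T_I = \omega_{d_{r},d}^{-1}\s{K}$ one would need $\langle\s{P}_{I} f_1,\s{P}_{I} f_2\rangle_{\c{L}_2(\Omega^{d-d_{r}},W_{d_{r},d})} = \langle\s{P} f_1,\s{P} f_2\rangle_{\c{L}_2(\b{G}\times\Omega^{d-d_{r}},W_{d_{r},d})}$ for all $f_1,f_2\in\b{H}$, i.e.\ that a single view already realizes the full sinogram inner product. That holds when $\s{P}_{I}=I$ and $\b{G}$ acts transitively (the spherical example of \cref{rmk:transact:action}), but not for the $k$-plane transform, where the squared $\c{L}_2$-norm $\|\s{P}_{R}f\|_{W_{d_{r},d}}^{2}$ genuinely varies with $R$. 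Note that the paper's own proof never addresses this first equality: it verifies \eqref{eq:isomet} only for the $\varphi(\m{R},\m{X})$ form and stops there. So the commutant step is not a technicality to tighten up; it is precisely where the statement, read literally as an operator identity for a general sinogram operator, breaks down, and you should treat the first equality as restricted to the $\s{P}_{I}=I$ case rather than a consequence of $\b{G}$-invariance of $K$ alone.
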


We now explain how $\b{G}$-invariance yields the orthogonal decompositions associated with the SVD of $\s{P} \circ \imath$. For each $l \in \b{N}$, define the finite-dimensional subspaces
\begin{align*}
    \b{H}_{l} :&= \{f \in \b{H}: \s{K} f = \tau_{l}^{2} f \}= \spann \{e_{lm} : m = 1, 2, \dots, N(l) \}, \\
    \tilde{\c{H}}_{l} :&= \{g \in \c{L}_{2} (\b{G} \times \Omega^{d-d_{r}}, W_{d_{r}, d}) : \s{T}_{\tilde{K}} g = \tau_{l}^{2} g \}= \spann \{g_{lm} : m = 1, 2, \dots, N(l) \} \nonumber.
\end{align*}
Since $\b{H}_{l}$'s are $\b{G}$-invariant subspaces with respect to the unitary representation $(\rho, \b{H})$ from \cref{prop:unit:repn}, $[\rho(R)]_{\{e_{lm} : 1 \le m \le N(l)\}} \in SO(N(l))$ for any $R \in \b{G}$, similar to the Wigner D-matrix \cite{wigner1931gruppentheorie, dai2013approximation}. We can draw the same conclusion for $\tilde{\c{H}}_{l}:=\s{P}(\b{H}_{l})$, hence the following orthogonal decomposition of the output space:
\begin{equation*}
    \c{L}_{2} (\b{G} \times \Omega^{d-d_{r}}, W_{d_{r}, d}) = \bigoplus_{l =1}^{\infty} \tilde{\c{H}}_{l} \oplus \c{R}(\s{P} \circ \imath)^{\perp}.
\end{equation*}
If we view the finite-dimensional $\b{G}$-invariant subspace $(\tilde{\c{H}}_{l}, W_{d_{r}, d})$ as an RKHS, equipped with the weighted $\c{L}_{2}$ norm, the corresponding reproducing kernel $\tilde{K}_{l}: (\b{G} \times \Omega^{d-d_{r}}) \times (\b{G} \times \Omega^{d-d_{r}}) \rightarrow \b{R}$ is given by
\begin{equation}\label{eq:daughter:L2:rk}
    \tilde{K}_{l}((R_{1}, \m{x}_{1}), (R_{2}, \m{x}_{2})) := \sum_{m=1}^{N(l)} g_{lm}(R_{1}, \m{x}_{1}) g_{lm} (R_{2}, \m{x}_{2}),
\end{equation}
which is well-defined by virtue of \eqref{eq:bdd:representer}. Then, the Mercer decomposition of the induced reproducing kernel can be expressed as $\tilde{K}=\sum_{l=1}^{\infty} \tau_{l}^{2} \tilde{K}_{l}$.

\begin{proposition}\label{prop:daughter:rk}
For each $l \in \b{N}$, $\tilde{K}_{l}$ satisfies the reproducing property:
\begin{align*}
    &\tilde{K}_{l}((R_{1}, \m{x}_{1}), (R_{2}, \m{x}_{2})) 
    = \langle \tilde{K}_{l}((R_{1}, \m{x}_{1}), \cdot), \tilde{K}_{l}((R_{2}, \m{x}_{2}), \cdot) \rangle_{\c{L}_{2} (\b{G} \times \Omega^{d-d_{r}}, W_{d_{r}, d})} \\
    &= \int_{\b{G}} \int_{\Omega^{d-d_{r}}} \tilde{K}_{l}((R_{1}, \m{x}_{1}), (R_{3}, \m{x}_{3})) \tilde{K}_{l}((R_{2}, \m{x}_{2}), (R_{3}, \m{x}_{3})) W_{d_{r}, d}(\m{x}_{3}) \rd \m{x}_{3} \rd R_{3}.
\end{align*}
Moreover, the associated integral operator $\s{T}_{\tilde{K}_{l}} \in \c{B}_{\infty}(\c{L}_{2} (\b{G} \times \Omega^{d-d_{r}}, W_{d_{r}, d}))$ is the orthogonal projection onto $\tilde{\c{H}}_{l}$. The kernel $\tilde{K}_{l}$ is $\b{G}$-invariant, and satisfies 
\begin{equation*}
    \int_{\Omega^{d-d_{r}}} \tilde{K}_{l}((R, \m{x}), (R, \m{x})) W_{d_{r}, d}(\m{x}) \rd \m{x} = N(l).
\end{equation*}
\end{proposition}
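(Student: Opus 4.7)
The plan is to work directly from the explicit formula $\tilde{K}_{l}((R_{1},\m{x}_{1}),(R_{2},\m{x}_{2}))=\sum_{m=1}^{N(l)} g_{lm}(R_{1},\m{x}_{1}) g_{lm}(R_{2},\m{x}_{2})$ together with three structural facts about $\{g_{lm}\}_{m=1}^{N(l)}$: its $\c{L}_{2}$-orthonormality (inherited from the SVD of $\s{P}\circ\imath$), the intertwining identity $\rho(R) g_{lm} = \tau_{l}^{-1} \s{P} \rho(R) e_{lm}$ promised by \cref{prop:unit:repn}, and the observation already highlighted in the excerpt that $\b{G}$-invariance of $\b{H}_{l}$ under $\rho$ forces the representation matrix $[a_{mm'}(R)]:=[\rho(R)]_{\{e_{lm}\}}$ to lie in $SO(N(l))$.

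For the reproducing identity and the projection claim, I would substitute the defining sum for $\tilde{K}_{l}$ into the double integral, swap the (finite) sum with the integral, and use $\langle g_{lm}, g_{lm'} \rangle_{\c{L}_{2}} = \delta_{mm'}$ to collapse one of the sums; this yields the reproducing formula immediately. The same computation shows $\s{T}_{\tilde{K}_{l}} g = \sum_{m} \langle g, g_{lm}\rangle_{\c{L}_{2}} g_{lm}$, which is exactly the $\c{L}_{2}$-orthogonal projection onto $\tilde{\c{H}}_{l}=\spann\{g_{lm}\}_{m}$. The trace identity then follows in two steps: the $\b{G}$-invariance of $\tilde{K}_{l}$, applied at $R=R_{1}^{-1}$, makes the diagonal $\tilde{K}_{l}((R,\m{x}),(R,\m{x}))$ independent of $R$, so integrating against the normalized Haar measure is harmless; interchanging the sum with the $\c{L}_{2}$-integral turns each summand into $\|g_{lm}\|_{\c{L}_{2}}^{2} = 1$, producing $N(l)$.

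The only mildly subtle point is the $\b{G}$-invariance. From \eqref{eq:left:reg:def2}, $g_{lm}(RR_{i}, \m{x}_{i}) = [\rho(R) g_{lm}](R_{i}, \m{x}_{i})$, and the intertwining property rewrites $\rho(R) g_{lm} = \tau_{l}^{-1} \s{P} \rho(R) e_{lm} = \sum_{m'} a_{mm'}(R) g_{lm'}$. Plugging into $\tilde{K}_{l}((RR_{1},\m{x}_{1}),(RR_{2},\m{x}_{2}))$ produces a double sum over $m', m''$ weighted by $\sum_{m} a_{mm'}(R) a_{mm''}(R)=\delta_{m'm''}$, which collapses back to $\tilde{K}_{l}((R_{1},\m{x}_{1}),(R_{2},\m{x}_{2}))$. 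I expect this transfer of the orthogonal representation from $\b{H}_{l}$ onto $\tilde{\c{H}}_{l}$ via $\s{P}/\tau_{l}$ to be the main (if still modest) obstacle: it relies on the intertwining identity together with the fact that $\s{P}$ restricted to $\b{H}_{l}$ is, up to the scalar $\tau_{l}$, an isometry onto $\tilde{\c{H}}_{l}$, which is precisely what the SVD construction guarantees.
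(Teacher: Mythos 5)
Your argument matches the paper's own proof in structure and content: both expand $\tilde{K}_l$ by the SVD formula $\tilde{K}_l=\sum_m g_{lm}\otimes g_{lm}$, use $\c{L}_2$-orthonormality of $\{g_{lm}\}$ to collapse the double integral and to identify $\s{T}_{\tilde{K}_l}$ with the orthogonal projection onto $\tilde{\c{H}}_l$, and then obtain the trace identity by integrating the $R$-independent diagonal against the normalized Haar measure. The one place you differ is in the level of detail: the paper simply declares the $R$-invariance of $\tilde{K}_l((R,\m{x}),(R,\m{x}))$ ``trivial'' from $\b{G}$-invariance of $\b{H}_l$ under $(\rho,\b{H})$, whereas you carry out the actual computation -- transferring the orthogonal matrix $[\rho(R)]_{\{e_{lm}\}}\in SO(N(l))$ onto the $g_{lm}$ via the intertwining identity $\rho(R)g_{lm}=\tau_l^{-1}(\s{P}\circ\imath)\rho(R)e_{lm}=\sum_{m'}a_{mm'}(R)g_{lm'}$, and then collapsing $\sum_m a_{mm'}(R)a_{mm''}(R)=\delta_{m'm''}$. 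That spelled-out verification is correct (and in fact resolves the full two-argument $\b{G}$-invariance asserted in the statement, not just the diagonal case the paper's proof explicitly addresses), but it is the same underlying argument rather than a genuinely different route.
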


\section{Asymptotic Analysis}\label{sec:asym}
To provide the uniform rate of convergence of our estimators, we require an additional assumption:
\begin{enumerate}
\item [(A)] There exists some $\tilde{B} >0$ such that, the $l$-th kernel $\tilde{K}_{l}$ in \eqref{eq:daughter:L2:rk} induced by $K$ satisfies
\begin{equation*}
    \frac{\tilde{K}_{l}((I, \m{x}), (I, \m{x}))}{N(l)} \le \tilde{B}, \quad l \in \b{N}, \ \m{x} \in \Omega^{d-d_{r}}. 
\end{equation*}
\end{enumerate}
Since the kernel $\tilde{K}_{l}$ is $\b{G}$-invariant and s.p.d., condition (A) is equivalent to
\begin{equation*}
    \frac{\tilde{K}_{l}((R, \m{x}), (R', \m{x}'))}{N(l)} \le \tilde{B}, \quad l \in \b{N}, \ R, R' \in \b{G}, \ \m{x}, \m{x}' \in \Omega^{d-d_{r}}. 
\end{equation*}

In view of \cref{prop:daughter:rk}, condition (A) amounts to requiring that the supremum norm of $\tilde{K}_{l}$ is uniformly bounded across $l \in \b{N}$ in terms of its $\c{L}_{1}$-norm along diagonal pairs. Heuristically, as $l \in \b{N}$ grows, each singular function $g_{lm}$ exhibits higher frequency oscillations; however, the growth in the dimension $N(l)$ counterbalances this fluctuation in the construction of $\tilde{K}_{l}$. For instance, in \cref{rmk:transact:action}, although each spherical harmonic of degree $l$ becomes more oscillatory with increasing degree $l$, the associated reproducing kernel still satisfies (A). More generally, if the group action of $\b{G}$ is transitive, then $\tilde{K}_{l}((I, \m{x}), (I, \m{x}))$ is constant along diagonal pairs. In this case, (A) is automatically satisfied for any sinogram kernel $K$:
\begin{equation*}
    \tilde{K}_{l}((I, \m{x}), (I, \m{x})) = \omega_{d_{r}, d}^{-1} N(l).
\end{equation*}

On the other hand, when the unitary group action on the domain is not transitive (like the action of $SO(d)$ on $\b{B}^{d}$ for the X-ray transform), one may take a different approach. If we have explicit knowledge of the SVD of $\s{P}: \c{L}_{2} (\Omega^{d}, W_{0, d}) \to \c{L}_{2} (\b{G} \times \Omega^{d-d_{r}}, W_{d_{r}, d})$:
\begin{equation*}
    \s{P} = \sum_{l=1}^{\infty} \lambda_{l} \sum_{m=1}^{N(l)} g_{lm} \otimes f_{lm}.
\end{equation*}
we can construct a compatible Mercer kernel using the singular functions $f_{lm}$:
\begin{equation*}
    K(\m{z}_{1}, \m{z}_{2})=\sum_{l=1}^{\infty} \zeta_{l}^{2} \sum_{m=1}^{N(l)} f_{lm}(\m{z}_{1}) f_{lm}(\m{z}_{2}),
\end{equation*}
where $\sum_{l=1}^{\infty} \zeta_{l}^{2} N(l) < + \infty$, so that $\s{P}$ and $\s{T}_{K}$ are simultaneously singular value decomposable, i.e., $\s{P}^{*} \s{P}$ and $\s{T}_{K}$ commute. In this setup, the coefficients satisfy $\tau_{l} = \lambda_{l} \zeta_{l}$ and the validity of the assumption (A) for the $l$-th kernel $\tilde{K}_{l}$ in \eqref{eq:daughter:L2:rk} depends solely on the SVD of $\s{P}$, as illustrated in \cref{ex:radon:kernel} for the Radon transform.


\subsection{Asymptotic Behavior of the Mean Estimator}\label{sec:asym:mean}
We present the uniform convergence rates of our mean estimator both in input and output levels, which are $\|\hat{\m{\mu}}_{\nu_{n}}-\m{\mu} \|_{\b{H}}$ and $\| \s{P} \hat{\m{\mu}}_{\nu_{n}}- \s{P} \m{\mu} \|_{\c{L}_{2} (\b{G} \times \Omega^{d-d_{r}}, W_{d_{r}, d})}$, respectively. Let us define the empirical operator
\begin{align*}
    \hat{\s{K}} := \frac{\omega_{d_{r}, d}}{n} \sum_{i=1}^{n} \frac{1}{r_{i}} \sum_{j=1}^{r_{i}} \frac{1}{s_{ij}} \sum_{k=1}^{s_{ij}} \varphi_{ijk} \otimes \varphi_{ijk} \in \c{B}_{\infty}(\b{H}).
\end{align*}
Then, it is straightforward from \cref{prop:isometry} that $\s{K} = \b{E}[\hat{\s{K}}]$. The decomposition
\begin{align*}
    \b{E}[Z_{ijk}-\langle \varphi_{ijk}, f \rangle_{\b{H}}]^{2} 
    &= \b{E}[(Z_{ijk}-\langle \varphi_{ijk}, \m{\mu} \rangle_{\b{H}}) + \langle \varphi_{ijk}, \m{\mu} - f \rangle_{\b{H}}]^{2} \\
    &= \b{E}[(Z_{ijk}-\langle \varphi_{ijk}, \m{\mu} \rangle_{\b{H}})^{2}] + \b{E}[\langle \varphi_{ijk}, \m{\mu} - f \rangle_{\b{H}}^{2}],
\end{align*}
yields the expectation of empirical risk functional $\hat{L}_{\nu}$ in \eqref{eq:emp:risk:mean} as
\begin{align*}
    L_{\nu}(f)
    :&= \b{E}[ \hat{L}_{\nu}(f) ] 
    = \omega_{d_{r}, d} \b{E}[\left(W_{111}-\langle \varphi_{111}, \m{\mu} \rangle_{\b{H}} \right)^{2}] + \langle \s{K}(\m{\mu}-f), \m{\mu}-f \rangle_{\b{H}} + \nu \|f\|^{2}_{\b{H}} \nonumber \\
    &= \omega_{d_{r}, d} \b{E}[\left(W_{111}-\s{P}_{\m{R}_{11}} \m{\mu}(\m{X}_{111}) \right)^{2}] + \langle \s{K}(\m{\mu}-f), \m{\mu}-f \rangle_{\b{H}} + \nu \|f\|^{2}_{\b{H}},
\end{align*}
where the first term does not depend on $f \in \b{H}$. By strong convexity, its minimizer is unique:
\begin{equation*}
    \bar{\m{\mu}}_{\nu} := \argmin_{f \in \b{H}} L_{\nu}(f).
\end{equation*}
Subsequently, we get explicit expressions for $\hat{\m{\mu}}_{\nu}$ and $\bar{\m{\mu}}_{\nu}$ similar to ridge regression \cite{hsing2015theoretical}:
\begin{equation*}
    \hat{\m{\mu}}_{\nu} = (\hat{\s{K}} + \nu I)^{-1} \c{Z}_{n}, \quad
    \bar{\m{\mu}}_{\nu} = (\s{K} + \nu I)^{-1} \s{K} \m{\mu},
\end{equation*}
where
\begin{equation*}
    \c{Z}_{n} := \frac{\omega_{d_{r}, d}}{n} \sum_{i=1}^{n} \frac{1}{r_{i}} \sum_{j=1}^{r_{i}} \frac{1}{s_{ij}} \sum_{k=1}^{s_{ij}} Z_{ijk} \varphi_{ijk} \in \b{H}.
\end{equation*}
Thus, it follows that
\begin{align}\label{eq:mean:asym:1st}
    \m{\mu} - \bar{\m{\mu}}_{\nu} = (\s{K} + \nu I)^{-1} (\s{K} + \nu I) \m{\mu} - \nu (\s{K} + \nu I)^{-1} \m{\mu} = \nu (\s{K} + \nu I)^{-1} \m{\mu}.
\end{align}

Before presenting the results, we argue why a source condition on $\m{\mu}$ is necessary if we aim to obtain an asymptotic result at the input level $\|\hat{\m{\mu}}_{\nu_{n}}-\m{\mu} \|_{\b{H}}$. Observe from \cref{prop:repre::proj} that $\varphi(R, \m{x}) \in \c{N}(\s{K})^{\perp} = \overline{\c{R}(\s{K})}$ for any $R \in \b{G}$ and $\m{x} \in \Omega^{d-d_{r}}$, and therefore their finite linear combination $\hat{\m{\mu}}_{\nu_{n}}$ must also lie in $\overline{\c{R}(\s{K})}$. Thus, a \emph{minimal} requirement for the consistency of $\hat{\m{\mu}}_{\nu_{n}}$ is that $\m{\mu} \in \overline{\c{R}(\s{K})}$; otherwise, the model would necessarily be misspecified.

Although it is true that $\|\bar{\m{\mu}}_{\nu_{n}}-\m{\mu} \|_{\b{H}} \to 0$ as the $\nu_{n} \rightarrow 0$ for a \emph{single} fixed $\m{\mu} \in \overline{\c{R}(\s{K})}$ (see Theorem 6.2.3 in \cite{hsing2015theoretical}), a closer examination using \eqref{eq:mean:asym:1st} shows that
\begin{equation*}
    \sup \{ \|\bar{\m{\mu}}_{\nu}-\m{\mu} \|_{\b{H}}^{2} : \m{\mu} \in \overline{\c{R}(\s{K})}, \|\m{\mu}\|_{\b{H}}^{2} \le M_{1} \}  = M_{1}, \quad M_{1} >0, \nu>0,
\end{equation*}
which implies that mere boundedness of $\m{\mu} \in \b{H}$ is insufficient to guarantee \emph{uniform} consistency of $\bar{\m{\mu}}_{\nu_{n}}$. This motivates the introduction of a source condition, parameterized by $\kappa_{1}, M_{1}$ to control the regularity of $\m{\mu}$, and by $M_{2}$ to control the heterogeneity of $\c{Y}_{i}$:

\begin{definition}[Source regularity]\label{def:srce:reg}
For $0 \le \kappa_{1} \le 1/2$ and $M_{1}, M_{2}>0$, we say that a second-order $\b{H}$-valued process $\c{Y}$ belongs to the class $\c{S}_{1}(\kappa_{1}, M_{1}, M_{2})$ if
    \begin{equation*}
        \m{\mu} \in \{\s{K}^{\kappa_{1}} \delta: \delta \in \b{H}, \|\delta\|_{\b{H}}^{2} \le M_{1}\} \subsetneq \b{H}, \quad \VERT \m{\Gamma} \VERT_{1} = \b{E} [\|\c{Y}\|_{\b{H}}^{2}] \le M_{2}.
    \end{equation*}
When $\kappa_{1} = 0$, we simply write $\c{S}_{1}(\kappa_{1}, M_{1}, M_{2})$ in place of $\c{S}_{1}(M_{1}, M_{2})$.
\end{definition}

Since $\s{K} \in \c{B}_{1}(\b{H})$ is a trace-class operator, increasing the value of $\kappa_{1} \ge 0$ imposes greater smoothness on the mean function $\m{\mu} \in \b{H}$. However, once $\kappa_{1} \ge 1/2$, further increasing $\kappa_{1}$ does not improve the convergence rate; see the discussion preceding \cref{sec:proof:asym:mean} for details. Moreover, the Douglas factorization lemma \cite{douglas1966majorization} implies that
\begin{equation}\label{eq:Doug:equiv}
    \m{\mu} \in \{\s{K}^{\kappa_{1}} \delta: \delta \in \b{H}, \|\delta\|_{\b{H}}^{2} \le M_{1}\} \quad \Longleftrightarrow \quad M_{1} \s{K}^{2 \kappa_{1}} \succeq \m{\mu} \otimes \m{\mu},
\end{equation}
where $\succeq$ denotes the Loewner order for operators. In the maximally smooth case where $\kappa_{1}=1/2$, relation \eqref{eq:Doug:equiv} can be further expressed, using \eqref{eq:isomet}, as
\begin{equation*}
    \m{\mu} \in \{\s{K}^{1/2} \delta: \delta \in \b{H}, \|\delta\|_{\b{H}}^{2} \le M_{1}\} \, \Leftrightarrow \, 
    \langle \m{\mu}, f \rangle_{\b{H}}^{2} \le M_{1} \|\s{P} f\|_{\c{L}_{2} (\b{G} \times \Omega^{d-d_{r}}, W_{d_{r}, d})}^{2}, \, f \in \b{H}(K).
\end{equation*}

The heteroscedasticity of the errors introduces a dependence of the convergence rate on two key quantities:
\begin{equation*}
    \overline{r} := \left[ \frac{1}{n} \sum_{i=1}^{n} \frac{1}{r_{i}} \right]^{-1}, \quad
    \overline{r}^{\sigma^{2}} := \left[ \frac{1}{n} \sum_{i=1}^{n} \frac{1}{r_{i}} \left(\frac{\sigma_{r_{i}}^{2}}{\sigma^{2}} \right) \right]^{-1}.
\end{equation*}
We refer to these quantities as the \emph{effective tilt number} and the \emph{noise-weighted effective tilt number}, respectively. They represent the harmonic mean of the number of tilts, with the second term adjusted for the noise variance in each measurement. In the second term, we normalize by a baseline variance $\sigma^{2}$, which is set to $\sigma_{1}^{2}$ for reference.

To make the asymptotic results more interpretable, we assume that the eigenvalues $\tau_{l}^{2}$ of $\s{K}$ (or $\s{T}_{\tilde{K}}$) decay in a polynomial speed,  while their multiplicities $N(l)$ grow polynomially:
\begin{equation*}
    \tau_{l} \asymp l^{-p}, \quad N(l) \asymp l^{q}, \quad p, q \ge 0.
\end{equation*}
Recall that $\VERT \s{K} \VERT_{1} = \sum_{l} \tau_{l}^{2} N(l) < \infty$ must hold, which is equivalent to the condition $2p > q+1$. This incorporates the familiar Sobolev-type condition $p > 1/2$, as discussed in \cref{ex:unit:Sobo}. The parameter $p$ quantifies the degree of smoothness, while $q$ typically scales with the intrinsic dimension $d$ of the domain, manifesting the curse of dimensionality in our asymptotic bounds. For general asymptotic results stated in terms of Schatten norms, without assuming specific decay or growth rates for $\tau_l$ and $N(l)$, see \cref{thm:gen:asym:mean}. In the case where $\s{P}_{I} = I$, the following theorem subsumes the known results for kernel regression \cite{cai2010nonparametric,caponera2022functional,caponera2025two}, matching the optimal convergence rates established in \cite{cai2011optimal}.

\begin{theorem}\label{thm:mean:asym:L2}
Let $M_{1}, M_{2}>0$. Assume (A) and that the eigenvalues $\tau_{l}^{2}$  of $\s{T}_{\tilde{K}}$ satisfies $\tau_{l} \asymp l^{-p}$ with multiplicities $N(l) \asymp l^{q}$ for some $p, q \ge 0$ with $2p > q+1$. If the decaying speed of the penalty parameter is $\nu_{n} \asymp [n \cdot \min(\overline{r},
\overline{r}^{\sigma^{2}})]^{-2p/(2p+q+1)}$, then it holds that 
\begin{align*}
    \lim_{D \rightarrow \infty} \limsup_{n \rightarrow \infty} \sup_{\b{P}_{\c{Y}} \in \c{S}_{1}(M_{1}, M_{2})} \b{P}_{\c{Y}} &\left[ \| \s{P} \hat{\m{\mu}}_{\nu_{n}}- \s{P} \m{\mu} \|_{\c{L}_{2} (\b{G} \times \Omega^{d-d_{r}}, W_{d_{r}, d})}^{2} > \right. \\
    &\hspace{2em} \left. D ((n \cdot \min(\overline{r},
    \overline{r}^{\sigma^{2}}))^{-\frac{2p}{2p+q+1}} + n^{-1}) \right] = 0.
\end{align*}   
\end{theorem}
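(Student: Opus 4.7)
My approach is a classical bias--variance decomposition centered on the population-level Tikhonov minimizer $\bar{\m{\mu}}_{\nu} = (\s{K}+\nu I)^{-1}\s{K}\m{\mu}$, combined with the isometry \eqref{eq:isomet} which rewrites the target norm as $\|\s{P}\hat{\m{\mu}}_{\nu_{n}} - \s{P}\m{\mu}\|_{\c{L}_{2}(\b{G}\times\Omega^{d-d_{r}},W_{d_{r},d})}^{2} = \|\s{K}^{1/2}(\hat{\m{\mu}}_{\nu_{n}} - \m{\mu})\|_{\b{H}}^{2}$. The entire analysis is therefore carried out in the $\s{K}^{1/2}$-weighted RKHS norm and splits into a deterministic bias $\|\s{K}^{1/2}(\bar{\m{\mu}}_{\nu_{n}} - \m{\mu})\|_{\b{H}}^{2}$ and a stochastic variance $\|\s{K}^{1/2}(\hat{\m{\mu}}_{\nu_{n}} - \bar{\m{\mu}}_{\nu_{n}})\|_{\b{H}}^{2}$, to be bounded uniformly over the class $\c{S}_{1}(M_{1},M_{2})$.

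The bias is immediate from \eqref{eq:mean:asym:1st}: starting from $\bar{\m{\mu}}_{\nu} - \m{\mu} = -\nu(\s{K}+\nu I)^{-1}\m{\mu}$ and using the spectral bound $\VERT \nu\s{K}^{1/2}(\s{K}+\nu I)^{-1} \VERT_{\infty} \le \tfrac{1}{2}\sqrt{\nu}$, one gets the uniform estimate $\|\s{K}^{1/2}(\bar{\m{\mu}}_{\nu}-\m{\mu})\|_{\b{H}}^{2} \le \nu M_{1}/4$. For the variance I would start from the algebraic identity
\begin{equation*}
(\hat{\s{K}}+\nu I)(\hat{\m{\mu}}_{\nu} - \bar{\m{\mu}}_{\nu}) = (\c{Z}_{n} - \s{K}\m{\mu}) + (\s{K}-\hat{\s{K}})\bar{\m{\mu}}_{\nu},
\end{equation*}
obtained by subtracting the normal equations of the two estimators. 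Inserting $(\s{K}+\nu I)^{\pm 1/2}$ factors, as in standard RKHS ridge analysis, reduces the task to controlling (i) the self-adjoint resolvent ratio $(\s{K}+\nu I)^{1/2}(\hat{\s{K}}+\nu I)^{-1}(\s{K}+\nu I)^{1/2}$ in operator norm, which is handled by a Neumann expansion once $\VERT(\s{K}+\nu I)^{-1/2}(\hat{\s{K}}-\s{K})(\s{K}+\nu I)^{-1/2}\VERT_{\infty}$ is shown to be $o_{p}(1)$, and (ii) the two stochastic vectors $(\s{K}+\nu I)^{-1/2}(\c{Z}_{n}-\s{K}\m{\mu})$ and $(\s{K}+\nu I)^{-1/2}(\s{K}-\hat{\s{K}})\bar{\m{\mu}}_{\nu}$, whose second moments are the key quantities to estimate.

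The bookkeeping is organized through the effective dimension
\begin{equation*}
\c{N}(\nu) := \tr\!\bigl(\s{K}(\s{K}+\nu I)^{-1}\bigr) = \sum_{l}\frac{\tau_{l}^{2}N(l)}{\tau_{l}^{2}+\nu} \asymp \nu^{-(q+1)/(2p)},
\end{equation*}
where the final asymptotic is direct from the polynomial behaviour of $\tau_{l}$ and $N(l)$. Assumption~(A) supplies the uniform bound on $\|\varphi(\m{R},\m{X})\|_{\b{H}}$ required to turn the various $\b{H}$- and $\c{B}_{2}(\b{H})$-valued sums into Bernstein/Chebyshev-type concentration inequalities. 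Computing second moments via \cref{prop:isometry} and the independence between particles, tilts and locations, I expect $\b{E}\|(\s{K}+\nu I)^{-1/2}(\c{Z}_{n}-\s{K}\m{\mu})\|_{\b{H}}^{2}$ to split into three pieces: a measurement-noise contribution of order $\sigma^{2}\c{N}(\nu)/(n\overline{r}^{\sigma^{2}})$, a within-particle angle/location-sampling contribution of order $M_{2}\c{N}(\nu)/(n\overline{r})$, and an irreducible between-particle contribution of order $M_{2}/n$ that averages only over the $n$ independent copies of $\c{Y}_{i}$. These are the respective origins of the two terms in the announced rate.

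Balancing the bias of order $\nu$ against the dominant variance of order $\c{N}(\nu)/(n\min(\overline{r},\overline{r}^{\sigma^{2}}))$ yields the announced choice $\nu_{n}\asymp[n\min(\overline{r},\overline{r}^{\sigma^{2}})]^{-2p/(2p+q+1)}$, with the residual $n^{-1}$ from the between-particle term surviving as the additive component. The main technical obstacle I anticipate is this concentration step: the three-level independent sampling (particles, tilts, locations) combined with the heteroscedastic noise variance $\sigma_{r_{i}}^{2}$ and unequal $r_{i},s_{ij}$ requires a careful conditional-then-marginal argument and meticulous bookkeeping of harmonic means, so as to correctly extract the factor $\min(\overline{r},\overline{r}^{\sigma^{2}})$ rather than lose it to a crude Cauchy--Schwarz bound.
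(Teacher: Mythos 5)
Your plan is a correct and workable alternative to the paper's argument, but it takes a genuinely different route at the heart of the variance analysis.

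The bias term is handled identically: both you and the paper use $\bar{\m{\mu}}_{\nu}-\m{\mu}=-\nu(\s{K}+\nu I)^{-1}\m{\mu}$ and the elementary bound $\VERT\nu\s{K}^{1/2}(\s{K}+\nu I)^{-1}\VERT_{\infty}\le\tfrac{1}{2}\sqrt{\nu}$, giving $O(\nu_{n}M_{1})$. For the variance, however, the paper does \emph{not} use the two-term split $(\hat{\m{\mu}}_{\nu}-\bar{\m{\mu}}_{\nu})$ inverted by a Neumann expansion. Instead it introduces an intermediate linearization $\tilde{\m{\mu}}_{\nu}:=\bar{\m{\mu}}_{\nu}-(\s{K}+\nu I)^{-1}\bigl(-\c{Z}_{n}+(\hat{\s{K}}+\nu I)\bar{\m{\mu}}_{\nu}\bigr)$ that avoids ever inverting the random operator $\hat{\s{K}}+\nu I$, estimates $\b{E}\|\s{K}^{1/2}(\tilde{\m{\mu}}_{\nu}-\bar{\m{\mu}}_{\nu})\|_{\b{H}}^{2}$ coordinate-by-coordinate in the eigenbasis of $\s{K}$ through a conditional-variance split into an angle-sampling piece $(A)_{lm}$ and a noise-plus-within-particle piece $(B)_{lm}$, and then shows $\|\s{K}^{1/2}(\hat{\m{\mu}}_{\nu}-\tilde{\m{\mu}}_{\nu})\|_{\b{H}}$ is dominated by the product of the previous term with a Cauchy--Schwarz remainder $\mathcal{X}_{\zeta,\beta}$. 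Your Neumann-expansion route is the standard Caponnetto--De Vito / Smale--Zhou style and will yield the same rate, but you should be aware of two things. First, the operator-norm concentration $\VERT(\s{K}+\nu I)^{-1/2}(\hat{\s{K}}-\s{K})(\s{K}+\nu I)^{-1/2}\VERT_{\infty}=o_{p}(1)$ that you defer is precisely the point where assumption (A) enters, and not in the way you describe: (A) is not what gives $\|\varphi(\m{R},\m{X})\|_{\b{H}}\le B$ (that follows from continuity of $\varphi$ on the compact $\b{G}\times\Omega^{d-d_{r}}$, see Proposition~\ref{prop:A1:check} and Remark~\ref{rmk:xray:arbi:kern}); rather, using $\langle\varphi(R,\m{x}),e_{lm}\rangle_{\b{H}}=\tau_{l}g_{lm}(R,\m{x})$ one has
\begin{equation*}
\bigl\VERT(\s{K}+\nu I)^{-1/2}\varphi(R,\m{x})\bigr\VERT_{\b{H}}^{2}=\sum_{l}\frac{\tau_{l}^{2}\,\tilde{K}_{l}\bigl((R,\m{x}),(R,\m{x})\bigr)}{\tau_{l}^{2}+\nu}\le\tilde{B}\,\c{N}(\nu),
\end{equation*}
and it is this $\c{N}(\nu)$-scaling of the leverage, valid only under (A), that makes any matrix-Bernstein inequality deliver the right rate; without (A) you would get $B^{2}/\nu$ in its place. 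The paper's use of (A) occurs in the analogous spot (bounding the cross moments of $\langle\varphi_{ijk},e_{lm}\rangle$ in the $\mathcal{X}_{\zeta,\beta}$ term). Second, the paper deliberately avoids an operator-norm concentration step so as to work only with second moments and Markov's inequality, which costs it the auxiliary parameter $\zeta$ appearing in Theorem~\ref{thm:gen:asym:mean}; your route requires a stronger concentration tool but is cleaner at $\beta=\tfrac12$. Your three-way bookkeeping of the variance (noise $\sigma^{2}\c{N}(\nu)/(n\overline{r}^{\sigma^{2}})$, within-particle $\c{N}(\nu)/(n\overline{r})$, between-particle $n^{-1}$) correctly reproduces the paper's $(A)_{lm}$ and $(B)_{lm}$ contributions after summation against $\tau_{l}^{2}/(\tau_{l}^{2}+\nu)^{2}$.
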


We now unpack the implications of \cref{thm:mean:asym:L2}. First, the convergence rate for the estimated sinogram depends on the effective tilt numbers, \emph{not} on the locational sampling frequency. 
Intuitively, if only a single viewing angle is available, one cannot recover the object's structure nor its full sinogram across different orientations, no matter how densely that single view is sampled. Nevertheless, a careful look at the proof shows that higher sampling leads to sharper non-asymptotic bounds.

Additionally, \cref{thm:mean:asym:L2} reveals a \emph{phase transition} based on how the number of tilts compares to the number of functions $n$ \cite{cai2010nonparametric, caponera2022functional, cai2011optimal}. As $1/2 < 2p/(2p+q+1) < 1$, the convergence rate is always slower than $(n \cdot \min(\overline{r},
\overline{r}^{\sigma^{2}}))^{-1} + n^{-1}$, yet faster than $(n \cdot \min(\overline{r},
\overline{r}^{\sigma^{2}}))^{-1/2} + n^{-1}$. Then, we see two regimes depending on the tilt numbers: 
\begin{itemize}
\item Sparse Tilts: In the regime where $\min(\overline{r},
\overline{r}^{\sigma^{2}}) = O(n^{\frac{q+1}{2p}})$, the convergence rate is
\begin{equation*}
    \| \s{P} \hat{\m{\mu}}_{\nu_{n}}- \s{P} \m{\mu} \|_{\c{L}_{2} (\b{G} \times \Omega^{d-d_{r}}, W_{d_{r}, d})}^{2} = O_{\b{P}} \left( (\min(\overline{r},
    \overline{r}^{\sigma^{2}}))^{-2p/(2p+q+1)} \right).
\end{equation*}
\item Dense Tilts: In the regime where $\min(\overline{r},
\overline{r}^{\sigma^{2}}) \gg n^{\frac{q+1}{2p}}$, we have enough tilt information and achieve a parametric rate that no longer depends on the effective tilt numbers:
\begin{equation*}
    \| \s{P} \hat{\m{\mu}}_{\nu_{n}}- \s{P} \m{\mu} \|_{\c{L}_{2} (\b{G} \times \Omega^{d-d_{r}}, W_{d_{r}, d})}^{2} = O_{\b{P}} \left( n^{-1} \right).
\end{equation*}
\end{itemize}

Finally, the theorem demonstrates another phase transition, this one related to the heteroscedastic noise. When the noise function $r \mapsto \sigma_{r}^{2}$ is increasing, the convergence rate depends on the \emph{noise-weighted} effective tilt number ($\overline{r}^{\sigma^{2}}$), making the rate slower than in the homoscedastic case. This is reasonable, as collecting more tilts introduces more noise into the overall dataset. In the extreme case where the noise variance grows faster than linearly with the number of tilts, the convergence rate can actually degrade as more tilts are added. This result provides a theoretical basis for the practical trade-off between acquiring more viewing angles and the corresponding degradation in image quality. As we will show later, the covariance estimation is even more susceptible to this noise growth.

Note that the preceding analysis of the output sinogram error does not require the source regularity parameter $\kappa_{1}$. However, to analyze the convergence rate of the object itself in the input space, this parameter becomes necessary.

\begin{theorem}\label{thm:mean:asym:RKHS}
Let $0 \le \kappa_{1} \le 1/2$ and $M_{1}, M_{2}>0$. Assume (A) and that the eigenvalues $\tau_{l}^{2}$ of $\s{T}_{\tilde{K}}$ satisfies $\tau_{l} \asymp l^{-p}$ with multiplicities $N(l) \asymp l^{q}$ for some $p, q \ge 0$ with $2p > q+1$. If the decaying speed of the penalty parameter is $\nu_{n} \asymp (n \cdot \min(\overline{r},
\overline{r}^{\sigma^{2}}))^{-(\frac{2p+q+1}{2p} + 2\kappa_{1})^{-1}}$, then
\begin{align*}
    \lim_{D \rightarrow \infty} \limsup_{n \rightarrow \infty} \sup_{\b{P}_{\c{Y}} \in \c{S}_{1}(\kappa_{1}, M_{1}, M_{2})} \b{P} & \left[ \| \hat{\m{\mu}}_{\nu_{n}}-\m{\mu} \|_{\b{H}}^{2} > \right. \\
    &\hspace{2em} \left. D ( (n \cdot \min(\overline{r},
    \overline{r}^{\sigma^{2}}))^{- 2\kappa_{1}(\frac{2p+q+1}{2p} + 2\kappa_{1})^{-1}} + n^{-1}) \right] = 0.
\end{align*}
\end{theorem}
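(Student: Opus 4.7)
The plan is a standard bias–variance decomposition, but tightened to reflect the polynomial eigenstructure and the heteroscedastic, sparse-tilt sampling. I would write
\begin{equation*}
    \hat{\m{\mu}}_{\nu}-\m{\mu} = \underbrace{(\bar{\m{\mu}}_{\nu}-\m{\mu})}_{\text{bias}} + \underbrace{(\hat{\m{\mu}}_{\nu}-\bar{\m{\mu}}_{\nu})}_{\text{stochastic}},
\end{equation*}
with $\bar{\m{\mu}}_{\nu}=(\s{K}+\nu I)^{-1}\s{K}\m{\mu}$ as in \eqref{eq:mean:asym:1st}. For the bias, the source condition $\m{\mu}=\s{K}^{\kappa_{1}}\delta$ with $\|\delta\|_{\b{H}}^{2}\le M_{1}$ and spectral calculus give
\begin{equation*}
    \|\bar{\m{\mu}}_{\nu}-\m{\mu}\|_{\b{H}}^{2}=\nu^{2}\|(\s{K}+\nu I)^{-1}\s{K}^{\kappa_{1}}\delta\|_{\b{H}}^{2}\le M_{1}\sup_{t\ge 0}\bigl(\tfrac{\nu\, t^{\kappa_{1}}}{t+\nu}\bigr)^{2}\le M_{1}\nu^{2\kappa_{1}},
\end{equation*}
where I use $\kappa_{1}\le 1/2\le 1$ to keep the envelope function monotone in $\nu$ (this is exactly why larger $\kappa_{1}$ beyond $1/2$ cannot help in the $\b{H}$-norm when we already need to control the stochastic side by a factor of $(\s{K}+\nu I)^{-1}$ rather than $(\s{K}+\nu I)^{-1/2}$).

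For the stochastic term, introduce the population noise vector $\c{Z}_{n}-\s{K}\m{\mu}$ (recall $\b{E}\c{Z}_{n}=\s{K}\m{\mu}$) and use the resolvent-style identity
\begin{equation*}
    \hat{\m{\mu}}_{\nu}-\bar{\m{\mu}}_{\nu}=(\hat{\s{K}}+\nu I)^{-1}\bigl[(\c{Z}_{n}-\s{K}\m{\mu})+(\s{K}-\hat{\s{K}})\bar{\m{\mu}}_{\nu}\bigr].
\end{equation*}
Following the standard inverse-problem recipe, I sandwich with $(\s{K}+\nu I)^{\pm 1/2}$ and reduce the bound to controlling
(i) the deterministic $\b{H}$-norm of $(\s{K}+\nu I)^{-1}(\c{Z}_{n}-\s{K}\m{\mu})$,
(ii) the operator-norm deviation $\VERT(\s{K}+\nu I)^{-1/2}(\hat{\s{K}}-\s{K})(\s{K}+\nu I)^{-1/2}\VERT_{\infty}$, which under assumption (A) and boundedness $\|\varphi\|_{\b{H}}\le B$ is amenable to a Bernstein-type concentration in $\c{B}_{2}(\b{H})$, giving $\VERT\cdot\VERT_{\infty}=o_{\b{P}}(1)$ and thus $\VERT(\hat{\s{K}}+\nu I)^{-1}(\s{K}+\nu I)\VERT_{\infty}=O_{\b{P}}(1)$ with high probability, and
(iii) the analogous bound on the $\bar{\m{\mu}}_{\nu}$-perturbation, which is absorbed into the bias order $\nu^{2\kappa_{1}}$ via $\|\bar{\m{\mu}}_{\nu}\|_{\b{H}}\le\|\m{\mu}\|_{\b{H}}$.

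The variance computation is the crux. Exploiting the mutual independence of $\{\m{R}_{ij}\}$, $\{\m{X}_{ijk}\}$, $\{\varepsilon_{ijk}\}$ and $\{\c{Y}_{i}\}$, the second-moment operator of $\c{Z}_{n}-\s{K}\m{\mu}$ splits into (a) a contribution from the centered process $\c{Y}_{i}-\m{\mu}$ scaling like $n^{-1}\,\s{K}\m{\Sigma}\s{K}$ (with total mass $\le M_{2}$ via $\VERT\m{\Gamma}\VERT_{1}\le M_{2}$), and (b) a conditional-variance contribution from within-particle tilt sampling and measurement noise scaling like $[n\min(\overline{r},\overline{r}^{\sigma^{2}})]^{-1}\,\s{K}$, up to constants governed by $B$ and $M_{1},M_{2},\sigma^{2}$. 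Consequently,
\begin{equation*}
    \b{E}\|(\s{K}+\nu I)^{-1}(\c{Z}_{n}-\s{K}\m{\mu})\|_{\b{H}}^{2}\;\lesssim\;\frac{1}{n}\VERT(\s{K}+\nu I)^{-1}\s{K}\VERT_{\infty}^{2}\;+\;\frac{1}{n\min(\overline{r},\overline{r}^{\sigma^{2}})}\sum_{l,m}\frac{\tau_{l}^{2}}{(\tau_{l}^{2}+\nu)^{2}}.
\end{equation*}
Under $\tau_{l}\asymp l^{-p}$ and $N(l)\asymp l^{q}$ with $2p>q+1$, the effective-degrees-of-freedom sum evaluates to
\begin{equation*}
    \sum_{l}\frac{\tau_{l}^{2}\,N(l)}{(\tau_{l}^{2}+\nu)^{2}}\;\asymp\;\nu^{-(2p+q+1)/(2p)},
\end{equation*}
so the stochastic $\b{H}$-norm squared is of order $n^{-1}+[n\min(\overline{r},\overline{r}^{\sigma^{2}})]^{-1}\nu^{-(2p+q+1)/(2p)}$. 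Balancing with the bias $\nu^{2\kappa_{1}}$ selects exactly the prescribed $\nu_{n}$, yielding the advertised rate, and Markov's inequality converts the mean-square bound into the uniform in-probability statement over $\c{S}_{1}(\kappa_{1},M_{1},M_{2})$. The main obstacle I anticipate is step (ii): proving the $O_{\b{P}}(1)$ equivalence of $\hat{\s{K}}+\nu I$ and $\s{K}+\nu I$ uniformly over the class and simultaneously tracking how the heteroscedastic variances $\sigma_{r_{i}}^{2}$ enter the concentration, since the natural Bernstein envelope depends on $\max_{i}\sigma_{r_{i}}^{2}/r_{i}$ whereas the variance proxy is $\overline{r}^{\sigma^{2}}$; the mismatch must be closed by inserting the assumption (A) bound at the level of each $\tilde{K}_{l}$ so that the noise contribution factorizes cleanly through the same effective-degrees-of-freedom sum as above.
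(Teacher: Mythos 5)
Your two-term decomposition $\hat{\m{\mu}}_{\nu}-\m{\mu}=(\bar{\m{\mu}}_{\nu}-\m{\mu})+(\hat{\m{\mu}}_{\nu}-\bar{\m{\mu}}_{\nu})$ with resolvent sandwiching is the classical integral-operator recipe and genuinely differs from the paper's argument. The paper inserts a third intermediate quantity $\tilde{\m{\mu}}_{\nu}=\bar{\m{\mu}}_{\nu}-(\s{K}+\nu I)^{-1}\bigl(-\c{Z}_{n}+(\hat{\s{K}}+\nu I)\bar{\m{\mu}}_{\nu}\bigr)$, so that the leading stochastic error $\tilde{\m{\mu}}_{\nu}-\bar{\m{\mu}}_{\nu}$ involves only the population resolvent $(\s{K}+\nu I)^{-1}$, never $(\hat{\s{K}}+\nu I)^{-1}$, and the remainder $\hat{\m{\mu}}_{\nu}-\tilde{\m{\mu}}_{\nu}=(\s{K}+\nu I)^{-1}(\s{K}-\hat{\s{K}})(\hat{\m{\mu}}_{\nu}-\bar{\m{\mu}}_{\nu})$ is shown negligible via a Cauchy--Schwarz argument with an auxiliary exponent $\zeta$, Markov's inequality on a scalar weighted Hilbert--Schmidt functional $\mathcal{X}_{\zeta,\beta}$, and a bootstrap on the event $\{\mathcal{X}_{\zeta,\beta}<1/4\}$. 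Crucially, that route never needs a high-probability operator-norm equivalence between $\hat{\s{K}}+\nu I$ and $\s{K}+\nu I$ --- exactly your step (ii), which you correctly flag as the crux and then leave as an unverified ``Bernstein-type concentration''. To make that step rigorous you would need $\VERT(\s{K}+\nu_{n} I)^{-1/2}(\hat{\s{K}}-\s{K})(\s{K}+\nu_{n} I)^{-1/2}\VERT_{\infty}=o_{\b{P}}(1)$ under the chosen $\nu_{n}$, and this is not a routine consequence of boundedness: the feature maps $\varphi_{ij1},\dots,\varphi_{ijs_{ij}}$ are only \emph{conditionally} independent given $\m{R}_{ij}$, so any concentration inequality must be applied at the $(i,j)$-blocked level with $n\overline{r}$ effectively independent blocks (not $n\sum s_{ij}$ summands), and the rate condition to check is a trace condition of the form $\VERT(\s{K}+\nu_{n} I)^{-2}\s{K}^{2\zeta+1}\VERT_{1}=o(n\cdot\min(\overline{r},\overline{r}^{\sigma^{2}}))$ for a suitable $\zeta\in(0,1/2]$, which the paper carefully verifies. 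Your sketch neither supplies this verification nor addresses the within-image dependence, so the operator-norm step is a genuine gap in the proposal, not merely an omitted routine detail.

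Separately, your diagnosis of where heteroscedasticity enters step (ii) is incorrect. The empirical operator $\hat{\s{K}}$ depends only on $(\m{R}_{ij},\m{X}_{ijk})$ and is entirely noise-free, so the variances $\sigma_{r_i}^{2}$ never appear in the concentration of $\hat{\s{K}}$ around $\s{K}$: there is no envelope--proxy mismatch to resolve there. The heteroscedastic noise enters only in step (i), through the conditional second moment of $\c{Z}_{n}$, where the diagonal pairs $j=j'$ in $\b{E}[Z_{ijk}Z_{ij'k'}\mid\varphi]$ contribute $\sigma_{r_i}^{2}$, and these aggregate into the harmonic-mean quantity $\overline{r}^{\sigma^{2}}$ exactly as in the paper's $(B)_{lm}$ bound. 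Once that misattribution is corrected, your bias bound and your variance split into an $O(n^{-1})$ term plus an $O\bigl([n\min(\overline{r},\overline{r}^{\sigma^{2}})]^{-1}\nu^{-(2p+q+1)/(2p)}\bigr)$ term, and the balancing that selects $\nu_{n}$, all coincide with Steps~1--2 of the paper's general theorem; what your plan leaves open is precisely the remainder control that the paper's three-term decomposition is designed to circumvent.
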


As mentioned, while setting $\kappa_{1} = 0$ in \cref{thm:mean:asym:RKHS} does not yield a meaningful consistency rate at the input level, once we have $\kappa_{1} > 0$, the convergence rate is bounded between $(n \cdot \min(\overline{r}, \overline{r}^{\sigma^{2}}))^{-\frac{2\kappa_{1}}{2\kappa_{1}+1}} + n^{-1}$ and $(n \cdot \min(\overline{r}, \overline{r}^{\sigma^{2}}))^{-\frac{2\kappa_{1}}{2\kappa_{1}+1/2}} + n^{-1}$ as $2p > q+1$.

Similar to the output-level analysis, \cref{thm:mean:asym:RKHS} also exhibits a phase transition with respect to the effective tilt numbers, though the boundary now occurs at $n^{(2\kappa_{1})^{-1} (\frac{2p+q+1}{2p})}$. In the sparse tilts regime where $\min(\overline{r},
\overline{r}^{\sigma^{2}}) = O( n^{(2\kappa_{1})^{-1}(\frac{2p+q+1}{2p})})$, we obtain
\begin{equation*}
    \| \hat{\m{\mu}}_{\nu_{n}}-\m{\mu} \|_{\b{H}}^{2} = O_{\b{P}} ( (n \overline{r})^{- 2\kappa_{1} (\frac{2p+q+1}{2p} + 2\kappa_{1})^{-1}} ),
\end{equation*}
In the dense tilts regime where $\min(\overline{r},
\overline{r}^{\sigma^{2}}) \gg n^{(2\kappa_{1})^{-1}(\frac{2p+q+1}{2p})}$, a parametric rate is achieved. The phase transition with respect to heteroscedastic noise is the same as before, with the boundary occurring when the noise function grows linearly.

\subsection{Asymptotic Behavior of the Covariance Estimator}\label{sec:asym:cov}
As mentioned in \cref{sec:notation}, we denote the tensor product between two elements in $\b{H} \otimes \b{H}$ by $\otimes_{2}$ when we reshuffle the fourth-order tensors. Define the following empirical and population functionals:
\begin{align}\label{eq:emp:tensor:UE}
    &\hat{\s{K}}^{\otimes} := \frac{\omega_{d_{r}, d}^{2}}{n} \sum_{i=1}^{n} \frac{1}{r_{i}(r_{i}-1)} \sum_{1 \le j \neq j' \le r_{i}} \frac{1}{s_{ij}s_{ij'}} \sum_{\substack{1 \le k \le s_{ij} \\ 1 \le k' \le s_{ij'}}} \varphi_{iJJ'}^{\otimes} \otimes_{2} \varphi_{iJJ'}^{\otimes}, \nonumber\\
    &\s{K}^{\otimes} := \b{E}[\hat{\s{K}}^{\otimes}] = \omega_{d_{r}, d}^{2} \b{E}[ (\varphi_{ijk} \otimes \varphi_{ij'k'}) \otimes_{2} (\varphi_{ijk} \otimes \varphi_{ij'k'})] \in \c{B}_{\infty}(\b{H} \otimes \b{H}), \, j \neq j'. 
\end{align}

Since $\varphi_{ijk}$ and $\varphi_{ij'k'}$ are independent for $j \neq j'$, it is important to note that 
\begin{align*}
    \s{K}^{\otimes} = \s{K} \otimes \s{K} = \sum_{l, l'=1}^{\infty} \tau_{ll'}^{2} \sum_{m, m'=1}^{N(l)} e^{\otimes}_{lm, l'm'}  \otimes_{2} e^{\otimes}_{lm, l'm'},
\end{align*}
where $\tau_{ll'} := \tau_{l} \tau_{l'}$ and $e^{\otimes}_{lm, l'm'} := e_{lm} \otimes e_{l'm'} \in \b{H} \otimes \b{H}$ forms a CONS for $\b{H} \otimes \b{H}$. Consequently, the s.p.d. operator $\s{K}^{\otimes}$ satisfies $\VERT\s{K}^{\otimes} \VERT_{\infty} = \VERT\s{K} \VERT_{\infty}^{2}$ and $\VERT(\s{K}^{\otimes})^{\theta} \VERT_{1} = \VERT\s{K}^{2 \theta} \VERT_{1}$ for $\theta \in (0, \infty)$
whenever either side is finite, which is always true for $\theta \ge 1/2$.


For any $j \neq j'$, since $\b{E} [\left. Z_{iJJ'} \right\vert \varphi] = \langle \m{\Gamma}, \varphi_{iJJ'}^{\otimes}  \rangle_{\b{H} \otimes \b{H}} $ by \eqref{eq:WW:cond:exp}, we have
\begin{align*}
    &\b{E} \left(Z_{iJJ'}- \langle G, \varphi_{iJJ'}^{\otimes}  \rangle_{\b{H} \otimes \b{H}} \right)^{2} = \b{E} \left(Z_{iJJ'}- \langle \m{\Gamma}, \varphi_{iJJ'}^{\otimes}  \rangle_{\b{H} \otimes \b{H}} \right)^{2} + \langle \s{K}^{\otimes} (\m{\Gamma}-G), \m{\Gamma}-G \rangle_{\b{H} \otimes \b{H}}.
\end{align*}
Therefore, the expectation of empirical risk functional in \eqref{eqn:emp:risk:cov} becomes
\begin{align*}
    L_{\eta}^{\otimes}(G)  
    =\b{E} \left[ \left(Z_{iJJ'}- \langle \m{\Gamma}, \varphi_{iJJ'}^{\otimes}  \rangle_{\b{H} \otimes \b{H}} \right)^{2} \right] + \langle \s{K}^{\otimes} (\m{\Gamma}-G), \m{\Gamma}-G \rangle_{\b{H} \otimes \b{H}} + \eta \|G\|^{2}_{\b{H} \otimes \b{H}}.
\end{align*}
Similar to \cref{sec:asym:mean}, both $\hat{L}_{\eta}^{\odot}$ and $L_{\eta}^{\otimes}$ admit unique minimizers, given by
\begin{equation*}
    \hat{\m{\Gamma}}_{\eta} := \argmin_{G \in \b{H} \otimes \b{H}} \hat{L}_{\eta}^{\odot}(G) = (\hat{\s{K}}^{\otimes} + \eta I)^{-1} \c{Z}^{\otimes}_{n}, \quad \bar{\m{\Gamma}}_{\eta} := \argmin_{G \in \b{H} \otimes \b{H}} L_{\eta}^{\otimes}(G) = (\s{K}^{\otimes} + \nu I)^{-1} \s{K}^{\otimes} \m{\Gamma},
\end{equation*}
where
\begin{align*}
    \c{Z}^{\otimes}_{n}:= \frac{\omega_{d_{r}, d}^{2}}{n} \sum_{i=1}^{n} \frac{1}{r_{i}(r_{i}-1)} \sum_{1 \le j \neq j' \le r_{i}} \frac{1}{s_{ij}s_{ij'}} \sum_{\substack{1 \le k \le s_{ij} \\ 1 \le k' \le s_{ij'}}} Z_{iJJ'} \varphi_{iJJ'}^{\otimes} \in \b{H} \otimes \b{H}.
\end{align*}
Similar to \eqref{eq:mean:asym:1st}, it follows that $\m{\Gamma} - \bar{\m{\Gamma}}_{\eta} = \eta (\s{K}^{\otimes} + \eta I)^{-1} \m{\Gamma}$.
Following the same reasoning as in \cref{sec:asym:mean}, we introduce the source regularity condition:
\begin{definition}[Source regularity]
For $0 \le \kappa_{2} \le 1/2$ and $M_{3}, M_{4}>0$, we say that a fourth-order $\b{H}$-valued process $\c{Y}$ belongs to the class $\c{S}_{2}(\kappa_{2}, M_{3}, M_{4})$ if
    \begin{equation*}
        \m{\Gamma} \in \{(\s{K}^{\otimes})^{\kappa_{2}} \delta^{\otimes}: \delta^{\otimes} \in \b{H} \otimes \b{H}, \|\delta^{\otimes}\|_{\b{H} \otimes \b{H}}^{2} \le M_{3}\}, \quad \b{E}[\|\c{Y}\|_{\b{H}}^{4}] \le M_{4}.
    \end{equation*}
When $\kappa_{2} = 0$, we simply denote $\c{S}_{2}(\kappa_{2}, M_{3}, M_{4})$ instead of $\c{S}_{2}(M_{3}, M_{4})$.
\end{definition}
Again, the Douglas factorization lemma \cite{douglas1966majorization} leads to
\begin{equation*}
    \m{\Gamma} \in \{(\s{K}^{\otimes})^{\kappa_{2}} \delta^{\otimes}: \delta^{\otimes} \in \b{H} \otimes \b{H}, \|\delta^{\otimes}\|_{\b{H} \otimes \b{H}}^{2} \le M_{3}\} \quad \Longleftrightarrow \quad M_{3} (\s{K}^{\otimes})^{2 \kappa_{2}} \succeq \m{\Gamma} \otimes_{2} \m{\Gamma}.
\end{equation*}

We formalize the intuition that the source condition for covariance estimation is stronger than that for mean estimation:
\begin{proposition}\label{prop:srce:cond}
Let $0 \le \kappa_{2} \le 1/2$ and $M_{3}, M_{4}>0$. Then 
\begin{equation*}
    \c{S}_{2}(\kappa_{2}, M_{3}, M_{4}) \subset \c{S}_{1}(\kappa_{2}, \sqrt{M_{3}}, \sqrt{M_{4}}).
\end{equation*}
\end{proposition}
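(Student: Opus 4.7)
The plan is to verify the two defining conditions of $\c{S}_{1}(\kappa_{2}, \sqrt{M_{3}}, \sqrt{M_{4}})$ in turn. The trace bound on $\m{\Gamma}$ is immediate from Cauchy--Schwarz,
\begin{equation*}
    \VERT \m{\Gamma} \VERT_{1} = \b{E}[\|\c{Y}\|_{\b{H}}^{2}] \le \left(\b{E}[\|\c{Y}\|_{\b{H}}^{4}]\right)^{1/2} \le \sqrt{M_{4}},
\end{equation*}
so only the transfer of the source regularity from $\m{\Gamma}$ to $\m{\mu}$ requires genuine work.

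For that, I would invoke the Douglas factorization equivalences displayed in \cref{sec:asym:mean} and after the definition of $\c{S}_{2}$ in \cref{sec:asym:cov}, in both directions. The hypothesis $\c{Y} \in \c{S}_{2}(\kappa_{2}, M_{3}, M_{4})$ is equivalent to the operator inequality $M_{3} (\s{K}^{\otimes})^{2\kappa_{2}} \succeq \m{\Gamma} \otimes_{2} \m{\Gamma}$ on $\b{H} \otimes \b{H}$, while the desired conclusion $\m{\mu} \in \{\s{K}^{\kappa_{2}} \delta: \|\delta\|_{\b{H}}^{2} \le \sqrt{M_{3}}\}$ is equivalent to $\sqrt{M_{3}} \s{K}^{2\kappa_{2}} \succeq \m{\mu} \otimes \m{\mu}$ on $\b{H}$. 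The bridge I would exploit is obtained by testing the first inequality against symmetric rank-one tensors $f \otimes f$ with $f \in \b{H}$. Since $(\s{K}^{\otimes})^{2\kappa_{2}} = \s{K}^{2\kappa_{2}} \otimes \s{K}^{2\kappa_{2}}$, the right-hand quadratic form factorizes as $\langle \s{K}^{2\kappa_{2}} f, f \rangle_{\b{H}}^{2}$. For the left-hand side, I would expand $\m{\Gamma}$ in its spectral basis $\m{\Gamma} = \sum_{k} \lambda_{k} u_{k} \otimes u_{k}$ and use the reshuffle identity $(u_{k} \otimes u_{k}) \otimes_{2} (u_{l} \otimes u_{l}) = (u_{k} \otimes u_{l}) \otimes (u_{k} \otimes u_{l})$ inherent in the $\otimes_{2}$ convention of \cref{sec:notation}; the quadratic form then collapses to $\langle \m{\Gamma} f, f \rangle_{\b{H}}^{2}$. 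Consequently,
\begin{equation*}
    M_{3} \langle \s{K}^{2\kappa_{2}} f, f \rangle_{\b{H}}^{2} \ge \langle \m{\Gamma} f, f \rangle_{\b{H}}^{2}, \quad f \in \b{H},
\end{equation*}
and taking square roots yields $\sqrt{M_{3}} \langle \s{K}^{2\kappa_{2}} f, f \rangle_{\b{H}} \ge \langle \m{\Gamma} f, f \rangle_{\b{H}}$.

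A second Jensen inequality at the level of $\c{Y}$ provides the final link: $\langle \m{\Gamma} f, f \rangle_{\b{H}} = \b{E}[\langle \c{Y}, f \rangle_{\b{H}}^{2}] \ge (\b{E}[\langle \c{Y}, f \rangle_{\b{H}}])^{2} = \langle \m{\mu}, f \rangle_{\b{H}}^{2} = \langle (\m{\mu} \otimes \m{\mu}) f, f \rangle_{\b{H}}$. Chaining the two inequalities and reading the mean-level Douglas equivalence in reverse delivers $\m{\mu} \in \{\s{K}^{\kappa_{2}} \delta: \|\delta\|_{\b{H}}^{2} \le \sqrt{M_{3}}\}$, completing the proof. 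The main subtlety is interpreting the reshuffled tensor $\m{\Gamma} \otimes_{2} \m{\Gamma}$ correctly; once one verifies that its quadratic form on a symmetric rank-one tensor $f \otimes f$ is exactly the square of the ordinary $\m{\Gamma}$-quadratic form, the rest reduces to stacking two Jensen-type inequalities.
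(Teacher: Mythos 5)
Your proof is correct, and it reaches the same intermediate operator inequality $\sqrt{M_{3}}\,\s{K}^{2\kappa_{2}} \succeq \m{\Gamma} \succeq \m{\mu}\otimes\m{\mu}$ that the paper's proof passes through, but via a genuinely different route for the first inequality. The paper works directly with the factored representation $\m{\Gamma} = (\s{K}^{\otimes})^{\kappa_{2}}\delta^{\otimes}$: it computes $\langle f, \m{\Gamma} f\rangle_{\b{H}} = \langle \s{K}^{\kappa_{2}}f \otimes \s{K}^{\kappa_{2}}f, \delta^{\otimes}\rangle_{\b{H}\otimes\b{H}}$ and applies Cauchy--Schwarz against $\|\delta^{\otimes}\|_{\b{H}\otimes\b{H}} \le \sqrt{M_{3}}$, giving $\sqrt{M_{3}}\,\s{K}^{2\kappa_{2}} \succeq \m{\Gamma}$ in one line. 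You instead invoke the Douglas equivalence at the $\b{H}\otimes\b{H}$ level, $M_{3}(\s{K}^{\otimes})^{2\kappa_{2}} \succeq \m{\Gamma}\otimes_{2}\m{\Gamma}$, test against symmetric rank-one tensors $f\otimes f$, and extract a scalar square-root. Both are sound; the paper's route is slightly shorter because it sidesteps the reshuffled fourth-order tensor entirely, whereas yours makes the quadratic form $\langle f\otimes f, (\m{\Gamma}\otimes_{2}\m{\Gamma})(f\otimes f)\rangle_{\b{H}\otimes\b{H}}$ do the work. Note that your detour through the spectral basis of $\m{\Gamma}$ and the reshuffle identity is not needed: since $\m{\Gamma}\otimes_{2}\m{\Gamma}$ acts as the rank-one Hilbert--Schmidt map $G \mapsto \langle \m{\Gamma}, G\rangle_{\b{H}\otimes\b{H}}\m{\Gamma}$, one has directly $\langle f\otimes f, (\m{\Gamma}\otimes_{2}\m{\Gamma})(f\otimes f)\rangle_{\b{H}\otimes\b{H}} = \langle \m{\Gamma}, f\otimes f\rangle_{\b{H}\otimes\b{H}}^{2} = \langle \m{\Gamma} f, f\rangle_{\b{H}}^{2}$. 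The rest of your argument (Jensen for $\m{\Gamma}\succeq\m{\mu}\otimes\m{\mu}$, reverse Douglas, and Cauchy--Schwarz for the moment bound) coincides with the paper.
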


As in \cref{sec:asym:mean}, we present our results under the assumption of polynomial decay of the eigenvalues and polynomial growth of their multiplicities, and we refer to \cref{thm:gen:asym:cov,cor:gen:asym:cov} in the Supplement for general results. In contrast to \cref{sec:asym:mean}, a key difference appears when handling heteroscedastic errors. The  rate for the covariance is more sensitive to noise and depends on the \emph{squared-noise-weighted effective tilt number}:
\begin{equation*}
    \overline{r}^{\sigma^{4}} := \left[ \frac{1}{n} \sum_{i=1}^{n} \frac{1}{r_{i}} \left(\frac{\sigma_{r_{i}}^{4}}{\sigma^{4}} \right) \right]^{-1}.
\end{equation*}
This term, which depends on the fourth power of the noise ($\sigma^{4}$), replaces the $\sigma^{2}$-dependent term used in the mean estimation, signaling a stronger penalty for noisy data. This is because noise is amplified to a greater degree when estimating higher-order moments.

\begin{theorem}\label{thm:cov:asym:L2}
Let $M_{3}, M_{4}>0$. Assume (A), and suppose that the eigenvalues $\tau_{l}^{2}$ of $\s{T}_{\tilde{K}}$ satisfies $\tau_{l} \asymp l^{-p}$ and the dimension of the corresponding eigenspace $N(l) \asymp l^{q}$ for some $p, q \ge 0$ with $2p > q+1$. If the penalty parameter satisfies $\eta_{n} \asymp (n \cdot \min(\overline{r},
\overline{r}^{\sigma^{4}})/ \log n)^{-(\frac{2p+q+1}{2p})^{-1}}$, then
\begin{align*}
    \lim_{D \rightarrow \infty} \limsup_{n \rightarrow \infty} \sup_{\b{P}_{\c{Y}} \in \c{S}_{2}(M_{3}, M_{4})} \b{P}_{\c{Y}} &\left[ \| (\s{P} \otimes \s{P}) (\hat{\m{\Gamma}}_{\eta_{n}}-\m{\Gamma}) \|_{\c{L}_{2} (\b{G} \times \Omega^{d-d_{r}}, W_{d_{r}, d})}^{2} > \right. \\
    &\hspace{6em} \left. D \left( \left( \frac{n \cdot \min(\overline{r},
    \overline{r}^{\sigma^{4}})}{\log n} \right)^{-\frac{2p}{2p+q+1}} + \frac{1}{n} \right) \right] = 0.
\end{align*}
Moreover, the covariance estimator $\hat{\m{\Sigma}}_{\eta_{n}} := \hat{\m{\Gamma}}_{\eta_{n}} - \hat{\m{\mu}}_{\eta_{n}} \otimes \hat{\m{\mu}}_{\eta_{n}}$ achieves the same uniform convergence rate as $\hat{\m{\Gamma}}_{\eta_{n}}$.
\end{theorem}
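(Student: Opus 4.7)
The plan is to follow a bias--variance decomposition in the seminorm induced by $\s{P}\otimes\s{P}$, paralleling the mean-level argument for \cref{thm:mean:asym:L2} but carried out over $\b{H}\otimes\b{H}$. Writing
$$\hat{\m{\Gamma}}_{\eta_n}-\m{\Gamma} = (\hat{\m{\Gamma}}_{\eta_n}-\bar{\m{\Gamma}}_{\eta_n})+(\bar{\m{\Gamma}}_{\eta_n}-\m{\Gamma}),$$
the tensorized version of the isometry \eqref{eq:isomet} gives $\|(\s{P}\otimes\s{P})G\|_{\c{L}_{2}}^{2} = \|(\s{K}^{\otimes})^{1/2}G\|_{\b{H}\otimes\b{H}}^{2}$, so the task reduces to bounding $(\s{K}^{\otimes})^{1/2}(\cdot)$ on each summand. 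For the deterministic bias, the identity $\m{\Gamma}-\bar{\m{\Gamma}}_{\eta} = \eta(\s{K}^{\otimes}+\eta I)^{-1}\m{\Gamma}$ combined with the elementary spectral bound $\lambda/(\lambda+\eta)^{2}\le 1/(4\eta)$ yields $\|(\s{K}^{\otimes})^{1/2}(\bar{\m{\Gamma}}_{\eta}-\m{\Gamma})\|_{\b{H}\otimes\b{H}}^{2} \le \eta\,\|\m{\Gamma}\|_{\b{H}\otimes\b{H}}^{2}/4 \le \eta M_{3}/4$, uniformly over $\c{S}_{2}(M_{3},M_{4})$.

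For the stochastic piece, I would decompose
$$\hat{\m{\Gamma}}_{\eta_n}-\bar{\m{\Gamma}}_{\eta_n} = (\hat{\s{K}}^{\otimes}+\eta_n I)^{-1}\bigl[\c{Z}_{n}^{\otimes}-\s{K}^{\otimes}\m{\Gamma}\bigr] + (\hat{\s{K}}^{\otimes}+\eta_n I)^{-1}\bigl[\s{K}^{\otimes}-\hat{\s{K}}^{\otimes}\bigr]\bar{\m{\Gamma}}_{\eta_n},$$
and control each summand separately. The first summand calls for a Hilbert-space Bernstein inequality applied to the $\b{H}\otimes\b{H}$-valued sum $\c{Z}_{n}^{\otimes}-\s{K}^{\otimes}\m{\Gamma}$: its second-moment structure splits into a signal contribution governed by $\b{E}\|\c{Y}\|_{\b{H}}^{4}\le M_{4}$ and a noise contribution $\b{E}\,\varepsilon_{ijk}^{2}\varepsilon_{ij'k'}^{2}=\sigma_{r_{i}}^{4}$ (possible because $j\neq j'$ enforces genuine independence of the two factors), producing exactly the $\overline{r}^{\sigma^{4}}$ dependence; the truncation needed to derive sub-exponential tails for products of centred Gaussians is the origin of the $\log n$ inflation absent from \cref{thm:mean:asym:L2}. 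The second summand calls for an operator Bernstein bound $\VERT\hat{\s{K}}^{\otimes}-\s{K}^{\otimes}\VERT_{\infty}=O_{\b{P}}(\sqrt{\log n/(n\overline{r})})$, where again the restriction to $j\neq j'$ is essential to treat $\varphi_{iJJ'}^{\otimes}\otimes_{2}\varphi_{iJJ'}^{\otimes}$ as i.i.d.~summands. Combined with the standard resolvent comparison $\VERT(\hat{\s{K}}^{\otimes}+\eta_{n}I)^{-1}(\s{K}^{\otimes}+\eta_{n}I)\VERT_{\infty}=O_{\b{P}}(1)$ on the event that $\eta_{n}$ dominates the operator fluctuation, weighting by $(\s{K}^{\otimes})^{1/2}$ reduces the variance to the tensorized effective dimension $\sum_{l,l'}\tfrac{\tau_{l}^{2}\tau_{l'}^{2}}{\tau_{l}^{2}\tau_{l'}^{2}+\eta_{n}}N(l)N(l')$, which under $\tau_{l}\asymp l^{-p}$, $N(l)\asymp l^{q}$ behaves like $\eta_{n}^{-(q+1)/(2p)}$ up to a further logarithmic factor absorbed by the prefactor already present. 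Balancing $O(\eta_{n})$ bias against $O(\log n\cdot\eta_{n}^{-(q+1)/(2p)}/(n\min(\overline{r},\overline{r}^{\sigma^{4}})))$ variance, modulo the parametric floor $n^{-1}$, pins down $\eta_{n}\asymp(n\min(\overline{r},\overline{r}^{\sigma^{4}})/\log n)^{-2p/(2p+q+1)}$ and yields the stated rate.

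For the \emph{moreover} claim, I would expand $\hat{\m{\Sigma}}_{\eta_{n}}-\m{\Sigma} = (\hat{\m{\Gamma}}_{\eta_{n}}-\m{\Gamma}) - (\hat{\m{\mu}}_{\eta_{n}}-\m{\mu})\otimes\hat{\m{\mu}}_{\eta_{n}} - \m{\mu}\otimes(\hat{\m{\mu}}_{\eta_{n}}-\m{\mu})$, use the multiplicativity $\|(\s{P}\otimes\s{P})(a\otimes b)\|_{\c{L}_{2}}=\|\s{P}a\|_{\c{L}_{2}}\|\s{P}b\|_{\c{L}_{2}}$, and invoke \cref{prop:srce:cond} to transfer $\c{S}_{2}(M_{3},M_{4})$ into a valid source class for the mean with $\kappa_{1}=0$. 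Then \cref{thm:mean:asym:L2}, applied even with the possibly non-optimal penalty $\nu=\eta_{n}$, delivers a mean rate strictly faster than the covariance rate (no $\log n$ inflation, and $\sigma^{2}$ in place of $\sigma^{4}$), so the bilinear cross-terms are dominated by the first summand once one uses tightness of $\|\s{P}\hat{\m{\mu}}_{\eta_{n}}\|_{\c{L}_{2}}$. The \textbf{main obstacle} throughout is the sharp accounting of the fourth-order noise contribution $\sigma_{r_{i}}^{4}$ under heterogeneous $r_{i}$, together with the truncation argument required for the operator-Bernstein bound in $\c{B}_{2}(\b{H}\otimes\b{H})$, which is responsible for the $\log n$ factor unseen in the mean-level analysis.
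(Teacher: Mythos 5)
Your bias--variance split and your accounting of the fourth-moment noise term $\overline{r}^{\sigma^{4}}$ are sound, and the ``moreover'' step via \cref{prop:srce:cond} and \cref{thm:mean:asym:L2} is consistent with the paper (cf.~the proof of \cref{cor:gen:asym:cov}). However, the stochastic part has a genuine gap, and a minor misattribution of the $\log n$ factor.

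\textbf{The gap.} Your control of the random piece hinges on the resolvent comparison
$\VERT(\hat{\s{K}}^{\otimes}+\eta_{n}I)^{-1}(\s{K}^{\otimes}+\eta_{n}I)\VERT_{\infty}=O_{\b{P}}(1)$,
which requires $\eta_{n}$ to dominate $\VERT\hat{\s{K}}^{\otimes}-\s{K}^{\otimes}\VERT_{\infty}$. Even under a best-case operator concentration of order $\sqrt{\log n/(n\overline{r})}$, this would need $\eta_{n}\gg(\log n/(n\overline{r}))^{1/2}$, i.e.\ $2p/(2p+q+1)<1/2$, i.e.\ $2p<q+1$ --- the exact opposite of the standing assumption $2p>q+1$. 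So for \emph{every} admissible pair $(p,q)$, and in particular in the sparse-tilt regime with bounded $\overline{r}$, the penalty $\eta_{n}\asymp(n\min(\overline{r},\overline{r}^{\sigma^{4}})/\log n)^{-2p/(2p+q+1)}$ decays strictly faster than the empirical covariance operator concentrates, and the event you condition on has probability tending to \emph{zero}, not one. The second summand of your decomposition, $(\hat{\s{K}}^{\otimes}+\eta_{n}I)^{-1}(\s{K}^{\otimes}-\hat{\s{K}}^{\otimes})\bar{\m{\Gamma}}_{\eta_{n}}$, cannot be bounded this way. This is precisely why the paper (following the sparse-FDA tradition of Cai--Yuan and Caponera--Panaretos) inserts the intermediate quantity
\begin{equation*}
    \tilde{\m{\Gamma}}_{\eta} := \bar{\m{\Gamma}}_{\eta} - (\s{K}^{\otimes}+\eta I)^{-1}\bigl(-\c{Z}_{n}^{\otimes}+(\hat{\s{K}}^{\otimes}+\eta I)\bar{\m{\Gamma}}_{\eta}\bigr),
\end{equation*}
proves $\b{E}\|(\s{K}^{\otimes})^{\beta}(\tilde{\m{\Gamma}}_{\eta}-\bar{\m{\Gamma}}_{\eta})\|^{2}$ by a law-of-total-variance computation, and then treats the remainder $\hat{\m{\Gamma}}_{\eta}-\tilde{\m{\Gamma}}_{\eta}=(\s{K}^{\otimes}+\eta I)^{-1}(\s{K}^{\otimes}-\hat{\s{K}}^{\otimes})(\hat{\m{\Gamma}}_{\eta}-\bar{\m{\Gamma}}_{\eta})$ by a Cauchy--Schwarz self-comparison: a random multiplier $\mathcal{X}^{\otimes}_{\zeta,\beta}$ is introduced with $\b{E}[\mathcal{X}^{\otimes}_{\zeta,\beta}]=O(a_{n}^{\otimes})\to 0$, so the remainder is $o_{\b{P}}$ of $\|(\s{K}^{\otimes})^{\zeta}(\hat{\m{\Gamma}}_{\eta}-\bar{\m{\Gamma}}_{\eta})\|^{2}$ \emph{without} any operator-norm comparison of resolvents. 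Your two-term split omits exactly the device that makes the sparse regime tractable.

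\textbf{The $\log n$ factor.} It does not come from a truncation argument for sub-exponential products of Gaussians. In the paper there is no Bernstein inequality anywhere in the covariance proof; everything is done with second moments and Markov. The $\log n$ arises purely from the tensor-product effective dimension: by Lemma~2.3 of \cite{lin2000tensor},
\begin{equation*}
    \VERT(\s{K}^{\otimes}+\eta I)^{-2}(\s{K}^{\otimes})^{2\beta+1}\VERT_{1}
    \asymp \sum_{l,l'} \frac{N(l)N(l')\,(\tau_{l}\tau_{l'})^{4\beta+2}}{(\tau_{l}^{2}\tau_{l'}^{2}+\eta)^{2}}
    \asymp \eta^{-(\frac{2p+q+1}{2p}-2\beta)}\log\!\left(\tfrac{1}{\eta}\right),
\end{equation*}
a $\log(1/\eta)$ that is absent in the one-fold problem \eqref{eq:tr:norm:rate}. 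This is where the extra $\log n$ relative to \cref{thm:mean:asym:L2} enters, not from tail truncation.

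\textbf{Smaller points.} Your bias bound $\eta M_{3}/4$ is correct in spirit but should use the range condition $\|\m{\Gamma}\|_{\b{H}\otimes\b{H}}^{2}\le M_{3}$ rather than a trace bound. Also note that \cref{thm:cov:asym:L2} is stated for $\c{S}_{2}(M_{3},M_{4})$, i.e.\ $\kappa_{2}=0$, so the source exponent does not enter the output-level bias; your derivation is consistent with this but you should flag it explicitly.
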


In \cref{thm:cov:asym:L2} above, the uniform convergence rate for $\m{\Gamma}$ or $\m{\Sigma}$ in the output $\c{L}_{2}$ space, like its counterpart \cref{thm:mean:asym:L2} for the mean, does not require a source regularity parameter ($\kappa_{2}$). However, as in \cref{thm:mean:asym:RKHS}, the convergence rate of the covariance operator itself in the input space $\b{H} \otimes \b{H}$ is not meaningful when $\kappa_{2} = 0$:

\begin{theorem}\label{thm:cov:asym:RKHS}
Let $0 \le \kappa_{2} \le 1/2$ and $M_{3}, M_{4}>0$. Assume (A), and suppose that the eigenvalues $\tau_{l}^{2}$ of $\s{T}_{\tilde{K}}$ satisfies $\tau_{l} \asymp l^{-p}$ and the dimension of the corresponding eigenspace $N(l) \asymp l^{q}$ for some $p, q \ge 0$ with $2p > q+1$. If the penalty parameter satisfies $\eta_{n} \asymp (n \cdot \min(\overline{r}, \overline{r}^{\sigma^{4}})/ \log n)^{-(\frac{2p+q+1}{2p} + 2\kappa_{2})^{-1}}$, then it holds that 
\begin{align*}
    \lim_{D \rightarrow \infty} \limsup_{n \rightarrow \infty} \sup_{\b{P}_{\c{Y}} \in \c{S}_{2}(\kappa_{2}, M_{3}, M_{4})} \b{P}_{\c{Y}} &\left[ \| \hat{\m{\Gamma}}_{\eta_{n}}-\m{\Gamma} \|_{\b{H} \otimes \b{H}}^{2} > \right. \\
    &\hspace{1em} \left. D \left( \left( \frac{n \cdot \min(\overline{r},
    \overline{r}^{\sigma^{4}})}{\log n} \right)^{- 2\kappa_{2}(\frac{2p+q+1}{2p} + 2\kappa_{2})^{-1}} + \frac{1}{n} \right) \right] = 0.
\end{align*}  
Moreover, the covariance estimator $\hat{\m{\Sigma}}_{\eta_{n}} := \hat{\m{\Gamma}}_{\eta_{n}} - \hat{\m{\mu}}_{\eta_{n}} \otimes \hat{\m{\mu}}_{\eta_{n}}$ achieves the same uniform convergence rate as $\hat{\m{\Gamma}}_{\eta_{n}}$.
\end{theorem}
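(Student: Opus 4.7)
The plan is to carry out a bias--variance analysis analogous to that of \cref{thm:mean:asym:RKHS}, now in the tensor space $\b{H}\otimes\b{H}$ with preconditioner $\s{K}^{\otimes}=\s{K}\otimes\s{K}$, and then to transfer the bound to $\hat{\m{\Sigma}}_{\eta_{n}}$ via a plug-in argument. I would begin with the decomposition
\begin{equation*}
    \hat{\m{\Gamma}}_{\eta_{n}}-\m{\Gamma} = (\hat{\m{\Gamma}}_{\eta_{n}}-\bar{\m{\Gamma}}_{\eta_{n}}) + (\bar{\m{\Gamma}}_{\eta_{n}}-\m{\Gamma}).
\end{equation*}
The deterministic bias term is handled through the identity $\m{\Gamma}-\bar{\m{\Gamma}}_{\eta_{n}} = \eta_{n}(\s{K}^{\otimes}+\eta_{n}I)^{-1}\m{\Gamma}$ combined with the source representation $\m{\Gamma}=(\s{K}^{\otimes})^{\kappa_{2}}\delta^{\otimes}$, $\|\delta^{\otimes}\|_{\b{H}\otimes\b{H}}^{2}\le M_{3}$; the functional-calculus inequality $\sup_{t\ge 0}\eta_{n} t^{\kappa_{2}}/(t+\eta_{n})\le C_{\kappa_{2}}\eta_{n}^{\kappa_{2}}$, valid for $0\le\kappa_{2}\le 1$, yields $\|\bar{\m{\Gamma}}_{\eta_{n}}-\m{\Gamma}\|_{\b{H}\otimes\b{H}}^{2}\lesssim M_{3}\,\eta_{n}^{2\kappa_{2}}$ uniformly on $\c{S}_{2}(\kappa_{2},M_{3},M_{4})$.

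For the stochastic term I would use the resolvent identity to write
\begin{equation*}
    \hat{\m{\Gamma}}_{\eta_{n}}-\bar{\m{\Gamma}}_{\eta_{n}} = (\hat{\s{K}}^{\otimes}+\eta_{n}I)^{-1}(\c{Z}_{n}^{\otimes}-\s{K}^{\otimes}\m{\Gamma}) + (\hat{\s{K}}^{\otimes}+\eta_{n}I)^{-1}(\s{K}^{\otimes}-\hat{\s{K}}^{\otimes})\bar{\m{\Gamma}}_{\eta_{n}},
\end{equation*}
and control the two preconditioned pieces $\VERT(\s{K}^{\otimes}+\eta_{n}I)^{-1/2}(\hat{\s{K}}^{\otimes}-\s{K}^{\otimes})(\s{K}^{\otimes}+\eta_{n}I)^{-1/2}\VERT_{\infty}$ and $\|(\s{K}^{\otimes}+\eta_{n}I)^{-1/2}(\c{Z}_{n}^{\otimes}-\s{K}^{\otimes}\m{\Gamma})\|_{\b{H}\otimes\b{H}}$ via an operator Bernstein/Minsker inequality applied to the i.i.d.\ (in the replicate index $i$) summands. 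The a.s.\ boundedness follows from $\|\varphi_{ijk}\|_{\b{H}}\le B$ and condition (A), while the variance proxy is governed by the effective dimension $\c{N}^{\otimes}(\eta):=\sum_{l,l'}\tau_{l}^{2}\tau_{l'}^{2}N(l)N(l')/(\tau_{l}^{2}\tau_{l'}^{2}+\eta)$. Under $\tau_{l}\asymp l^{-p}$ and $N(l)\asymp l^{q}$, a direct double-sum computation gives $\c{N}^{\otimes}(\eta)\asymp \eta^{-(q+1)/(2p)}\log(1/\eta)$, the extra logarithmic factor arising precisely from the tensor structure of the spectrum of $\s{K}^{\otimes}$; this is the source of the $\log n$ term in the final rate. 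Simultaneously, the noise in $\c{Z}_{n}^{\otimes}$ enters through products $Z_{ijk}Z_{ij'k'}$ whose conditional variance is of order $\sigma_{r_{i}}^{4}$, so that after averaging in $i$ one obtains the harmonic mean $\overline{r}^{\sigma^{4}}$ rather than $\overline{r}^{\sigma^{2}}$. Assembling the pieces produces, with high probability uniformly on $\c{S}_{2}(\kappa_{2},M_{3},M_{4})$,
\begin{equation*}
    \|\hat{\m{\Gamma}}_{\eta_{n}}-\m{\Gamma}\|_{\b{H}\otimes\b{H}}^{2} \lesssim \eta_{n}^{2\kappa_{2}} + \frac{(\log n)\,\eta_{n}^{-(q+1)/(2p)-1}}{n\,\min(\overline{r},\overline{r}^{\sigma^{4}})} + \frac{1}{n},
\end{equation*}
and the stated choice of $\eta_{n}$ is exactly the one that balances the first two summands.

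For the plug-in covariance estimator I would use
\begin{equation*}
    \hat{\m{\Sigma}}_{\eta_{n}}-\m{\Sigma} = (\hat{\m{\Gamma}}_{\eta_{n}}-\m{\Gamma}) - (\hat{\m{\mu}}_{\eta_{n}}-\m{\mu})\otimes\hat{\m{\mu}}_{\eta_{n}} - \m{\mu}\otimes(\hat{\m{\mu}}_{\eta_{n}}-\m{\mu})
\end{equation*}
together with the inclusion $\c{S}_{2}(\kappa_{2},M_{3},M_{4})\subset\c{S}_{1}(\kappa_{2},\sqrt{M_{3}},\sqrt{M_{4}})$ from \cref{prop:srce:cond}. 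The general (unoptimized) form of \cref{thm:mean:asym:RKHS} bounds $\|\hat{\m{\mu}}_{\eta_{n}}-\m{\mu}\|_{\b{H}}^{2}$ by $\eta_{n}^{2\kappa_{2}}+\eta_{n}^{-(q+1)/(2p)-1}/(n\min(\overline{r},\overline{r}^{\sigma^{2}}))+n^{-1}$, and a direct comparison of exponents shows that at the covariance-optimal $\eta_{n}$ this is dominated by the target rate (since $\overline{r}^{\sigma^{2}}\ge \overline{r}^{\sigma^{4}}$ and $\log n\ge 1$); consequently, the two cross terms contribute at most the same order, invoking also the uniform bound $\|\hat{\m{\mu}}_{\eta_{n}}\|_{\b{H}}=O_{\b{P}}(1)$. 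The main obstacle is the sharp preconditioned concentration of $\hat{\s{K}}^{\otimes}-\s{K}^{\otimes}$ and of the noise-driven part of $\c{Z}_{n}^{\otimes}-\s{K}^{\otimes}\m{\Gamma}$: unlike the vector-valued summands in \cref{thm:mean:asym:RKHS}, these are operator-valued objects that are fourth-order in the feature maps, and one must extract exactly the right effective dimension $\c{N}^{\otimes}(\eta_{n})$ and exactly the right power of $\sigma_{r_{i}}$ to recover both the $\log n$ factor and the harmonic mean $\overline{r}^{\sigma^{4}}$; any loose estimate here spoils the exponent, the noise dependence, or both.
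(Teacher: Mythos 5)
Your high-level plan is sound and identifies all the right quantities: the bias term via the source condition and the functional-calculus bound $\eta t^{\kappa_{2}}/(t+\eta)\lesssim \eta^{\kappa_{2}}$, the tensor effective dimension $\c{N}^{\otimes}(\eta)\asymp\eta^{-(q+1)/(2p)}\log(1/\eta)$ whose extra $\log$ is exactly the source of the $\log n$ in the rate, the escalation from $\overline{r}^{\sigma^{2}}$ to $\overline{r}^{\sigma^{4}}$ driven by the products $Z_{ijk}Z_{ij'k'}$, the role of \cref{prop:srce:cond} for the $\hat{\m{\Sigma}}_{\eta_n}$ plug-in term, and the balance equation that pins down $\eta_n$.

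However, your route for the stochastic term is genuinely different from the paper's, and you leave the hardest step as an acknowledged gap rather than supplying it. You propose a \emph{two}-term split around $\bar{\m{\Gamma}}_{\eta_n}$, with the random resolvent $(\hat{\s{K}}^{\otimes}+\eta_n I)^{-1}$ appearing on the outside, and then an operator Bernstein/Minsker inequality to control $\VERT(\s{K}^{\otimes}+\eta_n I)^{-1/2}(\hat{\s{K}}^{\otimes}-\s{K}^{\otimes})(\s{K}^{\otimes}+\eta_n I)^{-1/2}\VERT_{\infty}$. That is the Caponnetto--De Vito / Fischer--Steinwart style, but it carries two genuine obstacles you do not resolve: (i) the summands in $\c{Z}_n^{\otimes}-\s{K}^{\otimes}\m{\Gamma}$ involve the unbounded Gaussian noise products $Z_{ijk}Z_{ij'k'}$, so a bounded-operator Bernstein does not apply directly and you need either truncation or a sub-exponential Bernstein, and (ii) even in the noiseless case, extracting the exact $\c{N}^{\otimes}(\eta_n)$ and the harmonic mean $\overline{r}^{\sigma^4}$ from an operator concentration bound requires careful preconditioning that you flag but do not execute.

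The paper instead uses a \emph{three}-term decomposition
$\hat{\m{\Gamma}}_{\eta}-\m{\Gamma} = (\bar{\m{\Gamma}}_{\eta}-\m{\Gamma}) + (\tilde{\m{\Gamma}}_{\eta}-\bar{\m{\Gamma}}_{\eta}) + (\hat{\m{\Gamma}}_{\eta}-\tilde{\m{\Gamma}}_{\eta})$,
where $\tilde{\m{\Gamma}}_{\eta}:=\bar{\m{\Gamma}}_{\eta}-(\s{K}^{\otimes}+\eta I)^{-1}(-\c{Z}^{\otimes}_n+(\hat{\s{K}}^{\otimes}+\eta I)\bar{\m{\Gamma}}_{\eta})$ involves only the \emph{deterministic} resolvent $(\s{K}^{\otimes}+\eta I)^{-1}$. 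That lets the middle term be controlled purely via second moments (Parseval in the eigenbasis of $\s{K}^{\otimes}$ plus the law of total variance), with no concentration inequality needed --- the inner-moment computation is deferred to \cref{lem:quad:cond:exp}, which is where the $\sigma_{r_i}^4$ dependence is cleanly isolated. The last term is shown to equal $(\s{K}^{\otimes}+\eta I)^{-1}(\s{K}^{\otimes}-\hat{\s{K}}^{\otimes})(\hat{\m{\Gamma}}_{\eta}-\bar{\m{\Gamma}}_{\eta})$ and is handled by a Cauchy--Schwarz split that produces a random multiplier $\mathcal{X}^{\otimes}_{\zeta,\beta}$ with $\b{E}[\mathcal{X}^{\otimes}_{\zeta,\beta}]=O(a_n^{\otimes})$; on the high-probability event $\{\mathcal{X}^{\otimes}_{\zeta,\beta}<1/4\}$ one then bootstraps back to the middle term. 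In short, the paper converts what you plan to do with operator concentration into a sequence of second-moment computations plus a one-line bootstrap, which avoids both the unboundedness issue and the delicate infinite-dimensional operator-Bernstein setup.

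So: correct scaffolding and correct target rate, but a different (and in this paper not pursued) proof strategy whose central step --- the preconditioned concentration of fourth-order, unbounded, tensor-valued summands --- is named but not carried out. As written, the argument is a plan rather than a proof of the stochastic term.
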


Just as with  mean estimation in \cref{sec:asym:mean}, the asymptotic convergence rate in \cref{thm:cov:asym:L2,thm:cov:asym:RKHS} for the covariance depends on the effective tilt numbers ($\overline{r},
\overline{r}^{\sigma^{4}}$), not on the frequency of sampling at different locations within a projection. While more location samples can lead to sharper non-asymptotic bounds, they do not alter the overall uniform convergence rate.
We again observe a phase transition between sparse and dense sampling regimes:
\begin{itemize}
\item The rate in \cref{thm:cov:asym:L2} is always bounded between $(\min(\overline{r},
\overline{r}^{\sigma^{4}}) / \log n)^{-1} + n^{-1}$ and $(\min(\overline{r},
\overline{r}^{\sigma^{4}}) / \log n)^{-1/2} + n^{-1}$. As for \cref{thm:cov:asym:RKHS}, the  rate is always bounded between $(\min(\overline{r},
\overline{r}^{\sigma^{4}})/ \log n)^{-\frac{2\kappa_{2}}{2\kappa_{2}+1}} + n^{-1}$ and $(\min(\overline{r},
\overline{r}^{\sigma^{4}})/ \log n)^{-\frac{2\kappa_{2}}{2\kappa_{2}+1/2}} + n^{-1}$.

\item The critical boundaries where the regime switches from being limited by the effective tilt numbers (sparse tilts) to being limited by the number of particles (dense tilts) are:
\begin{equation*}
    \min(\overline{r},
    \overline{r}^{\sigma^{4}}) \asymp n^{\frac{q+1}{2p}} \log n, \quad \min(\overline{r},
    \overline{r}^{\sigma^{4}}) \asymp n^{(2\kappa_{2})^{-1}(\frac{2p+q+1}{2p})} \log n,
\end{equation*}
for \cref{thm:cov:asym:L2,thm:cov:asym:RKHS}, respectively.
\end{itemize}

The most significant difference from the mean estimation is the covariance estimator's heightened sensitivity to heteroscedastic noise:
\begin{itemize}
\item For the mean estimation in \cref{thm:mean:asym:L2,thm:mean:asym:RKHS}, the phase transition's critical boundary was where noise variance $\sigma_{r}^{2}$ grew linearly with the number of tilts $r$.
\item For the covariance estimation in \cref{thm:cov:asym:L2,thm:cov:asym:RKHS}, the boundary occurs when $\sigma_{r}^{2}$ grows with the square root of $r$.
\end{itemize}
This makes covariance estimation far more fragile. If the noise variance per image increases even moderately with the number of acquired tilts (i.e., faster than $\sqrt{r}$), adding more views will actually degrade the quality of the covariance estimate. This imposes a much stricter practical limit on the number of tilts that can be beneficially used.

\section{Nuisance Orientations}\label{sec:cryo}
In cryo-EM, each biomolecule $\c{Y}_{i}$ is flash-frozen in a random orientation $\m{R}_{i}$, unknown to the experimenter \cite{singer2018mathematics}. 
While the microscope applies known, controlled tilts $\tilde{\m{R}}_{ij}$ 
(if the imaging was conducted simultaneously across particles, then $\tilde{\m{R}}_{ij} \equiv \tilde{\m{R}}_{j}$) 
during the $j$-th image acquisition, the final viewing orientation is a composite:
\begin{equation}\label{eq:cryo:orient}
    \m{R}_{ij} = \tilde{\m{R}}_{ij} \m{R}_{i}
\end{equation}
Because the initial orientation $\m{R}_{i}$ is a latent variable, the absolute first and second moments of the particles are unidentifiable.
Fortunately, key structural features of biomolecules -- such as the icosahedral symmetry of spherical viruses \cite{zandi2004origin}, the pseudo-twofold symmetry of nucleosomes \cite{luger1997crystal}, and the tetrameric arrangement of aquaporins \cite{murata2000structural} -- are preserved under random orientations. Consequently, it is both reasonable and biologically motivated to focus on $\b{G}$-invariant statistics. Treating the initial orientations as nuisance parameters, we formally define these orientationally averaged quantities as:
\begin{align*}
    &\overline{\m{\mu}} := \b{E}_{\m{R}} [\rho(\m{R})\m{\mu}] \in \b{H}, \quad
    \overline{\m{\Gamma}} := \b{E}_{\m{R}} [\rho(\m{R}) \m{\Gamma} \rho(\m{R}^{-1})] \in \b{H} \otimes \b{H}, \\
    &\overline{\m{\Sigma}} := \b{E}_{\m{R}} [\rho(\m{R}) \m{\Sigma} \rho(\m{R}^{-1})] = \overline{\m{\Gamma}} - \overline{\m{\mu}} \otimes \overline{\m{\mu}} \in \b{H} \otimes \b{H},
\end{align*}
where $\m{R} \sim \b{P}_{\m{R}}$ is a uniform distribution on $\b{G}$ \cite{katsevich2015covariance, faraut2008analysis}. By construction, these averaged statistics are also $\b{G}$-invariant:
\begin{equation*}
    \overline{\m{\mu}}(\m{z}) = \overline{\m{\mu}}(R \m{z}), \quad \overline{\m{\Gamma}}(\m{z}, \m{z}') = \overline{\m{\Gamma}}(R \m{z}, R \m{z}'), \quad \overline{\m{\Sigma}}(\m{z}, \m{z}') = \overline{\m{\Sigma}}(R \m{z}, R \m{z}'), \quad R \in \b{G}, \, \m{z}, \m{z}' \in \Omega^{d}.
\end{equation*}
From an identifiability perspective, adopting this framework is statistically equivalent to assuming the random process $\c{Y}$ itself is $\b{G}$-invariant, i.e., $\c{Y} \stackrel{d}{\equiv} \rho(R) \c{Y}$ for any $R \in \b{G}$. This assumption simplifies the estimation procedure significantly: the specific unknown orientations $\m{R}_{i}$
become irrelevant, and all that matters are the known, relative orientations $\tilde{\m{R}}_{ij}$ between the tilted images.

To proceed with estimation, we define $\b{G}$-invariant feature maps and their tensor products by averaging over the orientations:
\begin{align*}
    \overline{\varphi} : &\b{G} \times \Omega^{d-d_{r}} \rightarrow \b{H}, (R, \m{x}) \mapsto 
    \b{E}_{\m{R}} [\rho(\m{R}) \varphi(R, \m{x})], \\
    \overline{\varphi \otimes \varphi} : &(\b{G} \times \Omega^{d-d_{r}}) \times (\b{G} \times \Omega^{d-d_{r}}) \rightarrow \b{H} \otimes \b{H}, \\
    &((R, \m{x}), (R', \m{x}')) \mapsto \b{E}_{\m{R}} [\rho(\m{R}) \circ (\varphi(R, \m{x}) \otimes \varphi(R', \m{x})) \circ \rho(\m{R}^{-1})],
\end{align*}
which inherits $\b{G}$-invariance from the Haar measure:
\begin{equation*}
    \overline{\varphi}(R, \m{x}) = \overline{\varphi}(I, \m{x}), \quad 
    \overline{\varphi \otimes \varphi}(R, \m{x}, R', \m{x}') = \overline{\varphi \otimes \varphi}(R (R')^{-1}, \m{x}, I, \m{x}').
\end{equation*}
Hence, we simply denote $\overline{\varphi}(\m{x})$ and $\overline{\varphi \otimes \varphi}(R, \m{x}, \m{x}')$, instead of $\overline{\varphi}(R, \m{x})$ and $\overline{\varphi \otimes \varphi}(R, \m{x}, I, \m{x}')$, respectively. 

\begin{remark}
Recall from \cref{prop:A1:check} that $\varphi(R, \m{x})(\m{z}) = \s{P}_{R} k_{\m{z}} (\m{x})$, hence
\begin{equation*}
    \overline{\varphi}(\m{x})(\m{z}) = \int_{\b{G}} \varphi(R, \m{x})(\m{z}) \rd R = \int_{\b{G}} \s{P}_{R} k_{\m{z}} (\m{x}) \rd R = \left[\s{P}_{I} \left( \int_{\b{G}} k_{R \m{z}} \rd R \right) \right] (\m{x}).
\end{equation*}
For the X-ray transform, this can be computed explicitly, see Proposition B.2. in \cite{yun2025computerized}:
\begin{align*}
    \overline{\varphi}(\m{x})(\m{z}) 
    = \frac{1}{|\b{S}^{d-1}|} \int_{\b{B}^{d} \backslash \|\m{x}\| \cdot \b{B}^{d}} \frac{K(\m{z}, \m{z}')}{\|\m{z}'\|^{d-2} (\|\m{z}'\| -\|\m{x}\|^{2})^{1/2}}  \rd \m{z}',
\end{align*}
which only depends on $\|\m{x}\|$ and $\|\m{z}\|$.
\end{remark}

Crucially, even though $\m{R}_{i} \in \b{G}$ are unknown, these $\b{G}$-invariant feature maps are still computable random elements from the known experimental parameters $\tilde{\m{R}}_{ij}$ and $\m{X}_{ijk}$:
\begin{align*}
    &\overline{\varphi}_{ijk} := \overline{\varphi}(\m{R}_{ij}, \m{X}_{ijk}) = \overline{\varphi}(\m{X}_{ijk}), \\
    &\overline{\varphi}_{iJJ'}^{\otimes} := \overline{\varphi \otimes \varphi}(\m{R}_{ij}, \m{X}_{ijk}, \m{R}_{ij'}, \m{X}_{ij'k'}) = \overline{\varphi \otimes \varphi}(\tilde{\m{R}}_{ij}, \m{X}_{ijk}, \tilde{\m{R}}_{ij'}, \m{X}_{ij'k'}).
\end{align*}

\begin{proposition}\label{prop:cryo:cond:exp}
For the observation model in \eqref{eq:obs:scheme} with nuisance orientations from \eqref{eq:cryo:orient}, the conditional expectations are given by:
\begin{align}\label{eq:cryo:cond:2nd}
    &\b{E}[Z_{ijk} \vert (\tilde{\m{R}}_{ij}, \m{X}_{ijk}) ] = \langle \m{\mu}, \overline{\varphi}_{ijk} \rangle_{\b{H}}, \nonumber \\
    &\b{E}[Z_{ijk} Z_{i'j'k'} \vert (\tilde{\m{R}}_{ij}, \m{X}_{ijk}) , (\tilde{\m{R}}_{i'j'}, \m{X}_{i'j'k'}) ] 
    =
    \begin{cases}
        \langle \m{\mu}, \overline{\varphi}_{ijk} \rangle_{\b{H}} \langle \m{\mu}, \overline{\varphi}_{i'j'k'} \rangle_{\b{H}}, &i \neq i', \\
        \langle \m{\Gamma}, \overline{\varphi}_{iJJ'}^{\otimes} \rangle_{\b{H} \otimes \b{H}} + \sigma_{r_{i}}^{2} \delta_{jj'} \delta_{kk'}, &i = i'.
    \end{cases}
\end{align}
Additionally, for any $\m{\mu} \in \b{H}$ and $\m{\Gamma} \in \b{H} \otimes \b{H}$, the inner products are equivalent to those with their $\b{G}$-invariant counterparts:
\begin{equation*}
    \langle \m{\mu}, \overline{\varphi}_{ijk} \rangle_{\b{H}} = \langle \overline{\m{\mu}}, \overline{\varphi}_{ijk} \rangle_{\b{H}}, \quad \langle \m{\Gamma}, \overline{\varphi}_{iJJ'}^{\otimes} \rangle_{\b{H} \otimes \b{H}} = \langle \overline{\m{\Gamma}}, \overline{\varphi}_{iJJ'}^{\otimes} \rangle_{\b{H} \otimes \b{H}}.
\end{equation*}
\end{proposition}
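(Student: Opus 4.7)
The plan is to exploit the unitarity of the representation $\rho$ (referred to as the intertwining property in \cref{prop:unit:repn}) together with the orientational duality $\s{P}_{R} = \s{P}_{I} \circ \rho(R)$ and the bi-invariance of the Haar measure on $\b{G}$, in order to transfer the latent orientation $\m{R}_{i}$ off of the feature map $\varphi(\tilde{\m{R}}_{ij}\m{R}_{i}, \m{X}_{ijk})$ and onto $\c{Y}_{i}$ (or, equivalently, $\m{\mu}$ and $\m{\Gamma}$). The key cocycle identity is $\varphi(R_{1} R_{2}, \m{x}) = \rho(R_{2}^{-1})\,\varphi(R_{1}, \m{x})$, which follows from $\varphi(R, \m{x})(\m{z}) = \s{P}_{R} k_{\m{z}}(\m{x}) = \s{P}_{I} k_{R\m{z}}(\m{x})$ under $\b{G}$-invariance of $K$. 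Specialising to $R_{1} = \tilde{\m{R}}_{ij}$, $R_{2} = \m{R}_{i}$, it gives $\varphi_{ijk} = \rho(\m{R}_{i}^{-1})\,\varphi(\tilde{\m{R}}_{ij}, \m{X}_{ijk})$, which is the workhorse of the proof.

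First I would compute $\b{E}[Z_{ijk} \mid \tilde{\m{R}}_{ij}, \m{X}_{ijk}]$ by writing $\langle \c{Y}_{i}, \varphi_{ijk}\rangle_{\b{H}} = \langle \rho(\m{R}_{i})\c{Y}_{i}, \varphi(\tilde{\m{R}}_{ij}, \m{X}_{ijk})\rangle_{\b{H}}$ via unitarity, and taking expectation over $(\c{Y}_{i}, \m{R}_{i})$ (independent of the conditioning variables and of the mean-zero $\varepsilon_{ijk}$), which yields $\langle \b{E}_{\m{R}_{i}}[\rho(\m{R}_{i})\m{\mu}], \varphi(\tilde{\m{R}}_{ij}, \m{X}_{ijk})\rangle_{\b{H}} = \langle \overline{\m{\mu}}, \varphi(\tilde{\m{R}}_{ij}, \m{X}_{ijk})\rangle_{\b{H}}$. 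The same manoeuvre handles the second moment: for $i=i'$,
\begin{equation*}
    \langle \c{Y}_{i}, \varphi_{ijk}\rangle_{\b{H}}\,\langle \c{Y}_{i}, \varphi_{ij'k'}\rangle_{\b{H}} = \langle \rho(\m{R}_{i})\c{Y}_{i} \otimes \rho(\m{R}_{i})\c{Y}_{i},\; \varphi(\tilde{\m{R}}_{ij},\m{X}_{ijk}) \otimes \varphi(\tilde{\m{R}}_{ij'},\m{X}_{ij'k'})\rangle_{\b{H}\otimes\b{H}},
\end{equation*}
so that conditional expectation produces $\langle \overline{\m{\Gamma}}, \varphi(\tilde{\m{R}}_{ij},\m{X}_{ijk}) \otimes \varphi(\tilde{\m{R}}_{ij'},\m{X}_{ij'k'})\rangle$, with the noise contribution $\sigma_{r_{i}}^{2}\delta_{jj'}\delta_{kk'}$ arising from $\b{E}[\varepsilon_{ijk}\varepsilon_{ij'k'}]$ and from independence of the errors with all other quantities. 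The $i \ne i'$ case factorises by independence across particles and reduces immediately to the product of two conditional means.

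It remains to convert the intermediate expressions $\langle \overline{\m{\mu}}, \varphi(\tilde{\m{R}}_{ij}, \m{X}_{ijk})\rangle$ and $\langle \overline{\m{\Gamma}}, \varphi(\tilde{\m{R}}_{ij},\m{X}_{ijk}) \otimes \varphi(\tilde{\m{R}}_{ij'},\m{X}_{ij'k'})\rangle$ into the stated form, and to verify the equivalences $\langle \m{\mu}, \overline{\varphi}_{ijk}\rangle = \langle \overline{\m{\mu}}, \overline{\varphi}_{ijk}\rangle$ and $\langle \m{\Gamma}, \overline{\varphi}_{iJJ'}^{\otimes}\rangle = \langle \overline{\m{\Gamma}}, \overline{\varphi}_{iJJ'}^{\otimes}\rangle$. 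Both reductions follow by peeling the Haar averaging off $\overline{\m{\mu}}$ (resp. $\overline{\m{\Gamma}}$), transferring $\rho(\m{R})$ via unitarity to the other side of the inner product, substituting $\m{R}\mapsto \m{R}^{-1}$ under bi-invariance of Haar, and invoking the $\b{G}$-invariance of $\overline{\varphi}_{ijk}$ (resp. $\overline{\varphi}_{iJJ'}^{\otimes}$) already established in the passage preceding the proposition. For instance, $\langle \overline{\m{\Gamma}}, \overline{\varphi}_{iJJ'}^{\otimes}\rangle_{HS} = \b{E}_{\m{R}}\langle \m{\Gamma}, \rho(\m{R}^{-1})\,\overline{\varphi}_{iJJ'}^{\otimes}\,\rho(\m{R})\rangle_{HS} = \langle \m{\Gamma}, \overline{\varphi}_{iJJ'}^{\otimes}\rangle_{HS}$.

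The main obstacle I anticipate is the fourth-order tensor bookkeeping in the second-moment step: keeping track that conjugation of an elementary tensor $f\otimes g$ (viewed as a rank-one HS operator) by $\rho(\m{R})$ coincides with the elementary tensor $\rho(\m{R})f \otimes \rho(\m{R})g$, so that the Haar averages defining $\overline{\m{\Gamma}}$ and $\overline{\varphi\otimes\varphi}$ can be correctly swapped between the two slots of the HS pairing via cyclicity of the trace, with the change of variable $\m{R}\mapsto\m{R}^{-1}$ supplied by bi-invariance of Haar on the compact group $\b{G}$. Once this is clean, the rest is simply assembly of independence, linearity, and unitarity.
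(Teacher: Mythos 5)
Your proposal is correct and relies on the same ingredients as the paper's proof: conditional independence, the tower property (averaging out the latent $\m{R}_i$), the cocycle/unitarity relation $\varphi(\tilde{\m{R}}_{ij}\m{R}_{i}, \m{x}) = \rho(\m{R}_{i}^{-1})\varphi(\tilde{\m{R}}_{ij}, \m{x})$ from \cref{prop:A1:check}\,/\,\eqref{eq:gen:cond:indep}, and bi-invariance of Haar. The only difference is one of sequencing: you first push the average onto $\m{\mu}$ and $\m{\Gamma}$ (producing $\overline{\m{\mu}}, \overline{\m{\Gamma}}$) and then convert to the $\overline{\varphi}$-form by a second Haar-transfer step, whereas the paper goes directly to $\overline{\varphi}_{ijk}$ and $\overline{\varphi}_{iJJ'}^{\otimes}$ and verifies the $\overline{\m{\mu}}/\overline{\m{\Gamma}}$ identities afterward as a separate claim -- a cosmetic reordering that does not change the substance.
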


Similar to the standard case, if we define
\begin{align*}
    \overline{L}_{\nu}(f)
    = \frac{\omega_{d_{r}, d}}{n} \sum_{i=1}^{n} \frac{1}{r_{i}} \sum_{j=1}^{r_{i}} \frac{1}{s_{ij}} \sum_{k=1}^{s_{ij}}  \left(Z_{ijk}-\langle f, \overline{\varphi}_{ijk} \rangle_{\b{H}} \right)^{2} +\nu \|f\|^{2}_{\b{H}},
\end{align*}
we recover the representer theorem to estimate $\overline{\m{\mu}}$:
\begin{align*}
    \argmin_{f \in \b{H}} \overline{L}_{\nu}(f) \in \text{span} \{\overline{\varphi}_{ijk}: 1 \le i \le n, 1 \le j \le r_{i}, 1 \le k \le s_{ij} \},
\end{align*}
which is guaranteed to be a $\b{G}$-invariant function. The estimator's coefficients are found by solving a linear system with a modified Gram matrix, by replacing $\varphi_{ijk}$ with $\overline{\varphi}_{ijk}$ in \eqref{eq:mean:Gram}:
\begin{equation*}
    \overline{\m{\Phi}}:= \left[ \overline{\m{\Phi}}_{i_{1}j_{1}k_{1}, i_{2}j_{2}k_{2}}:= \frac{1}{\sqrt{r_{i_{1}}s_{i_{1}j_{1}}}} \frac{1}{\sqrt{r_{i_{2}}s_{i_{2}j_{2}}}} \langle \overline{\varphi}_{i_{1}j_{1}k_{1}}, \overline{\varphi}_{i_{2}j_{2}k_{2}} \rangle_{\b{H}} \right].
\end{equation*}
The estimation of the second moment $\overline{\m{\Gamma}}$ follows the exact same logic, replacing the corresponding quantities in \eqref{eq:tens:Gram} with their $\b{G}$-invariant versions, so we omit the detailed formulation.
Because this $\b{G}$-invariant framework depends only on relative orientations, consistent estimation is feasible, and all asymptotic results from \cref{sec:asym} apply directly.

\section{Simulation Study}\label{sec:simul}
We set $d=2$ and generate $n=100$ random functions. To address low-dose tomography setups \cite{bhagya21, sanaat2020projection, dong19, tong20, simeoni19acoustic}, we sample $r=5$ angles for each function, and $s=10$ locations for each angle.
In homogeneous inverse problems, the 2D Shepp-Logan phantom -- formed by the overlap of several ellipses -- serves as a standard benchmark image for tomographic simulations. Analogously, we generate random functions by superimposing several fixed ellipsoidal-shaped bump functions, each modulated by a random intensity, as illustrated in \cref{fig:rand:phantoms}. Specifically, let $Q = 6$ denote the number of bump functions, each characterized by a location parameter $\m{l}_{q} \in \b{B}^{2}$, and a dispersion matrix $\m{S}_{q} \in \b{R}^{2 \times 2}$. We then define $n$ realizations of random functions as:
\begin{equation*}
    \c{Y}_{i} (\m{z}) = \sum_{q=1}^{Q} \xi_{q}^{i} \Psi[(\m{z} - \m{l}_{q})^{\top} \m{S}_{q}^{-1} (\m{z} - \m{l}_{q})], \quad \m{z} \in \b{B}^{2}, \, 1 \le i \le n,
\end{equation*}
where $\Psi$ is a compactly supported radial function
\begin{equation*}
    \Psi(r) = \frac{\tanh (5 (1 - r))^{2}}{\tanh (5 (1 - r))^{2} + \tanh (5 r)^{2}}  \cdot I(r \le 1).
\end{equation*}
For intensity, the i.i.d. random coefficients $\boldsymbol{\xi}^{i} = (\xi_{1}^{i}, \dots, \xi_{Q}^{i})^{\top}$ are drawn from a multivariate normal distribution $\c{N}_{Q}(\m{m}, \m{C})$ with mean vector $\m{m} \in \b{R}^{Q}$ and covariance matrix $\m{C} \in \b{R}^{Q \times Q}$. 
Consequently, the mean and covariance functions are given by
\begin{align*}
    &\m{\mu} (\m{z}) = \sum_{q=1}^{Q} \m{m}_{q} \Psi[(\m{z} - \m{l}_{q})^{\top} \m{S}_{q}^{-1} (\m{z} - \m{l}_{q})], \quad \m{z} \in \b{B}^{2}, \\
    &\m{\Sigma} (\m{z}, \m{z}') = \sum_{q, q'=1}^{Q} \m{C}_{qq'} \Psi[(\m{z} - \m{l}_{q})^{\top} \m{S}_{q}^{-1} (\m{z} - \m{l}_{q})] \Psi[(\m{z}' - \m{l}_{q'})^{\top} \m{S}_{q'}^{-1} (\m{z}' - \m{l}_{q'})], \quad \m{z}, \m{z}' \in \b{B}^{2}.
\end{align*}


\begin{figure}[h]
\centering
\includegraphics[width=.8\textwidth, height=7cm]{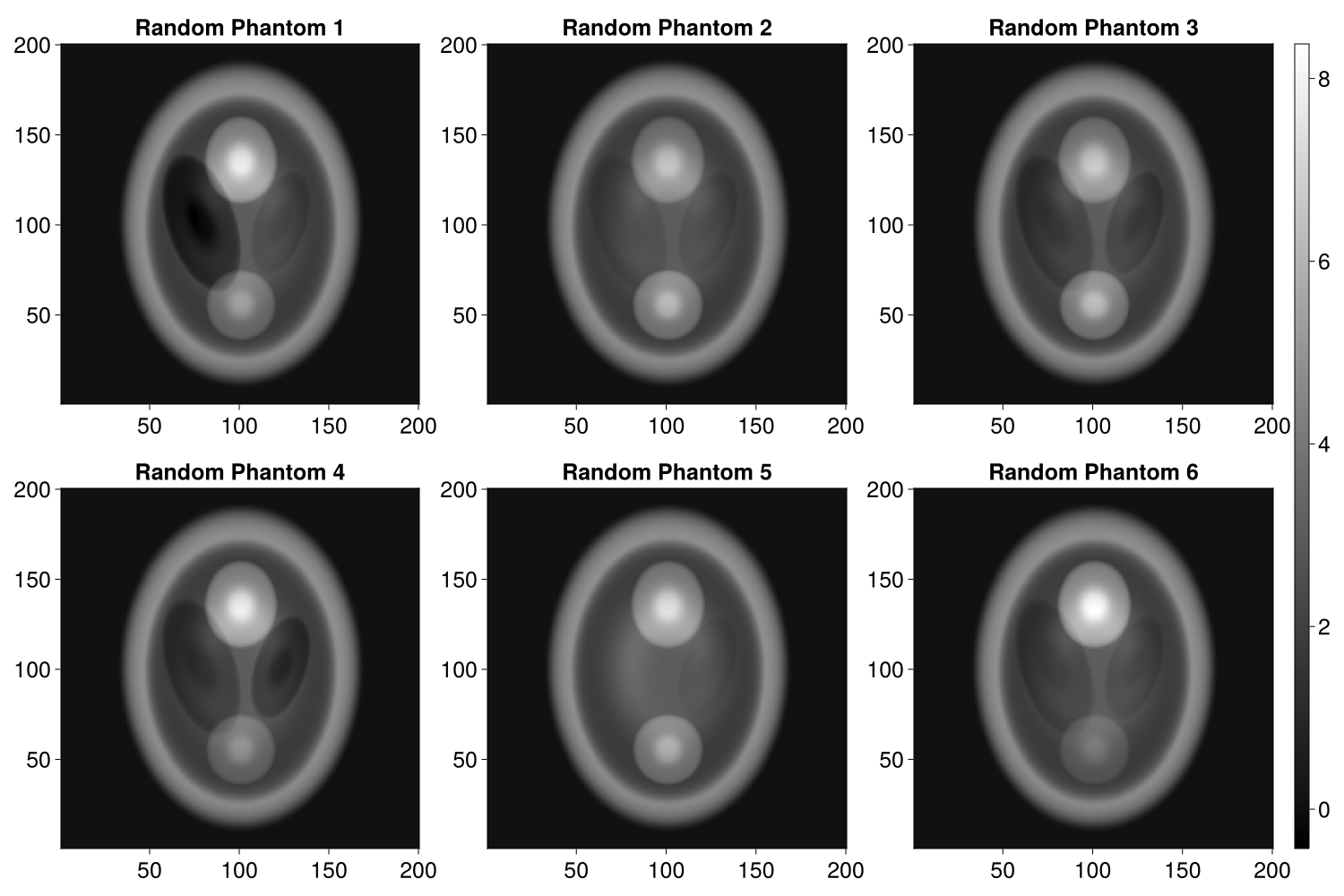}
\caption{Six realizations out of random phantoms, composed of ellipsoidal bump functions with random intensities.
}
\label{fig:rand:phantoms}
\end{figure}


Since $d = 2$, the $k$-plane, Radon, and X-ray transforms all coincide, which is the forward operator used to generate the observed data in \cref{fig:hetero:sino}, where we take the sampling distributions $\b{P}_{\m{R}}$ and $\b{P}_{\m{X}}$ to be uniform on $SO(2)$ and $[-1, +1]$, respectively, with the noise level $\sigma = 0.1$.
In two dimensions, $SO(2)$ is isomorphic to the circle $\b{R}/(2 \pi \b{Z}) $:
\begin{equation*}
    \phi \in \b{R}/(2 \pi \b{Z}) 
    \ \cong \ \m{E}(\phi)= \begin{pmatrix}
    \cos \phi & -\sin \phi \\
    \sin \phi & \cos \phi
    \end{pmatrix} \in SO(2).
\end{equation*}
Since the transform satisfies \emph{antipodal} symmetry, i.e., $\s{P} f (\m{E}(\phi), x) = \s{P} f (\m{E}(\phi + \pi), -x)$, instead of sampling from $\b{P}_{\m{R}}$, we equivalently sample $\phi \sim \mathrm{Unif}[0, \pi)$, which avoids redundancy in data acquisition.

\begin{figure}[h]
\centering
\includegraphics[width=\textwidth, height=4.5cm]{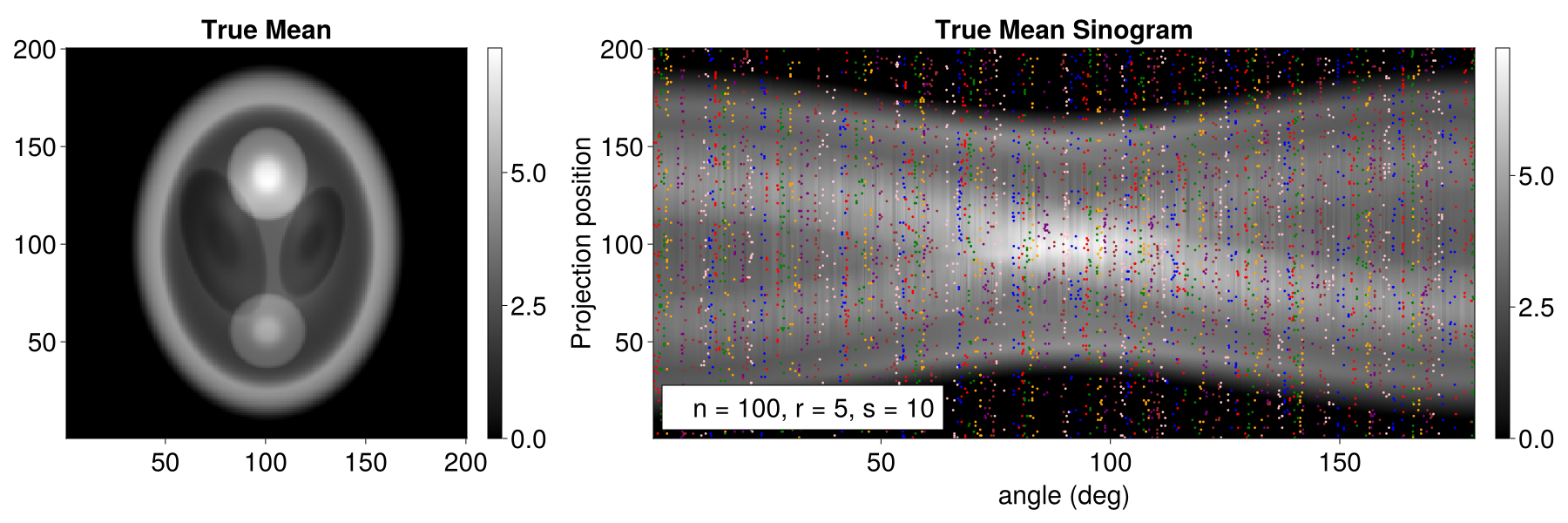}
\caption{The mean function $\m{\mu}$ is depicted on the left panel, with the corresponding sinogram $\s{P} \m{\mu}$ is shown on the right panel in grayscale. Overlaid are colored dots representing i.i.d. pairs of viewing angle and positions $(\boldsymbol{\phi}_{ij}, \m{X}_{ijk}) \sim \mathrm{Unif}([0, \pi) \times [-1, +1])$, with seven different colors indicating different random functions.}
\label{fig:hetero:sino}
\end{figure}

For the estimation, we employ the Gaussian kernel 
\begin{equation*}
    K_{\gamma}(\m{z}_{1}, \m{z}_{2}) = \exp(-\gamma \|\m{z}_{1}-\m{z}_{2}\|^{2}), \quad \gamma >0, \, \m{z}_{1}, \m{z}_{2} \in \b{B}^{2}.
\end{equation*}
with bandwidth parameter $\gamma =2^{8}$. The induced kernel $\tilde{K}$ used to construct the mean Gram matrix $\m{\Phi}$ can be computed explicitly; see \cite{yun2025computerized} for details. For both the mean and second-moment estimations shown in \cref{fig:second}, we set the regularization parameters to $\nu = \eta = 2^{-8}$ for the representer theorem in \cref{thm:repre:mean}, using the implementation detailed in \cref{sec:num:consid}.

\begin{figure}[h]
\centering
\includegraphics[width=.8\textwidth, height=6.5cm]{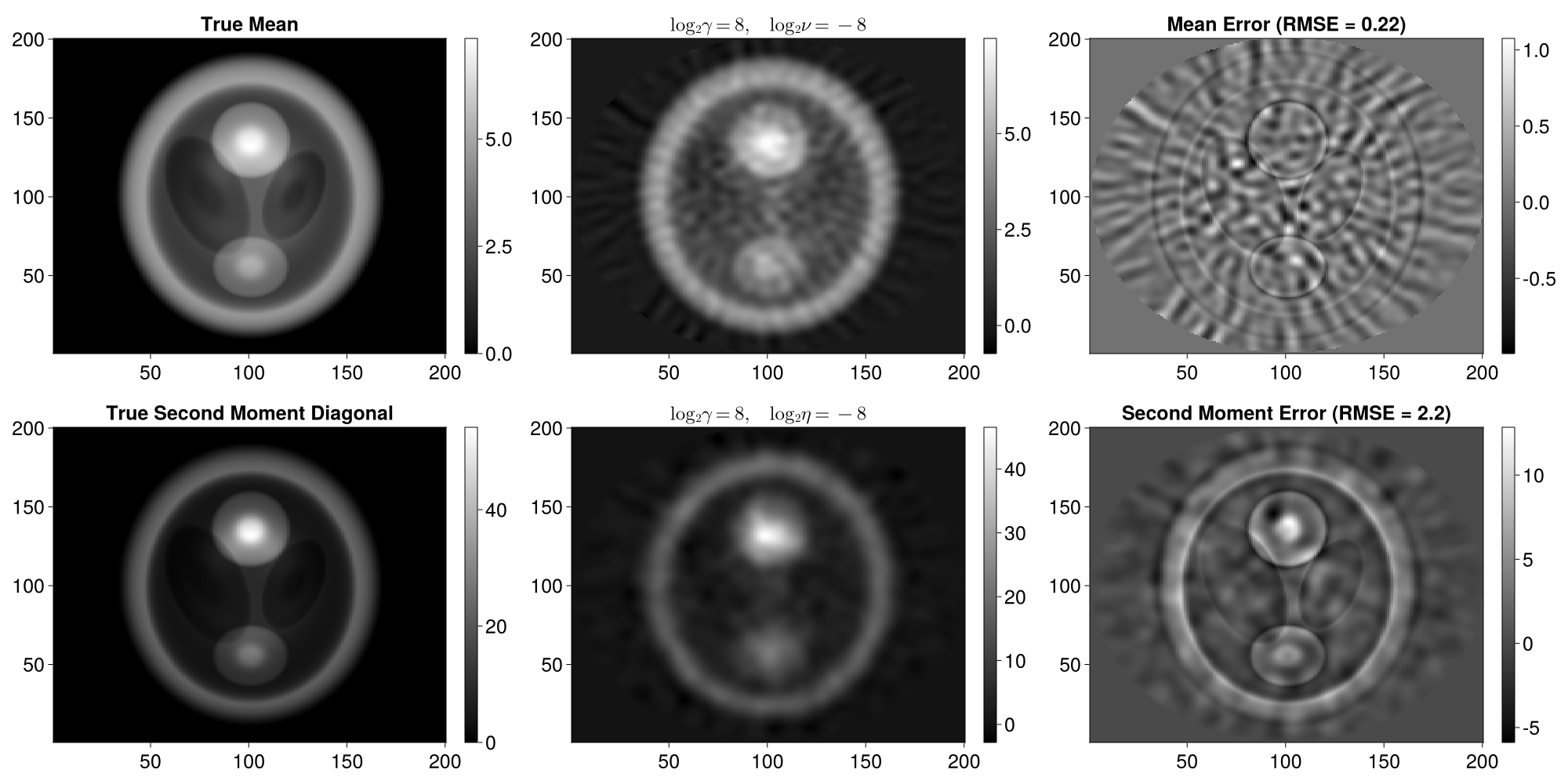}
\caption{Top row: the ground truth and estimated mean function $\m{\mu}(\m{z})$. Bottom row: the diagonal of the second-moment function $\m{\Gamma}(\m{z}, \m{z})$, for $\m{z} \in \b{B}^{2}$.  
Left column: ground truth. Middle column: estimators $\hat{\m{\mu}}_{\nu}(\m{z})$ and $\hat{\m{\Gamma}}_{\eta}(\m{z}, \m{z})$ computed using the Gaussian kernel $K_{\gamma}$. Right column: pointwise absolute error between the estimates and ground truth, with reported Root Mean Squared Error (RMSE) for quantitative comparison.}
\label{fig:second}
\end{figure}

After obtaining $\hat{\m{\mu}}_{\nu}$ and $\hat{\m{\Gamma}}_{\eta}$, we perform kernel PCA using \cref{thm:fpca}:
\begin{equation*}
    \hat{\m{\Sigma}}_{\nu, \eta} = \hat{\m{\Gamma}}_{\eta} - \hat{\m{\mu}}_{\nu} \otimes \hat{\m{\mu}}_{\nu} = \sum_{l=1}^{\mathrm{rank}(\hat{\m{\Sigma}}_{\nu, \eta})} \hat{\lambda}_{l} \ \hat{\psi}_{l} \otimes \hat{\psi}_{l} \in \b{H} \otimes \b{H},
\end{equation*}
where each principal component $\hat{\psi}_{l} \in \b{H}$ is an orthonormal eigenfunction corresponding to the eigenvalue $\hat{\lambda}_{l} \in \b{R}$. The results are illustrated in \cref{fig:fpca}, where we observe that the first principal component effectively captures the dominant covariance structure in our simulation. 

\begin{figure}[h]
\centering
\includegraphics[width=\textwidth, height=7cm]{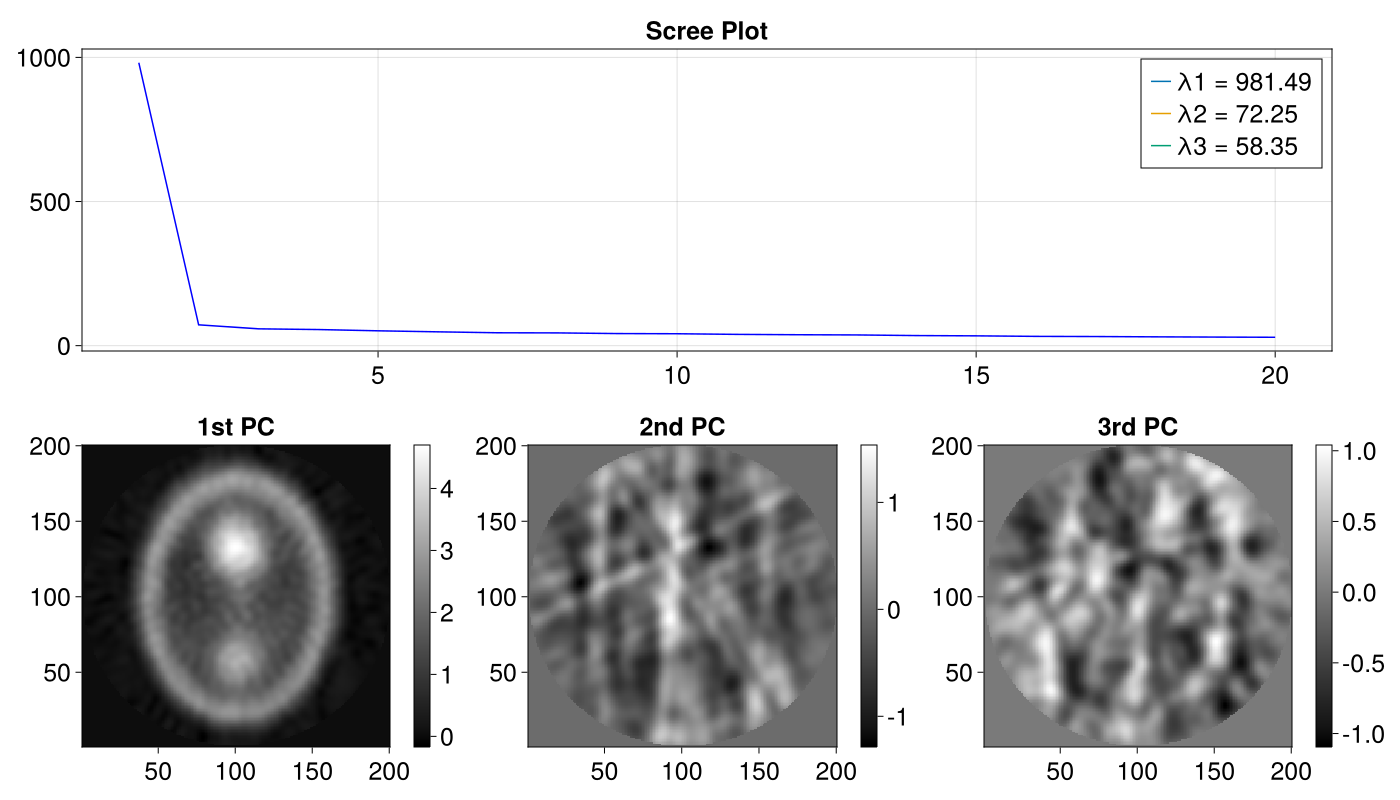}
\caption{Top panel: Scree plot of the first 20 eigenvalues of the estimated covariance operator. A clear spectral gap is observed after the leading eigenvalue, indicating that the first principal component dominates the variability. Bottom panels: The first three principal components of the covariance operator.}
\label{fig:fpca}
\end{figure}


\section{Conclusion}
Our work combines methods from inverse problems \cite{hanke2017taste, kaipio2006statistical, engl1996regularization} and FDA \cite{hsing2015theoretical, ramsay2002applied} to provide statistical foundations for the central challenge of continuous heterogeneity. Our framework provides a consistent, theoretically-grounded solution by using the representer theorems in a RKHS setup, which reduces the problem to a finite-dimensional regression made practical by our efficient \texttt{TReK} algorithm to overcome the limitations of traditional FBP \cite{natterer2001mathematics}. By enabling the principled estimation of covariance, our method facilitates the quantification of conformational variability from pooled data, making it especially valuable in low-dose, high-noise imaging regimes. The framework encompasses several tomographic modalities -- including electron, photoacoustic, and optical tomography \cite{bhagya21, sanaat2020projection, dong19, tong20, simeoni19acoustic} -- and is supported by rigorous asymptotic guarantees.





\begin{supplement}
\stitle{A. Sinogram Operators}
\sdescription{It shows how our theoretical results, based on orientational duality, extend beyond the X-ray transform to general linear inverse problems.}
\end{supplement}
\begin{supplement}
\stitle{B. Implementation}
\sdescription{It provides detailed implementation notes for our estimators. It focuses on the Krylov methods used to efficiently solve the large linear system arising from the covariance estimation.}
\end{supplement}
\begin{supplement}
\stitle{C. Proofs}
\sdescription{It contains the detailed mathematical proofs for all \emph{theorems, propositions, and lemmas}.}
\end{supplement}
\begin{supplement}
\stitle{D. Index}
\sdescription{It provides a comprehensive \emph{list of notations}, serving as a quick reference.}
\end{supplement}

\bibliographystyle{imsart-number}
\bibliography{bibliography}

\newpage

\section*{}

\renewcommand{\thesection}{\Alph{section}}
\renewcommand{\theequation}{S.\arabic{equation}}

\setcounter{page}{1}

\setcounter{equation}{0}
\setcounter{section}{0}
\setcounter{subsection}{0}

\centerline{\bf \large Supplementary Material}


\bigskip

\section{Sinogram Operators}\label{sec:sino:op}
Throughout the Supplement, we consider our problem setup and the associated proofs in a more general setup. First, we let an input $f$ to be a $d$-dimensional functions defined on a shared support $\Omega^{d}$, a compact subset of $\b{R}^{d}$, and output $g$ as $(d-d_{r})$-dimensional functions defined on a shared compact support $\Omega^{d-d_{r}} \subset \b{R}^{d-d_{r}}$, where $0 \leq d_{r} \leq d$. For simplicity, we consider the Lie group $\b{G}$ to be compact, so that $\b{G}$ ensures the uniqueness of the normalized Haar measure. Additionally, we conceptualize the set of orientations as a Lie group acting on $\Omega^{d}$ from the left, denoted as $\b{G}$. We denote the left action of $R \in \b{G}$ by $\rho(R): \Omega^{d} \rightarrow \Omega^{d}, \m{z} \mapsto R \m{z}$. The forward operator $\s{P}_{R}$ at orientation $R \in \b{G}$  generates a $(d-d_{r})$-dimensional projection image of the $d$-dimensional input function.

We present what type of forward operators $\{\s{P}_{R}: R \in \b{G}\}$ across orientations are considered in the context of inverse problems. 
While the forward operator $f \mapsto \s{P}_{R} f$ could be well-defined on different domains or ranges, the analytical properties of the operator $\s{P}_{R}$ depend on these choices, affecting the quality of reconstructed image \cite{hanke2017taste, yun2025computerized}. As a starting point, we first assume that $\s{P}_{R}$ is a well-defined continuous linear operator between some weighted $\c{L}_{2}$ spaces for any orientation $R \in \b{G}$. Then, we restrict the domain by an RKHS, and discuss how this alteration changes the properties of the forward operator. 
Reliable reconstruction necessitates sectional images from multiple orientations, and they should be systematically intertwined, leading to the following definition:

\begin{definition}[Sinogram Operator]\label{def:sino:op}
A collection of forward operators $\s{P} = \{\s{P}_{R} : R \in \b{G}\}$ is said to be a \emph{sinogram} operator if
\begin{enumerate}
\item $\s{P}_{I} \in \c{B}_{\infty}(\c{L}_{2} (\Omega^{d}, W_{0, d}), \c{L}_{2} (\Omega^{d-d_{r}}, W_{d_{r}, d}))$ is a continuous linear operator.
\item If $f: \Omega^{d} \rightarrow \b{R}$ is a continuous function, then $\s{P}_{I}f: \Omega^{d-d_{r}} \rightarrow \b{R}$ is also a continuous function.
\item For any $R \in \b{G}$, it holds that $\s{P}_{R} = \s{P}_{I} \circ \rho(R)$.
\end{enumerate}
In this case, we identify the sinogram operator $\s{P}: \c{L}_{2} (\Omega^{d}, W_{0, d}) \rightarrow \c{L}_{2} (\b{G} \times \Omega^{d-d_{r}}, W_{d_{r}, d}))$ through $\s{P} f (R, \m{x}) := \s{P}_{R} f (\m{x})$.
\end{definition}

$\s{P}_{R} = \s{P}_{I} \circ \rho(R)$ dictates the dual perspective—rotating the viewing orientation is equivalent to rotating the input function in the opposite direction. This leads to $\s{P}^{*}_{R} = \rho(R^{-1}) \circ \s{P}^{*}_{I}$ and that the sinogram operator $\s{P}$ is continuous, with its adjoint given by
\begin{equation*}
    \s{P}^{*} g = \int_{\b{G}} \s{P}^{*}_{R} g(R, \cdot) \rd R, \quad g \in \c{L}_{2} (\b{G} \times \Omega^{d-d_{r}}, W_{d_{r}, d}),
\end{equation*}
in the sense of Bochner integration \cite{da2014stochastic, conway2019course}.

\begin{remark}\label{rmk:kplane}
The $k$-plane transform is a special case of \cref{def:sino:op} with $\b{G}=SO(d)$, $\Omega^{d} = \b{B}^{d}$, $\Omega^{d-d_{r}}=\b{B}^{d-d_{r}}$, and the weight functions $W_{0, d} = 1$ and $W_{d_{r}, d} = (1-\|x\|^{2})^{-d_{r}/2}$. 
However, if we let $\Omega^{d} = \b{R}^{d}$, then $\s{P}_{I}$ is no longer continuous \cite{natterer2001mathematics}, which shows that the compactness of $\Omega^{d}$ is necessary. Later on, the compactness of $\Omega^{d}$ ensures the Mercer decomposition in \cref{sec:Mercer:inv}.
\end{remark} 

We next explain how the Lie group $\b{G}$ induces unitary representations on the input and output function spaces, and how these are connected through duality.

\begin{definition}[Representation]\label{def:repre}
Let $\b{G}$ be a Lie group, and $\c{H}$ be a Hilbert space. A representation $\rho$ of $\b{G}$ on $\c{H}$ is a map $\rho: \b{G} \rightarrow \c{B}_{\infty}(\c{H}), R \mapsto \rho(R)$ such that
\begin{enumerate}
    \item $\rho(I)$ is the identity map and $\rho(R_{1} R_{2}) = \rho(R_{1}) \rho(R_{2})$ for any $R_{1}, R_{2} \in \b{G}$.
    \item (strong operator topology (SOT)) For any $f \in \c{H}$, the map $\b{G} \rightarrow \c{H}, R \mapsto \rho(R) f$ is continuous.
\end{enumerate}
$\rho$ is said to be \emph{unitary} if $\| \rho(R) f \|_{\c{H}} = \| f \|_{\c{H}}$ for any $f \in \c{H}$ and $R \in \b{G}$.
A subspace $\c{H}_{0} \subset \c{H}$ is said to be $\rho$-invariant if $\rho(R) \c{H}_{0} = \c{H}_{0}$ for any $R \in \b{G}$.
Given two representations $(\rho_{1}, \c{H}_{1})$ and $(\rho_{2}, \c{H}_{2})$ of $\b{G}$, a continuous linear operator $\s{P} \in \c{B}_{\infty}(\c{H}_{1}, \c{H}_{2})$ is said to be \emph{intertwining} if $\s{P} \circ \rho_{1}(R) = \rho_{2}(R) \circ \s{P}$ for any $R \in \b{G}$.
\end{definition}

If $\dim \c{H} = \infty$, then the Banach space $\c{B}_{\infty}(\c{H})$ equipped with the norm topology is not separable \cite{da2014stochastic}, hence the SOT is a suitable topology to work with the representation theory, which is weaker than the norm topology \cite{faraut2008analysis, stone1932one}.

\begin{proposition}\label{prop:unit:repn}
Let $(\rho, \c{L}_{2} (\Omega^{d}, W_{0, d}) )$ and $(\rho, \c{L}_{2} (\b{G} \times \Omega^{d-d_{r}}, W_{d_{r}, d}) )$ be defined as in \eqref{eq:left:reg:def1} and \eqref{eq:left:reg:def2}, respectively. Then, they are both unitary representations of $\b{G}$. Moreover, the sinogram operator $\s{P}: \c{L}_{2} (\Omega^{d}, W_{0, d}) \rightarrow \c{L}_{2} (\b{G} \times \Omega^{d-d_{r}}, W_{d_{r}, d})$ is an intertwining operator.
\end{proposition}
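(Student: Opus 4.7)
The proposition has three parts: each of the two actions is a unitary representation, and the sinogram operator intertwines them. My plan is to verify the three requirements of \cref{def:repre} (group homomorphism, strong-operator continuity, isometry) for each representation, and then to deduce the intertwining property from \cref{def:sino:op}(3) together with the homomorphism established in step one.

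For the \emph{homomorphism} on the input side, the identity $(R_1 R_2)^{-1} = R_2^{-1} R_1^{-1}$ gives $[\rho(R_1)\rho(R_2) f](\m{z}) = f(R_2^{-1} R_1^{-1} \m{z}) = [\rho(R_1 R_2) f](\m{z})$ pointwise; on the output side, it is a one-line calculation from \eqref{eq:left:reg:def2} using associativity in $\b{G}$, and $\rho(I) = \mathrm{Id}$ is immediate in both cases. For \emph{unitarity} on the input, I change variables $\m{y} = R^{-1}\m{z}$: the volume-preservation of the standard $\b{G}$-action (e.g.\ $SO(d)$-rotations) combined with the assumed $\b{G}$-invariance of $W_{0,d}$ from \cref{sec:setup} yields $\|\rho(R) f\|_{W_{0,d}} = \|f\|_{W_{0,d}}$. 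On the output, the $\b{G}$-action affects only the first coordinate, so the (left-)translation invariance of the normalized Haar measure $\rd R$ delivers isometry, with the $W_{d_r,d}$-piece of the product measure left untouched.

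For \emph{SOT-continuity}, I argue first on the dense subspace $\c{C}(\Omega^{d}) \subset \c{L}_{2}(\Omega^{d}, W_{0, d})$. For continuous $f$, uniform continuity on the compact $\Omega^d$ together with joint continuity of $(R, \m{z}) \mapsto R^{-1}\m{z}$ forces $\sup_{\m{z}}|f(R^{-1}\m{z}) - f(\m{z})| \to 0$ as $R \to I$, whence $L^2$-convergence follows from $\omega_{0,d} < \infty$. A standard $\varepsilon/3$-argument, leaning on the uniform operator-norm bound $\VERT \rho(R) \VERT_\infty = 1$ from unitarity, then extends continuity to all of $\c{L}_2$. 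The output side proceeds identically by continuity of group multiplication together with density of $\c{C}(\b{G} \times \Omega^{d-d_{r}})$.

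The \emph{intertwining} is a short calculation: for any $f$ and any $(R_0, \m{x})$,
\begin{equation*}
[\s{P}(\rho(R) f)](R_0, \m{x}) = \s{P}_{R_0}[\rho(R) f](\m{x}) = \s{P}_I [\rho(R_0)\rho(R) f](\m{x}) = \s{P}_I[\rho(R_0 R) f](\m{x}) = \s{P}_{R_0 R} f(\m{x}),
\end{equation*}
where I invoked \cref{def:sino:op}(3) twice together with the homomorphism of step one; matching the right-hand side against $[\rho(R)(\s{P} f)](R_0, \m{x})$ expanded via \eqref{eq:left:reg:def2} closes the argument. The main obstacle I anticipate is the SOT-continuity on all of $\c{L}_{2}$: pointwise continuity on a dense subspace is easy, but one needs the uniform equicontinuity afforded by unitarity to pass to the closure. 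A secondary care-point is that the change-of-variables step for unitarity silently uses volume-preservation of the $\b{G}$-action, which holds for the canonical actions considered here but should be stated explicitly alongside the $\b{G}$-invariance of $W_{0,d}$.
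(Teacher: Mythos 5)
Your proposal is correct and follows essentially the same route as the paper's proof: algebraic verification of the homomorphism, unitarity from $\b{G}$-invariance of the weight and of the Haar measure, strong-operator continuity via a density (Stone--Weierstrass) and $\varepsilon/3$ argument on the compact domain, and intertwining by the same direct chain $\s{P}_{R_2}\rho(R_1)=\s{P}_I\rho(R_2)\rho(R_1)=\s{P}_I\rho(R_2 R_1)=\s{P}_{R_2 R_1}$. Your flagging of the implicit volume-preservation of the group action as a separate hypothesis from $\b{G}$-invariance of $W_{0,d}$ is a fair and worthwhile care-point that the paper's proof leaves silent.
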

\begin{proof}
Denote by $\c{H}_{1} = \c{L}_{2} (\Omega^{d}, W_{0, d})$ and $\c{H}_{2} = \c{L}_{2} (\b{G} \times \Omega^{d-d_{r}}, W_{d_{r}, d})$. For $\rho: \b{G} \rightarrow \c{B}_{\infty}(\c{H}_{1})$ given in \eqref{eq:left:reg:def1}, it is clear that $\rho(I)$ is the identity map and $\rho(R_{1} R_{2}) = \rho(R_{1}) \rho(R_{2})$ for any $R_{1}, R_{2} \in \b{G}$. Also, they are unitary since the weight $W_{0, d}$ is $\b{G}$-invariant. Therefore, it remains to show that $\rho: \b{G} \rightarrow \c{B}_{\infty}(\c{H}_{1})$ is continuous at $I \in \b{G}$ with respect to the strong operator topology. Let $f \in \c{H}_{1}$ and $\varepsilon>0$ be given. Since $C(\Omega^{d})$ is dense in $\c{H}_{1}$ by the Stone-Weierstrasse theorem, there is some $\tilde{f} \in C(\Omega^{d})$ such that $\| f - \tilde{f} \|_{\c{H}} < \varepsilon$. Also, $\tilde{f}$ is uniformly continuous as $\Omega^{d}$ is compact, so there exists some $\delta > 0$ such that $\|R - I\|<\delta$ implies
\begin{equation*}
    \sup_{\m{z} \in \Omega^{d}} \left| \tilde{f} (R^{-1} \m{z}) - \tilde{f} (\m{z}) \right| < \varepsilon.
\end{equation*}
Thus, whenever $\|R - I\|<\delta$, 
\begin{align*}
    \|\rho(R) \tilde{f} - \tilde{f} \|_{\c{H}}^{2} = \int_{\Omega^{d}} [\tilde{f}(R^{-1} \m{z}) - \tilde{f}(\m{z})]^{2} W_{0, d}(\m{z}) \rd \m{z} \le \omega_{0, d} \cdot \varepsilon^{2},
\end{align*}
where $\omega_{0, d}$ is defined in \eqref{eq:normal:const}, which leads to
\begin{equation*}
    \|\rho(R) f - f \|_{\c{H}} \le \|\rho(R) (f - \tilde{f}) \|_{\c{H}} + \|\rho(R) \tilde{f} - \tilde{f} \|_{\c{H}} + \|\tilde{f} - f \|_{\c{H}} < \varepsilon (2 + \sqrt{\omega_{0, d}}).
\end{equation*}
This demonstrates that $\rho: \b{G} \rightarrow \c{B}_{\infty}(\c{H}_{1})$ is a unitary representation, and one can also prove in the same way that $\rho: \b{G} \rightarrow \c{B}_{\infty}(\c{H}_{2})$ is also a unitary representation. Finally, for any $f \in \c{H}_{1}$, we have
\begin{align*}
    [\s{P} \rho(R_{1}) f](R_{2}, \cdot) &= \s{P}_{R_{2}} \circ \rho(R_{1}) f = \s{P}_{I} \rho(R_{2}) \rho(R_{1}) f = \s{P}_{I} \rho(R_{2} R_{1}) f \\
    &= \s{P}_{R_{2} R_{1}} f = \s{P} f(R_{2} R_{1}, \cdot) = [\rho(R_{1}) \s{P} f](R_{2}, \cdot),
\end{align*}
thus $\s{P}: \c{H}_{1} \rightarrow \c{H}_{2}$ is intertwining.
\end{proof}

Even when the sinogram operator $\s{P}$ is injective, $\s{P}^{\dagger}$ is unbounded unless $\c{R}(\s{P})$ is closed in $\c{L}_{2} (\b{G} \times \Omega^{d-d_{r}}, W_{d_{r}, d})$. This implies that even a small perturbation could have a fatal impact on our reconstruction. The $k$-plane transform is compact with infinite-dimensional range, hence $\s{P}^{\dagger}$ is necessarily unbounded \cite{conway2019course}. 
Moreover, with only finitely many orientations $R_{1}, \dots, R_{r} \in \b{G}$, it is often the case that $\cap_{j=1}^{r} \c{N}(\s{P}_{R_{j}}) \neq \{0\}$—as is true for the $k$-plane transform \cite{natterer2001mathematics}. This underscores the inherent ill-posedness: reconstruction from limited views admits infinitely many solutions \cite{hanke2017taste, hadamard1902problemes, natterer1980sobolev}. Regularization techniques, such as truncated SVD \cite{izen1988series, maass1987x}, are therefore essential. Since the sinogram operator $\s{P}$ leads to an ill-posed inverse problem, we seek kernels that induce a reproducing property compatible with $\s{P}$.

\begin{definition}[Sinogram Kernel]\label{def:sino:ker}
Let $\s{P}$ be a sinogram operator as in \cref{def:sino:op}. A Mercer kernel $K: \Omega^{d} \times \Omega^{d} \rightarrow \b{R}$ on $\Omega^{d}$ is said to be a sinogram kernel if
\begin{enumerate}
\item $K$ is $\b{G}$-invariant, i.e. 
\begin{equation}\label{eq:G-inv}
    K(\m{z}_{1}, \m{z}_{2}) = K(R \m{z}_{1}, R \m{z}_{2}), \quad R \in \b{G}, \m{z}_{1}, \m{z}_{2} \in \Omega^{d}.
\end{equation}
\item At orientation $I \in \b{G}$, there is some constant $B_{\m{x}} > 0$ for every $\m{x} \in \Omega^{d-d_{r}}$ such that 
\begin{equation}\label{eq:sino:kernel}
    |\s{P}_{I} f ( \m{x})| \le B_{\m{x}} \|f\|_{\b{H}}, \quad f \in \b{H}(K).
\end{equation}
\end{enumerate}
\end{definition}

Note that \eqref{eq:G-inv} is equivalent to $\rho(R) k_{\m{z}} = K(\m{z}, R^{-1} \cdot) = K (R \m{z}, \cdot) = k_{R \m{z}}$,
thus $(\rho, \b{H})$ is a unitary representation of $\b{G}$. Indeed, \eqref{eq:sino:kernel} holds for any orientation $R \in \b{G}$:
\begin{equation}\label{eq:bdd:eval}
    |\s{P}_{R} f ( \m{x})| = |\s{P}_{I} [\rho(R) f] ( \m{x})| \le B_{\m{x}} \|\rho(R) f\|_{\b{H}} = B_{\m{x}} \|f\|_{\b{H}}, \quad f \in \b{H}.
\end{equation}
Due to the Riesz representation theorem, \eqref{eq:bdd:eval} is equivalent to the fact that,
for each $R \in \b{G}$ and $\m{x} \in \Omega^{d-d_{r}}$, there is some $\varphi(R, \m{x}) \in \b{H}$ with $\| \varphi(R, \m{x}) \|_{\b{H}} \le B_{\m{x}}$ with
\begin{equation}\label{eq:eval:ftnal}
    \langle \varphi(R, \m{x}), f \rangle_{\b{H}} = \s{P} f (R, \m{x}), \quad f \in \b{H}.
\end{equation}
Using the unitary representation $(\rho, \b{H})$, it is immediate that
\begin{equation}\label{eq:gen:cond:indep}
    \varphi(R, \m{x}) = \rho(R^{-1}) \varphi(I, \m{x}), \quad R \in \b{G}, \m{x} \in \Omega^{d-d_{r}}.
\end{equation}

Since $K$ is a Mercer kernel, the feature map $\varphi(R, \m{x}) \in \b{H}(K)$ is a continuous function. The following proposition offers a description how to evaluate this map, which is a fundamental building block to obtain representer theorems in \cref{sec:scheme}: 

\begin{proposition}\label{prop:A1:check}
Let $\s{P}$ be a sinogram operator and $K: \Omega^{d} \times \Omega^{d} \rightarrow \b{R}$ be a sinogram kernel as in \cref{def:sino:ker}. 
Then $\varphi(R, \m{x}): \Omega^{d} \rightarrow \b{R}$ in \eqref{eq:eval:ftnal} is characterized by
\begin{equation*}
    \varphi(R, \m{x})(\m{z}) = \s{P}_{R} k_{\m{z}} (\m{x}), \quad \m{x} \in \Omega^{d-d_{r}}, \m{z} \in \Omega^{d}, R \in \b{G},
\end{equation*}
and for any $R, R' \in \b{G}$ and $\m{x}, \m{x}' \in \Omega^{d-d_{r}}$,
\begin{equation}\label{eq:Gram:elem}
    \langle \varphi(R, \m{x}), \varphi(R', \m{x}') \rangle_{\b{H}} = \langle \varphi(I, \m{x}), \varphi(R'R^{-1}, \m{x}') \rangle_{\b{H}} = [\s{P}_{I} \varphi(R'R^{-1}, \m{x}')](\m{x}).
\end{equation}
Additionally, $\varphi: \b{G} \times \Omega^{d-d_{r}} \rightarrow \b{H}, (R, \m{x}) \mapsto \varphi(R, \m{x})$ is a continuous mapping.
\end{proposition}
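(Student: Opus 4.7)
The plan is to chain the two reproducing properties available — the Mercer reproducing property of $K$ and the feature-map identity \eqref{eq:eval:ftnal} — and then exploit the orientational duality \eqref{eq:gen:cond:indep} together with unitarity of $\rho$. The two displayed algebraic identities are routine once the right chain is set up; joint continuity of $\varphi$ is the delicate part.

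For the pointwise formula, I evaluate $\varphi(R,\m{x})$ at $\m{z}$ using the reproducing property of $K$ in $\b{H}(K)$, and then apply \eqref{eq:eval:ftnal} with $f = k_{\m{z}}$:
\begin{equation*}
\varphi(R,\m{x})(\m{z}) = \langle \varphi(R,\m{x}), k_{\m{z}} \rangle_{\b{H}} = \s{P} k_{\m{z}}(R,\m{x}) = \s{P}_R k_{\m{z}}(\m{x}).
\end{equation*}
For the Gram formula, I substitute \eqref{eq:gen:cond:indep} into both factors of the inner product, migrate one representation factor across via $\rho(R^{-1})^{*} = \rho(R)$ (unitarity), and then fold the resulting $\rho(R(R')^{-1})$ back into a single feature map by a second application of \eqref{eq:gen:cond:indep}, noting $[R(R')^{-1}]^{-1} = R' R^{-1}$:
\begin{equation*}
\langle \varphi(R,\m{x}), \varphi(R',\m{x}') \rangle_{\b{H}} = \langle \varphi(I,\m{x}), \rho(R(R')^{-1}) \varphi(I,\m{x}') \rangle_{\b{H}} = \langle \varphi(I,\m{x}), \varphi(R' R^{-1}, \m{x}') \rangle_{\b{H}}.
\end{equation*}
A final application of \eqref{eq:eval:ftnal} with $f = \varphi(R'R^{-1},\m{x}')$ rewrites the rightmost inner product as $\s{P}_I \varphi(R' R^{-1}, \m{x}')(\m{x})$, producing all three equalities in \eqref{eq:Gram:elem}.

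For continuity, I use \eqref{eq:gen:cond:indep} once more to write $\varphi(R,\m{x}) = \rho(R^{-1}) \varphi(I,\m{x})$; then the triangle inequality, combined with $\|\rho(R^{-1})\| = 1$ and SOT-continuity of $\rho$, reduces joint continuity of $\varphi$ to norm-continuity of $\m{x} \mapsto \varphi(I,\m{x})$ in $\b{H}$. This last step is the \emph{main obstacle}. Weak continuity is immediate from \eqref{eq:eval:ftnal}: for every $f \in \b{H}$, the map $\m{x} \mapsto \langle \varphi(I,\m{x}), f \rangle_{\b{H}} = \s{P}_I f(\m{x})$ is continuous on $\Omega^{d-d_r}$ by property (ii) of \cref{def:sino:op} (since $\b{H}(K) \subset \c{C}(\Omega^{d})$). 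Pointwise-in-$f$ boundedness then follows from compactness of $\Omega^{d-d_r}$, and the uniform boundedness principle upgrades this to $B := \sup_{\m{x}} \|\varphi(I,\m{x})\|_{\b{H}} < \infty$. Closing the gap between weak and strong convergence along a sequence $\m{x}_n \to \m{x}$ amounts to showing $\|\varphi(I,\m{x}_n)\|_{\b{H}} \to \|\varphi(I,\m{x})\|_{\b{H}}$, equivalently diagonal continuity of $\tilde{K}(\m{x},\m{x}') := \s{P}_I \varphi(I,\m{x}')(\m{x})$; I expect to handle this via an equicontinuity/Ascoli-type argument leveraging the uniform bound $B$ together with the separate continuity of $\tilde{K}$ established above.
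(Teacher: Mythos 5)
Your handling of the first two claims is correct and matches the paper's proof step-for-step: the pointwise formula is the reproducing property plus \eqref{eq:eval:ftnal} applied to $f=k_{\m{z}}$, and the Gram formula is one pass of \eqref{eq:gen:cond:indep} followed by unitarity of $\rho$ and one more pass of \eqref{eq:gen:cond:indep}.

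The continuity argument, however, is not complete, and the missing step is not routine. You correctly reduce joint continuity to strong continuity of $\m{x}\mapsto\varphi(I,\m{x})$, and you correctly observe that, given weak continuity and a uniform norm bound, the residual obstruction is continuity of $\m{x}\mapsto\|\varphi(I,\m{x})\|_{\b{H}}^{2}=\tilde{K}^{I}(\m{x},\m{x})$. But the closing mechanism you announce — ``an equicontinuity/Ascoli-type argument leveraging the uniform bound $B$ together with the separate continuity of $\tilde K$'' — does not exist as a general device. A symmetric positive-definite kernel on a compact metric space can be separately continuous and uniformly bounded and still have a discontinuous diagonal: take the one-point compactification $X=\mathbb Z\cup\{\infty\}$ with $K(n,m)=\delta_{nm}$ for $n,m\in\mathbb Z$ extended by zero on $\{\infty\}$; each slice is continuous, $|K|\le 1$, but $K(n,n)\equiv 1\not\to 0=K(\infty,\infty)$. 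So separate continuity plus boundedness cannot by themselves deliver what you need, and moreover your reduction is circular: diagonal continuity of $\tilde K^{I}$ is precisely the statement you are trying to prove, so you cannot ``leverage'' any continuity of $\tilde K^{I}$ stronger than the separate continuity you already have.

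The paper instead uses the specific structure rather than abstract kernel properties: it expands
\begin{equation*}
\|\varphi(I,\m{x})-\varphi(I,\m{x}')\|_{\b{H}}^{2}
=\bigl([\s{P}\varphi(I,\m{x})](I,\m{x})-[\s{P}\varphi(I,\m{x})](I,\m{x}')\bigr)
-\bigl([\s{P}\varphi(I,\m{x}')](I,\m{x})-[\s{P}\varphi(I,\m{x}')](I,\m{x}')\bigr)
\end{equation*}
and argues from continuity of the induced continuous functions $\s{P}\varphi(I,\m{x})$ and $\s{P}\varphi(I,\m{x}')$ (via property (ii) of \cref{def:sino:op} combined with $\b{H}(K)\subset\c{C}(\Omega^{d})$ from $K$ being Mercer), not from abstract kernel theory. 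This is the point your proposal is missing: you need to exploit that $\varphi(I,\m{x})$ is itself in $\c{C}(\Omega^{d})$ and that $\s{P}_{I}$ maps $\c{C}(\Omega^{d})$ into $\c{C}(\Omega^{d-d_{r}})$, which gives you an identity you can estimate directly, rather than trying to reach the conclusion from weak continuity and boundedness alone.
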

\begin{proof}
First, note that
\begin{equation*}
    \varphi(R, \m{x})(\m{z}) = \langle \varphi(R, \m{x}), k_{\m{z}} \rangle_{\b{H}} =\s{P} k_{\m{z}}(R, \m{x}) = \s{P}_{R} k_{\m{z}} (\m{x}), \quad \m{x} \in \Omega^{d-d_{r}}, \m{z} \in \Omega^{d}, R \in \b{G}.
\end{equation*}
Additionally, due to \eqref{eq:gen:cond:indep}, we have
\begin{equation*}
    \langle \varphi(R, \m{x}), \varphi(R', \m{x}') \rangle_{\b{H}} = \langle \varphi(I, \m{x}), \varphi(R'R^{-1}, \m{x}') \rangle_{\b{H}} = [\s{P}_{I} \varphi(R'R^{-1}, \m{x}')](\m{x}).
\end{equation*}
To show that $\varphi: \b{G} \times \Omega^{d-d_{r}} \rightarrow \b{H}, (R, \m{x}) \mapsto \varphi(R, \m{x})$ is continuous, it suffices to show the continuity with $R=I$ fixed due to \cref{prop:unit:repn}. For $\m{x}, \m{x}' \in \Omega^{d-d_{r}}$,
\begin{small}
\begin{align*}
    \|\varphi(I, \m{x}) - \varphi(I, \m{x}')\|_{\b{H}}^{2} = \left([\s{P} \varphi(I, \m{x})](I, \m{x}) - [\s{P} \varphi(I, \m{x})](I, \m{x}') \right) - \left([\s{P} \varphi(I, \m{x}')](I, \m{x}) - [\s{P} \varphi(I, \m{x}')](I, \m{x}') \right).
\end{align*}    
\end{small}
Since $\varphi(I, \m{x}), \varphi(I, \m{x}')$ are continuous functions, both $\s{P} \varphi(I, \m{x}), \s{P} \varphi(I, \m{x}')$ are also continuous functions, so the above equation converges to $0$ whenever $|\m{x} - \m{x}'| \rightarrow 0$.
\end{proof}

Since $\b{G} \times \Omega^{d-d_{r}}$ is compact, the continuity of $\varphi: \b{G} \times \Omega^{d-d_{r}} \rightarrow \b{H}, (R, \m{x}) \mapsto \varphi(R, \m{x})$
ensures that $\sup_{R, \m{x}} \|\varphi(R, \m{x})\|_{\b{H}}$ is finite, say $B > 0$. Henceforth, we will use $B > 0$ instead of $B_{\m{x}}$ in \eqref{eq:sino:kernel}.

\begin{remark}\label{rmk:xray:arbi:kern}
Any $SO(d)$-invariant Mercer kernel $K: \b{B}^{d} \times \b{B}^{d} \rightarrow \b{R}$ on $\b{B}^{d}$ is a sinogram kernel associated to the $k$-plane transform in \cref{rmk:kplane}. By the compactness of $\b{B}^{d}$, $K$ is bounded by some constant, say $(B/|\b{B}^{d_{r}}|)^{2} >0$. Then, the Cauchy-Schwarz inequality yields
\begin{align*}
    \sup_{\m{x} \in \b{B}^{d-d_{r}}} |\s{P}_{I} f (\m{x})| 
    &= \left| \int_{(1-\|\m{x}\|^{2})^{1/2} \cdot \b{B}^{d_{r}}} f( [\m{x}:z]) \rd z \right|
    = \int_{(1-\|\m{x}\|^{2})^{1/2} \cdot \b{B}^{d_{r}}} |\langle f, k_{[\m{x}:z]} \rangle_{\b{H}}| \rd z\\
    &\le \int_{(1-\|\m{x}\|^{2})^{1/2} \cdot \b{B}^{d_{r}}} \|f\|_{\b{H}} \frac{B}{|\b{B}^{d_{r}}|} \rd z  
    \le B \|f\|_{\b{H}}.
\end{align*}   
\end{remark}

We show that the range of the operator $\s{P}$ is again an RKHS. \cref{thm:induced:RKHS} generalizes the central theorem in \cite{yun2025computerized} with a simpler proof, and also incorporates the pull-back theorem (Theorem 5.7 in \cite{paulsen2016introduction}).

\begin{theorem}\label{thm:induced:RKHS}
Let $\s{P}$ be a sinogram operator and $K: \Omega^{d} \times \Omega^{d} \rightarrow \b{R}$ be a sinogram kernel. Define bivariate functions $\tilde{K}^{I}: \Omega^{d-d_{r}} \times \Omega^{d-d_{r}} \rightarrow \b{R}$ and $\tilde{K}: (\b{G} \times \Omega^{d-d_{r}}) \times (\b{G} \times \Omega^{d-d_{r}}) \rightarrow \b{R}$ by
\begin{align*}
    \tilde{K}^{I} (\m{x}, \m{x}') := \langle \varphi(I, \m{x}), \varphi(I, \m{x}') \rangle_{\b{H}}, \quad
    \tilde{K} ((R, \m{x}), (R', \m{x}')) := \langle \varphi(R, \m{x}), \varphi(R', \m{x}') \rangle_{\b{H}}.
\end{align*}
Then, both $\tilde{K}^{I}$ and $\tilde{K}$ are Mercer kernels. Also, if we restrict the domain of $\s{P}_{R}$ and $\s{P}$ to $\b{H}(K)$, then both $\s{P}_{R}: \b{H}(K) \rightarrow \b{H}(\tilde{K}^{I})$ and $\s{P}: \b{H}(K) \rightarrow \b{H}(\tilde{K})$ are well-defined contractive, surjective operators. Considering the sinogram operator as an operator between RKHSs, the following holds for any $R \in \b{G}$:
\begin{enumerate}
\item $\s{P}_{R}^{*}: \b{H}(\tilde{K}^{I}) \rightarrow \b{H}(K)$ and $\s{P}^{*}: \b{H}(\tilde{K}) \rightarrow \b{H}(K)$ are isometries, i.e. $\s{P}_{R} \circ \s{P}_{R}^{*}$ are identity operators on $\b{H}(\tilde{K}^{I})$ and $\b{H}(\tilde{K})$, respectively. Additionally, they share the same range given by
\begin{equation*}
    \c{R}(\s{P}^{*}) = \c{R}(\s{P}_{R}^{*}) = \overline{\spann\{\varphi(I, \m{x}) : \m{x} \in \Omega^{d-d_{r}} \}}^{\b{H}(K)},
\end{equation*}
where $\overline{\cdot}^{\b{H}(K)}$ denotes the closure in $\b{H}(K)$.

\item For $\tilde{K}^{I}$, it holds that $\tilde{k}^{I}_{\m{x}} = \s{P}_{R} \varphi(R, \m{x})$ and $\s{P}_{R}^{*} \tilde{k}^{I}_{\m{x}} =  \varphi(R, \m{x})$. For $\tilde{K}$, it holds that $\tilde{k}_{(R, \m{x})} = \s{P} \varphi(R, \m{x})$ and $\s{P}^{*} \tilde{k}_{(R, \m{x})} =  \varphi(R, \m{x})$.

\item Both $\b{H}(\tilde{K}^{I})$ and $\b{H}(\tilde{K})$ are equipped with the range norm in the following sense \cite{sarason2007complex}:
\begin{align*}
    &\|g_{R}\|_{\b{H}(\tilde{K}^{I})}=\|\s{P}_{R}^{*} g_{R}\|_{\b{H}(K)}= \min \left\{\|f\|_{\b{H}(K)} : g_{R}=\s{P}_{R}f \right\}, \quad g_{R} \in \b{H}(\tilde{K}^{I}), \\
    &\|g\|_{\b{H}(\tilde{K})}=\|\s{P}^{*} g\|_{\b{H}(K)}= \min \left\{\|f\|_{\b{H}(K)} : g=\s{P} f \right\}, \quad g \in \b{H}(\tilde{K}).
\end{align*}
\end{enumerate}
\end{theorem}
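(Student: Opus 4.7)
The plan is to leverage the feature-map characterization of an RKHS, since both $\tilde{K}^{I}$ and $\tilde{K}$ are defined as inner products of the $\b{H}(K)$-valued features $\varphi$ analyzed in \cref{prop:A1:check}. Concretely, I would realize $\b{H}(\tilde{K}^{I})$ and $\b{H}(\tilde{K})$ as isometric copies of the closed linear span of the $\varphi$'s sitting inside $\b{H}(K)$, with each sinogram operator then playing the role of the corresponding coisometric quotient.

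First I would verify that $\tilde{K}^{I}$ and $\tilde{K}$ are Mercer kernels: symmetry is immediate from the inner product representation, positive semi-definiteness from the computation $\sum_{i,j} c_{i} c_{j}\, \tilde{K}(\xi_{i}, \xi_{j}) = \|\sum_{i} c_{i}\, \varphi(\xi_{i})\|_{\b{H}}^{2} \ge 0$, and continuity is inherited from the continuity of $\varphi: \b{G} \times \Omega^{d-d_{r}} \to \b{H}$ already established in \cref{prop:A1:check}. Moore-Aronszajn then produces the RKHSs $\b{H}(\tilde{K}^{I})$ and $\b{H}(\tilde{K})$.

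Next I would build $\s{P}^{*}$ directly on the dense set of generators by setting $\s{P}^{*} \tilde{k}_{(R, \m{x})} := \varphi(R, \m{x})$ and extending linearly. The identity
\[
\left\|\sum_{i} c_{i}\, \tilde{k}_{(R_{i}, \m{x}_{i})}\right\|_{\b{H}(\tilde{K})}^{2} = \sum_{i,j} c_{i} c_{j}\, \tilde{K}((R_{i}, \m{x}_{i}), (R_{j}, \m{x}_{j})) = \left\|\sum_{i} c_{i}\, \varphi(R_{i}, \m{x}_{i})\right\|_{\b{H}(K)}^{2}
\]
certifies that the assignment is well-defined and extends to a linear isometry $\s{P}^{*}: \b{H}(\tilde{K}) \hookrightarrow \b{H}(K)$ whose range is the closed linear span of $\{\varphi(R, \m{x}): R \in \b{G}, \m{x} \in \Omega^{d-d_{r}}\}$; an analogous computation at fixed $R$ yields $\s{P}_{R}^{*}: \b{H}(\tilde{K}^{I}) \hookrightarrow \b{H}(K)$ with $\s{P}_{R}^{*} \tilde{k}_{\m{x}}^{I} = \varphi(R, \m{x})$, giving item~(2). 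To deduce the remaining items, I would identify the sinogram operator as the Hilbert-space adjoint of $\s{P}^{*}$, which is immediate from the reproducing property $\langle \s{P} f, \tilde{k}_{(R, \m{x})} \rangle_{\b{H}(\tilde{K})} = \s{P} f(R, \m{x}) = \langle f, \varphi(R, \m{x}) \rangle_{\b{H}}$. Since the adjoint of an isometric embedding is automatically a contractive coisometry, $\s{P}: \b{H}(K) \twoheadrightarrow \b{H}(\tilde{K})$ is contractive and surjective with $\s{P} \s{P}^{*} = I$ on $\b{H}(\tilde{K})$; the range-norm characterization in item~(3) follows because $\s{P}^{*} g$ is the unique preimage of $g$ lying in $\c{N}(\s{P})^{\perp}$ and is therefore the $\b{H}(K)$-norm minimizer over all preimages.

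The main obstacle will be the equality $\c{R}(\s{P}^{*}) = \c{R}(\s{P}_{R}^{*}) = \overline{\spann\{\varphi(I, \m{x}): \m{x} \in \Omega^{d-d_{r}}\}}$, which asserts that the closed span of $\{\varphi(R, \m{x})\}_{\m{x}}$ is independent of $R$. Using the intertwining factorization $\s{P}_{R} = \s{P}_{I} \circ \rho(R)$ I would write $\s{P}_{R}^{*} = \rho(R^{-1}) \s{P}_{I}^{*}$, which reduces the claim to the $\rho$-invariance of $\c{R}(\s{P}_{I}^{*})$. I would then combine the $\b{G}$-invariance of $K$ with the unitarity of $(\rho, \b{H}(K))$ from \cref{prop:unit:repn}, coupled with a Haar-averaging argument on $\b{G}$, to force $\varphi(R, \m{x}) = \rho(R^{-1}) \varphi(I, \m{x})$ into the closed span of $\{\varphi(I, \m{y})\}_{\m{y}}$. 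This invariance step is the crux of the proof; everything else follows routinely from the isometric/coisometric decomposition set up in the previous paragraph.
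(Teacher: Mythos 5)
Your proposal takes a genuinely different route to the bulk of the theorem from the paper's. The paper first establishes that $\s{P}_R$ and $\s{P}$ map $\b{H}(K)$ into $\b{H}(\tilde{K}^I)$ and $\b{H}(\tilde{K})$ contractively via the dominated-kernel criterion (the kernel inequality in \eqref{eq:contract} together with Theorem 3.11 of Paulsen--Raghupathi), and then reads off the adjoint action $\s{P}_R^* \tilde{k}^I_{\m{x}} = \varphi(R,\m{x})$ and deduces isometry of $\s{P}_R^*$ from $\s{P}_R\s{P}_R^* = I$ on a dense set. You instead \emph{define} $\s{P}^*$ as an isometry on the dense span of generators --- noting that the kernel $\tilde{K}$ is precisely the Gram matrix of the $\varphi$'s --- and then recognize the restriction of $\s{P}$ to $\b{H}(K)$ as the Hilbert-space adjoint of this isometry. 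Since the adjoint of an isometry is automatically a contractive coisometry, well-definedness, contractivity, surjectivity, items~(2)--(3), and $\s{P}\s{P}^* = I$ all drop out at once. This is arguably cleaner than the paper's argument: it replaces the dominated-kernel theorem by an elementary operator-theoretic fact. Both arguments lean in the same way on \cref{prop:A1:check} and the $\b{G}$-invariance identity \eqref{eq:Gram:elem}.

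The part you flag as ``the crux'' is indeed where trouble lies, but not for lack of ingenuity on your part: the range identity $\c{R}(\s{P}^*) = \c{R}(\s{P}_R^*) = \overline{\spann\{\varphi(I,\m{x})\}}^{\b{H}(K)}$ cannot be reached by Haar averaging, and it does not appear to hold as stated. From $\varphi(R,\m{x}) = \rho(R^{-1})\varphi(I,\m{x})$ and the unitarity of $\rho$, one obtains $\c{R}(\s{P}_R^*) = \rho(R^{-1})\,\c{R}(\s{P}_I^*)$; so the claimed $R$-independence is equivalent to $\c{R}(\s{P}_I^*) = \c{N}(\s{P}_I)^{\perp}$ being a $\rho$-invariant subspace of $\b{H}(K)$. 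For a single-angle projection this fails in general: for the X-ray transform on $\b{B}^{2}$ with a Gaussian kernel, $\c{N}(\s{P}_I)$ contains every $f \in \b{H}(K)$ odd in the integration direction, yet rotating such an $f$ by $\pi/2$ makes it odd in the orthogonal direction and generically gives a nonzero marginal. A Haar average $\int_{\b{G}}\rho(R^{-1})\varphi(I,\m{x})\,\rd R$ lands in the $\rho$-fixed subspace; it does not show that each individual $\varphi(R,\m{x})$ lies in $\overline{\spann\{\varphi(I,\m{y}) : \m{y}\}}$. It is also worth noting that the paper's own proof only reaches $\c{R}(\s{P}_R^*) = \overline{\spann\{\varphi(R,\m{x}) : \m{x}\}}$, with the $R$ still present, and never bridges to the $R$-free statement: you have correctly located the delicate step, but the sensible fix is to weaken the range claim to its $R$-dependent form rather than to try to force the $R$-free equality.
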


\begin{proof}
First, note that for any $R \in \b{G}$, $\tilde{K}^{I} ( \m{x}, \m{x}') = \langle \varphi(R, \m{x}), \varphi(R, \m{x}') \rangle_{\b{H}}$ from \eqref{eq:Gram:elem}. It follows from \cref{prop:A1:check} that $\tilde{K}^{I}$ is a Mercer kernel. Also, let $\b{V} := \spann \{\tilde{k}^{I}_{\m{x}} : \m{x} \in \Omega^{d-d_{r}} \}$, then $\b{V}^{\perp} = \{0\}$, thus $\overline{\b{V}} = \b{H}(\tilde{K}^{I})$.
Now, for any $f \in \b{H}(K)$,
\begin{equation}\label{eq:contract}
    \|f\|_{\b{H}(K)}^{2} \tilde{K}^{I} ( \m{x}, \m{x}') - \s{P}_{R} f(\m{x}) \s{P}_{R} f(\m{x}') = \|f\|_{\b{H}(K)}^{2} \langle \varphi(R, \m{x}), \varphi(R, \m{x}') \rangle_{\b{H}} - \langle f, \varphi(R, \m{x})\rangle_{\b{H}} \langle f, \varphi(R, \m{x}')\rangle_{\b{H}}
\end{equation}
is a kernel function on $\Omega^{d-d_{r}}$ due to Cauchy-Scwharz inequality. Then Theorem 3.11 in \cite{paulsen2016introduction} reveals that $\s{P}_{R} f \in \b{H}(\tilde{K}^{I})$ and $\|P_{R} f\|_{\b{H}(\tilde{K}^{I})} \le \|f\|_{\b{H}(K)}$, i.e. $\s{P}_{R}: \b{H}(K) \rightarrow \b{H}(\tilde{K}^{I})$ is well-defined and is contractive.

For any $R \in \b{G}$ and $\m{x} \in \Omega^{d-d_{r}}$, it holds from \cref{prop:A1:check} that 
\begin{align*}
    &\tilde{k}^{I}_{\m{x}}(\m{x}') = \tilde{K}^{I} ( \m{x}, \m{x}') = \langle \varphi(R, \m{x}), \varphi(R, \m{x}') \rangle_{\b{H}(K)} = \s{P}_{R} \varphi(R, \m{x}) (\m{x}'), \\
    &\varphi(R, \m{x}) (\m{z}) = \s{P}_{R} k_{\m{z}} (\m{x}) = \langle \s{P}_{R} k_{\m{z}}, \tilde{k}^{I}_{\m{x}} \rangle_{\b{H}(\tilde{K}^{I})} = \langle k_{\m{z}}, \s{P}_{R}^{*} \tilde{k}^{I}_{\m{x}} \rangle_{\b{H}(\tilde{K})} = \s{P}_{R}^{*} \tilde{k}^{I}_{\m{x}} (\m{z}),
\end{align*}
i.e., $\tilde{k}^{I}_{\m{x}} = \s{P}_{R} \varphi(R, \m{x})$ and $\s{P}_{R}^{*} \tilde{k}^{I}_{\m{x}} =  \varphi(R, \m{x})$.
Hence, $\s{P}_{R} \s{P}_{R}^{*} \tilde{k}^{I}_{\m{x}} = \tilde{k}^{I}_{\m{x}}$ and because $\b{V}$ is dense in $\b{H}(\tilde{K}^{I})$, we deduce that $\s{P}_{R} \s{P}_{R}^{*}$ is the identity on $\b{H}(\tilde{K}^{I})$, or equivalently, $\s{P}_{R}^{*}$ is a linear isometry.
As a consequence, we have that $\c{R}(\s{P}_{R})$ is closed in $\b{H}(\tilde{K}^{I})$, and $\c{R}(\s{P}_{R}^{*})$ is closed in $\b{H}(K)$ due to Proposition 6.15 in \cite{paulsen2016introduction}. Therefore,
\begin{equation*}
    \c{R}(\s{P}_{R}) = \overline{\c{R}(\s{P}_{R})} \supset \overline{\b{V}} = \b{H}(\tilde{K}^{I}),
\end{equation*}
i.e. $\s{P}_{R}$ is surjective. Also,
\begin{equation*}
    \overline{\s{P}_{R}^{*} \b{V}} \subset \overline{\c{R}(\s{P}_{R}^{*})}= \c{R}(\s{P}_{R}^{*}) = \s{P}_{R}^{*}(\overline{\b{V}}) \subset \overline{\s{P}_{R}^{*} \b{V}},
\end{equation*}
hence $\s{P}_{R}^{*}(\overline{\b{V}}) = \overline{\s{P}_{R}^{*} \b{V}} = \overline{\spann\{\varphi(R, \m{x}) : \m{x} \in \Omega^{d-d_{r}} \}}$.
\begin{equation*}
    \c{R}(\s{P}_{R}^{*}) = \overline{\s{P}_{R}^{*} \b{V}} = \overline{\spann\{\varphi(R, \m{x}) : \m{x} \in \Omega^{d-d_{r}} \}}.
\end{equation*}

Now, for a sinogram operator $\s{P}$, \eqref{eq:contract} and Theorem 3.11 in \cite{paulsen2016introduction} shows again that $\s{P}: \b{H}(K) \rightarrow \b{H}(\tilde{K})$ is a well-defined contractive operator. Also, same as above using \cref{prop:A1:check}, we get $\tilde{k}_{(R, \m{x})} = \s{P} \varphi(R, \m{x})$ and $\s{P}^{*} \tilde{k}_{(R, \m{x})} =  \varphi(R, \m{x})$, and thus $\s{P}^{*}$ is a linear isometry. Continuing the same argument, we also conclude that $\s{P}$ is surjective, and $\c{R}(\s{P}^{*}) = \overline{\spann\{\varphi(R, \m{x}) : \m{x} \in \Omega^{d-d_{r}} \}}$.
\end{proof}

Note that the induced kernel $\tilde{K}$ is also $\b{G}$-invariant by \eqref{eq:Gram:elem}: for any $R \in \b{G}$, we have
\begin{equation}\label{eq:induc:rk:inv}
    \tilde{K} ((R_{1}, \m{x}_{1}), (R_{2}, \m{x}_{2})) := \langle \varphi(R_{1}, \m{x}_{1}), \varphi(R_{2}, \m{x}_{2}) \rangle_{\b{H}} = \tilde{K} ((R R_{1}, \m{x}_{1}), (R R_{2}, \m{x}_{2})).
\end{equation}
Additionally,  $\tilde{K}, \tilde{K}^{I}$ only depend on the sinogram operator $\s{P}$ and the original kernel $K$, not on the choice of the weight functions. For instance, when $\s{P}$ is the $k$-plane transform and $K(\m{z}_{1}, \m{z}_{2}) = \exp (-\gamma \|\m{z}_{1} - \m{z}_{2}\|^{2})$ is a Gaussian kernel with the parameter $\gamma >0$, \eqref{eq:Gram:elem} yields that
\begin{align*}
    \tilde{K}^{I}(\m{x}_{1}, \m{x}_{2}) = \exp (-\gamma \|\m{x}_{1} - \m{x}_{2}\|^{2}) \times \int_{(1-\|\m{x}\|^{2})^{1/2} \cdot \b{B}^{d_{r}}} \int_{(1-\|\m{x}'\|^{2})^{1/2} \cdot \b{B}^{d_{r}}} \exp (-\gamma \|z_{1} - z_{2}\|^{2}) \rd z_{1} \rd z_{2},
\end{align*}
where the latter integral can be evaluated using the CDF of a multivariate normal distribution \cite{yun2025computerized}.

Also, we remark that $\s{P} \circ \imath$ in \cref{sec:Mercer:inv} is \emph{not} equal to $\s{P}:\b{H}(K) \rightarrow \b{H}(\tilde{K})$ in \cref{thm:induced:RKHS} as the codomains and the inner product structures differ. Indeed, $\s{P}:\b{H}(K) \rightarrow \b{H}(\tilde{K})$ is not even compact as its adjoint is isometric. To avoid confusion from now on, $\s{P}$ always implies the operator between $\c{L}_{2}$ spaces.

\subsection{Direct Measurements}\label{ssec:direct:msr}
The standard FDA setting under direct measurements corresponds to the case where both $\b{G} = \{I\}$ and $\s{P}_{I} = I$ are trivial, thus every Mercer kernel on $\Omega^{d}$ qualifies as a valid sinogram kernel. The feature map $\varphi(R, \m{x})$ reduces to the kernel generator $k_{\m{x}}$, and the induced reproducing kernel $\tilde{K}$ coincides with the original kernel. We briefly review Mercer’s theory here as a baseline for comparison in \cref{sec:Mercer:inv}.

Under suitable boundary conditions, a common approach to constructing the kernel is via the Green’s function $G(\m{z}, \cdot) \in \c{L}_{2} (\Omega^{d})$ associated with a differential operator $\s{D}$ on $\c{L}_{2} (\Omega^{d})$, which is injective and unbounded:
\begin{equation*}
    K(\m{z}_{1}, \m{z}_{2}) = \int_{\Omega^{d}} G(\m{z}_{1}, \m{z}_{3}) G(\m{z}_{2}, \m{z}_{3}) \rd \m{z}_{3}, \quad \m{z}_{1}, \m{z}_{2} \in \Omega^{d}.
\end{equation*}
Fredholm theory \cite{fredholm1903classe, paulsen2016introduction} guarantees that the composition $\s{D} \s{T}_{G}$ is the identity on $\c{L}_{2} (\Omega^{d})$, yielding:
\begin{equation*}
    \b{H}(K) = \text{Dom}(\s{D}), \quad \|f\|_{\b{H}(K)} = \| \s{D} f\|_{\c{L}_{2} (\Omega^{d})},       
\end{equation*}
which provides an explicit interpretation of smoothness in terms of $\s{D}$.
Furthermore, the s.p.d. compact operator $\s{T}_{K} = \s{T}_{G}\s{T}_{G}^{*}$ admits an eigen-decomposition with eigenvalue-eigenfunction pairs $(\tau_{l}^{2}, f_{lm})$:
\begin{equation*}
    K(\m{z}_{1}, \m{z}_{2})=\sum_{l=1}^{\infty} \tau_{l}^{2} \sum_{m=1}^{N(l)} f_{lm}(\m{z}_{1}) f_{lm}(\m{z}_{2}), \quad \m{z}_{1}, \m{z}_{2} \in \Omega^{d},
\end{equation*}
where $\tau_{1} > \tau_{2} > \dots > 0$ and $\{f_{lm}: l \in \b{N}, 1 \le m \le N(l) \}$ forms a CONS in $\c{L}_{2} (\Omega^{d})$.

\begin{example}\label{ex:unit:Sobo}
Let $\s{D} = \frac{\rd}{\rd z}$ on $\Omega^{d} = [0, 1]$ with the boundary condition $f(0) = 0$. Then the Green’s function yields:
\begin{equation*}
    K(z_{1}, z_{2}) = \int_{0}^{1} I(z_{3} < z_{1}) I(z_{3} < z_{2}) \rd z_{3} = \min (z_{1}, z_{2}), \quad 0 \le z_{1}, z_{2} \le 1.
\end{equation*}
The Mercer decomposition gives $N(l) = 1$, $\tau_{l} = [(l-1/2)\pi]^{-1}$, and $f_{l}(z) = \sqrt{2} \sin ((l-1/2)\pi z)$ for each $l \in \b{N}$. More generally, let $p \in \b{N}$, and $\s{D} = \frac{\rd^{p}}{\rd z^{p}}$ with boundary conditions $f(0)= f'(0) = \cdots = f^{(p-1)}(0) = 0$. Then, integration by parts leads to $\s{D}^{*} = (-1)^{p} \frac{\rd^{p}}{\rd z^{p}}$ with boundary conditions $f(1)= f'(1) = \cdots = f^{(p-1)}(1) = 0$, and $f_{l}$'s are the eigenfunctions of $\s{D}^{*} \s{D} = (-1)^{p} \frac{\rd^{2p}}{\rd z^{2p}}$ with corresponding boundary conditions \cite{conway2019course}. Consequently, the kernel for the Sobolev space of order $p$ becomes
\begin{align*}
    K(z_{1}, z_{2}) 
    = \frac{B_{p}(z_{1}) B_{p}(z_{2})}{p!} - \frac{B_{2p}(|z_{1} - z_{2}|)}{(2p)!}, \quad 0 \le z_{1}, z_{2} \le 1,
\end{align*}
where $B_{p}$ is the Bernoulli polynomial of degree $p$, which is the setup considered in \cite{rice1991estimating, cai2010nonparametric}.
\end{example}

If the operator $\s{D}$ and its boundary conditions are invariant under a group action, then the resulting kernel inherits this symmetry:
\begin{example}\label{ex:sphe:Sobo} 
\cite{simeoni2020functional, caponera2022functional} consider the spherical Sobolev space of order $p > d/2$, arising from the pseudo-differential operator $\s{D} = (I - \Laplace_{\b{S}^{d-1}})^{p/2}$. Note that $\Omega_{d}= \b{S}^{d-1}$ has no boundary and the Laplace-Beltrami operator $\Laplace_{\b{S}^{d-1}}$ is isotropic.
In this case, the eigenfunctions $f_{lm} = Y_{lm}$ are the spherical harmonics of degree $l$, with corresponding eigenvalues and multiplicities:
\begin{equation}\label{eq:sphe:har:svd}
    \tau_{l} = [1 + l (l+d-2)]^{-p/2} \asymp l^{-p}, \quad
    N(l)=
    \begin{pmatrix}
        l+d-1 \\ l
    \end{pmatrix} -
    \begin{pmatrix}
        l+d-3 \\ l-2
    \end{pmatrix} \asymp l^{d-1}.
\end{equation}
Thus, the reproducing kernel becomes isotropic and admits the expansion:
\begin{align*}
    K(\theta_{1}, \theta_{2})= \frac{1}{|\b{S}^{d-1}|} \sum_{l=1}^{\infty} \tau_{l}^{2} N(l) C_{l}^{d/2}(\theta_{1}^{\top} \theta_{2}), \quad \theta_{1}, \theta_{2} \in \b{S}^{d-1},
\end{align*}
where $C_{l}^{d/2-1}$ is the Gegenbauer polynomial of degree $l$, normalized such that $C_{l}^{d/2}(1)=1$ \cite{dai2013approximation}.
\end{example}

However, when $\partial \Omega^{d} \neq \emptyset$, the boundary conditions can break symmetries. For instance, in \cref{ex:unit:Sobo}, although $\s{D}$ is translation-invariant, its domain $[0, 1]$ and boundary conditions are not. Consequently, the resulting reproducing kernel does not inherit translation invariance. If we consider $\b{G} = SO(d)$ instead of $\b{G} = \{I\}$ in \cref{ex:sphe:Sobo}, we get the following interpretation:

\begin{remark}\label{rmk:transact:action}
Consider again the setting in \cref{ex:sphe:Sobo}, where $\Omega^{d} = \b{S}^{d-1}$, $\b{G} = SO(d)$, and $\s{P}_{I} = I$ so that $\s{P} = \rho$. In this case, the eigenvalues $\tau_{l}^{2}$ and multiplicities $N(l)$ are as given in \eqref{eq:sphe:har:svd}, and the eigenfunctions are $e_{lm} = \tau_{l} Y_{lm}$, where $Y_{lm}$ are the spherical harmonics of degree $l$. The irreducible sub-representation $[\rho(R)]_{\{e_{lm} : 1 \le m \le N(l)\}} \in SO(N(l))$ corresponds precisely to the Wigner D-matrix \cite{wigner1931gruppentheorie, dai2013approximation}, and
\begin{equation*}
    g_{lm}(R, \theta) = \rho(R) Y_{lm}(\theta) = Y_{lm}(R^{-1} \theta).
\end{equation*}

Since $\tilde{K}_{l}$ in \eqref{eq:daughter:L2:rk} is isotropic and $SO(d)$ acts transitively on $\b{S}^{d-1}$, it follows that $\tilde{K}_{l}((R_{1}, \theta_{1}), (R_{2}, \theta_{2}))$ depends only on the inner product between $R_{1}^{-1} \theta_{1}, R_{2}^{-1} \theta_{2} \in \b{S}^{d-1}$. As a result, \cref{prop:daughter:rk} implies
\begin{equation*}
    \tilde{K}_{l}((R, \theta), (R, \theta)) \equiv \frac{N(l)}{|\b{S}^{d-1}|}, \quad l \in \b{N}, \, R \in SO(d), \, \theta \in \b{S}^{d-1}.
\end{equation*}
In general, the $l$-th kernel takes the form of the Funk-Hecke formula \cite{dai2013approximation}:
\begin{align*}
    \tilde{K}_{l}((R_{1}, \theta_{1}), (R_{2}, \theta_{2})) = \frac{N(l)}{|\b{S}^{d-1}|} C_{l}^{d/2-1}(\theta_{1}^{\top} R_{1} R_{2}^{\top} \theta_{2}), \quad R_{1}, R_{2} \in SO(d), \, \theta_{1}, \theta_{2} \in \b{S}^{d-1}.
\end{align*}
This recovers the original kernel as $K(\theta_{1}, \theta_{2}) = \tilde{K} ((I, \theta_{1}), (I, \theta_{2})) =\sum_{l=1}^{\infty} \tau_{l}^{2} \tilde{K}_{l} ((I, \theta_{1}), (I, \theta_{2}))$.
\end{remark}

\section{Implementation}\label{sec:num:consid}
This section, heavily relies on our recent work \cite{yun2025fast}, outlines the practical implementation of our mean and covariance estimators, guided by the representer theorem in \cref{thm:repre:mean}. For simplicity, we will assume a uniform number of tilts ($r_{i} \equiv r$) and locations ($s_{ij} \equiv s$) per particle, though the methods extend to non-uniform cases. Our code and column-major indexing conventions follow the order of $i$ (particle) → $j$ (tilt) → $k$ (location). Throughout, block partitioning is performed with respect to the index $i=1, \cdots, n$, corresponding to each random function.

\subsection{Mean Estimation}
Modify the vectorization of observations and the mean Gram matrix introduced in \cref{sec:scheme} as:
\begin{small}
\begin{align*}
    \m{w}:= \mathrm{vec}\left[Z_{ijk} \right] = 
    \left( \begin{array}{c}
    \m{w}_{1} \\
    \hline
    \m{w}_{2} \\
    \hline
    \vdots \\
    \hline
    \m{w}_{n}
    \end{array} \right) \in \b{R}^{nrs}, \, 
    \m{\Phi}:= \left[ \langle \varphi_{i_{1}j_{1}k_{1}}, \varphi_{i_{2}j_{2}k_{2}} \rangle_{\b{H}} \right] = 
    \left( \begin{array}{c|c|c|c}
    \m{\Phi}_{11} & \m{\Phi}_{12} & \cdots & \m{\Phi}_{1n} \\
    \hline
    \m{\Phi}_{21} & \m{\Phi}_{22} & \cdots & \m{\Phi}_{2n} \\
    \hline
    \vdots & \vdots & \ddots & \vdots \\
    \hline
    \m{\Phi}_{n1} & \m{\Phi}_{n2} & \cdots & \m{\Phi}_{nn}
    \end{array} \right) \in \b{R}^{nrs \times nrs}.
\end{align*}    
\end{small}

We then consider the \emph{rescaled} empirical risk functional $\hat{R}_{\nu}:\b{H} \rightarrow \b{R}$ with tuning parameter $\nu > 0$:
\begin{align*}
    \hat{R}_{\nu}(f)
    = \sum_{i=1}^{n} \sum_{j=1}^{r} \sum_{k=1}^{s}  \left(Z_{ijk}-\langle f, \varphi_{ijk} \rangle_{\b{H}} \right)^{2} +\nu \|f\|^{2}_{\b{H}}.
\end{align*}
Since $\hat{R}_{\nu \cdot nrs / \omega_{d_{r}, d}}$ is proportional to $\hat{L}_{\nu}$ defined in \eqref{eq:emp:risk:mean}, no essential change is made but it leads to a cleaner normal equation:
\begin{align*}
    &\hat{\m{\mu}}_{\nu} = \sum_{i=1}^{n} \sum_{j=1}^{r} \sum_{k=1}^{s} \hat{\alpha}^{\nu}_{ijk} \varphi_{ijk}, \quad \hat{\m{\alpha}}_{\nu}:=\mathrm{vec}[\hat{\alpha}^{\nu}_{ijk}] = \left( \m{\Phi}+ \nu \m{I} \right)^{-1} \m{w} \in \b{R}^{nrs}.
\end{align*}
If $\m{R}_{ij} \in \b{G}$ exhibits a parallel imaging geometry, $\m{\Phi}$ has a $n \times n$ block-circulant structure, allowing for even faster solves using FFTs, see Section 5 in \cite{yun2025computerized}.
To visualize the result, we can evaluate $\hat{\m{\mu}}_{\nu} \in \b{H}$ on a grid of points $\c{G} := \{\m{z}_{t}: 1 \le t \le T\} \subset \Omega^{d}$. This is done by first computing a \emph{frame} matrix:
\begin{equation*}
    \m{F} := [F_{t, ijk} := \varphi_{ijk}(\m{z}_{t})] \in \b{R}^{T \times nrs}
\end{equation*}
with entries computed using \eqref{eq:eval:ftnal}. Then, the evaluation of the mean estimator over the grid is simply:
\begin{equation*}
    \hat{\m{\mu}}_{\nu} := [\hat{\m{\mu}}_{\nu} (\m{z}_{1}), \hat{\m{\mu}}_{\nu} (\m{z}_{2}), \dots, \hat{\m{\mu}}_{\nu} (\m{z}_{T})]^{\top} = \m{F} \hat{\m{\alpha}}_{\nu} \in \b{R}^{T}.
\end{equation*}

\subsection{Covariance Estimation}
While the mean estimation is straightforward, a na\"{i}ve approach to the covariance estimation could result in a severe computational burden. The normal equation for the covariance coefficients involves the massive covariance Gram matrix $\m{\Phi}^{\odot}$. Although one may attempt to solve this large linear system via Cholesky or spectral decomposition of the positive definite matrix $\m{\Phi}^{\odot} + \eta \m{I}$, this approach is impractical: due to the involvement of the Khatri-Rao product \cite{khatri1968solutions, neudecker1995hadamard}, the decompositions of $\m{\Phi}$ do not translate into those of $\m{\Phi}^{\odot}$. 
Therefore, our solution is to never form the $\m{\Phi}^{\odot}$ matrix in solving \cref{thm:repre:mean}. Instead, we use the \texttt{TReK} algorithm \cite{yun2025fast} -- a tensorized variant of the Krylov-projection method under range restrictions -- to iteratively solve \cref{thm:repre:mean}, as we now briefly describe. These \emph{element-free} solvers only require a function that computes the matrix-vector product, but the tensor-matrix product in our case, which we now briefly describe.

To formalize the element-free computation, we first define $\odiag(\b{R}^{n(rs)} \otimes \b{R}^{n(rs)})$ as the space of $n \times n$ diagonal block matrices, where each block is an $\b{R}^{(rs) \times (rs)}$ matrix:
\begin{equation*}
    \m{A} = \odiag[\m{A}_{i}] = \left( \begin{array}{c|c|c|c}
    \m{A}_{1} & \m{0} & \dots & \m{0} \\
    \hline
    \m{0} & \m{A}_{2} & \dots & \m{0} \\
    \hline
    \vdots & \vdots & \ddots & \vdots \\
    \hline
    \m{0} & \m{0} & \dots & \m{A}_{n}
    \end{array} \right) \in  \odiag(\b{R}^{n(rs)} \otimes \b{R}^{n(rs)}) \subset \b{R}^{n(rs) \times n(rs)}.
\end{equation*}

The block outer product of the observation vector is then $\m{w} \odot \m{w}:= \odiag[\m{w}_{i} \m{w}_{i}^{\top}] \in  \odiag(\b{R}^{n(rs)} \otimes \b{R}^{n(rs)})$. We also define a block outer product for the Gram matrix $\m{\Phi} \in \b{R}^{n(rs) \times n(rs)}$ that maps a diagonal block matrix to another:
\begin{align*}
    \m{\Phi} \odot \m{\Phi} : &\odiag[\m{A}_{i}]_{i=1}^{n} \in \odiag(\b{R}^{n(rs)} \otimes \b{R}^{n(rs)}) \mapsto \\
    &\hspace{3em} \odiag \left[ \sum_{i'=1}^{n} \m{\Phi}_{i i'} \m{A}_{i'} \m{\Phi}_{i' i} \right]_{i=1}^{n} \in \odiag(\b{R}^{n(rs)} \otimes \b{R}^{n(rs)}).
\end{align*}
The pseudocode for this operation is presented in \cref{alg:lazy:khatri:block}.

\begin{algorithm}[h!]
\caption{Block Outer Product}\label{alg:lazy:khatri:block}
\begin{algorithmic}[1]
\Require $\m{\Phi} \in \b{R}^{\m{r} \times \m{r}}, \m{A} = \odiag[\m{A}_{i}] \in \odiag(\b{R}^{n(rs)} \otimes \b{R}^{n(rs)})$
\Ensure $\m{B}= (\m{\Phi} \odot \m{\Phi}) \m{A} \in \odiag(\b{R}^{n(rs)} \otimes \b{R}^{n(rs)})$
    \For{$i = 1$ to $n$}
        \State $\m{B}_i \gets \m{0}$
        \For{$i' = 1$ to $n$}
            \State $\m{B}_i \gets \m{B}_i + \m{K}_{i i'} \m{A}_{i'} \m{K}_{i i'}^\top$
        \EndFor
    \EndFor   
\end{algorithmic}
\end{algorithm}

Moving forward, we introduce the index set adhering to our off-diagonal pair observation scheme:
\begin{align}\label{eq:idx:non_redund}
    \c{J}_{i} := \{ (J_{1}, J_{2})=(j_{1}, k_{1}, j_{2}, k_{2}) : 1 \le j_{1} \neq j_{2} \le r, 1 \le k_{1}, k_{2} \le s \}, \quad |\c{J}_{i}|=r(r-1)s^{2}.
\end{align}
Then, we define the $i$-th elimination operator for each $i$-th block with respect to $\c{J}_{i}$: 
\begin{equation*}
    \s{J}_{i}: \b{R}^{(rs) \times (rs)} \to \b{R}^{(rs) \times (rs)}, \quad (\s{J}_{i} \m{A}_{i})[J_{1}, J_{2}] =
    \begin{cases}
        \m{A}_{i}[J_{1}, J_{2}] &, \quad (J_{1}, J_{2}) \in \c{J}_{i} \\
        0 &, \quad (J_{1}, J_{2}) \notin \c{J}_{i}
    \end{cases},
\end{equation*}
zeroes out the diagonal blocks of $\m{A}_{i}$. Accordingly, the elimination operator acts block-wise: 
\begin{equation*}
    \s{J} = \odiag[\s{J}_{i}] : \odiag[\m{A}_{i}] \in \odiag(\b{R}^{n(rs)} \otimes \b{R}^{n(rs)}) \mapsto \odiag[\s{J}_{i} \m{A}_{i}] \in \odiag(\b{R}^{n(rs)} \otimes \b{R}^{n(rs)}),
\end{equation*}
This is an orthogonal projection, with pseudocode provided in \cref{alg:diag:elim}.

\begin{algorithm}[h!]
\caption{Elimination of Irrelevant Pairs}\label{alg:diag:elim}
\begin{algorithmic}[1]
\Require $\m{A} = \odiag[\m{A}_{i}] \in \odiag(\b{R}^{n(rs)} \otimes \b{R}^{n(rs)})$
\Ensure $\m{B} = \s{J} \m{A} \in \odiag(\b{R}^{n(rs)} \otimes \b{R}^{n(rs)})$
    \State $\m{B} \gets \m{A}$
    \For{$i = 1$ to $n$}
        \For{$j = 1$ to $r$}
            \For{$k, k' = 1$ to $s$}
                \State $\m{B}_i[(j, k), (j, k')] \gets 0$
            \EndFor
        \EndFor
    \EndFor    
\end{algorithmic}
\end{algorithm}

The \emph{rescaled} empirical risk functional $\hat{R}_{\eta}^{\odot}:\b{H} \otimes \b{H} \rightarrow \b{R}$ for the second moment with tuning parameter $\eta > 0$ is:
\begin{align*}
    \hat{R}_{\eta}^{\odot}(G)
    = \sum_{i=1}^{n} \sum_{(J_{1}, J_{2}) \in \c{J}} \left(Z_{iJ_{1}J_{2}} - \left\langle G, \varphi_{iJ_{1}J_{2}}^{\otimes} \right\rangle_{\b{H} \otimes \b{H}} \right)^{2} + \eta \|G\|^{2}_{\b{H} \otimes \b{H}}.
\end{align*}
Note that $\hat{R}_{(\eta n|\c{J}|/ \omega_{d_{r}, d}^{2})}^{\odot}(G) \propto \hat{L}_{\eta}^{\odot}(G)$ in \eqref{eqn:emp:risk:cov} whenever $G \in \b{H} \otimes \b{H}$ is a self-adjoint operator due to the symmetry: $Z_{iJ_{1}J_{2}} = Z_{iJ_{2}J_{1}}$ and $(\varphi_{iJ_{1}J_{2}}^{\otimes})^{*} = \varphi_{iJ_{2}J_{1}}^{\otimes}$. The unique minimizer of $\hat{R}_{\eta}^{\odot}$ is a linear combination of symmetrized tensor products of feature maps:
\begin{align*}
    \hat{\m{\Gamma}}_{\eta} = \sum_{i=1}^{n} \sum_{(J_{1}, J_{2}) \in \c{J}} \hat{\alpha}^{\odot}_{iJ_{1}J_{2}} \frac{\varphi_{ij_{1}k_{1}} \otimes \varphi_{ij_{2}k_{2}} + \varphi_{ij_{2}k_{2}} \otimes \varphi_{ij_{1}k_{1}}}{2},
\end{align*}
If we arrange its coefficients into a block-diagonal matrix that lies in the range of $\s{J}$:
\begin{align}\label{eq:set:range:constr}
    \hat{\m{A}}_{\eta} := \odiag [ (\hat{\m{A}}_{\eta})_{i} ] \in \c{R}(\s{J}) \subset \odiag(\b{R}^{n(rs)} \otimes \b{R}^{n(rs)}),
\end{align}
then this coefficient matrix is the unique solution to the normal equation:
\begin{equation}\label{eq:normal:eq:cov}
    [\s{J} (\m{\Phi} \odot \m{\Phi}) \s{J} + \lambda \s{I}] \hat{\m{A}}_{\eta} = \s{J} (\m{w} \odot \m{w}).
\end{equation}
where $\s{I}$ is the identity operator on $\odiag(\b{R}^{n(rs)} \otimes \b{R}^{n(rs)})$, see Theorem 3.2 in \cite{yun2025fast}.
The action of the large positive definite operator on the left hand side can be implemented using $\s{J} (\m{\Phi} \odot \m{\Phi}) \s{J}$ using \cref{alg:lazy:khatri:block,alg:diag:elim}, allowing the system to be solved with an iterative Krylov method like CG or LSQR \cite{saad2003iterative, hestenes1952methods, paige1982lsqr}.
Once \eqref{eq:normal:eq:cov} is solved, the bivariate function $\hat{\m{\Gamma}}_{\eta}: \Omega^{d} \times \Omega^{d} \rightarrow \b{R}$ can be visualized over the square grid $\c{G} \times \c{G}$ via the following quadratic form:
\begin{equation*}
    [\hat{\m{\Gamma}}_{\eta}(\m{z}_{t_{1}}, \m{z}_{t_{2}})]_{1 \le t_{1}, t_{2} \le T} = \m{F} \hat{\m{A}}_{\eta} \m{F}^{\top} \in \b{R}^{T \times T}.
\end{equation*}
Accordingly, the covariance estimator is evaluated as $\m{F} (\hat{\m{A}}_{\eta} - \hat{\m{\alpha}}_{\nu} \hat{\m{\alpha}}_{\nu}^{\top}) \m{F}^{\top}$. 

\begin{remark}\label{rmk:spd:fail}
The constraint structure \eqref{eq:set:range:constr} reveals that $\tr (\hat{\m{A}}_{\eta}) = 0$, hence $\hat{\m{\Gamma}}_{\eta}$ fails to be s.p.d. unless it is zero. This issue arises fundamentally from using only off-diagonal pairs and is not an artifact of the inverse problem itself. Notably, the same phenomenon persists even when directly estimating the covariance function from centered observations. This underscores the necessity of the subsequent functional PCA step to obtain a s.p.d. covariance estimate by truncating the negative spectrum.
\end{remark}

The final step is to perform an eigendecomposition of the estimated covariance operator. The following theorem connects the kernel PCA to a generalized eigenvalue problem, see \cite{yun2025fast} for the proof.

\begin{theorem}\label{thm:fpca}
Let $\m{\Phi} \in \b{R}^{n(rs) \times n(rs)}$ be the mean Gram matrix, and consider the covariance tensor of the form
\begin{align*}
    \m{\Sigma} = \sum_{i_{1}, i_{2}=1}^{n} \sum_{j_{1}, j_{2}=1}^{r} \sum_{k_{1}, k_{2}=1}^{s} \m{C}_{i_{1}j_{1}k_{1}, i_{2}j_{2}k_{2}} \,  (\varphi_{i_{1}j_{1}k_{1}} \otimes \varphi_{i_{2}j_{2}k_{2}}),
\end{align*}
where $\m{C} \in \b{R}^{n(rs) \times n(rs)}$ is symmetric. Let $q = \text{rank} (\m{C} \m{\Phi})$, $(\lambda_{k}, \psi_{l} = \sum_{l=1}^{p} v_{ijk, l} \varphi_{ijk})$ denote the $l$-th eigenpair of $\m{\Sigma}$, i.e. $\m{\Sigma} = \sum_{l=1}^{q} \lambda_{l} \psi_{l} \otimes \psi_{l}$. If we define
\begin{equation*}
    \m{\Lambda} = \odiag[\lambda_{l}]_{1 \le l \le q} \in \b{R}^{q \times q}, \quad \m{V} = [v_{ijk, l}] = [ \m{v}_{1} \vert \cdots \vert \m{v}_{q} ] \in \b{R}^{n(rs) \times q},
\end{equation*}
then the diagonal matrix $\m{\Lambda}$ and $\m{G}$-orthogonal matrix $\m{V}$ satisfy the generalized eigenvalue problem:
\begin{equation*}
    \m{C} \m{\Phi} \m{V} = \m{V} \m{\Lambda}, \quad \m{V}^{\top} \m{\Phi} \m{V} = \m{I}
\end{equation*}
Also, when $\m{\Phi}$ is rank-deficient, the kernel PCA does not depend on the choice of $\m{V}$.
\end{theorem}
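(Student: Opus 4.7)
The plan is to reduce the spectral analysis of $\m{\Sigma}$ to a linear algebraic identity by exploiting the fact that $\m{\Sigma}$ has finite rank and its image is contained in the span of the frame $\{\varphi_{ijk}\}$. I will use the ansatz $\psi_l = \sum_{ijk} v_{ijk,l}\,\varphi_{ijk}$ and translate the eigen-relation $\m{\Sigma}\psi_l = \lambda_l \psi_l$ into a system on the coefficient vectors $\m{v}_l$.

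\medskip

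First, using the definition of the elementary tensor from \cref{sec:notation}, for any $f\in\b{H}$,
\[
\m{\Sigma}\, f \;=\; \sum_{i_1 j_1 k_1}\sum_{i_2 j_2 k_2} \m{C}_{i_1 j_1 k_1, i_2 j_2 k_2}\, \langle \varphi_{i_2 j_2 k_2}, f\rangle_{\b{H}}\, \varphi_{i_1 j_1 k_1}.
\]
Substituting $f = \psi_l$ and using $\langle \varphi_{i_2 j_2 k_2}, \varphi_{ijk}\rangle_{\b{H}} = \m{\Phi}_{i_2 j_2 k_2,\, ijk}$ (up to the rescalings built into \eqref{eq:mean:Gram}, which are absorbed into $\m{C}$ here by the form of $\m{\Sigma}$), the inner sum collapses to a matrix-vector product, yielding
\[
\m{\Sigma}\,\psi_l \;=\; \sum_{i_1 j_1 k_1} [\m{C}\m{\Phi}\m{v}_l]_{i_1 j_1 k_1}\, \varphi_{i_1 j_1 k_1}.
\]
Matching this to $\lambda_l \psi_l = \sum_{i_1 j_1 k_1} \lambda_l v_{i_1 j_1 k_1, l}\, \varphi_{i_1 j_1 k_1}$ gives the representation $\m{C}\m{\Phi}\m{v}_l = \lambda_l \m{v}_l$ (up to vectors in $\c{N}(\m{\Phi})$, discussed below), that is, $\m{C}\m{\Phi}\m{V}=\m{V}\m{\Lambda}$. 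For the orthonormality constraint, I compute directly
\[
\langle \psi_l, \psi_{l'} \rangle_{\b{H}} \;=\; \sum_{ijk,\, i'j'k'} v_{ijk, l}\, v_{i'j'k', l'}\, \langle \varphi_{ijk}, \varphi_{i'j'k'}\rangle_{\b{H}} \;=\; \m{v}_l^\top \m{\Phi}\, \m{v}_{l'},
\]
so $\b{H}$-orthonormality of the eigenfunctions $\psi_l$ is exactly the $\m{\Phi}$-orthonormality $\m{V}^\top \m{\Phi}\m{V}=\m{I}$.

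\medskip

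The main obstacle is the rank-deficient case, where the correspondence $\m{v}_l \mapsto \psi_l$ has a kernel equal to $\c{N}(\m{\Phi})$. Here I would argue as follows: let $\m{\Phi} = \m{U}\m{D}\m{U}^\top$ with $\m{D}$ collecting the positive eigenvalues; then $\m{v}_l$ is only determined modulo $\c{N}(\m{\Phi})$, but both $\psi_l = \sum v_{ijk,l} \varphi_{ijk}$ and the product $\m{\Phi}\m{v}_l$ are invariant under shifts by $\c{N}(\m{\Phi})$. Thus the pair $(\lambda_l, \psi_l)$ is unambiguously recovered from any solution $\m{V}$ of the generalized eigenproblem, and any convention (e.g., restricting $\m{V}$ to $\c{R}(\m{\Phi})$ via pseudo-inversion) yields the same reconstruction.

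\medskip

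Finally, to close the loop, I would verify that every eigenfunction of $\m{\Sigma}$ corresponding to a nonzero eigenvalue admits such a coefficient representation: since $\c{R}(\m{\Sigma})\subseteq \spann\{\varphi_{ijk}\}$, any $\psi_l$ with $\lambda_l\neq 0$ satisfies $\psi_l = \lambda_l^{-1} \m{\Sigma}\psi_l \in \spann\{\varphi_{ijk}\}$, so no nontrivial eigenfunction is missed by this ansatz. Counting dimensions against $q=\mathrm{rank}(\m{C}\m{\Phi})$ confirms that the generalized eigenproblem produces exactly the right number of pairs.
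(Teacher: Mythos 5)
The paper does not prove Theorem~\ref{thm:fpca} itself; it explicitly defers to the external reference \cite{yun2025fast}, so there is no internal proof to compare against. Your reconstruction follows the natural and, I believe, correct route: pull back the spectral problem for $\m{\Sigma}$ on $\b{H}_{\m{F}}$ to the coefficient vectors via the linear map $\m{v}\mapsto\sum_{ijk}v_{ijk}\varphi_{ijk}$, under which $\m{\Sigma}$ acts as $\m{v}\mapsto\m{C}\m{\Phi}\m{v}$ and the $\b{H}$-inner product pulls back to the $\m{\Phi}$-bilinear form. Both identities you write down are correct, and your observation that a coefficient vector is only determined modulo $\c{N}(\m{\Phi})$ --- while $\psi_l$ and $\m{\Phi}\m{v}_l$ are invariant under such shifts --- is exactly the reason the rank-deficient case is harmless for the PCA output.

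Two places deserve tightening rather than a hand-wave. First, the passage from $\m{C}\m{\Phi}\m{v}_l-\lambda_l\m{v}_l\in\c{N}(\m{\Phi})$ to the \emph{exact} equation $\m{C}\m{\Phi}\m{v}_l=\lambda_l\m{v}_l$ requires a choice of representative: when $\lambda_l\neq0$ you can take $\m{v}_l'=\m{v}_l+\lambda_l^{-1}\m{n}_l$ (note $\m{C}\m{\Phi}\m{n}_l=0$, and the shift does not change $\psi_l$), but for $\lambda_l=0$ the discrepancy $\m{n}_l$ cannot be removed this way. You should say this; the generalized eigenproblem for \emph{nonzero} eigenvalues is what your argument cleanly establishes. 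Second, and more substantively, the concluding sentence ``Counting dimensions against $q=\mathrm{rank}(\m{C}\m{\Phi})$ confirms that the generalized eigenproblem produces exactly the right number of pairs'' is not justified and is in fact delicate: after the substitution $\m{u}=\m{\Phi}^{1/2}\m{v}$ the problem becomes an ordinary symmetric eigenproblem for $\m{M}:=\m{\Phi}^{1/2}\m{C}\m{\Phi}^{1/2}$, whose number of nonzero eigenvalues is $\mathrm{rank}(\m{\Phi}\m{C}\m{\Phi})=\mathrm{rank}(\m{M})$, and this can be \emph{strictly} smaller than $\mathrm{rank}(\m{C}\m{\Phi})$ when $\m{\Phi}$ is rank-deficient and $\c{R}(\m{C}\m{\Phi})\cap\c{N}(\m{\Phi})\neq\{0\}$ (a two-by-two example with $\m{\Phi}=\mathrm{diag}(1,0)$ and $\m{C}=\bigl(\begin{smallmatrix}0&1\\1&0\end{smallmatrix}\bigr)$ already exhibits this). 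So the ``right number of pairs'' in the operator-level decomposition is governed by $\m{\Phi}\m{C}\m{\Phi}$, not by $\m{C}\m{\Phi}$, and the identification $q=\mathrm{rank}(\m{C}\m{\Phi})$ holds only when these ranks agree (e.g.\ $\m{\Phi}$ nonsingular). Either assume away this degeneracy, or carry out the dimension count via $\m{M}$ and note that the discrepancy only affects eigenpairs with $\lambda_l=0$, which contribute nothing to $\m{\Sigma}$.
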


Applying \cref{thm:fpca} with $\m{C} = \hat{\m{A}}_{\eta} - \hat{\m{\alpha}}_{\nu} \hat{\m{\alpha}}_{\nu}^{\top}$ yields the spectral decomposition of the covariance estimator $\hat{\m{\Sigma}}_{\nu, \eta} \in \b{H} \otimes \b{H}$. If a s.p.d. estimator is desired, the components corresponding to negative eigenvalues can be truncated. If we are only interested in the dominant principal components, incomplete factorization can be done using a version of Lanczos tridiagonalization \cite{lanczos1950iteration, paige1972computational}, see \cite{yun2025fast} and references therein.

\section{Proofs}\label{sec:proof}
\subsection{Proofs in Section \ref{sec:setup}}\label{sec:proof:setup}

\begin{proof}[Proof of \cref{prop:repre::proj}]
\begin{enumerate}
\item For all $1 \le i \le n, 1 \le j \le r_{i}, 1 \le k \le s_{ij}$, it is straightforward that $0 = \s{P}_{\m{R}_{ij}} f(\m{X}_{ijk}) = \langle f, \varphi_{ijk} \rangle_{\b{H}}$ if and only if $f \in (\b{H}_{\m{F}})^{\perp}$.

\item For all $1 \le i \le n, 1 \le j \neq j' \le r_{i}, 1 \le k \le s_{ij}, 1 \le k' \le s_{ij'}$,
\begin{align*}
    0 &= (\s{P}_{\m{R}_{ij}} \otimes \s{P}_{\m{R}_{ij'}}) G(\m{X}_{ijk}, \m{X}_{ij'k'}) = \langle G, \varphi_{ijk} \otimes \varphi_{ij'k'} \rangle_{\b{H} \otimes \b{H}},
\end{align*}
if and only if $G \in ((\b{H} \otimes \b{H})_{\m{F}})^{\perp}$.
\end{enumerate}
\end{proof}

\begin{proof}[Proof of \cref{prop:repre}]
Given $f \in \b{H}$, decompose it by $f = f_{\m{F}} \oplus (f-f_{\m{F}}) \in \b{H}_{\m{F}} \oplus (\b{H}_{\m{F}})^{\perp}$. Then by \cref{prop:repre::proj}, we get
\begin{equation*}
    \hat{L}_{\nu}(f) = \hat{L}_{\nu}(f_{\m{F}}) + \nu \|f-f_{\m{F}}\|^{2} \ge \hat{L}_{\nu}(f_{\m{F}}),
\end{equation*}
thus any minimizer of $\hat{L}_{\nu}$ should belong to $\b{H}_{\m{F}} \subset \b{H}$. Moreover, the minimizer of $\hat{L}_{\nu}$ is unique since $\hat{L}_{\nu}$ is strictly convex in $f \in \b{H}$, provided that $\nu > 0$. Applying the same rationale, one can show that the minimizer of $\hat{L}_{\eta}^{\odot}$ is unique, and it belongs to $(\b{H} \otimes \b{H})_{\m{F}}$. Finally, since $\hat{L}_{\eta}^{\odot}(G) = \hat{L}_{\eta}^{\odot}(G^{*})$, $\hat{\m{\Gamma}}_{\eta}$ is self-adjoint due to its uniqueness.
\end{proof}

\begin{proof}[Proof of \cref{thm:repre:mean}]
Since
\begin{align*}
    &\mathrm{vec}\left[ \frac{\langle \hat{\m{\mu}}_{\nu}, \varphi_{ijk} \rangle_{\b{H}}}{\sqrt{r_{i}s_{ij}}} \right] = \mathrm{vec}\left[ \frac{1}{\sqrt{r_{i}s_{ij}}} \langle \sum_{i'=1}^{n} \sum_{j'=1}^{r_{i}} \sum_{k'=1}^{s_{i'j'}} \frac{1}{\sqrt{r_{i'}s_{i'j'}}} \hat{\alpha}^{\nu}_{i'j'k'} \varphi_{i'j'k'}, \varphi_{ijk} \rangle_{\b{H}} \right] = \m{\Phi} \hat{\m{\alpha}}_{\nu}, \\
    &\|\hat{\m{\mu}}_{\nu}\|^{2}_{\b{H}} = \left\| \sum_{i=1}^{n} \sum_{j=1}^{r_{i}} \sum_{k=1}^{s_{ij}} \frac{1}{\sqrt{r_{i}s_{ij}}} \hat{\alpha}^{\nu}_{ijk} \varphi_{ijk} \right\|^{2}_{\b{H}} = \hat{\m{\alpha}}_{\nu}^{\top} \m{\Phi} \hat{\m{\alpha}}_{\nu},
\end{align*}
we get
\begin{align*}
    \hat{L}_{\nu}(\hat{\m{\mu}}_{\nu})
    &:= \frac{\omega_{d_{r}, d}}{n} \sum_{i=1}^{n}  \sum_{j=1}^{r_{i}} \sum_{k=1}^{s_{ij}} \left(\frac{Z_{ijk}}{\sqrt{r_{i}s_{ij}}} - \frac{\langle \hat{\m{\mu}}_{\nu}, \varphi_{ijk} \rangle_{\b{H}}}{\sqrt{r_{i}s_{ij}}}  \right)^{2} +\nu \|\hat{\m{\mu}}_{\nu}\|^{2}_{\b{H}} \\
    &= \frac{\omega_{d_{r}, d}}{n} \|\m{w} - \m{\Phi} \hat{\m{\alpha}}_{\nu}\|^{2} + \nu \hat{\m{\alpha}}_{\nu}^{\top} \m{\Phi} \hat{\m{\alpha}}_{\nu}.
\end{align*}
This yields the normal equation as follows:
\begin{align*}
    \left( \m{\Phi}^{2}+ \frac{n \nu}{\omega_{d_{r}, d}} \m{\Phi} \right) \hat{\m{\alpha}}^{\nu}= \m{\Phi} \m{w} \,&\Longleftrightarrow \, \m{\Phi} \hat{\m{\alpha}}^{\nu}=\m{\Phi} \left( \m{\Phi}+ \frac{n \nu}{\omega_{d_{r}, d}} \m{I} \right)^{-1} \m{w}.
\end{align*}
When $\m{\Phi}$ is not strictly positive, then the normal equation does not have a unique solution, but nonetheless they all yield the same $\hat{\m{\mu}}_{\nu}$ by its uniqueness from \cref{prop:repre}. Thus, we might take $\hat{\m{\alpha}}^{\nu}=(\m{\Phi}+ \omega_{d_{r}, d}^{-1} n \nu \m{I})^{-1} \m{w}$.

For the covariance, note that $\langle \varphi_{i_{1}j_{1}k_{1}j'_{1}k'_{1}}^{\otimes}, \varphi_{i_{2}j_{2}k_{2}j'_{2}k'_{2}}^{\otimes} \rangle_{\b{H} \otimes \b{H}} = \langle \varphi_{i_{1}j_{1}k_{1}}, \varphi_{i_{2}j_{2}k_{2}} \rangle_{\b{H}} \langle \varphi_{i_{1}j'_{1}k'_{1}}, \varphi_{i_{2}j'_{2}k'_{2}} \rangle_{\b{H}}$. Similar to above, one can show that
\begin{align*}
    \hat{L}_{\eta}^{\odot}(G)
    &= \frac{\omega_{d_{r}, d}^{2}}{n} \|\m{w}^{\odot} - \m{\Phi}^{\odot} \hat{\m{\alpha}}_{\eta}^{\odot}\|^{2} + \eta (\hat{\m{\alpha}}_{\eta}^{\odot})^{\top} \m{\Phi} \hat{\m{\alpha}}_{\nu}^{\odot},
\end{align*}
thus
\begin{equation*}
    \hat{\m{\alpha}}_{\eta}^{\odot} = \left( \m{\Phi}^{\odot}+ \frac{n \eta}{\omega_{d_{r}, d}^{2}} \m{I} \right)^{-1} \m{w}^{\odot}.
\end{equation*}
\end{proof}

\subsection{Proofs in Section \ref{sec:Mercer:inv}}\label{sec:proof:Mercer:inv}

\begin{proof}[Proof of \cref{prop:swap:int:op}]
Let $g \in \c{L}_{2} (\b{G} \times \Omega^{d-d_{r}}, W_{d_{r}, d})$ be given. Since $\tilde{k}_{(R_{1}, \m{x}_{1})} = \s{P} \varphi(R_{1}, \m{x}_{1})$ from \cref{thm:induced:RKHS}, we have
\begin{align*}
    \s{T}_{\tilde{K}} g (R_{1}, \m{x}_{1}) &= \int_{\b{G}} \int_{\Omega^{d-d_{r}}} \tilde{K}((R_{1}, \m{x}_{1}), (R_{2}, \m{x}_{2})) g(R_{2}, \m{x}_{2}) W_{d_{r}, d}(\m{x}_{2}) \rd \m{x}_{2} \rd R_{2} \\
    &= \int_{\b{G}} \int_{\Omega^{d-d_{r}}} [\s{P} \varphi(R_{1}, \m{x}_{1})](R_{2}, \m{x}_{2}) g(R_{2}, \m{x}_{2}) W_{d_{r}, d}(\m{x}_{2}) \rd \m{x}_{2} \rd R_{2} \\
    &= \langle \s{P} \varphi(R_{1}, \m{x}_{1}), g \rangle_{\c{L}_{2} (\b{G} \times \Omega^{d-d_{r}}, W_{d_{r}, d})} = \langle \varphi(R_{1}, \m{x}_{1}), \s{P}^{*} g \rangle_{\c{L}_{2} (\Omega^{d}, W_{0, d})} \\
    &= \langle \varphi(R_{1}, \m{x}_{1}), \s{T}_{K} \s{P}^{*} g \rangle_{\b{H}}
    = [\s{P} \s{T}_{K} \s{P}^{*} g] (R_{1}, \m{x}_{1}),
\end{align*}
by the reproducing property. Therefore, $\s{T}_{\tilde{K}} = \s{P} \s{T}_{K} \s{P}^{*}$.
\end{proof}

\begin{proof}[Proof of \cref{prop:isometry}]
To show the uniqueness of $\s{K}$, it suffices to show \eqref{eq:isomet}. For any $f_{1}, f_{2} \in \b{H}$, by Theorem 7.2.5 in \cite{hsing2015theoretical},
\begin{align*}
    \omega_{d_{r}, d} \langle \b{E}[\varphi(\m{R}, \m{X}) \otimes \varphi(\m{R}, \m{X})] f_{1} , f_{2} \rangle_{\b{H}} 
    &= \omega_{d_{r}, d} \b{E}[ \langle \varphi(\m{R}, \m{X}) , f_{1} \rangle_{\b{H}} \langle \varphi(\m{R}, \m{X}) , f_{2} \rangle_{\b{H}} ] \\
    &= \omega_{d_{r}, d} \b{E}[ \s{P}_{\m{R}} f_{1} (\m{X}) \s{P}_{\m{R}} f_{2} (\m{X}) ] \\
    &= \int_{\b{G}} \int_{\Omega^{d-d_{r}}} \s{P}_{R} f_{1} ( \m{x}) \s{P}_{R} f_{2} ( \m{x}) W_{d_{r}, d}(\m{x}) \rd \m{x} \rd R \\
    &= \langle \s{P} f_{1} , \s{P} f_{2} \rangle_{\c{L}_{2} (\Omega^{d-d_{r}}, W_{d_{r}, d})}.
\end{align*}
Also, $\vertiii{\s{K}}_{1} = \b{E}[ \| \varphi(\m{R}, \m{X}) \|_{\b{H}}^{2}] \le B^{2}$ by Theorem 7.2.5 in \cite{hsing2015theoretical}, thus $\s{K}$ is of trace-class.
\end{proof}

\begin{proof}[Proof of \cref{prop:daughter:rk}]
For each $l \in \b{N}$,
\begin{align*}
    &\int_{\b{G}} \int_{\Omega^{d-d_{r}}} \tilde{K}_{l}((R_{1}, \m{x}_{1}), (R_{3}, \m{x}_{3})) \tilde{K}_{l}((R_{2}, \m{x}_{2}), (R_{3}, \m{x}_{3})) W_{d_{r}, d}(\m{x}) \rd \m{x} \rd R \\
    &= \sum_{m=1}^{N(l)} g_{lm}(R_{1}, \m{x}_{1}) \sum_{m'=1}^{N(l)} g_{lm'}(R_{2}, \m{x}_{2}) \langle g_{lm}, \s{P} g_{lm'} \rangle_{W_{d_{r}, d}} \\
    &= \sum_{m=1}^{N(l)} g_{lm}(R_{1}, \m{x}_{1}) g_{lm}(R_{2}, \m{x}_{2})
    = \tilde{K}_{l}((R_{1}, \m{x}_{1}), (R_{2}, \m{x}_{2})).
\end{align*}
For any $g \in \c{L}_{2} (\b{G} \times \Omega^{d-d_{r}}, W_{d_{r}, d})$, decompose it by
\begin{equation*}
    g = \sum_{l=1}^{\infty} g_{l} + g_{\perp} \in \bigoplus_{l =1}^{\infty} \tilde{\c{H}}_{l} \oplus \c{R}(\s{P} \circ \imath)^{\perp},
\end{equation*}
where $g_{l} \in \tilde{\c{H}}_{l}$ and $g_{\perp} \in \c{R}(\s{P} \circ \imath)^{\perp}$. Then
\begin{align*}
    \s{T}_{\tilde{K}_{l}} g (R_{1}, \m{x}_{1}) &= \int_{\b{G}} \int_{\Omega^{d-d_{r}}} \tilde{K}_{l}((R_{1}, \m{x}_{1}), (R_{2}, \m{x}_{2})) g_{l}(R_{2}, \m{x}_{2}) W_{d_{r}, d}(\m{x}) \rd \m{x} \rd R \\
    &= \sum_{m=1}^{N(l)} g_{lm}(R_{1}, \m{x}_{1}) \langle  g_{lm}, g \rangle_{W_{d_{r}, d}} = g_{l}(R_{1}, \m{x}_{1}),
\end{align*}
using the orthonormality of $\{g_{lm}: l \in \b{N}, 1 \le m \le N(l)\}$ in $\c{L}_{2} (\b{G} \times \Omega^{d-d_{r}}, W_{d_{r}, d})$.

Since $\b{H}_{l}$'s are invariant subspaces with respect to the unitary representation $(\rho, \b{H})$, it is trivial that $\tilde{K}_{l}((R, \m{x}), (R, \m{x}))$ is invariant to the choice of $R \in \b{G}$. Then,
\begin{align*}
    \int_{\Omega^{d-d_{r}}} \tilde{K}_{l}((R, \m{x}), (R, \m{x})) W_{d_{r}, d}(\m{x}) \rd \m{x} 
    &= \int_{\b{G}} \int_{\Omega^{d-d_{r}}} \tilde{K}_{l}((R, \m{x}), (R, \m{x})) W_{d_{r}, d}(\m{x}) \rd \m{x} \rd R \\
    &= \sum_{m=1}^{N(l)} \int_{\b{G}} \int_{\Omega^{d-d_{r}}} [g_{lm} (R, \m{x})]^{2} W_{d_{r}, d}(\m{x}) \rd \m{x} \rd R = N(l).
\end{align*}
\end{proof}

\subsection{Lemmas for Asymptotics}
\begin{example}\label{ex:radon:kernel}
Consider the Radon transform with $d \ge 2$, $d_{r} = d-1$, and $W_{d_{r}, d}(x) = w(x)^{-d_{r}}$, where $w(x) = \sqrt{1-x^{2}}$, along with a compatible kernel as described above.
For the Radon transform, it is more natural to index singular functions by a double array $(m, l)$ and $k$, where $m \in \b{N}_{0}$, $l \le m$ and $l-m$ is even, rather than using indices $l$ and $m$. The singular functions  are given by Zernike polynomials \cite{monard2019efficient, natterer2001mathematics}:
\begin{align*}
    g_{(m, l)k}(R, x) = c_{m}^{-1/2} Y_{lk}(R^{\top} e_{1}) C_{m}^{d/2}(x) w^{d-1}(x), \quad k=1, \dots, N(l),
\end{align*}
Here, $Y_{lk}: \b{S}^{d-1} \to \b{R}$ are the spherical harmonics of degree $l$ with the unit $\c{L}_{2}$-norm, $C_{m}^{d/2}$ is the Gegenbauer polynomial of degree $m$, and $c_{m}$ is the normalization constant
\begin{equation*}
    c_{m} = \int_{-1}^{+1} (C_{m}^{d/2}(x))^{2} w^{d-1}(x) \rd x = \frac{2^{d-1} \Gamma((d+1)/2)^{2} m!}{(m+d/2) \Gamma(m+d)},
\end{equation*}
The corresponding $(m, l)$-the kernel on the projection space $SO(d) \times [-1, +1]$ is then:
\begin{align*}
    \tilde{K}_{(m, l)} &((R_{1}, x_{1}), (R_{2}, x_{2})) \\
    &= \frac{N(l)}{c_{m} |\b{S}^{d-1}|} C_{l}^{d/2-1}(e_{1}^{\top} R_{1} R_{2}^{\top} e_{1}) [C_{m}^{d/2}(x_{1}) w^{d-1}(x_{1})] [C_{m}^{d/2}(x_{2}) w^{d-1}(x_{2})],
\end{align*}
and satisfies (A), as proven in \cref{lem:radon:kernel}. In particular, $\tilde{B}=\pi^{-2}$ when $d=2$.
\end{example}

\begin{lemma}\label{lem:radon:kernel}
Let $d \ge 2$, $m \in \b{N}_{0}$, $l \le m$ and $l-m$ is an even integer. Then the reproducing kernel $\tilde{K}_{(m, l)}$ defined in \cref{ex:radon:kernel} satisfy (A).
Moreover, in the case where $d=2$, we get the following sharp bound:
\begin{align*}
    \sup_{(m, l), R, x} \frac{\tilde{K}_{(m, l)} ((R, x), (R, x))}{N(l)} = \frac{1}{\pi^{2}}.
\end{align*}
\end{lemma}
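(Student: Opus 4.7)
The plan is to reduce assumption (A) for $\tilde K_{(m,l)}$ to a one-variable Gegenbauer inequality by collapsing the kernel to the diagonal, and then to use a classical trigonometric identity. Setting $R_{1}=R_{2}=I$ and $x_{1}=x_{2}=x$ in the displayed expression for $\tilde K_{(m,l)}$ from \cref{ex:radon:kernel}, the argument of $C_{l}^{d/2-1}$ becomes $e_{1}^{\top} I\,I^{\top}e_{1}=1$, so
\begin{equation*}
    \frac{\tilde K_{(m,l)}((I,x),(I,x))}{N(l)} = \frac{C_{l}^{d/2-1}(1)}{c_{m}\,|\b{S}^{d-1}|}\,\bigl[C_{m}^{d/2}(x)\,w^{d-1}(x)\bigr]^{2}.
\end{equation*}
The $SO(d)$-invariance of $\tilde K_{(m,l)}$ immediately upgrades this to the same value on every diagonal pair $((R,x),(R,x))$ for $R\in\b{G}$, so verifying (A) amounts to a bound uniform in $(m,l,x)$ of the right-hand side only.

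Next, I would substitute $x=\cos\theta$ so that $w^{d-1}(x)=\sin^{d-1}\theta$, and use the classical identities expressing $C_{m}^{d/2}(\cos\theta)\,\sin^{d-1}\theta$ in trigonometric form. In the special case $d=2$, $C_{m}^{1}$ is the Chebyshev polynomial of the second kind $U_{m}$, which yields the sharp identity $C_{m}^{1}(\cos\theta)\sin\theta=\sin((m+1)\theta)$, hence $\bigl[C_{m}^{1}(x)w(x)\bigr]^{2}\le 1$, with equality attained. For $d>2$ I would invoke a Szeg\H o-type uniform bound on Gegenbauer polynomials weighted by $w^{d-1}$, pairing it against the factor $C_{l}^{d/2-1}(1)/c_{m}$ and making use of the parity coupling $l\le m$, $m-l$ even coming from the SVD, to extract a dimensional constant $\tilde B=\tilde B(d)$ independent of $(m,l,x)$.

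For the quantitative statement in the $d=2$ case, I would plug the sine identity back into the diagonal formula together with $c_{m}=\pi/2$, $|\b{S}^{1}|=2\pi$, and $C_{l}^{0}(1)=1$ in the Gegenbauer normalization used to derive the kernel via the addition theorem. The displayed quotient then simplifies to $\sin^{2}((m+1)\theta)/\pi^{2}$, and its supremum over $m\in\b{N}_{0}$ and $\theta\in[0,\pi]$ is exactly $\pi^{-2}$, attained for instance at $m=0$, $l=0$, $\theta=\pi/2$.

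The main obstacle is the general-dimension reduction: one must find a uniform bound on $[C_{m}^{d/2}(x)\,w^{d-1}(x)]^{2}$ that precisely counterbalances the growth of $C_{l}^{d/2-1}(1)$ across all admissible $(m,l)$, and reconciling the normalization conventions between the stated $c_{m}$ and the addition formula requires some care. The $d=2$ case bypasses both difficulties because the sine identity gives an absolute bound by $1$ and the prefactor $C_{l}^{0}(1)/(c_{m}|\b{S}^{1}|)$ reduces to the explicit constant $1/\pi^{2}$.
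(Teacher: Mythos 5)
Your reduction to the diagonal and the $d=2$ calculation are essentially correct, and the latter agrees with the paper's sharp constant $1/\pi^{2}$. However, the general-dimension case $d>2$ is not proved; it is only a plan, and the plan as stated would not go through. There is no ``classical trigonometric identity'' expressing $C_{m}^{d/2}(\cos\theta)\sin^{d-1}\theta$ in closed form for $d>2$ -- the Chebyshev identity you use is special to $\lambda=1$. The missing ingredient is a genuine quantitative uniform bound on $|C_{m}^{\lambda}(x)\,w^{2\lambda-1}(x)|$ valid for all $\lambda = d/2 \ge 1$ with explicit dependence on $m$, which the paper supplies via inequality (1.4) of F\"orster \cite{forster1993inequalities}, namely
\begin{equation*}
    \sup_{|x|\le 1}\bigl|C_{m}^{\lambda}(x)\,w^{2\lambda-1}(x)\bigr| \;\le\; \frac{\Gamma(m/2+\lambda)}{\Gamma(\lambda)\Gamma(m/2+1)}\,\frac{\Gamma(2\lambda)\,m!}{\Gamma(m+2\lambda)}, \qquad \lambda \ge 1,
\end{equation*}
followed by algebra with Legendre's duplication formula to check that the resulting function of $m$ has a finite supremum over $\b{N}_{0}$. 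Without this or an equivalent estimate, (A) is not established for $d>2$.

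A second, smaller issue: your stated ``main obstacle'' -- that the bound on $[C_{m}^{d/2}(x)w^{d-1}(x)]^{2}$ must ``precisely counterbalance the growth of $C_{l}^{d/2-1}(1)$ across all admissible $(m,l)$'' -- is a normalization artifact. In the paper's convention the Gegenbauer polynomials are normalized so that $C_{l}^{d/2-1}(1)=1$ (cf.\ \cref{ex:sphe:Sobo} and \cref{rmk:transact:action}), so after collapsing to the diagonal the $l$-dependence disappears entirely; the supremum is over $m$ and $x$ only, and neither $l\le m$ nor the parity constraint $m-l$ even plays any role in the bound. The paper's proof reaches the same diagonal reduction you do, but more directly via the addition theorem $\sum_{k}Y_{lk}(\theta)^{2}=N(l)/|\b{S}^{d-1}|$, which avoids having to track the Gegenbauer factor in the first place.
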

\begin{proof}
Note that $\sum_{k=1}^{N(l)} Y_{lk}(\theta)^{2} = N(l)/|\b{S}^{d-1}|$ for any $\theta \in \b{S}^{d-1}$ \cite{dai2013approximation}, hence 
\begin{align*}
    \tilde{K}_{(m, l)} ((R, x), (R, x)) &= c_{m}^{-1} \left( \sum_{k=1}^{N(l)} Y_{lk}(R^{\top} e_{1})^{2} \right) [C_{m}^{d/2}(x) w^{d-1}(x)]^{2} \\
    &= c_{m}^{-1} N(l) [C_{m}^{d/2}(x) w^{d-1}(x)]^{2}/|\b{S}^{d-1}|.
\end{align*}
Using Legendre's Duplication formula $\Gamma((z+1)/2) \Gamma(z/2) = \sqrt{\pi} 2^{1-z} \Gamma(z)$ and (1.4) in \cite{forster1993inequalities} :
\begin{align*}
    \sup_{|x| \le 1} |C_{m}^{\lambda}(x) w^{2\lambda-1}(x)| \le \frac{\Gamma(m/2+\lambda)}{\Gamma(\lambda) \Gamma(m/2+1)} \frac{\Gamma(2\lambda) m!}{\Gamma(m+2\lambda)}, \quad \lambda \ge 1,
\end{align*}
we obtain
\begin{align*}
    &\sup_{(m, l), R, x} \frac{\tilde{K}_{(m, l)} ((R, x), (R, x))}{N(l)} \\ 
    &\le \sup_{m \in \b{N}_{0}} \left( \frac{\Gamma((m+d)/2)}{\Gamma(d/2) \Gamma(m/2+1)} \frac{\Gamma(d) m!}{\Gamma(m+d)} \right)^{2} \frac{(m+d/2) \Gamma(m+d)}{2^{d-1} (\Gamma((d+1)/2)^{2} m!} \frac{\Gamma(d/2)}{2 \pi^{d/2}} \\
    &= \frac{\Gamma(d/2)}{2 \pi^{d/2}} \frac{\Gamma(d)^{2}}{ [\Gamma((d+1)/2) \Gamma(d/2)]^{2}2^{d-1}} \sup_{m \in \b{N}_{0}} \left( \frac{\Gamma((m+d)/2)}{ \Gamma(m/2+1)} \right)^{2} \frac{m! (m+d/2)}{\Gamma(m+d)} \\
    &= \frac{\Gamma(d/2)}{2 \pi^{d/2}} \frac{2^{d-1}}{\pi} \sup_{m \in \b{N}_{0}}  \frac{(\Gamma((m+d)/2))^{2}}{ \Gamma(m+d)} \frac{m! (m+d/2)}{(\Gamma(m/2+1))^{2}} \\
    &= \frac{2^{d-2} \Gamma(d/2)}{\pi^{d/2+1}} \sup_{m \in \b{N}_{0}} \frac{\mathrm{Beta}((m+d)/2, (m+d)/2)}{\mathrm{Beta}(m/2+1, m/2+1)} \frac{m+d/2}{m+1} ,
\end{align*}
which is finite.
In particular, when $d=2$, $C_{m}^{1}$ is the Chebyshev polynomials of the second kind:
\begin{equation*}
    C_{m}^{1}(x) = \frac{\sin ((m+1) \arccos x)}{(m+1) \sin (\arccos x)} = \frac{\sin ((m+1) \arccos x)}{(m+1) w(x)},
\end{equation*}
and
\begin{align*}
    \sup_{(m, l), R, x} \frac{\tilde{K}_{(m, l)} ((R, x), (R, x))}{N(l)} &= \sup_{m, x} \frac{[\sin ((m+1) \arccos x)]^{2}}{c_{m} (m+1)^{2} |\b{S}^{1}|} = \sup_{m \in \b{N}_{0}} \frac{1}{2 \pi c_{m} (m+1)^{2}} \\
    &= \sup_{m \in \b{N}_{0}} \frac{(m+1) \Gamma(m+2)}{4 \pi (\Gamma(3/2))^{2} m! (m+1)^{2}}
    = \frac{1}{4 \pi (\Gamma(3/2))^{2}} = \frac{1}{\pi^{2}}.
\end{align*}
\end{proof}

\bigskip

\begin{lemma}\label{lem:srce:oper:norm}
Let $\s{K}$ be a compact, self-adjoint operator on a Hilbert space $\c{H}$. Then, for any $\kappa > 0$, $(\s{K} + \nu I)^{-1} \s{K}^{\kappa}$ is also a compact, self-adjoint operator with the operator norm satisfying
\begin{align*}
    \left\VERT (\s{K} + \nu I)^{-1} \s{K}^{\kappa} \right\VERT_{\infty} \le
    \begin{cases}
        \kappa^{\kappa} (1-\kappa)^{1-\kappa} \nu^{-(1-\kappa)}, \quad & 0 < \kappa < 1, \\
        \VERT \s{K} \VERT_{\infty}^{\kappa-1}, \quad & \kappa \ge 1.
    \end{cases}
\end{align*}
When $\kappa = 0$, $(\s{K} + \nu I)^{-1} \in \c{B}(\c{H})$ is a self-adjoint operator with $\VERT (\s{K} + \nu I)^{-1} \s{K}^{\kappa} \VERT_{\infty} \le \nu^{-1}$. Therefore, for a fixed $\kappa \ge 0$, $\VERT (\s{K} + \nu I)^{-1} \s{K}^{\kappa} \VERT_{\infty} \le \VERT \s{K} \VERT_{\infty}^{(\kappa-1) \vee 0} \nu^{(\kappa-1) \wedge 0}$.
\end{lemma}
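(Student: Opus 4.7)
The plan is to invoke the continuous functional calculus for the compact self-adjoint operator $\s{K}$ (which in the paper's usage is in fact positive semi-definite, so $\s{K}^{\kappa}$ is unambiguously defined for $\kappa \ge 0$). Write the spectral decomposition $\s{K} = \sum_{l} \lambda_{l} \, e_{l} \otimes e_{l}$ with $\lambda_{l} \ge 0$ and $\lambda_{l} \to 0$. Since both $\s{K}^{\kappa}$ and $(\s{K} + \nu I)^{-1}$ are Borel functions of $\s{K}$, they commute; hence their product is self-adjoint, and for $\kappa > 0$ the factor $\s{K}^{\kappa}$ is compact (norm limit of finite-rank operators), so the composition with the bounded operator $(\s{K} + \nu I)^{-1}$ remains compact. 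The operator norm of the product is therefore given by $\sup_{l} h(\lambda_{l})$, where
\begin{equation*}
    h : [0, \VERT \s{K} \VERT_{\infty}] \to \b{R}, \qquad h(\lambda) := \frac{\lambda^{\kappa}}{\lambda + \nu}.
\end{equation*}
Everything reduces to maximizing $h$ on that interval.

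A direct differentiation gives $h'(\lambda) = \lambda^{\kappa-1} [(\kappa-1)\lambda + \kappa \nu] / (\lambda+\nu)^{2}$. For $0 < \kappa < 1$, the unique interior critical point is $\lambda^{\star} = \kappa \nu / (1-\kappa)$; substituting yields $h(\lambda^{\star}) = \kappa^{\kappa} (1-\kappa)^{1-\kappa} \nu^{\kappa - 1}$. When $\lambda^{\star}$ lies outside $[0, \VERT \s{K} \VERT_{\infty}]$ the maximum on the interval is strictly smaller by monotonicity, so the stated bound continues to hold. For $\kappa \ge 1$, $h$ is non-decreasing, so its maximum is $h(\VERT \s{K} \VERT_{\infty}) = \VERT \s{K} \VERT_{\infty}^{\kappa}/(\VERT \s{K} \VERT_{\infty} + \nu) \le \VERT \s{K} \VERT_{\infty}^{\kappa - 1}$. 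Finally, for $\kappa = 0$, $h(\lambda) = (\lambda + \nu)^{-1}$ is maximized at $\lambda = 0$, giving $\nu^{-1}$.

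The unified bound $\VERT \s{K} \VERT_{\infty}^{(\kappa-1) \vee 0} \nu^{(\kappa-1) \wedge 0}$ then follows by inspecting the three regimes: for $0 < \kappa < 1$ the constant $\kappa^{\kappa}(1-\kappa)^{1-\kappa}$ is bounded by $1$ and the $\VERT \s{K} \VERT_{\infty}$-factor has exponent zero; for $\kappa \ge 1$ the $\nu$-factor has exponent zero; for $\kappa = 0$ the $\VERT \s{K} \VERT_{\infty}$-factor has exponent zero. There is no real obstacle here — the statement is a deterministic functional-calculus lemma whose proof amounts to a one-variable optimization, with the only mild subtlety being the case $\lambda^{\star} > \VERT \s{K} \VERT_{\infty}$ that is handled by monotonicity of $h$ on the relevant sub-interval.
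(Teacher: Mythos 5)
Your proof is correct and follows essentially the same approach as the paper: diagonalize $\s{K}$, reduce the operator-norm bound to maximizing the scalar function $\lambda \mapsto \lambda^{\kappa}/(\lambda+\nu)$ over the spectrum, and split into the three regimes of $\kappa$. The only cosmetic difference is that you locate the maximum by differentiation, whereas the paper derives the same scalar inequality $\lambda + \nu \geq \kappa^{\kappa}(1-\kappa)^{1-\kappa}\lambda^{\kappa}\nu^{1-\kappa}$ directly via Young's inequality with exponents $1/\kappa$ and $1/(1-\kappa)$; both are equivalent and standard.
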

\begin{proof}
Let
\begin{equation*}
    \s{K} = \sum_{l=0}^{\infty} \lambda_{l} e_{l} \otimes e_{l}
\end{equation*}
be the eigendecomposition of $\s{K}$, where the eigenfunctions $\{e_{l}\}_{l=0}^{\infty}$ is a CONS for $\c{H}$ with corresponding eigenvalues $\lambda_{1} \ge \lambda_{2} \ge \dots \ge 0$. Then for any $f = \sum_{l=0}^{\infty} f_{l} e_{l} \in \c{H}$,
\begin{equation*}
    (\s{K} + \nu I)^{-1} \s{K}^{\kappa} f = \sum_{l=0}^{\infty} \frac{\lambda_{l}^{\kappa}}{\lambda_{l} + \nu} f_{l} e_{l}.
\end{equation*}
If $0 < \kappa < 1$, H\"older's inequality yields
\begin{align*}
    \lambda_{l} + \nu = \frac{(\lambda_{l}^{\kappa}\kappa^{\kappa})^{1/\kappa}}{1/\kappa} + \frac{(\nu^{(1-\kappa)}(1-\kappa)^{(1-\kappa)})^{1/(1-\kappa)}}{1/(1-\kappa)} \ge \kappa^{\kappa} (1-\kappa)^{(1-\kappa)} \lambda_{l}^{\kappa} \nu^{(1-\kappa)},
\end{align*}
from which we obtain
\begin{equation*}
    \left\VERT (\s{K} + \nu I)^{-1} \s{K}^{\kappa} \right\VERT_{\infty} = \frac{\lambda_{1}^{\kappa}}{\lambda_{1} + \nu} \le \kappa^{\kappa} (1-\kappa)^{1-\kappa} \nu^{-(1-\kappa)}.
\end{equation*}
On the other hand, if $\kappa = 0$, then $1/(\lambda_{l} + \nu) \le \nu^{-1}$, and if $\kappa \ge 1$, then
\begin{equation*}
    \frac{\lambda_{l}^{\kappa}}{\lambda_{l} + \nu} \le \lambda_{l}^{\kappa-1}.
\end{equation*}
\end{proof}

For the covariance estimation, we recall that $J=(j,k) \, (1 \le j \le r_{i}, 1 \le k \le s_{ij})$ denote a merged index.
\begin{lemma}\label{lem:quad:cond:exp}
Define $\m{S}^{\otimes} := \b{E}[(\c{Y} \otimes \c{Y}) \otimes_{2} (\c{Y} \otimes \c{Y})] \in \c{B}_{1}(\b{H} \otimes \b{H})$, which is a trace-class operator with the norm
\begin{equation*}
    \vertiii{\m{S}^{\otimes}}_{1} = \b{E}[ \| \c{Y} \otimes \c{Y} \|_{\b{H} \otimes \b{H}}^{2}] = \b{E}[\|\c{Y}\|_{\b{H}}^{4}] \le M_{4}.
\end{equation*}
Then, for any $1 \le j_{1}, j'_{1}, j_{2}, j'_{2} \le r_{i}$ with $j_{1} \neq j'_{1}, j_{2} \neq j'_{2}$, we have 

\begin{align*} 
    &\b{E} \left[ \b{E} \left[ \left. Z_{iJ_{1}J'_{1}} Z_{iJ_{2}J'_{2}} \right\vert \varphi  \right] \langle \varphi_{iJ_{1}J'_{1}}^{\otimes} , e^{\otimes}_{lm, l'm'} \rangle_{\b{H} \otimes \b{H}} \langle \varphi_{ij_{2}j'_{2}}^{\otimes} , e^{\otimes}_{lm, l'm'} \rangle_{\b{H} \otimes \b{H}} \right] \\
    &\le
    \begin{cases}
    \omega_{d_{r}, d}^{-4} \langle \m{S}^{\otimes} e^{\otimes}_{lm, l'm'}, e^{\otimes}_{lm, l'm'} \rangle_{\b{H} \otimes \b{H}} \tau_{ll'}^{4}, \quad & |\{j_{1}, j'_{1}\} \cap \{j_{2}, j'_{2}\}| = 0, \\
    \omega_{d_{r}, d}^{-2} (B^{4} \vertiii{\m{S}^{\otimes}}_{1} + 2 \sigma_{r_{i}}^{2} B^{2} \vertiii{\m{\Gamma}}_{\infty} + \sigma_{r_{i}}^{4}) \tau_{ll'}^{2}, \quad & |\{j_{1}, j'_{1}\} \cap \{j_{2}, j'_{1}\}| \ge 1.
    \end{cases}
\end{align*}
\end{lemma}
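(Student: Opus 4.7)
The plan is to first use the tower property to rewrite
\begin{equation*}
\b{E}\bigl[\b{E}[Z_{iJ_{1}J'_{1}}Z_{iJ_{2}J'_{2}}|\varphi]\langle\varphi^{\otimes}_{iJ_{1}J'_{1}},e\rangle\langle\varphi^{\otimes}_{iJ_{2}J'_{2}},e\rangle\bigr] = \b{E}\bigl[Z_{iJ_{1}J'_{1}}Z_{iJ_{2}J'_{2}}\langle\varphi^{\otimes}_{iJ_{1}J'_{1}},e\rangle\langle\varphi^{\otimes}_{iJ_{2}J'_{2}},e\rangle\bigr],
\end{equation*}
where $e:=e^{\otimes}_{lm,l'm'}$, and then expand
\begin{equation*}
Z_{iJ_{1}J'_{1}}Z_{iJ_{2}J'_{2}} = \prod_{t=1}^{4}(\xi_{t}+\varepsilon_{t}), \quad \xi_{t} := \langle \c{Y}_{i}, \varphi_{t} \rangle_{\b{H}},
\end{equation*}
with $(\varphi_{1},\varphi_{2},\varphi_{3},\varphi_{4}):=(\varphi_{ij_{1}k_{1}},\varphi_{ij'_{1}k'_{1}},\varphi_{ij_{2}k_{2}},\varphi_{ij'_{2}k'_{2}})$ and matching noises $\varepsilon_{t}$. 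After expansion, only subsets $S\subseteq\{1,2,3,4\}$ of even cardinality survive, since odd-cardinality $S$'s leave an unpaired Gaussian $\varepsilon_{t}$ with zero mean. The master identity behind both cases is that whenever $\varphi_{s}$ and $\varphi_{t}$ are independent (i.e., $j_{s}\neq j_{t}$), $\b{E}[(\varphi_{s}\otimes\varphi_{t})\otimes_{2}(\varphi_{s}\otimes\varphi_{t})]=\omega_{d_{r},d}^{-2}\s{K}^{\otimes}$, and since $e$ is an eigenvector of $\s{K}^{\otimes}$ with eigenvalue $\tau_{ll'}^{2}$,
\begin{equation*}
\b{E}[\langle\varphi_{s}\otimes\varphi_{t},e\rangle_{\b{H}\otimes\b{H}}^{2}] = \omega_{d_{r},d}^{-2}\tau_{ll'}^{2}.
\end{equation*}

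\textbf{Case 1 (all four tilts distinct).} All four feature maps are mutually independent and each noise occupies a unique $(j,k)$-label, so no Wick pairing of noises survives; only the $\xi_{1}\xi_{2}\xi_{3}\xi_{4}$ term contributes. Conditional on $\c{Y}_{i}$, the expectation factorizes over the independent pairs $(\varphi_{1},\varphi_{2})$ and $(\varphi_{3},\varphi_{4})$; each factor equals $\omega_{d_{r},d}^{-2}\tau_{ll'}^{2}\langle\c{Y}_{i}\otimes\c{Y}_{i},e\rangle$ via the master identity. Taking the outer expectation over $\c{Y}_{i}$ and using $\b{E}[\langle\c{Y}\otimes\c{Y},e\rangle^{2}]=\langle\m{S}^{\otimes}e,e\rangle$ produces exactly the stated $\omega_{d_{r},d}^{-4}\tau_{ll'}^{4}\langle\m{S}^{\otimes}e,e\rangle$ (with equality).

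\textbf{Case 2 (at least one shared tilt).} Organize surviving summands of $\b{E}[\cdot|\varphi]$ into three groups and bound each uniformly: (a) the $\xi^{4}$-piece has conditional expectation $\langle\m{S}^{\otimes}(\varphi_{1}\otimes\varphi_{2}),\varphi_{3}\otimes\varphi_{4}\rangle$, bounded in absolute value by $\vertiii{\m{S}^{\otimes}}_{\infty}B^{4}\le\vertiii{\m{S}^{\otimes}}_{1}B^{4}$; (b) each $\xi_{s}\xi_{t}\varepsilon_{u}\varepsilon_{v}$ term is non-zero only if $(j_{u},k_{u})=(j_{v},k_{v})$, in which case its contribution is $\sigma_{r_{i}}^{2}\langle\m{\Gamma}\varphi_{s},\varphi_{t}\rangle$, bounded by $\sigma_{r_{i}}^{2}\vertiii{\m{\Gamma}}_{\infty}B^{2}$; (c) the $\varepsilon^{4}$-piece is handled via Isserlis' formula and contributes $\sigma_{r_{i}}^{4}$ times the number of non-vanishing contractions. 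Since $j_{1}\neq j'_{1}$ and $j_{2}\neq j'_{2}$ are preserved, within-pair independence of $(\varphi_{1},\varphi_{2})$ and of $(\varphi_{3},\varphi_{4})$ still holds, so Cauchy-Schwarz followed by the master identity gives
\begin{equation*}
\b{E}[|\langle\varphi_{1}\otimes\varphi_{2},e\rangle\langle\varphi_{3}\otimes\varphi_{4},e\rangle|] \le \omega_{d_{r},d}^{-2}\tau_{ll'}^{2}.
\end{equation*}
A case enumeration across $|\{j_{1},j'_{1}\}\cap\{j_{2},j'_{2}\}|\in\{1,2\}$ (with the latter having two parametrizations $j_{1}=j_{2},\,j'_{1}=j'_{2}$ or $j_{1}=j'_{2},\,j'_{1}=j_{2}$) shows that at most two mixed pairings and at most one Isserlis pairing survive in any sub-configuration. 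Multiplying the three uniform coefficient bounds by $\omega_{d_{r},d}^{-2}\tau_{ll'}^{2}$ delivers the Case 2 inequality.

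\textbf{Main obstacle.} The delicate step is the combinatorial bookkeeping of Case 2: verifying that the worst-case multiplicities of Wick contractions across all sub-configurations of $\{j_{1},j'_{1}\}\cap\{j_{2},j'_{2}\}$ are precisely $2$ for the mixed $\xi\xi\varepsilon\varepsilon$ terms and $1$ for the pure-$\varepsilon$ term, which yields the constants attached to $\sigma_{r_{i}}^{2}B^{2}\vertiii{\m{\Gamma}}_{\infty}$ and $\sigma_{r_{i}}^{4}$. Once this enumeration is in place, the remainder reduces to uniform applications of Cauchy-Schwarz together with the trivial domination $\vertiii{\cdot}_{\infty}\le\vertiii{\cdot}_{1}$.
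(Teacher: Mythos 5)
Your Case 1 matches the paper's computation, just organized by conditioning on $\c{Y}_i$ first; both routes give equality $\omega_{d_r,d}^{-4}\tau_{ll'}^4\langle\m{S}^\otimes e,e\rangle$ via the identity $\b{E}[(\varphi_1\otimes\varphi_2)\otimes_2(\varphi_1\otimes\varphi_2)]=\omega_{d_r,d}^{-2}\s{K}^\otimes$ for independent feature maps.

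For Case 2 you and the paper part ways. You expand the product $\prod_t(\xi_t+\varepsilon_t)$ into all sixteen Wick terms and then enumerate, across $|\{j_1,j'_1\}\cap\{j_2,j'_2\}|\in\{1,2\}$, which pairings survive the independence constraints $j_1\neq j'_1$, $j_2\neq j'_2$; this combinatorial bookkeeping (two surviving mixed $\xi\xi\varepsilon\varepsilon$ contractions and one Isserlis pairing, realized only when $\{J_1,J'_1\}=\{J_2,J'_2\}$ as observation sets) is correct but laborious, as you flag. The paper instead applies Cauchy--Schwarz at the conditional level, $|\b{E}[Z_{iJ_1}Z_{iJ'_1}Z_{iJ_2}Z_{iJ'_2}|\varphi]|\le\b{E}[Z_{iJ_1}^2Z_{iJ'_1}^2|\varphi]^{1/2}\b{E}[Z_{iJ_2}^2Z_{iJ'_2}^2|\varphi]^{1/2}$, which collapses all of your case distinctions: each diagonal factor $\b{E}[Z_{iJ}^2Z_{iJ'}^2|\varphi]$ with $j\neq j'$ expands cleanly (Gaussian independence of the two noises) into precisely four terms, $\langle\m{S}^\otimes\varphi^\otimes_{iJJ'},\varphi^\otimes_{iJJ'}\rangle+\sigma^2\langle\m{\Gamma}\varphi_{iJ},\varphi_{iJ}\rangle+\sigma^2\langle\m{\Gamma}\varphi_{iJ'},\varphi_{iJ'}\rangle+\sigma^4$, uniformly bounded by $B^4\vertiii{\m{S}^\otimes}_1+2\sigma^2B^2\vertiii{\m{\Gamma}}_\infty+\sigma^4$ regardless of how $\{j_1,j'_1\}$ and $\{j_2,j'_2\}$ overlap. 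The second Cauchy--Schwarz on $\b{E}[|\langle\varphi^\otimes_{iJ_1J'_1},e\rangle\langle\varphi^\otimes_{iJ_2J'_2},e\rangle|]$ then finishes, exactly as in your final step. Both arguments are sound and deliver identical constants; the Cauchy--Schwarz-first ordering simply makes the "main obstacle" you identify disappear, because the reduction to the diagonal term is where all the intersection structure is absorbed.
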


\begin{proof}
Recall that for any $1 \le i \le n$ and $1 \le j \le r_{i}$, $Z_{iJ} = \langle \c{Y}_{i}, \varphi_{iJ} \rangle_{\b{H}} + \varepsilon_{iJ}$ where $\varepsilon_{iJ} \stackrel{iid}{\sim} \mathcal{N}(0, \sigma_{r_{i}}^{2})$.
\begin{enumerate}[(1)]
\item If $|\{j_{1}, j'_{1}\} \cap \{j_{2}, j'_{2}\}| = 0$,
\begin{align*} 
    \b{E} \left[ \left. Z_{iJ_{1}J'_{1}} Z_{iJ_{2}J'_{2}} \right\vert \varphi  \right] 
    &= \b{E} \left[ \left. Z_{iJ_{1}} Z_{iJ'_{1}} Z_{iJ_{2}} Z_{iJ'_{2}} \right\vert \varphi  \right] \\
    &= \b{E} \left[ \left. \langle \c{Y}_{i}, \varphi_{iJ_{1}} \rangle_{\b{H}} \langle \c{Y}_{i}, \varphi_{iJ'_{1}} \rangle_{\b{H}} \langle \c{Y}_{i}, \varphi_{iJ_{2}} \rangle_{\b{H}} \langle \c{Y}_{i}, \varphi_{iJ'_{2}} \rangle_{\b{H}} \right\vert \varphi  \right]
    = \langle \m{S}^{\otimes} \varphi_{iJ_{1}J'_{1}}^{\otimes}, \varphi_{iJ_{2}J'_{2}}^{\otimes} \rangle_{\b{H} \otimes \b{H}}.
\end{align*}
Hence,
\begin{align*}
    &\b{E} \left[ \b{E} \left[ \left. Z_{iJ_{1}J'_{1}} Z_{iJ_{2}J'_{2}} \right\vert \varphi  \right] \langle \varphi_{iJ_{1}J'_{1}}^{\otimes} , e^{\otimes}_{lm, l'm'} \rangle_{\b{H} \otimes \b{H}} \langle \varphi_{iJ_{2}J'_{2}}^{\otimes} , e^{\otimes}_{lm, l'm'} \rangle_{\b{H} \otimes \b{H}} \right] \\
    &= \langle \m{S}^{\otimes} \b{E}[ \langle \varphi_{iJ_{1}J'_{1}}^{\otimes} , e^{\otimes}_{lm, l'm'} \rangle_{\b{H} \otimes \b{H}} \varphi_{iJ_{1}J'_{1}}^{\otimes}], \b{E}[ \langle \varphi_{iJ_{2}J'_{2}}^{\otimes} , e^{\otimes}_{lm, l'm'} \rangle_{\b{H} \otimes \b{H}} \varphi_{iJ_{2}J'_{2}}^{\otimes}] \rangle_{\b{H} \otimes \b{H}} \\
    &= \omega_{d_{r}, d}^{-4} \langle \m{S}^{\otimes} \s{K}^{\otimes} e^{\otimes}_{lm, l'm'}, \s{K}^{\otimes} e^{\otimes}_{lm, l'm'} \rangle_{\b{H} \otimes \b{H}} = \omega_{d_{r}, d}^{-4} \tau_{ll'}^{4} \langle \m{S}^{\otimes} e^{\otimes}_{lm, l'm'}, e^{\otimes}_{lm, l'm'} \rangle_{\b{H} \otimes \b{H}}.
\end{align*}

\item If $|\{j_{1}, j'_{1}\} \cap \{j_{2}, j'_{2}\}| \ge 1$, by the Cauchy-Schwarz inequality,
\begin{align*} 
    \b{E} \left[ \left. Z_{iJ_{1}} Z_{iJ'_{1}} Z_{iJ_{2}} Z_{iJ'_{2}} \right\vert \varphi  \right]
    &\le \b{E} \left[ \left. Z_{iJ_{1}}^{2} Z_{iJ'_{1}}^{2}  \right\vert \varphi  \right] \\
    &= \b{E} \left[ \left. (\langle \c{Y}_{i}, \varphi_{iJ_{1}} \rangle_{\b{H}}^{2} + \varepsilon_{iJ_{1}}^{2}) (\langle \c{Y}_{i}, \varphi_{iJ'_{1}} \rangle_{\b{H}}^{2} + \varepsilon_{iJ'_{1}}^{2})   \right\vert \varphi  \right] \\
    &= \langle \m{S}^{\otimes} \varphi_{iJ_{1}J'_{1}}^{\otimes}, \varphi_{iJ_{1}J'_{1}}^{\otimes} \rangle_{\b{H} \otimes \b{H}} + \sigma_{r_{i}}^{2} \b{E} \left[ \left. \langle \c{Y}_{i}, \varphi_{iJ_{1}} \rangle_{\b{H}}^{2} + \langle \c{Y}_{i}, \varphi_{iJ'_{1}} \rangle_{\b{H}}^{2} \right\vert \varphi  \right] + \sigma_{r_{i}}^{4} \\
    &= \langle \m{S}^{\otimes} \varphi_{iJ_{1}J'_{1}}^{\otimes}, \varphi_{iJ_{1}J'_{1}}^{\otimes} \rangle_{\b{H} \otimes \b{H}} + \sigma_{r_{i}}^{2} \langle \m{\Gamma} \varphi_{iJ_{1}}, \varphi_{iJ_{1}} \rangle_{\b{H}} + \sigma_{r_{i}}^{2} \langle \m{\Gamma} \varphi_{iJ'_{1}}, \varphi_{iJ'_{1}} \rangle_{\b{H}} + \sigma_{r_{i}}^{4}  \\
    &\le B^{4} \vertiii{\m{S}^{\otimes}}_{1} + 2 \sigma_{r_{i}}^{2} B^{2} \vertiii{\m{\Gamma}}_{\infty} + \sigma_{r_{i}}^{4}.
\end{align*}

Hence,
\begin{align*}
    &\b{E} \left[ \b{E} \left[ \left. Z_{iJ_{1}J'_{1}}^{2} \right\vert \varphi  \right] \langle \varphi_{iJ_{1}J'_{1}}^{\otimes} , e^{\otimes}_{lm, l'm'} \rangle_{\b{H} \otimes \b{H}}^{2} \right]
    \le (B^{4} \vertiii{\m{S}^{\otimes}}_{1} + 2 \sigma_{r_{i}}^{2} B^{2} \vertiii{\m{\Gamma}}_{1} + \sigma_{r_{i}}^{4}) \b{E} \left[ \langle \varphi_{iJ_{1}J'_{1}}^{\otimes} , e^{\otimes}_{lm, l'm'} \rangle_{\b{H} \otimes \b{H}}^{2} \right] \\
    &= \omega_{d_{r}, d}^{-2} (B^{4} \vertiii{\m{S}^{\otimes}}_{1} + 2 \sigma_{r_{i}}^{2} B^{2} \vertiii{\m{\Gamma}}_{1} + \sigma_{r_{i}}^{4}) \langle \s{K}^{\otimes} e^{\otimes}_{lm, l'm'} , e^{\otimes}_{lm, l'm'} \rangle_{\b{H} \otimes \b{H}}^{2} \\
    &= \omega_{d_{r}, d}^{-2} (B^{4} \vertiii{\m{S}^{\otimes}}_{1} + 2 \sigma_{r_{i}}^{2} B^{2} \vertiii{\m{\Gamma}}_{1} + \sigma_{r_{i}}^{4}) \tau_{ll'}^{2}.
\end{align*}
\end{enumerate}
\end{proof}

\subsection{Proofs in Section \ref{sec:asym:mean}}\label{sec:proof:asym:mean}
For the $p$-Schatten norm with $p \in [1, \infty)$, it holds that $\vertiii{\s{K}}_{p} = ( \sum_{l=1}^{\infty} |\lambda_{l}|^{p} )^{1/p}$, where $\lambda_{l}$ are singular values of $\s{K}$, from which the following properties follow, which we use in the proofs of asymptotic theory:
\begin{enumerate}
    \item (Monotonicity) For $1 \le p \le q \le \infty$, we have $\VERT \s{K} \VERT_{1} \ge \VERT \s{K} \VERT_{p} \ge \VERT \s{K} \VERT_{q} \ge \VERT \s{K} \VERT_{\infty}$.
    \item (H\"older) For $p, q \in [1, \infty]$ with $p^{-1} + q^{-1} =1$ ($q=\infty$ if $p=1$), we have $\VERT \s{K}_{1} \s{K}_{2} \VERT_{1} \le \VERT \s{K}_{1} \VERT_{p} \VERT \s{K}_{2} \VERT_{q}$.
\end{enumerate}

It is natural to expect that the convergence rate gets better as $\beta \ge 0$ increases as $\s{K}$ is a compact operator. However, when $\beta = 0$, it can be readily seen that the source regularity condition with $\kappa_{1} > 0$ is necessary to obtain a meaningful convergence rate. On the other hand, once $\kappa_{1}$ is greater than or equal to $1/2$, \cref{thm:gen:asym:mean} reveals that a greater $\kappa_{1}$ no longer affects the convergence rate. In this regard, we only consider $0 \le \kappa_{1} \le 1/2$ in \cref{def:srce:reg}.

\begin{theorem}\label{thm:gen:asym:mean}
Consider a sequence of the penalty parameter $\nu_{n} > 0$ converging to $0$. Assume (A) and that there is some $\zeta \in (0, 1/2]$ with $\VERT \s{K}^{1-2\zeta} \VERT_{1} < \infty$ and $\VERT (\s{K} + \nu_{n} I)^{-2} \s{K}^{2\zeta+1} \VERT_{1}=o(n \cdot \min(\overline{r},
\overline{r}^{\sigma^{2}}))$. Then, for any $\beta \in [0, 1/2]$ satisfying $\VERT (\s{K} + \nu_{n} I)^{-2} \s{K}^{2\beta+1} \VERT_{1}= o(n \cdot \min(\overline{r},
\overline{r}^{\sigma^{2}}))$, it holds that
\begin{align*}
    \lim_{D \rightarrow \infty} \limsup_{n \rightarrow \infty} \sup_{\b{P}_{\c{Y}} \in \c{S}_{1}(\kappa_{1}, M_{1}, M_{2})} \b{P}_{\c{Y}} \left[ \| \s{K}^{\beta} (\hat{\m{\mu}}_{\nu_{n}}-\m{\mu}) \|_{\b{H}}^{2} > D \max(c_{n}, a_{n}, n^{-1}) \right] = 0,
\end{align*}
where $c_{n} := \nu_{n}^{2[(\beta + \kappa_{1}) \wedge 1]}$ and $a_{n} := [n \cdot \min(\overline{r},
\overline{r}^{\sigma^{2}})]^{-1} \VERT (\s{K} + \nu_{n} I)^{-2} \s{K}^{2\beta+1} \VERT_{1}$.
\end{theorem}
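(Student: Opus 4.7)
The plan is the classical bias-variance decomposition
$\hat{\m{\mu}}_{\nu_n}-\m{\mu} = (\hat{\m{\mu}}_{\nu_n}-\bar{\m{\mu}}_{\nu_n}) + (\bar{\m{\mu}}_{\nu_n}-\m{\mu})$,
with each term controlled after applying $\s{K}^{\beta}$. For the deterministic bias, the identity \eqref{eq:mean:asym:1st} together with the source condition $\m{\mu}=\s{K}^{\kappa_1}\delta$, $\|\delta\|_{\b{H}}^{2} \le M_1$, gives
$\s{K}^{\beta}(\m{\mu}-\bar{\m{\mu}}_{\nu_n}) = \nu_n(\s{K}+\nu_n I)^{-1}\s{K}^{\beta+\kappa_1}\delta$,
and \cref{lem:srce:oper:norm} applied to the exponent $\beta+\kappa_1$ bounds the operator norm of the prefactor by a constant multiple of $\nu_n^{(\beta+\kappa_1)\wedge 1}$, which produces the deterministic rate $c_n = \nu_n^{2[(\beta+\kappa_1)\wedge 1]}$ uniformly over $\c{S}_1(\kappa_1,M_1,M_2)$.

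For the stochastic part, straightforward algebra on the ridge formulas yields the identity
$\hat{\m{\mu}}_{\nu_n}-\bar{\m{\mu}}_{\nu_n} = (\hat{\s{K}}+\nu_n I)^{-1}\bigl[(\c{Z}_n-\hat{\s{K}}\m{\mu}) + \nu_n(\hat{\s{K}}-\s{K})(\s{K}+\nu_n I)^{-1}\m{\mu}\bigr]$.
The random inverse $(\hat{\s{K}}+\nu_n I)^{-1}$ is stabilised by the sandwich factorization
$\s{K}^{\beta}(\hat{\s{K}}+\nu_n I)^{-1} = \bigl[\s{K}^{\beta}(\s{K}+\nu_n I)^{-1/2}\bigr]\,\Delta_n\,(\s{K}+\nu_n I)^{-1/2}$,
where $\Delta_n := (\s{K}+\nu_n I)^{1/2}(\hat{\s{K}}+\nu_n I)^{-1}(\s{K}+\nu_n I)^{1/2}$. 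The deterministic prefactor $\s{K}^{\beta}(\s{K}+\nu_n I)^{-1/2}$ is controlled by \cref{lem:srce:oper:norm}, while on the event where $u_n := \VERT(\s{K}+\nu_n I)^{-1/2}(\hat{\s{K}}-\s{K})(\s{K}+\nu_n I)^{-1/2}\VERT_{\infty}\le 1/2$, a Neumann-series argument gives $\VERT\Delta_n\VERT_{\infty}\le 2$.

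The analysis then reduces to two concentration inputs. First, a Bernstein-type bound for the sum-of-rank-one operators $\hat{\s{K}}-\s{K}$ after the $(\s{K}+\nu_n I)^{-1/2}$ sandwich: assumption (A) supplies the uniform feature-map bound $B$ on $\|\varphi(R,\m{x})\|_{\b{H}}$, and combining this with the trace/effective-dimension hypothesis $\VERT(\s{K}+\nu_n I)^{-2}\s{K}^{2\zeta+1}\VERT_{1} = o(n\min(\overline{r},\overline{r}^{\sigma^{2}}))$ forces $u_n = o_{\b{P}}(1)$. Second, a second-moment bound on
$(\s{K}+\nu_n I)^{-1/2}\bigl[(\c{Z}_n-\hat{\s{K}}\m{\mu})+\nu_n(\hat{\s{K}}-\s{K})(\s{K}+\nu_n I)^{-1}\m{\mu}\bigr]$,
obtained by expanding the three-level sum defining $\c{Z}_n$ and conditioning successively on $\c{Y}_i$, on the tilts $\m{R}_{ij}$, and on the locations $\m{X}_{ijk}$. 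The conditional-variance formula decomposes the total variance into three pieces: a particle-level term of order $n^{-1}$, a signal/tilt term of order $(n\overline{r})^{-1}\VERT(\s{K}+\nu_n I)^{-2}\s{K}^{2\beta+1}\VERT_{1}$, and a pure-noise term of order $(n\overline{r}^{\sigma^{2}})^{-1}\sigma^{2}\VERT(\s{K}+\nu_n I)^{-2}\s{K}^{2\beta+1}\VERT_{1}$. A Chebyshev step combined with the sandwich bound then yields the stated uniform rate $D\max(c_n,a_n,n^{-1})$.

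The main obstacle is organising the three-level heteroscedastic sampling so that the harmonic means $\overline{r}$ and $\overline{r}^{\sigma^{2}}$ emerge in clean form: because $\varphi_{ijk}$ and $\varphi_{ijk'}$ share the tilt $\m{R}_{ij}$, they are only conditionally independent, so the $k$-layer variance must be computed conditionally on the tilts before the per-particle weights $1/r_i$ and $\sigma_{r_i}^{2}/r_i$ can aggregate into their harmonic means. Matching the hypothesis exponent $\zeta$ (used only to drive the operator-perturbation event $\{u_n\le 1/2\}$) to the error exponent $\beta$ (used in the variance expansion) is what gives a single bound that holds uniformly over the source class and for every admissible $\beta$.
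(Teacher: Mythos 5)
Your bias term (Step 1) reproduces the paper's argument exactly, and your identity
\begin{equation*}
\hat{\m{\mu}}_{\nu_n}-\bar{\m{\mu}}_{\nu_n} = (\hat{\s{K}}+\nu_n I)^{-1}\bigl[(\c{Z}_n-\hat{\s{K}}\m{\mu}) + \nu_n(\hat{\s{K}}-\s{K})(\s{K}+\nu_n I)^{-1}\m{\mu}\bigr]
\end{equation*}
is algebraically correct. Your variance computation (conditioning in layers, splitting into a particle-level $n^{-1}$ piece, a signal/tilt piece $(n\overline{r})^{-1}\VERT(\s{K}+\nu_n I)^{-2}\s{K}^{2\beta+1}\VERT_1$, and a pure-noise piece with $\overline{r}^{\sigma^{2}}$) also matches the paper's Step 2 in essence. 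The gap is in how you handle the random inverse.

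You propose to sandwich $(\hat{\s{K}}+\nu_n I)^{-1}$ between $(\s{K}+\nu_n I)^{1/2}$ factors and to control
$u_n = \VERT(\s{K}+\nu_n I)^{-1/2}(\hat{\s{K}}-\s{K})(\s{K}+\nu_n I)^{-1/2}\VERT_\infty$
via Bernstein concentration (or, implicitly, via a second-moment Markov bound). Neither route is covered by the theorem's hypotheses. A second-moment Markov bound on $u_n$ (through the HS norm) yields, after applying (A) to sum over multiplicities,
$\b{E}[u_n^2] \lesssim (n\overline{r})^{-1}\bigl(\mathrm{tr}[(\s{K}+\nu_n I)^{-1}\s{K}]\bigr)^2$;
but the stated hypothesis with $\zeta=1/2$, read off \eqref{eq:tr:norm:rate}, only gives $\mathrm{tr}[(\s{K}+\nu_n I)^{-1}\s{K}] = o(n\overline{r})$, which is one power short of the required $o(\sqrt{n\overline{r}})$. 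An operator-norm Bernstein inequality fares no better: the linear Bernstein term is governed by the a.s.\ bound $\VERT(\s{K}+\nu_n I)^{-1/2}\varphi_{ijk}\VERT^2\lesssim B^2/\nu_n$, so its contribution is $B^2/(n\overline{r}\nu_n)$, and the hypothesis only forces $n\overline{r}\cdot\nu_n^{(q+1)/(2p)}\to\infty$, not $n\overline{r}\cdot\nu_n\to\infty$ (the former is strictly weaker since $(q+1)/(2p)<1$). In short, your approach requires the additional condition $n\cdot\min(\overline{r},\overline{r}^{\sigma^2})\cdot\nu_n\to\infty$, which the theorem does not assume and which would exclude part of the claimed range.

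The paper sidesteps this entirely. Rather than inverting $\hat{\s{K}}+\nu I$ and driving an operator-norm sandwich to zero, it introduces the intermediate quantity $\tilde{\m{\mu}}_\nu = \bar{\m{\mu}}_\nu - (\s{K}+\nu I)^{-1}(-\c{Z}_n + (\hat{\s{K}}+\nu I)\bar{\m{\mu}}_\nu)$ so that $\hat{\m{\mu}}_\nu - \tilde{\m{\mu}}_\nu = (\s{K}+\nu I)^{-1}(\s{K}-\hat{\s{K}})(\hat{\m{\mu}}_\nu - \bar{\m{\mu}}_\nu)$ — a purely deterministic inverse multiplied by a perturbation whose quadratic form against $\s{K}^\beta$ it Cauchy--Schwarzes with weights $\tau_l^{4\beta}/(\tau_l^2+\nu)^2$ on one side and $\tau_{l'}^{-4\zeta}$ on the other. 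The resulting scalar multiplier $\mathcal{X}_{\zeta,\beta}$ has \emph{first moment} of order $a_n$ (the trace quantity times $\VERT\s{K}^{1-2\zeta}\VERT_1/(n\overline{r})$), so a plain Markov inequality and an event split on $\{\mathcal{X}_{\zeta,\beta}<1/4\}$ suffice. That is why the hypothesis only needs the trace condition rather than $n\overline{r}\nu_n\to\infty$. If you want to rescue your Neumann-series approach, you would have to either impose the stronger condition on $\nu_n$ or weight the sandwich by fractional powers of $\s{K}$ as in the paper's $\mathcal{X}_{\zeta,\beta}$ rather than by $(\s{K}+\nu I)^{-1/2}$.
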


\begin{proof}
Let
\begin{align*}
    &\bar{\m{\mu}}_{\nu} = (\s{K} + \nu I)^{-1} \s{K} \m{\mu}, \\
    &\tilde{\m{\mu}}_{\nu} = \bar{\m{\mu}}_{\nu} - (\s{K} + \nu I)^{-1} (-\c{Z}_{n} + (\hat{\s{K}} + \nu I) \bar{\m{\mu}}_{\nu}), \\
    &\hat{\m{\mu}}_{\nu} = (\hat{\s{K}} + \nu I)^{-1} \c{Z}_{n}.    
\end{align*}
To prove the statement, we decompose as follows:
\begin{equation*}
    (\hat{\m{\mu}}_{\nu}-\m{\mu}) = (\bar{\m{\mu}}_{\nu}-\m{\mu}) + (\tilde{\m{\mu}}_{\nu}-\bar{\m{\mu}}_{\nu}) + (\hat{\m{\mu}}_{\nu}-\tilde{\m{\mu}}_{\nu}).
\end{equation*}

The idea behind this decomposition is as follows. The first term is deterministic and easy to control. Also, the second part does not involve the inverse of the random operator $(\hat{\s{K}} + \nu I)$, which we do not want to work with. Then, we control the second moment of the second term using the spectral decomposition of $\s{K}$. Finally, we impose certain regularity conditions so that the final term, which involves $(\hat{\s{K}} + \nu I)^{-1}$ could be majorized by the second term. It will turn out that the first term is majorizing when the penalty parameter is large, whereas the second term is majorizing when the penalty parameter is small.

Additionally, We list some equations that we will use several times in the proof below. Here, $f \in \b{H}$ and $\c{Y}$ is a second-order $\b{H}$-valued random element. In the proof, $\b{E}[\cdot \vert \varphi]$ should be understood as the conditional expectation given all $\varphi_{ijk}$'s.
\begin{align}
    &\b{E} \|\c{Y}\|_{\b{H}}^{2} = \sum_{l=1}^{\infty} \sum_{m=1}^{N(l)} \b{E} [ \langle \c{Y}, e_{lm} \rangle_{\b{H}}^{2} ] \label{eq:parseval} \\
    &\b{E} [\left. \c{Z}_{n} \right\vert \varphi] = \b{E} \left[\left. \frac{\omega_{d_{r}, d}}{n} \sum_{i=1}^{n} \frac{1}{r_{i}} \sum_{j=1}^{r_{i}} \frac{1}{s_{ij}} \sum_{k=1}^{s_{ij}} Z_{ijk} \varphi_{ijk} \right\vert \varphi \right] = \hat{\s{K}} \m{\mu} \label{eq:Wn:cond:exp}, \\
    &\Var \left[ \langle (\varphi_{ijk} \otimes \varphi_{ijk}) f, e_{lm} \rangle_{\b{H}} \right] 
    \le \b{E} \left[ \langle \varphi_{ijk}, f \rangle_{\b{H}}^{2} \langle \varphi_{ijk}, e_{lm} \rangle_{\b{H}}^{2}  \right] \nonumber \\
    &\hspace*{6em} \quad \le B^{2} \|f\|_{\b{H}}^{2} \b{E} \left[ \langle \varphi_{ijk}, e_{lm} \rangle_{\b{H}}^{2}  \right] = \frac{B^{2}}{\omega_{d_{r}, d}} \|f\|_{\b{H}}^{2} \langle \s{K} e_{lm}, e_{lm} \rangle_{\b{H}} = \frac{\tau_{l}^{2} B^{2}}{\omega_{d_{r}, d}} \|f\|_{\b{H}}^{2} \label{eq:var:lk}, \\
    &\vertiii{(\s{K} + \nu I)^{-2} \s{K}^{\alpha}}_{1} = \sum_{l=1}^{\infty} \sum_{m=1}^{N(l)} \langle (\s{K} + \nu I)^{-2} \s{K}^{\alpha} e_{lm}, e_{lm} \rangle_{\b{H}} = \sum_{l=1}^{\infty} \left( \frac{\tau_{l}^{\alpha}}{\tau_{l}^{2} + \nu} \right)^{2} N(l).
    \label{eq:tr:norm}
\end{align}
Also, for any Hilbert-Schmidt operator $G \in \b{H} \otimes \b{H}$, we have
\begin{align}\label{eq:HS:inn:pr}
    \omega_{d_{r}, d} \b{E} \left[ \langle G \varphi_{ij}, \varphi_{ij} \rangle_{\b{H}} \right] = \langle G, \s{K} \rangle_{\b{H} \otimes \b{H}} = \sum_{l=1}^{\infty} \sum_{m=1}^{N(l)} \langle G e_{lm}, \s{K} e_{lm} \rangle_{\b{H}}
    = \sum_{l=1}^{\infty} \sum_{m=1}^{N(l)} \tau_{l}^{2} \langle G e_{lm}, e_{lm} \rangle_{\b{H}}.
\end{align}

\begin{enumerate}
\item [(Step 1)] For any $\beta \ge 0, \nu > 0$, we claim that
\begin{equation*}
    \sup_{\b{P}_{\c{Y}} \in \c{S}_{1}(\kappa_{1}, M_{1}, M_{2})} \| \s{K}^{\beta} (\m{\mu} - \bar{\m{\mu}}_{\nu}) \|_{\b{H}}^{2} = O(\nu^{2[(\beta + \kappa_{1}) \wedge 1]}).
\end{equation*} 

Then \eqref{eq:mean:asym:1st} and \cref{lem:srce:oper:norm} yield
\begin{align}\label{eq:step1}
    \| \s{K}^{\beta} (\m{\mu} - \bar{\m{\mu}}_{\nu}) \|_{\b{H}}^{2} 
    &= \| \nu (\s{K} + \nu I)^{-1} \s{K}^{\beta} \m{\mu} \|_{\b{H}}^{2}
    = \nu^{2} \| (\s{K} + \nu I)^{-1} \s{K}^{\beta + \kappa_{1}} \delta \|_{\b{H}}^{2} \nonumber \\
    &\le (\nu^{(\beta + \kappa_{1}) \wedge 1} \vertiii{\s{K}}_{\infty}^{(\beta + \kappa_{1}-1) \vee 0} \|\delta \|_{\b{H}})^{2}
    \le \tilde{M}_{1} \nu^{2[(\beta + \kappa_{1}) \wedge 1]},
\end{align}
where $\tilde{M}_{1} := M_{1} \vertiii{\s{K}}_{\infty}^{2(\beta + \kappa_{1}-1) \vee 0}$.

\item [(Step 2)] For any $\beta \ge 0, \nu > 0$, we claim that
\begin{equation*}
    \sup_{\b{P}_{\c{Y}} \in \c{S}_{1}(\kappa_{1}, M_{1}, M_{2})} \b{E} \left[ \| \s{K}^{\beta} (\tilde{\m{\mu}}_{\nu}-\bar{\m{\mu}}_{\nu}) \|_{\b{H}}^{2} \right] = O ((n\overline{r})^{-1} \vertiii{ (\s{K} + \nu I)^{-2} \s{K}^{2\beta+1} }_{1} + n^{-1}).
\end{equation*} 

First, we remark that whenever $\beta \ge 0$ and $\nu > 0$, $(\s{K} + \nu I)^{-2} \s{K}^{2\beta+1}$ is a trace-class operator since
\begin{align*}
    \vertiii{ (\s{K} + \nu I)^{-2} \s{K}^{2\beta+1} }_{1} \le \vertiii{ (\s{K} + \nu I)^{-1} \s{K}^{\beta} }_{\infty}^{2} \vertiii{ \s{K}}_{1} < \infty.
\end{align*}
by \cref{lem:srce:oper:norm}.
Observe $\tilde{\m{\mu}}_{\nu}-\bar{\m{\mu}}_{\nu} = (\s{K} + \nu I)^{-1} (-\c{Z}_{n} + (\hat{\s{K}} + \nu I) \bar{\m{\mu}}_{\nu})$.
For any $\beta \ge 0$, we obtain
\begin{align}\label{eq:step2:decomp}
    \b{E} \left[ \| \s{K}^{\beta} (\tilde{\m{\mu}}_{\nu}-\bar{\m{\mu}}_{\nu}) \|_{\b{H}}^{2} \right]
    &\stackrel{\eqref{eq:parseval}}{=} \sum_{l=1}^{\infty} \sum_{m=1}^{N(l)} \b{E} [ \langle \s{K}^{2 \beta} (\s{K} + \nu I)^{-1} (-\c{Z}_{n} + (\hat{\s{K}} + \nu I) \bar{\m{\mu}}_{\nu}), e_{lm} \rangle_{\b{H}}^{2} ] \nonumber \\
    &= \sum_{l=1}^{\infty} \sum_{m=1}^{N(l)} \b{E} [ \langle  -\c{Z}_{n} + (\hat{\s{K}} + \nu I) \bar{\m{\mu}}_{\nu}, \s{K}^{2 \beta} (\s{K} + \nu I)^{-1} e_{lm} \rangle_{\b{H}}^{2} ] \nonumber \\
    &= \sum_{l=1}^{\infty} \left( \frac{\tau_{l}^{2 \beta}}{\tau_{l}^{2} + \nu} \right)^{2} \sum_{m=1}^{N(l)} \b{E} [ \langle -\c{Z}_{n} + (\hat{\s{K}} + \nu I) \bar{\m{\mu}}_{\nu}, e_{lm} \rangle_{\b{H}}^{2} ] \nonumber \\
    &= \sum_{l=1}^{\infty} \left( \frac{\tau_{l}^{2 \beta}}{\tau_{l}^{2} + \nu} \right)^{2} \sum_{m=1}^{N(l)} \Var [ \langle \c{Z}_{n} - \hat{\s{K}} \bar{\m{\mu}}_{\nu}, e_{lm} \rangle_{\b{H}} ],
\end{align}
where the last equality follows from the fact that $\bar{\m{\mu}}_{\nu}$ is deterministic and
\begin{align*}
    \b{E} \left[ \c{Z}_{n} - (\hat{\s{K}} + \nu I) \bar{\m{\mu}}_{\nu} \right] 
    &= \b{E} \left[ \b{E} [ \left.  \c{Z}_{n} - (\hat{\s{K}} + \nu I) \bar{\m{\mu}}_{\nu} \right\vert \varphi ] \right] \\
    &\stackrel{\eqref{eq:Wn:cond:exp}}{=} \b{E} \left[ \hat{\s{K}} \m{\mu} - (\hat{\s{K}} + \nu I) \bar{\m{\mu}}_{\nu} \right]
    = \s{K} \m{\mu} - (\s{K} + \nu I) \bar{\m{\mu}}_{\nu} = 0.
\end{align*}
To calculate \eqref{eq:step2:decomp}, we first calculate the summand for fixed $l$ and $m$, using the law of total variance:
\begin{align*}
    \Var [ \langle \c{Z}_{n} - \hat{\s{K}} \bar{\m{\mu}}_{\nu}, e_{lm} \rangle_{\b{H}} ]
    = \underbrace{\Var \left( \b{E} \left[ \left. \langle \c{Z}_{n} - \hat{\s{K}} \bar{\m{\mu}}_{\nu}, e_{lm} \rangle_{\b{H}} \right\vert \varphi \right] \right)}_{=: (A)_{lm}} + \underbrace{\b{E} \left( \Var \left[ \left. \langle \c{Z}_{n} - \hat{\s{K}} \bar{\m{\mu}}_{\nu}, e_{lm} \rangle_{\b{H}} \right\vert \varphi \right] \right)}_{=: (B)_{lm}}.
\end{align*}
Because $\varphi_{ijk}$ and $\varphi_{ij'k'}$ are i.i.d. whenever $j \neq j'$, note that
\begin{align*}
    (A)_{lm} &\stackrel{\eqref{eq:Wn:cond:exp}}{=} \Var \left[ \langle \hat{\s{K}} (\m{\mu} - \bar{\m{\mu}}_{\nu}) , e_{lm} \rangle_{\b{H}} \right] \\
    &= \left( \frac{\omega_{d_{r}, d}}{n} \right)^{2} \sum_{i=1}^{n} \frac{1}{r_{i}^{2}} \sum_{j=1}^{r_{i}} \frac{1}{s_{ij}^{2}} \Var \left[ \sum_{k=1}^{s_{ij}}  \langle (\varphi_{ijk} \otimes \varphi_{ijk}) (\m{\mu} - \bar{\m{\mu}}_{\nu}), e_{lm} \rangle_{\b{H}} \right] \\
    &\le \left( \frac{\omega_{d_{r}, d}}{n} \right)^{2} \sum_{i=1}^{n} \frac{1}{r_{i}^{2}} \sum_{j=1}^{r_{i}} \frac{1}{s_{ij}} \sum_{k=1}^{s_{ij}} \b{E} \left[ \langle (\varphi_{ijk} \otimes \varphi_{ijk}) (\m{\mu} - \bar{\m{\mu}}_{\nu}), e_{lm} \rangle_{\b{H}}^{2} \right] \\
    &\stackrel{\eqref{eq:var:lk}}{\le} \frac{\tau_{l}^{2} B^{2} \omega_{d_{r}, d}}{n^{2}} \sum_{i=1}^{n} \frac{1}{r_{i}^{2}} \sum_{j=1}^{r_{i}} \|\m{\mu} - \bar{\m{\mu}}_{\nu}\|_{\b{H}}^{2} \\
    &\stackrel{\eqref{eq:step1}}{\le} \frac{\tau_{l}^{2} B^{2} \omega_{d_{r}, d}}{n^{2}} \sum_{i=1}^{n} \frac{\nu^{2(\kappa_{1} \wedge 1)} \tilde{M}_{1}}{r_{i}}
    = \tilde{M}_{1} B^{2} \omega_{d_{r}, d} \nu^{2(\kappa_{1} \wedge 1)} \frac{\tau_{l}^{2}}{n \overline{r}}.
\end{align*}

To calculate $(B)_{lm}$, we first derive the conditional variance:
\begin{align}\label{eq:step2:cond:var}
    &\Var \left[ \left. \langle \c{Z}_{n} - \hat{\s{K}} \bar{\m{\mu}}_{\nu}, e_{lm} \rangle_{\b{H}} \right\vert \varphi \right] = \Var \left[ \left. \langle \c{Z}_{n}, e_{lm} \rangle_{\b{H}} \right\vert \varphi \right] \nonumber \\
    &= \left( \frac{\omega_{d_{r}, d}}{n} \right)^{2} \sum_{i=1}^{n} \frac{1}{r_{i}^{2}} \Var \left[ \left. \sum_{j=1}^{r_{i}} \frac{1}{s_{ij}} \sum_{k=1}^{s_{ij}} \langle Z_{ijk} \varphi_{ijk}, e_{lm} \rangle_{\b{H}} \right\vert \varphi \right] \nonumber \\
    &\le \left( \frac{\omega_{d_{r}, d}}{n} \right)^{2} \sum_{i=1}^{n} \frac{1}{r_{i}^{2}} \sum_{(j,k,j',k') \in \c{I}^{i}} \frac{1}{s_{ij} s_{ij'}} \b{E} \left[ \left. Z_{ijk} Z_{ij'k'} \right\vert \varphi \right] \langle \varphi_{ijk}, e_{lm} \rangle_{\b{H}} \langle  \varphi_{ij'k'}, e_{lm} \rangle_{\b{H}},
\end{align}
where $\c{I}^{i}$ is the index set for $(j,k,j',k')$ given by
\begin{equation*}
    \c{I}^{i} := \{ (j,k,j',k') \in \b{N}^{4} : 1 \le j, j' \le r_{i}, 1 \le k \le s_{ij}, 1 \le k' \le s_{ij'} \}.
\end{equation*}
Now, consider the partitions of $\c{I}^{i}$ as follows:
\begin{align*}
    \c{I}^{i}_{\neq} := \{ (j,k,j',k') \in \c{I}^{i} : j \neq j' \}, \quad \c{I}^{i}_{=} := \{ (j,k,j',k') \in \c{I}^{i} : j = j' \}
\end{align*}

We remark that when $(j,k,j',k') \in \c{I}^{i}_{\neq}$, by the independence of $\varphi_{ijk}$ and $\varphi_{ij'k'}$, 
\begin{align*}
    &\b{E} \left[ \b{E} [ \left. Z_{ijk} Z_{ij'k'} \right\vert \varphi ] \langle \varphi_{ijk}, e_{lm} \rangle_{\b{H}} \langle  \varphi_{ij'k'}, e_{lm} \rangle_{\b{H}} \right]
    = \b{E} \left[ \langle \m{\Gamma} \varphi_{ijk}, \varphi_{ij'k'} \rangle \langle \varphi_{ijk}, e_{lm} \rangle_{\b{H}} \langle  \varphi_{ij'k'}, e_{lm} \rangle_{\b{H}} \right] \\
    &= \omega_{d_{r}, d}^{-2} \langle \m{\Gamma} \b{E} \left[ \omega_{d_{r}, d} \langle \varphi_{ijk}, e_{lm} \rangle_{\b{H}} \varphi_{ijk} \right], \b{E} \left[ \omega_{d_{r}, d} \langle \varphi_{ij'k'}, e_{lm} \rangle_{\b{H}} \varphi_{ij'k'} \right] \rangle_{\b{H}} \\
    &= \omega_{d_{r}, d}^{-2} \langle \m{\Gamma} \s{K} e_{lm}, \s{K} e_{lm} \rangle_{\b{H}} = \omega_{d_{r}, d}^{-2} \tau_{l}^{4} \langle \m{\Gamma} e_{lm}, e_{lm} \rangle_{\b{H}}.
\end{align*}
In the case where $(j,k,j',k') \in \c{I}^{i}_{=}$, by the Cauchy-Schwarz inequality,
\begin{align*}
    &\b{E} \left[ \b{E} [ \left. Z_{ijk} Z_{ij'k'} \right\vert \varphi ] \langle \varphi_{ijk}, e_{lm} \rangle_{\b{H}} \langle  \varphi_{ij'k'}, e_{lm} \rangle_{\b{H}} \right]
    \le \b{E} \left[ \b{E} [ \left. Z_{ijk}^{2} \right\vert \varphi ] \langle \varphi_{ijk}, e_{lm} \rangle_{\b{H}}^{2} \right] \\
    &= \b{E} \left[ (\langle \m{\Gamma} \varphi_{ijk}, \varphi_{ijk} \rangle + \sigma_{r_{i}}^{2}) \langle \varphi_{ijk}, e_{lm} \rangle_{\b{H}}^{2} \right]
    \le (B^{2} \vertiii{\m{\Gamma}}_{\infty} + \sigma_{r_{i}}^{2}) \b{E} \left[ \langle \varphi_{ijk}, e_{lm} \rangle_{\b{H}}^{2} \right] \\
    &= \omega_{d_{r}, d}^{-1} (B^{2} \vertiii{\m{\Gamma}}_{\infty} + \sigma_{r_{i}}^{2}) \langle \s{K} e_{lm}, e_{lm} \rangle_{\b{H}} = \omega_{d_{r}, d}^{-1} \tau_{l}^{2} (B^{2} \vertiii{\m{\Gamma}}_{\infty} + \sigma_{r_{i}}^{2}).
\end{align*}

From \eqref{eq:step2:cond:var} and the above inequalities, we get
\begin{small}
\begin{align*}
    (B)_{lm} 
    &= \b{E} \left( \Var \left[ \left. \langle \c{Z}_{n} - \hat{\s{K}} \bar{\m{\mu}}_{\nu}, e_{lm} \rangle_{\b{H}} \right\vert \varphi \right] \right) \\
    &\le \left( \frac{\omega_{d_{r}, d}}{n} \right)^{2} \sum_{i=1}^{n} \frac{1}{r_{i}^{2}} \sum_{(j,k,j',k') \in \c{I}^{i}} \frac{1}{s_{ij} s_{ij'}} \b{E} \left[ \b{E} [ \left. Z_{ijk} Z_{ij'k'} \right\vert \varphi ] \langle \varphi_{ijk}, e_{lm} \rangle_{\b{H}} \langle  \varphi_{ij'k'}, e_{lm} \rangle_{\b{H}} \right] \\
    &\le \left( \frac{\omega_{d_{r}, d}}{n} \right)^{2} \sum_{i=1}^{n} \frac{1}{r_{i}^{2}} \left( \sum_{(j,k,j',k') \in \c{I}^{i}_{\neq}} \frac{\omega_{d_{r}, d}^{-2} \tau_{l}^{4} \langle \m{\Gamma} e_{lm}, e_{lm} \rangle_{\b{H}}}{s_{ij} s_{ij'}}  + \sum_{(j,k,j',k') \in \c{I}^{i}_{=}} \frac{\omega_{d_{r}, d}^{-1} \tau_{l}^{2} (B^{2} \vertiii{\m{\Gamma}}_{\infty} + \sigma_{r_{i}}^{2})}{s_{ij} s_{ij'}}  \right) \\
    &\le \frac{1}{n} \langle \m{\Gamma} e_{lm}, e_{lm} \rangle_{\b{H}} \tau_{l}^{4} + \frac{\omega_{d_{r}, d} B^{2} \vertiii{\m{\Gamma}}_{\infty}}{n \overline{r}} \tau_{l}^{2} + \frac{\omega_{d_{r}, d} \sigma^{2}}{n \overline{r}^{\sigma^{2}}} \tau_{l}^{2},
\end{align*}    
\end{small}
where we have used in the last inequality that 
\begin{equation*}
    \sum_{(j,k,j',k') \in \c{I}^{i}} \frac{1}{s_{ij} s_{ij'}} = \left( \sum_{j=1}^{r_{i}} \sum_{k=1}^{s_{ij}} \frac{1}{s_{ij}} \right)^{2} = r_{i}^{2}, \quad  \sum_{(j,k,j',k') \in \c{I}^{i}_{=}} \frac{1}{s_{ij} s_{ij'}} = \sum_{j=1}^{r_{i}} \sum_{k=1}^{s_{ij}} \sum_{k'=1}^{s_{ij}} \frac{1}{s_{ij}^{2}} = r_{i}.
\end{equation*}
Therefore,
\begin{align*}
    &\Var [ \langle \c{Z}_{n} - \hat{\s{K}} \bar{\m{\mu}}_{\nu}, e_{lm} \rangle_{\b{H}} ]
    = (A)_{lm} + (B)_{lm} \\
    &\le \omega_{d_{r}, d} (\tilde{M}_{1} B^{2} \nu^{2(\kappa_{1} \wedge 1)} + B^{2} \vertiii{\m{\Gamma}}_{\infty}) \frac{\tau_{l}^{2}}{n\overline{r}} + \omega_{d_{r}, d} \sigma^{2} \frac{\tau_{l}^{2}}{n \overline{r}^{\sigma^{2}}} + \langle \m{\Gamma} e_{lm}, e_{lm} \rangle_{\b{H}} \frac{\tau_{l}^{4}}{n}.
\end{align*}
Finally, \eqref{eq:step2:decomp} becomes
\begin{small}
\begin{align}\label{eq:step2:conclud}
    \b{E} &\left[ \| \s{K}^{\beta} (\tilde{\m{\mu}}_{\nu}-\bar{\m{\mu}}_{\nu}) \|_{\b{H}}^{2} \right] \\
    &= \sum_{l=1}^{\infty} \left( \frac{\tau_{l}^{2 
    \beta}}{\tau_{l}^{2} + \nu} \right)^{2} \sum_{m=1}^{N(l)} \Var [ \langle \c{Z}_{n} - \hat{\s{K}} \bar{\m{\mu}}_{\nu}, e_{lm} \rangle_{\b{H}} ] \nonumber \\
    &\le \sum_{l=1}^{\infty} \left( \frac{\tau_{l}^{2 \beta}}{\tau_{l}^{2} + \nu} \right)^{2} \sum_{m=1}^{N(l)} \left( \omega_{d_{r}, d}  \left( \tilde{M}_{1} B^{2} \nu^{2(\kappa_{1} \wedge 1)} + B^{2} \vertiii{\m{\Gamma}}_{\infty} + \sigma^{2} \right) \frac{\tau_{l}^{2}}{n \cdot \min(\overline{r},
    \overline{r}^{\sigma^{2}})} + \langle \m{\Gamma} e_{lm}, e_{lm} \rangle_{\b{H}} \frac{\tau_{l}^{4}}{n} \right) \nonumber \\
    &\stackrel{\eqref{eq:tr:norm}}{\le}  \frac{\omega_{d_{r}, d}  \left( \tilde{M}_{1} B^{2} \nu^{2(\kappa_{1} \wedge 1)} + B^{2} \vertiii{\m{\Gamma}}_{\infty} + \sigma^{2} \right)}{n \cdot \min(\overline{r},
    \overline{r}^{\sigma^{2}})} \| (\s{K} + \nu I)^{-2} \s{K}^{2\beta+1}\|_{1} 
    + \frac{1}{n} \sum_{l=1}^{\infty} \left( \frac{\tau_{l}^{2 \beta+2}}{\tau_{l}^{2} + \nu} \right)^{2} \sum_{m=1}^{N(l)} \langle \m{\Gamma} e_{lm}, e_{lm} \rangle_{\b{H}} \nonumber \\
    &= \frac{\omega_{d_{r}, d}  \left( \tilde{M}_{1} B^{2} \nu^{2(\kappa_{1} \wedge 1)} + B^{2} \vertiii{\m{\Gamma}}_{\infty} + \sigma^{2} \right)}{n \cdot \min(\overline{r},
    \overline{r}^{\sigma^{2}})} \vertiii{(\s{K} + \nu I)^{-2} \s{K}^{2\beta+1}}_{1} + \frac{1}{n} \vertiii{(\s{K} + \nu I)^{-2} \s{K}^{2\beta+2} \m{\Gamma} }_{1} \nonumber \\
    &\le \frac{\omega_{d_{r}, d}  \left( \tilde{M}_{1} B^{2} \nu^{2(\kappa_{1} \wedge 1)} + B^{2} M_{2} + \sigma^{2} \right)}{n \cdot \min(\overline{r},
    \overline{r}^{\sigma^{2}})} \vertiii{ (\s{K} + \nu I)^{-2} \s{K}^{2\beta+1} }_{1} + \frac{M_{2}}{n} \vertiii{(\s{K} + \nu I)^{-2} \s{K}^{2\beta+2} }_{\infty} \nonumber \\
    &= O ((n \cdot \min(\overline{r},
    \overline{r}^{\sigma^{2}}))^{-1} \vertiii{ (\s{K} + \nu I)^{-2} \s{K}^{2\beta+1} }_{1} + n^{-1}), \nonumber 
\end{align}
\end{small}
where the first term in the last inequality is justified by H\"older's inequality for the Schatten norm, and the second term is due to $\vertiii{(\s{K} + \nu I)^{-1} \s{K}^{\beta+1}}_{\infty} \le \vertiii{\s{K}}_{\infty}^{\beta}$ using \cref{lem:srce:oper:norm}.

\item [(Step 3)] Denote 
\begin{align*}
    &a_{n} := [n \cdot \min(\overline{r},
    \overline{r}^{\sigma^{2}})]^{-1} \vertiii{ (\s{K} + \nu I)^{-2} \s{K}^{2\beta+1} }_{1}, \\
    &b_{n} := [n \cdot \min(\overline{r},
    \overline{r}^{\sigma^{2}})]^{-1} \vertiii{ (\s{K} + \nu I)^{-2} \s{K}^{2\zeta+1} }_{1} + n^{-1}, \\
    &\gamma_{n} := [n \cdot \min(\overline{r},
    \overline{r}^{\sigma^{2}})]^{-1} \vertiii{ (\s{K} + \nu I)^{-2} \s{K}^{2\beta+1} }_{1} + n^{-1}.
\end{align*}
We remark that if $\VERT (\s{K} + \nu I)^{-2} \s{K}^{2\zeta+1} \VERT_{1}=o(n \cdot \min(\overline{r}, \overline{r}^{\sigma^{2}}))$ and $\VERT (\s{K} + \nu I)^{-2} \s{K}^{2\beta+1} \VERT_{1}=o(n \cdot \min(\overline{r}, \overline{r}^{\sigma^{2}}))$, then $a_{n} = o(1)$ and $a_{n}b_{n}=o(\gamma_{n})$ since
\begin{align*}
    \frac{a_{n}b_{n}}{\gamma_{n}} &= \frac{a_{n} (n \cdot \min(\overline{r}, \overline{r}^{\sigma^{2}}))^{-1} \vertiii{ (\s{K} + \nu I)^{-2} \s{K}^{2\zeta+1} }_{1} + n^{-1} a_{n}}{a_{n} + n^{-1}} \\
    &\le \max \left([n \cdot \min(\overline{r}, \overline{r}^{\sigma^{2}})]^{-1} \VERT (\s{K} + \nu I)^{-2} \s{K}^{2\zeta+1} \VERT_{1}, a_{n} \right).
\end{align*}

For any fixed $\varepsilon > 0$ and any $\beta \in [0, 1/2]$ satisfying $\VERT (\s{K} + \nu_{n} I)^{-2} \s{K}^{2\beta+1} \VERT_{1}= o(n\overline{r})$, we claim that
\begin{equation*}
    \lim_{n \rightarrow \infty} \sup_{\b{P}_{\c{Y}} \in \c{S}_{1}(\kappa_{1}, M_{1}, M_{2})} \b{P} \left[ \| \s{K}^{\beta} (\hat{\m{\mu}}_{\nu_{n}}-\tilde{\m{\mu}}_{\nu_{n}}) \|_{\b{H}}^{2} > \varepsilon \gamma_{n} \right] = 0.
\end{equation*} 
Note that
\begin{align*}
    \hat{\m{\mu}}_{\nu} - \tilde{\m{\mu}}_{\nu} 
    &= (\hat{\s{K}} + \nu I)^{-1} \c{Z}_{n} - \bar{\m{\mu}}_{\nu} + (\s{K} + \nu I)^{-1} (-\c{Z}_{n} + (\hat{\s{K}} + \nu I) \bar{\m{\mu}}_{\nu}) \\
    &= [(\hat{\s{K}} + \nu I)^{-1} - (\s{K} + \nu I)^{-1}] \c{Z}_{n} - (\s{K} + \nu I)^{-1} (\s{K} - \hat{\s{K}}) \bar{\m{\mu}}_{\nu} \\
    &= (\s{K} + \nu I)^{-1} (\s{K} - \hat{\s{K}}) [(\hat{\s{K}} + \nu I)^{-1} \c{Z}_{n} - \bar{\m{\mu}}_{\nu}] \\
    &= (\s{K} + \nu I)^{-1} (\s{K} - \hat{\s{K}})  (\hat{\m{\mu}}_{\nu} - \bar{\m{\mu}}_{\nu})
\end{align*}
If we decompose $\hat{\m{\mu}}_{\nu} - \bar{\m{\mu}}_{\nu} = \sum_{l, m} f_{lm} e_{lm}$, where $f_{lm} := \langle \hat{\m{\mu}}_{\nu} - \bar{\m{\mu}}_{\nu}, e_{lm} \rangle_{\b{H}}$ are real-valued random variables, then
\begin{align*}
    \langle \hat{\m{\mu}}_{\nu} - \tilde{\m{\mu}}_{\nu} , e_{lm} \rangle_{\b{H}} 
    &= \langle (\s{K} + \nu I)^{-1} (\s{K} - \hat{\s{K}})  (\hat{\m{\mu}}_{\nu} - \bar{\m{\mu}}_{\nu}), e_{lm} \rangle_{\b{H}} \\
    &= \frac{1}{\tau_{l}^{2} + \nu} \langle (\s{K} - \hat{\s{K}})  \sum_{l', m'} f_{l'm'} e_{l'm'}, e_{lm} \rangle_{\b{H}} \\
    &= \frac{1}{\tau_{l}^{2} + \nu} \sum_{l', m'} f_{l'm'} V_{lm, l'm'},
\end{align*}
where $V_{lm, l'm'} := \langle (\s{K} - \hat{\s{K}}) e_{l'm'}, e_{lm} \rangle_{\b{H}} = \tau_{l}^{2} \delta_{ll'} \delta_{mm'} - \langle \hat{\s{K}} e_{l'm'}, e_{lm} \rangle_{\b{H}}$ are real-valued random variables with zero expectation as $\b{E}[ \hat{\s{K}} ] =\s{K}$. Hence, for any $\beta \ge 0$, by the Cauchy-Schwarz inequality, 
\begin{align*}
    \| \s{K}^{\beta} (\hat{\m{\mu}}_{\nu} - \tilde{\m{\mu}}_{\nu} ) \|_{\b{H}}^{2} 
    &= \sum_{l, m} \langle \hat{\m{\mu}}_{\nu} - \tilde{\m{\mu}}_{\nu} , \s{K}^{\beta} e_{lm} \rangle_{\b{H}}^{2} = \sum_{l, m} \tau_{l}^{4 \beta} \langle \hat{\m{\mu}}_{\nu} - \tilde{\m{\mu}}_{\nu} , e_{lm} \rangle_{\b{H}}^{2} \\
    &= \sum_{l, m} \left( \frac{\tau_{l}^{2 \beta}}{\tau_{l}^{2} + \nu} \right)^{2} \left( \sum_{l', m'} f_{l'm'} V_{lm, l'm'} \right)^{2} \\
    &\le \left( \sum_{l', m'} \tau_{l'}^{4 \zeta} f_{l'm'}^{2} \right) \sum_{l, m} \left( \frac{\tau_{l}^{2 \beta}}{\tau_{l}^{2} + \nu} \right)^{2}  \left( \sum_{l', m'} \tau_{l'}^{-4 \zeta} V_{lm, l'm'}^{2} \right) \\
    &= \| \s{K}^{\zeta} (\hat{\m{\mu}}_{\nu} - \bar{\m{\mu}}_{\nu}) \|_{\b{H}}^{2} \underbrace{\sum_{l, m} \left( \frac{\tau_{l}^{2 \beta}}{\tau_{l}^{2} + \nu} \right)^{2} \sum_{l', m'} \tau_{l'}^{-4 \zeta} V_{lm, l'm'}^{2}}_{=: \mathcal{X}_{\zeta, \beta}}.
\end{align*}

We shall prove $\b{E}[\mathcal{X}_{\zeta, \beta}] = O(a_{n})$. Assuming (A), for fixed $l, l' \in \b{N}$, note that
\begin{align}\label{eq:A4:sum}
    \sum_{m=1}^{N(l)} \sum_{m'=1}^{N(l')} &\b{E}[ \langle \varphi_{ijk} ,  e_{lm} \rangle_{\b{H}}^{2} \langle \varphi_{ijk} ,  e_{l'm'} \rangle_{\b{H}}^{2} ] 
    = \sum_{m=1}^{N(l)} \sum_{m'=1}^{N(l')} \b{E}[ \s{P}  e_{lm} (R, \m{x})^{2} \s{P}  e_{l'm'} (R, \m{x})^{2} ] \nonumber \\
    &\le \sum_{m=1}^{N(l)} \sup_{R, \m{x}} \s{P}  e_{lm} (R, \m{x})^{2} \sum_{m'=1}^{N(l')} \b{E}[ \s{P}  e_{l'm'} (R, \m{x})^{2} ] \nonumber \\
    &\le \tilde{B} \tau_{l}^{2} N(l) \sum_{m'=1}^{N(l')} \b{E}[ \s{P}  e_{l'm'} (R, \m{x})^{2} ]
    = \frac{\tilde{B}}{\omega_{d_{r}, d}}  \tau_{l}^{2} N(l) \tau_{l'}^{2} N(l') ,
\end{align}
and
\begin{align*}
    \b{E} [ V_{lm, l'm'}^{2} ] &= \Var V_{lm, l'm'} = \Var [ \langle \hat{\s{K}} e_{l'm'}, e_{lm} \rangle_{\b{H}} ] \\
    &= \left( \frac{\omega_{d_{r}, d}}{n} \right)^{2} \sum_{i=1}^{n} \frac{1}{r_{i}^{2}} \sum_{j=1}^{r_{i}} \frac{1}{s_{ij}^{2}} \Var [ \sum_{k=1}^{s_{ij}} \langle \varphi_{ijk} ,  e_{lm} \rangle_{\b{H}} \langle \varphi_{ijk} ,  e_{l'm'} \rangle_{\b{H}} ] \\
    &\le \left( \frac{\omega_{d_{r}, d}}{n} \right)^{2} \sum_{i=1}^{n} \frac{1}{r_{i}^{2}} \sum_{j=1}^{r_{i}} \frac{1}{s_{ij}} \sum_{k=1}^{s_{ij}} \b{E}[ \langle \varphi_{ijk} ,  e_{lm} \rangle_{\b{H}}^{2} \langle \varphi_{ijk} ,  e_{l'm'} \rangle_{\b{H}}^{2} ].
\end{align*}
Hence, we obtain
\begin{align*}
    \sup_{\b{P}_{\c{Y}} \in \c{S}_{1}(\kappa_{1}, M_{1}, M_{2})} \b{E} \mathcal{X}_{\zeta, \beta} 
    &\le \frac{\tilde{B}}{\omega_{d_{r}, d}} \left( \frac{\omega_{d_{r}, d}}{n} \right)^{2} \sum_{i=1}^{n} \frac{1}{r_{i}^{2}} \sum_{j=1}^{r_{i}} \left( \sum_{l} \left( \frac{\tau_{l}^{2 \beta+1}}{\tau_{l}^{2} + \nu} \right)^{2} N(l) \sum_{l'} \tau_{l'}^{2-4 \zeta} N(l') \right) \\
    &\le \frac{\tilde{B} \omega_{d_{r}, d}}{n\overline{r}} \sum_{l} \left( \frac{\tau_{l}^{2 \beta + 1}}{\tau_{l}^{2} + \nu} \right)^{2} N(l) \sum_{l'} \tau_{l'}^{2-4\zeta} N(l') \\
    &= \frac{\tilde{B} \omega_{d_{r}, d}}{n\overline{r}} \vertiii{(\s{K} + \nu I)^{-2} \s{K}^{2 \beta+1}}_{1} \vertiii{\s{K}^{1-2\zeta}}_{1}
    = O(a_{n}),
\end{align*}
which leads to
\begin{align}\label{eq:step3:decom}
    &\sup_{\b{P}_{\c{Y}} \in \c{S}_{1}(\kappa_{1}, M_{1}, M_{2})} \b{P} \left[ \| \s{K}^{\beta} (\hat{\m{\mu}}_{\nu}-\tilde{\m{\mu}}_{\nu}) \|_{\b{H}}^{2} > \varepsilon \gamma_{n} \right] 
    \le \sup_{\b{P}_{\c{Y}} \in \c{S}_{1}(\kappa_{1}, M_{1}, M_{2})} \b{P} \left[ \mathcal{X}_{\zeta, \beta} \| \s{K}^{\zeta} (\hat{\m{\mu}}_{\nu} - \bar{\m{\mu}}_{\nu}) \|_{\b{H}}^{2} > \varepsilon \gamma_{n} \right] \nonumber \\
    &\le \sup_{\b{P}_{\c{Y}} \in \c{S}_{1}(\kappa_{1}, M_{1}, M_{2})} \b{P} \left[ \mathcal{X}_{\zeta, \beta} \| \s{K}^{\zeta} (\hat{\m{\mu}}_{\nu}-\bar{\m{\mu}}_{\nu}) \|_{\b{H}}^{2} > \varepsilon \gamma_{n}, \mathcal{X}_{\zeta, \beta} < 1/4 \right] + \b{P} \left[ \mathcal{X}_{\zeta, \beta} \ge 1/4 \right] \nonumber \\
    &\le \sup_{\b{P}_{\c{Y}} \in \c{S}_{1}(\kappa_{1}, M_{1}, M_{2})} \b{P} \left[ \mathcal{X}_{\zeta, \beta} \| \s{K}^{\zeta} (\hat{\m{\mu}}_{\nu}-\bar{\m{\mu}}_{\nu}) \|_{\b{H}}^{2} > \varepsilon \gamma_{n}, \mathcal{X}_{\zeta, \beta} < 1/4 \right] + 4 a_{n},
\end{align}
where we use the Markov inequality for the last line. Meanwhile, on the event $(\mathcal{X}_{\zeta, \beta} < 1/4)$, we have
\begin{align*}
    \| \s{K}^{\zeta} (\tilde{\m{\mu}}_{\nu}-\bar{\m{\mu}}_{\nu}) \|_{\b{H}}
    &\ge \| \s{K}^{\zeta} (\hat{\m{\mu}}_{\nu}-\bar{\m{\mu}}_{\nu}) \|_{\b{H}} - \| \s{K}^{\zeta} (\hat{\m{\mu}}_{\nu}-\tilde{\m{\mu}}_{\nu}) \|_{\b{H}} \\
    &\ge (1 - \sqrt{\mathcal{X}_{\zeta, \beta}}) \| \s{K}^{-\theta} (\hat{\m{\mu}}_{\nu}-\bar{\m{\mu}}_{\nu}) \|_{\b{H}} = \| \s{K}^{\zeta} (\hat{\m{\mu}}_{\nu}-\bar{\m{\mu}}_{\nu}) \|_{\b{H}}/2,
\end{align*}
and we know that $\b{E} \| \s{K}^{\zeta} (\tilde{\m{\mu}}_{\nu}-\bar{\m{\mu}}_{\nu}) \|_{\b{H}}^{2} = O(b_{n})$. Therefore, \eqref{eq:step3:decom} becomes
\begin{align*}
    &\sup_{\b{P}_{\c{Y}} \in \c{S}_{1}(\kappa_{1}, M_{1}, M_{2})} \b{P} \left[ \| \s{K}^{\beta} (\hat{\m{\mu}}_{\nu}-\tilde{\m{\mu}}_{\nu}) \|_{\b{H}}^{2} > \varepsilon \gamma_{n} \right] \\
    &\le \sup_{\b{P}_{\c{Y}} \in \c{S}_{1}(\kappa_{1}, M_{1}, M_{2})} \b{P} \left[ \frac{\mathcal{X}_{\zeta, \beta}}{a_{n}} \frac{\| \s{K}^{\zeta} (\tilde{\m{\mu}}_{\nu}-\bar{\m{\mu}}_{\nu}) \|_{\b{H}}^{2}}{b_{n}} > \frac{\varepsilon \gamma_{n}}{4 a_{n} b_{n}} \right] + 4 a_{n} \\
    &\le \sup_{\b{P}_{\c{Y}} \in \c{S}_{1}(\kappa_{1}, M_{1}, M_{2})} \b{P} \left[ \frac{\mathcal{X}_{\zeta, \beta}}{a_{n}}  > \sqrt{\frac{\varepsilon \gamma_{n}}{4 a_{n} b_{n}}} \right] + \b{P} \left[ \frac{\| \s{K}^{\zeta} (\tilde{\m{\mu}}_{\nu}-\bar{\m{\mu}}_{\nu}) \|_{\b{H}}^{2}}{b_{n}}  > \sqrt{\frac{\varepsilon \gamma_{n}}{4 a_{n} b_{n}}} \right] + 4 a_{n} \\
    &\le \sqrt{\frac{4 a_{n} b_{n}}{\varepsilon \gamma_{n}}} \left( \sup_{\b{P}_{\c{Y}} \in \c{S}_{1}(\kappa_{1}, M_{1}, M_{2})} \b{E} \left[ \frac{\mathcal{X}_{\zeta, \beta}}{a_{n}} \right] + \b{E} \left[ \frac{\| \s{K}^{\zeta} (\tilde{\m{\mu}}_{\nu}-\bar{\m{\mu}}_{\nu}) \|_{\b{H}}^{2}}{b_{n}} \right] \right) + 4 a_{n} \rightarrow 0,     
\end{align*}
as $n \rightarrow \infty$ since $a_{n} = o(1)$ and $a_{n}b_{n}=o(\gamma_{n})$.

\item [(Step 4)] Let $c_{n} := \nu_{n}^{2[(\beta + \kappa_{1}) \wedge 1]}$. Choose any $D > 3 \tilde{M}_{1}$, where the constant $\tilde{M}_{1} >0$ is given in (Step 1).
\begin{small}
\begin{align*}
    &\sup_{\b{P}_{\c{Y}} \in \c{S}_{1}(\kappa_{1}, M_{1}, M_{2})} \b{P} \left[ \| \s{K}^{\beta} (\hat{\m{\mu}}_{\nu}-\m{\mu}) \|_{\b{H}}^{2} > D ( c_{n} + \gamma_{n}) \right] \\
    &\le \sup_{\b{P}_{\c{Y}} \in \c{S}_{1}(\kappa_{1}, M_{1}, M_{2})} \b{P} \left[ \| \s{K}^{\beta} (\bar{\m{\mu}}_{\nu}-\m{\mu}) \|_{\b{H}}^{2}  + \| \s{K}^{\beta} (\tilde{\m{\mu}}_{\nu}-\bar{\m{\mu}}_{\nu}) \|_{\b{H}}^{2} + \| \s{K}^{\beta} (\hat{\m{\mu}}_{\nu}-\tilde{\m{\mu}}_{\nu}) \|_{\b{H}}^{2} > \frac{D}{3} ( c_{n} + \gamma_{n}) \right] \\
    &\le \sup_{\b{P}_{\c{Y}} \in \c{S}_{1}(\kappa_{1}, M_{1}, M_{2})} \b{P} \left[ \| \s{K}^{\beta} (\tilde{\m{\mu}}_{\nu}-\bar{\m{\mu}}_{\nu}) \|_{\b{H}}^{2} + \| \s{K}^{\beta} (\hat{\m{\mu}}_{\nu}-\tilde{\m{\mu}}_{\nu}) \|_{\b{H}}^{2} > \frac{D}{3} \gamma_{n} \right] \\
    &\le \sup_{\b{P}_{\c{Y}} \in \c{S}_{1}(\kappa_{1}, M_{1}, M_{2})} \b{P} \left[ \| \s{K}^{\beta} (\tilde{\m{\mu}}_{\nu}-\bar{\m{\mu}}_{\nu}) \|_{\b{H}}^{2} > \frac{D}{6} \gamma_{n} \right] + \b{P} \left[ \| \s{K}^{\beta} (\hat{\m{\mu}}_{\nu}-\tilde{\m{\mu}}_{\nu}) \|_{\b{H}}^{2} > \frac{D}{6} \gamma_{n} \right],
\end{align*}   
\end{small}
hence by (Step 3), for any $D>0$, it holds that
\begin{align*}
    &\limsup_{n \rightarrow \infty} \sup_{\b{P}_{\c{Y}} \in \c{S}_{1}(\kappa_{1}, M_{1}, M_{2})} \b{P} \left[ \| \s{K}^{\beta} (\hat{\m{\mu}}_{\nu}-\m{\mu}) \|_{\b{H}}^{2} > D ( c_{n} + \gamma_{n}) \right] \\ 
    &\le \frac{6}{D} \sup_{n} \sup_{\b{P}_{\c{Y}} \in \c{S}_{1}(\kappa_{1}, M_{1}, M_{2})} \frac{\b{E}[ \| \s{K}^{\beta} (\tilde{\m{\mu}}_{\nu}-\bar{\m{\mu}}_{\nu}) \|_{\b{H}}^{2} ]}{\gamma_{n}}.
\end{align*}
Finally, from (Step 2), we conclude using Markov's inequality that
\begin{align*}
    \lim_{D \rightarrow \infty} \limsup_{n \rightarrow \infty} \sup_{\b{P}_{\c{Y}} \in \c{S}_{1}(\kappa_{1}, M_{1}, M_{2})} \b{P} \left[ \| \s{K}^{\beta} (\hat{\m{\mu}}_{\nu}-\m{\mu}) \|_{\b{H}}^{2} > D ( c_{n} + \gamma_{n}) \right] = 0.
\end{align*}
\end{enumerate}
\end{proof}

In \cref{thm:gen:asym:mean}, there is a trade-off of the convergence rate concerning the decaying speed of the penalty parameter $\nu_{n} > 0$. When $\nu_{n}$ decreases faster, $c_{n}$ decreases faster but $a_{n}$ decreases more slowly. Therefore, in order to achieve the optimal convergence rate of our regularized mean estimator, one may choose $\nu_{n} > 0$ satisfying $c_{n}=\nu_{n}^{2(\beta + \kappa_{1})} \asymp [n \cdot \min(\overline{r}, \overline{r}^{\sigma^{2}})]^{-1} \VERT (\s{K} + \nu_{n} I)^{-2} \s{K}^{2\beta+1} \VERT_{1}=a_{n}$. Specifically, when $\tau_{l} \asymp l^{-p}$ and $N(l) \asymp l^{q}$ for some $p, q >0$ with $2p > q+1$, it holds for any $\beta \ge 0$ that
\begin{align*}
    \vertiii{(\s{K} + \nu I)^{-2} \s{K}^{2 \beta + 1}}_{1} &= \sum_{l=1}^{\infty} N(l) \frac{\tau_{l}^{4 \beta + 2}}{(\tau_{l}^{2} + \nu)^{2}} = \sum_{l=1}^{\infty}  \frac{N(l) \tau_{l}^{-2(1 - 2\beta)}}{(1 + \nu \tau_{l}^{-2})^{2}} \\
    &\asymp \sum_{l=1}^{\infty}  \frac{l^{2p(1 - 2\beta)+q}}{(1 + \nu l^{2p})^{2}} \asymp \int_{1}^{\infty} \frac{x^{2p(1 - 2\beta)+q}}{(1+ \nu x^{2p})^{2}} \rd x.
\end{align*}
Applying the change of variables with $s=(\nu x^{2p})^{(2p(1 - 2\beta)+q+1)/2p}$, we get
\begin{align}\label{eq:tr:norm:rate}
    \vertiii{(\s{K} + \nu I)^{-2} \s{K}^{2 \beta + 1}}_{1} 
    &\asymp \nu^{-(q+2p(1 - 2\beta)+1)/2p} \int_{\nu^{(q+2p(1 - 2\beta)+1)/2p}}^{\infty} (1+ s^{2p/(q+2p(1 - 2\beta)+1)})^{-2} \rd s \nonumber \\ 
    &\asymp \nu^{-(2p(1 - 2\beta)+q+1)/2p} = \nu^{-(\frac{2p+q+1}{2p}-2\beta)}.
\end{align}
Therefore, 
\begin{align}\label{eq:opt:pen:rate}
    &\nu_{n}^{2(\beta + \kappa_{1})} \asymp [n \cdot \min(\overline{r},
    \overline{r}^{\sigma^{2}})]^{-1} \VERT (\s{K} + \nu_{n} I)^{-2} \s{K}^{2\beta+1} \VERT_{1} \\ &\Longleftrightarrow \quad \nu_{n} \asymp [n \cdot \min(\overline{r},
    \overline{r}^{\sigma^{2}})]^{-(\frac{2p+q+1}{2p} + 2\kappa_{1})^{-1}}, \nonumber   
\end{align}
regardless of the value of $\beta \ge 0$.

\begin{proof}[Proof of \cref{thm:mean:asym:L2}]
Take $\beta=1/2$ and $\kappa_{1}=0$. Also, choose $\nu_{n} \asymp [n \cdot \min(\overline{r}, \overline{r}^{\sigma^{2}})]^{-(\frac{2p+q+1}{2p})^{-1}}$ in view of \eqref{eq:opt:pen:rate}. It remains to check whether the assumptions other than (A) in \cref{thm:gen:asym:mean} hold. First, we verify
\begin{equation*}
    \VERT (\s{K} + \nu_{n} I)^{-2} \s{K}^{2\beta+1} \VERT_{1} \stackrel{\eqref{eq:tr:norm:rate}}{\asymp} \nu_{n}^{-(\frac{2p+q+1}{2p}-1)} \asymp [n \cdot \min(\overline{r}, \overline{r}^{\sigma^{2}})]^{(\frac{2p+q+1}{2p})^{-1}(\frac{2p+q+1}{2p}-1)} = o(n \cdot \min(\overline{r},
    \overline{r}^{\sigma^{2}})).
\end{equation*}
Second, choose $\theta \in (\frac{q+1}{2p},1) \neq \emptyset$ and set $\zeta := (1-\theta)/2 \in (0, 1/2]$. Then, $-2\theta p+q < -1$ so
\begin{equation*}
    \VERT \s{K}^{1-2\zeta} \VERT_{1} \asymp \sum_{l=1}^{\infty} l^{-2\theta p+q} < \infty,
\end{equation*}
and additionally
\begin{equation*}
    \VERT (\s{K} + \nu_{n} I)^{-2} \s{K}^{2\zeta+1} \VERT_{1} \stackrel{\eqref{eq:tr:norm:rate}}{\asymp} \nu_{n}^{-(\frac{2p+q+1}{2p}-2\zeta)} \asymp [n \cdot \min(\overline{r}, \overline{r}^{\sigma^{2}})]^{(\frac{q+1}{2p}+1)^{-1}(\frac{q+1}{2p}+\theta)} = o(n \cdot \min(\overline{r}, \overline{r}^{\sigma^{2}})).
\end{equation*}
\end{proof}

\begin{proof}[Proof of \cref{thm:mean:asym:RKHS}]
Take $\beta=0$, $\kappa_{1} > 0$, and choose $\nu_{n} \asymp [n \cdot \min(\overline{r}, \overline{r}^{\sigma^{2}})]^{-(\frac{2p+q+1}{2p} + 2\kappa_{1})^{-1}}$ in view of \eqref{eq:opt:pen:rate}. It remains to check whether the assumptions other than (A) in \cref{thm:gen:asym:mean} hold. First, we verify
\begin{equation*}
    \VERT (\s{K} + \nu_{n} I)^{-2} \s{K}^{2\beta+1} \VERT_{1} \stackrel{\eqref{eq:tr:norm:rate}}{\asymp} \nu_{n}^{-\frac{2p+q+1}{2p}} \asymp [n \cdot \min(\overline{r}, \overline{r}^{\sigma^{2}})]^{(\frac{2p+q+1}{2p} + 2\kappa_{1})^{-1}\frac{2p+q+1}{2p}} = o(n \cdot \min(\overline{r}, \overline{r}^{\sigma^{2}})).
\end{equation*}
Second, choose $\theta \in (\frac{q+1}{2p},1) \neq \emptyset$ and set $\zeta := (1-\theta)/2 \in (0, 1/2]$. Then, $-2\theta p+q < -1$ so $\VERT \s{K}^{1-2\zeta} \VERT_{1} < \infty$, and
\begin{equation*}
    \VERT (\s{K} + \nu_{n} I)^{-2} \s{K}^{2\zeta+1} \VERT_{1} \stackrel{\eqref{eq:tr:norm:rate}}{\asymp} \nu_{n}^{-(\frac{2p+q+1}{2p}-2\zeta)} \asymp [n \cdot \min(\overline{r}, \overline{r}^{\sigma^{2}})]^{(\frac{q+1}{2p}+ 1+ 2\kappa_{1})^{-1}(\frac{q+1}{2p}+\theta)} = o(n \cdot \min(\overline{r}, \overline{r}^{\sigma^{2}})).
\end{equation*}
\end{proof}

\subsection{Proofs in Section \ref{sec:asym:cov}}\label{sec:proof:asym:cov} 

\begin{proof}[Proof of \cref{prop:srce:cond}]
First, by Cauchy-Schwarz inequality, we have $\b{E} [\|\c{Y}\|_{\b{H}}^{2}] \le (\b{E} [\|\c{Y}\|_{\b{H}}^{4}])^{1/2} \le \sqrt{M_{4}}$. Let $\m{\Gamma} = (\s{K}^{\otimes})^{\kappa_{2}} \delta^{\otimes}$ with $\|\delta^{\otimes}\|_{\b{H} \otimes \b{H}}^{2} \le M_{3}$. Then by H\"older's inequality,
\begin{equation*}
    \VERT \m{\Gamma} \VERT_{1} \le \VERT (\s{K}^{\otimes})^{\kappa_{2}} \VERT_{2} \VERT \delta^{\otimes} \VERT_{2} = \VERT \s{K}^{2\kappa_{2}}\VERT_{1}^{2} \|\delta^{\otimes}\|_{\b{H} \otimes \b{H}}^{2}
\end{equation*}
Then $\sqrt{M_{3}} \s{K}^{2 \kappa_{2}} \succeq \m{\Gamma}$ since for any $f \in \b{H}$,
\begin{align*}
    \langle f, \m{\Gamma} f \rangle_{\b{H}} = \langle f, (\s{K}^{\otimes})^{\kappa_{2}} \delta^{\otimes} f \rangle_{\b{H}} &= \langle f \otimes f, (\s{K}^{\otimes})^{\kappa_{2}} \delta^{\otimes} \rangle_{\b{H} \otimes \b{H}} = \langle (\s{K}^{\otimes})^{\kappa_{2}} (f \otimes f),  \delta^{\otimes} \rangle_{\b{H} \otimes \b{H}} \\
    &= \langle \s{K}^{\kappa_{2}} f \otimes \s{K}^{\kappa_{2}} f,  \delta^{\otimes} \rangle_{\b{H} \otimes \b{H}} \le \sqrt{M_{3}} \| \s{K}^{\kappa_{2}} f \otimes \s{K}^{\kappa_{2}} f \|_{\b{H} \otimes \b{H}} \\
    &= \sqrt{M_{3}} \| \s{K}^{\kappa_{2}} f \|_{\b{H}}^{2}
    = \sqrt{M_{3}} \langle f, \s{K}^{2\kappa_{2}} f \rangle_{\b{H}}.
\end{align*}
Due to the Douglas factorization theorem \cite{douglas1966majorization}, we obtain $\sqrt{M_{3}} \s{K}^{2 \kappa_{2}} \succeq \m{\Gamma} \succeq \m{\mu} \otimes \m{\mu}$,
and \eqref{eq:Doug:equiv} yields the result.
\end{proof}

\begin{theorem}\label{thm:gen:asym:cov}
Consider a sequence of the penalty parameter $\eta_{n} > 0$ converging to $0$. Assume (A) and that there is some $\zeta \in (0, 1/2]$ with $\VERT \s{K}^{1-2\zeta} \VERT_{1} < \infty$ and $\VERT (\s{K}^{\otimes} + \eta_{n} I)^{-2} (\s{K}^{\otimes})^{2\zeta+1} \VERT_{1}=o(n \cdot \min(\overline{r}, \overline{r}^{\sigma^{4}}))$. Then, for any $\beta \in [0, 1/2]$ satisfying $\VERT (\s{K}^{\otimes} + \eta_{n} I)^{-2} (\s{K}^{\otimes})^{2\beta+1} \VERT_{1}=o(n \cdot \min(\overline{r}, \overline{r}^{\sigma^{4}}))$, it holds that
\begin{align*}
    \lim_{D \rightarrow \infty} \limsup_{n \rightarrow \infty} \sup_{\b{P}_{\c{Y}} \in \c{S}_{2}(\kappa_{2}, M_{3}, M_{4})} \b{P} \left[ \| (\s{K}^{\otimes})^{\beta} (\hat{\m{\Gamma}}_{\eta_{n}}-\m{\Gamma}) \|_{\b{H} \otimes \b{H}}^{2} > D \max(c_{n}^{\otimes}, a_{n}^{\otimes}, n^{-1}) \right] = 0,
\end{align*}
where
\begin{align*}
    c^{\otimes}_{n} := \eta_{n}^{2[(\beta + \kappa_{2}) \wedge 1]}, \quad
    a^{\otimes}_{n} := [n \cdot \min(\overline{r},
    \overline{r}^{\sigma^{4}})]^{-1} \vertiii{ (\s{K}^{\otimes} + \eta_{n} I)^{-2} (\s{K}^{\otimes})^{2\beta+1} }_{1}.
\end{align*}
\end{theorem}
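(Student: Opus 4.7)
The plan is to mirror the four-step decomposition used in the proof of \cref{thm:gen:asym:mean}, adapting it to the tensor setting. Specifically, I would introduce the population-level minimizer $\bar{\m{\Gamma}}_{\eta} = (\s{K}^{\otimes} + \eta I)^{-1} \s{K}^{\otimes} \m{\Gamma}$ and the intermediate quantity
\begin{equation*}
    \tilde{\m{\Gamma}}_{\eta} := \bar{\m{\Gamma}}_{\eta} - (\s{K}^{\otimes} + \eta I)^{-1}(-\c{Z}_n^{\otimes} + (\hat{\s{K}}^{\otimes} + \eta I) \bar{\m{\Gamma}}_{\eta}),
\end{equation*}
and decompose $\hat{\m{\Gamma}}_{\eta} - \m{\Gamma} = (\bar{\m{\Gamma}}_{\eta} - \m{\Gamma}) + (\tilde{\m{\Gamma}}_{\eta} - \bar{\m{\Gamma}}_{\eta}) + (\hat{\m{\Gamma}}_{\eta} - \tilde{\m{\Gamma}}_{\eta})$. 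Step 1 (bias) is almost identical to the mean case: from $\m{\Gamma} - \bar{\m{\Gamma}}_{\eta} = \eta(\s{K}^{\otimes} + \eta I)^{-1} \m{\Gamma}$ and the source condition $\m{\Gamma} = (\s{K}^{\otimes})^{\kappa_2} \delta^{\otimes}$, applying \cref{lem:srce:oper:norm} to $\s{K}^{\otimes}$ gives $\|(\s{K}^{\otimes})^{\beta}(\m{\Gamma} - \bar{\m{\Gamma}}_{\eta})\|_{\b{H} \otimes \b{H}}^2 = O(\eta^{2[(\beta + \kappa_2) \wedge 1]}) = O(c_n^{\otimes})$.

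Step 2 (variance of $\tilde{\m{\Gamma}}_{\eta} - \bar{\m{\Gamma}}_{\eta}$) is the main obstacle. Expanding in the CONS $\{e^{\otimes}_{lm, l'm'}\}$ for $\b{H} \otimes \b{H}$, I reduce the problem to bounding
\begin{equation*}
    \sum_{l, l'} \left(\frac{\tau_{ll'}^{2\beta}}{\tau_{ll'}^2 + \eta}\right)^2 \sum_{m, m'} \mathrm{Var}\bigl[\langle \c{Z}_n^{\otimes} - \hat{\s{K}}^{\otimes} \bar{\m{\Gamma}}_{\eta}, e^{\otimes}_{lm, l'm'} \rangle_{\b{H} \otimes \b{H}} \bigr].
\end{equation*}
Via the law of total variance, the conditional variance term requires fourth-moment products $\b{E}[Z_{iJ_1 J'_1} Z_{iJ_2 J'_2} \mid \varphi]$. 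Here \cref{lem:quad:cond:exp} is essential: splitting the index set $\c{I}^i_{\neq}$ (corresponding to off-diagonal pairs $j \neq j'$) further by the overlap $|\{j_1, j'_1\} \cap \{j_2, j'_2\}| \in \{0, 1, 2\}$, the $0$-overlap contribution is controlled by $\tau_{ll'}^4 \langle \m{S}^{\otimes} e^{\otimes}_{lm, l'm'}, e^{\otimes}_{lm, l'm'} \rangle / n$, while the partial-overlap contribution scales like $(B^4 M_4 + \sigma_{r_i}^4) \tau_{ll'}^2 / (n \overline{r})$. Crucially, the $\sigma_{r_i}^4$ term is the source of the $\overline{r}^{\sigma^4}$ quantity (rather than $\overline{r}^{\sigma^2}$), because the fourth-moment computation produces noise to the fourth power. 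Collecting the bounds and invoking $\|(\s{K}^{\otimes})^{2\beta+2}(\s{K}^{\otimes} + \eta I)^{-2} \m{S}^{\otimes}\|_1 \le M_4 \|(\s{K}^{\otimes})^{2\beta+2}(\s{K}^{\otimes} + \eta I)^{-2}\|_{\infty}$ yields $\b{E}\|(\s{K}^{\otimes})^{\beta}(\tilde{\m{\Gamma}}_{\eta} - \bar{\m{\Gamma}}_{\eta})\|_{\b{H} \otimes \b{H}}^2 = O(a_n^{\otimes} + n^{-1})$.

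Step 3 handles $\hat{\m{\Gamma}}_{\eta} - \tilde{\m{\Gamma}}_{\eta} = (\s{K}^{\otimes} + \eta I)^{-1}(\s{K}^{\otimes} - \hat{\s{K}}^{\otimes})(\hat{\m{\Gamma}}_{\eta} - \bar{\m{\Gamma}}_{\eta})$. Writing $\mathcal{X}_{\zeta, \beta}^{\otimes} := \sum_{l, l', m, m'} ((\tau_{ll'}^{2\beta})/(\tau_{ll'}^2 + \eta))^2 \sum_{L, L', M, M'} \tau_{LL'}^{-4\zeta} \langle (\s{K}^{\otimes} - \hat{\s{K}}^{\otimes}) e^{\otimes}_{LM, L'M'}, e^{\otimes}_{lm, l'm'} \rangle^2$, a Cauchy--Schwarz argument gives $\|(\s{K}^{\otimes})^{\beta}(\hat{\m{\Gamma}}_{\eta} - \tilde{\m{\Gamma}}_{\eta})\|^2 \le \mathcal{X}_{\zeta, \beta}^{\otimes} \|(\s{K}^{\otimes})^{\zeta}(\hat{\m{\Gamma}}_{\eta} - \bar{\m{\Gamma}}_{\eta})\|^2$. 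The key calculation is $\b{E}[\mathcal{X}_{\zeta, \beta}^{\otimes}] = O(a_n^{\otimes})$: assumption (A) together with $\|\s{K}^{1-2\zeta}\|_1 < \infty$ produces the product structure $\sum_{m, m'} \b{E}[\langle \varphi_{iJ J'}^{\otimes}, e^{\otimes}_{lm, l'm'} \rangle^4] \lesssim \tilde{B}^2 \tau_{ll'}^2 N(l) N(l') / \omega_{d_r, d}^2$, after which $\|\s{K}^{1-2\zeta}\|_1^2 = \||\s{K}^{\otimes}|^{1-2\zeta}\|_1$ finishes the bound. On the good event $\{\mathcal{X}_{\zeta, \beta}^{\otimes} < 1/4\}$ one then gets $\|\s{K}^{\zeta}(\tilde{\m{\Gamma}}_{\eta} - \bar{\m{\Gamma}}_{\eta})\| \ge \tfrac{1}{2} \|\s{K}^{\zeta}(\hat{\m{\Gamma}}_{\eta} - \bar{\m{\Gamma}}_{\eta})\|$, and the assumptions $a_n^{\otimes} = o(1)$ and $a_n^{\otimes} b_n^{\otimes} = o(a_n^{\otimes} + n^{-1})$ (verified as in the mean case) push the remainder below $\varepsilon(a_n^{\otimes} + n^{-1})$ in probability. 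Step 4 combines the three pieces via a union bound and Markov's inequality. The heightened sensitivity to heteroscedasticity (switch from $\sigma^2$ to $\sigma^4$) and the log factor that eventually appears in \cref{thm:cov:asym:L2,thm:cov:asym:RKHS} when specializing to polynomial spectra are byproducts of the fourth-moment bookkeeping in Step 2; these are the subtle points requiring the most care.
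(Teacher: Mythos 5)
Your proposal mirrors the paper's proof essentially step for step: the same three-term decomposition $\hat{\m{\Gamma}}_{\eta}-\m{\Gamma} = (\bar{\m{\Gamma}}_{\eta}-\m{\Gamma}) + (\tilde{\m{\Gamma}}_{\eta}-\bar{\m{\Gamma}}_{\eta}) + (\hat{\m{\Gamma}}_{\eta}-\tilde{\m{\Gamma}}_{\eta})$, the same use of \cref{lem:srce:oper:norm} for the bias, the same law-of-total-variance calculation with \cref{lem:quad:cond:exp} and the overlap partition $|\{j_1,j_1'\}\cap\{j_2,j_2'\}|\in\{0,1,2\}$ producing $\overline{r}^{\sigma^4}$, and the same Cauchy--Schwarz $\mathcal{X}^{\otimes}_{\zeta,\beta}$ argument driven by assumption (A) and $\vertiii{(\s{K}^{\otimes})^{1-2\zeta}}_{1}=\vertiii{\s{K}^{1-2\zeta}}_{1}^{2}<\infty$. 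Your Step~3 sketch compresses the fourth-moment term to a single fourth power $\b{E}[\langle\varphi^{\otimes}_{iJJ'},e^{\otimes}_{lm,l'm'}\rangle^{4}]$ where the paper actually handles four distinct CONS indices (via the factorization $S^{\cdots}_{ijj'}=S^{\cdots}_{ij}S^{\cdots}_{ij'}$ and \eqref{eq:A4:sum}), but this is a notational abbreviation rather than a substantive gap, and the surrounding reasoning is correct.
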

\begin{proof}
Similar to \cref{thm:gen:asym:mean}, consider
\begin{align*}
    &\bar{\m{\Gamma}}_{\eta} = (\s{K}^{\otimes} + \eta I)^{-1} \s{K}^{\otimes} \m{\Gamma}, \\
    &\tilde{\m{\Gamma}}_{\eta} = \bar{\m{\Gamma}}_{\eta} - (\s{K}^{\otimes} + \eta I)^{-1} (- \c{Z}^{\otimes}_{n} + (\hat{\s{K}}^{\otimes} + \eta I) \bar{\m{\Gamma}}_{\eta}), \\
    &\hat{\m{\Gamma}}_{\eta} = (\hat{\s{K}}^{\otimes} + \eta I)^{-1} \c{Z}^{\otimes}_{n},    
\end{align*}
and the decomposition $(\hat{\m{\Gamma}}_{\eta}-\m{\Gamma}) = (\bar{\m{\Gamma}}_{\eta}-\m{\Gamma}) + (\tilde{\m{\Gamma}}_{\eta}-\bar{\m{\Gamma}}_{\eta}) + (\hat{\m{\Gamma}}_{\eta}-\tilde{\m{\Gamma}}_{\eta})$. We list some equations that we will use several times in the proof below. Here, $G \in \b{H} \otimes \b{H}$ is any Hilbert-Schmidt operator:
\begin{align}
    &\b{E} [\left. \c{Z}^{\otimes}_{n} \right\vert \varphi] = \b{E} \left[\left. \frac{\omega_{d_{r}, d}^{2}}{n} \sum_{i=1}^{n} \frac{1}{r_{i}(r_{i}-1)} \sum_{1 \le j \neq j' \le r_{i}} \frac{1}{s_{ij}s_{ij'}} \sum_{\substack{1 \le k \le s_{ij} \\ 1 \le k' \le s_{ij'}}} Z_{iJJ'} \varphi_{iJJ'}^{\otimes} \right\vert \varphi \right] = \hat{\s{K}}^{\otimes} \m{\Gamma} \label{eq:Wn:cond:exp:cov}, \\
    &\Var \left[ \langle (\varphi_{iJJ'}^{\otimes} \otimes_{2} \varphi_{iJJ'}^{\otimes}) G, e^{\otimes}_{lm, l'm'} \rangle_{\b{H} \otimes \b{H}} \right] 
    \le \b{E} \left[ \langle \varphi_{iJJ'}^{\otimes}, G \rangle_{\b{H} \otimes \b{H}}^{2} \langle \varphi_{iJJ'}^{\otimes}, e^{\otimes}_{lm, l'm'} \rangle_{\b{H} \otimes \b{H}}^{2}  \right] \nonumber \\
    &\hspace*{6em} \quad \le B^{4} \|G\|_{\b{H} \otimes \b{H}}^{2} \b{E} \left[ \langle \varphi_{iJJ'}^{\otimes}, e^{\otimes}_{lm, l'm'} \rangle_{\b{H} \otimes \b{H}}^{2}  \right] \nonumber \\
    &\hspace*{6em} \quad = \frac{B^{4}}{\omega_{d_{r}, d}^{2}} \|G\|_{\b{H} \otimes \b{H}}^{2} \langle \s{K}^{\otimes} e^{\otimes}_{lm, l'm'}, e^{\otimes}_{lm, l'm'} \rangle_{\b{H} \otimes \b{H}} = \frac{\tau_{ll'}^{2} B^{4}}{\omega_{d_{r}, d}^{2}} \|G\|_{\b{H} \otimes \b{H}}^{2} , \quad j \neq j', \label{eq:var:lk:cov}\\
    &\vertiii{(\s{K}^{\otimes} + \eta I)^{-2} (\s{K}^{\otimes})^{\alpha}}_{1} = \sum_{l, l'=1}^{\infty} \sum_{m, m'=1}^{N(l)} \langle (\s{K}^{\otimes} + \eta I)^{-2} (\s{K}^{\otimes})^{\alpha} e^{\otimes}_{lm, l'm'}, e^{\otimes}_{lm, l'm'} \rangle_{\b{H} \otimes \b{H}} \nonumber \\ 
    &\hspace*{6em} \quad = \sum_{l, l'=1}^{\infty} \left( \frac{\tau_{ll'}^{\alpha}}{\tau_{ll'}^{2} + \eta} \right)^{2} N(l) N(l')
    \label{eq:tr:norm:cov},
\end{align}
where the constant $B > 0$ is defined as in the assumption (A).

\begin{enumerate}
\item [(Step 1)] For any $\beta \ge 0, \eta > 0$, we claim that
\begin{equation*}
    \sup_{\b{P}_{\c{Y}} \in \c{S}_{2}(\kappa_{2}, M_{3}, M_{4})} \| (\s{K}^{\otimes})^{\beta} (\m{\Gamma} - \bar{\m{\Gamma}}_{\eta}) \|_{\b{H} \otimes \b{H}}^{2} = O(\eta^{2[(\beta + \kappa_{2}) \wedge 1]}).
\end{equation*} 
Similar to the (Step 1) in the proof of \cref{thm:gen:asym:mean}, we get
\begin{align}\label{eq:step1:cov}
    \| (\s{K}^{\otimes})^{\beta} (\m{\Gamma} - \bar{\m{\Gamma}}_{\eta}) \|_{\b{H} \otimes \b{H}}^{2} \le \eta^{2[(\beta + \kappa_{2}) \wedge 1]} M_{5} = O(\eta^{2[(\beta + \kappa_{2}) \wedge 1]}),
\end{align}
where $M_{5} = M_{3} \VERT \s{K}^{\otimes} \VERT_{\infty}^{2(\beta+\kappa_{2}-1) \vee 0} = M_{3} \VERT \s{K} \VERT_{\infty}^{4(\beta+\kappa_{2}-1) \vee 0}$.

\item [(Step 2)] For any $\beta \ge 0, \eta > 0$, we claim that
\begin{equation*}
    \sup_{\b{P}_{\c{Y}} \in \c{S}_{2}(\kappa_{2}, M_{3}, M_{4})} \b{E} \left[ \| (\s{K}^{\otimes})^{\beta} (\tilde{\m{\Gamma}}_{\eta} - \bar{\m{\Gamma}}_{\eta}) \|_{\b{H} \otimes \b{H}}^{2} \right] = O ((n\overline{r})^{-1} \vertiii{ (\s{K}^{\otimes} + \eta I)^{-2} (\s{K}^{\otimes})^{2\beta+1} }_{1} + n^{-1}).
\end{equation*} 
First, we remark that whenever $\beta \ge 0$ and $\eta > 0$, $(\s{K}^{\otimes} + \eta I)^{-2} (\s{K}^{\otimes})^{2\beta+1}$ is a trace-class operator since
\begin{align*}
    \vertiii{ (\s{K}^{\otimes} + \eta I)^{-2} (\s{K}^{\otimes})^{2\beta+1} }_{1} 
    &\le \vertiii{ (\s{K}^{\otimes} + \eta I)^{-1} (\s{K}^{\otimes})^{\beta+1} }_{\infty}^{2} \vertiii{\s{K}^{\otimes}}_{1} \\
    &\le \vertiii{\s{K}^{\otimes}}_{\infty}^{\beta} \vertiii{\s{K}^{\otimes}}_{1} = \vertiii{\s{K}}_{\infty}^{2\beta} \vertiii{\s{K}}_{2}< \infty,
\end{align*}
by \cref{lem:srce:oper:norm}. Observe that
\begin{align*}
    \tilde{\m{\Gamma}}_{\eta}-\bar{\m{\Gamma}}_{\eta} &= (\s{K}^{\otimes} + \eta I)^{-1} (-\c{Z}^{\otimes}_{n} + (\hat{\s{K}}^{\otimes} + \eta I) \bar{\m{\Gamma}}_{\eta}).
\end{align*}
Thus, for any $\beta \ge 0$, similar to \eqref{eq:step2:decomp} in the proof of \cref{thm:gen:asym:mean}, we obtain
\begin{align}\label{eq:step2:decomp:cov}
    \b{E} \left[ \| (\s{K}^{\otimes})^{\beta} (\tilde{\m{\Gamma}}_{\eta} - \bar{\m{\Gamma}}_{\eta}) \|_{\b{H} \otimes \b{H}}^{2} \right]
    = \sum_{l, l' =1}^{\infty} \left( \frac{\tau_{ll'}^{2 \beta}}{\tau_{ll'}^{2} + \eta} \right)^{2} \sum_{m, m'=1}^{N(l)} \Var [ \langle \c{Z}^{\otimes}_{n} - \hat{\s{K}}^{\otimes} \bar{\m{\Gamma}}_{\eta}, e^{\otimes}_{lm, l'm'} \rangle_{\b{H} \otimes \b{H}} ].
\end{align}
To calculate \eqref{eq:step2:decomp:cov}, we first calculate the summand for fixed $l, l', m$, and $m'$, using the law of total variance:
\begin{align}\label{eq:step2:cov:total:var}
    &\Var [ \langle \c{Z}^{\otimes}_{n} - \hat{\s{K}}^{\otimes} \bar{\m{\Gamma}}_{\eta}, e^{\otimes}_{lm, l'm'} \rangle_{\b{H} \otimes \b{H}} ] \\
    &= \underbrace{\Var \left( \b{E} \left[ \left. \langle \c{Z}^{\otimes}_{n} - \hat{\s{K}}^{\otimes} \bar{\m{\Gamma}}_{\eta}, e^{\otimes}_{lm, l'm'} \rangle_{\b{H} \otimes \b{H}} \right\vert \varphi \right] \right)}_{=: (A)_{lm, l'm'}} + \underbrace{\b{E} \left( \Var \left[ \left. \langle \c{Z}^{\otimes}_{n} - \hat{\s{K}}^{\otimes} \bar{\m{\Gamma}}_{\eta}, e^{\otimes}_{lm, l'm'} \rangle_{\b{H} \otimes \b{H}} \right\vert \varphi \right] \right)}_{=: (B)_{lm, l'm'}}. \nonumber
\end{align}

First, we calculate $(A)_{lm, l'm'}$. Because $\varphi_{i_{1}J_{1}J'_{1}}^{\otimes}$ and $\varphi_{i_{2}J_{2}J'_{2}}^{\otimes}$ are i.i.d. whenever $i_{1} \neq i_{2}$, we get
\begin{align*}
    &(A)_{lm, l'm'} \stackrel{\eqref{eq:Wn:cond:exp:cov}}{=} \Var \left[ \langle \hat{\s{K}}^{\otimes} (\m{\Gamma} - \bar{\m{\Gamma}}_{\eta}) , e^{\otimes}_{lm, l'm'} \rangle_{\b{H} \otimes \b{H}} \right] \\
    &= \left( \frac{\omega_{d_{r}, d}^{2}}{n} \right)^{2} \sum_{i=1}^{n} \frac{1}{(r_{i}(r_{i}-1))^{2}} \Var \underbrace{\left[ \sum_{1 \le j \neq j' \le r_{i}}  T_{ijj'}^{lm, l'm'} \right]}_{=:T_{i}^{lm, l'm'}}.
\end{align*}
where
\begin{equation*}
    T_{ijj'}^{lm, l'm'} := \frac{1}{s_{ij}s_{ij'}} \sum_{\substack{1 \le k \le s_{ij} \\ 1 \le k' \le s_{ij'}}} \langle (\m{\Gamma} - \bar{\m{\Gamma}}_{\eta}), (\varphi_{iJJ'}^{\otimes} \otimes_{2} \varphi_{iJJ'}^{\otimes}) e^{\otimes}_{lm, l'm'} \rangle_{\b{H} \otimes \b{H}}, \quad j \neq j'.
\end{equation*}

Note that
\begin{align}
    &\b{E}[T_{ijj'}^{lm, l'm'}] = \langle (\m{\Gamma} - \bar{\m{\Gamma}}_{\eta}), \s{K}^{\otimes} e^{\otimes}_{lm, l'm'} \rangle_{\b{H} \otimes \b{H}} = \tau_{ll'}^{2} \langle (\m{\Gamma} - \bar{\m{\Gamma}}_{\eta}), e^{\otimes}_{lm, l'm'} \rangle_{\b{H} \otimes \b{H}}, \label{eq:cov:step2:tijj:exp} \\
    &\b{E}[(T_{ijj'}^{lm, l'm'})^{2}] \le \frac{1}{s_{ij}s_{ij'}} \sum_{\substack{1 \le k \le s_{ij} \\ 1 \le k' \le s_{ij'}}} \b{E} \left[ \langle  (\m{\Gamma} - \bar{\m{\Gamma}}_{\eta}), \varphi_{iJJ'}^{\otimes} \rangle_{\b{H} \otimes \b{H}}^{2} \langle  e^{\otimes}_{lm, l'm'}, \varphi_{iJJ'}^{\otimes} \rangle_{\b{H} \otimes \b{H}}^{2} \right] \nonumber \\
    &\hspace*{6em} \quad \stackrel{\eqref{eq:var:lk:cov}}{\le} \frac{\tau_{ll'}^{2} B^{4}}{\omega_{d_{r}, d}^{2}} \|\m{\Gamma} - \bar{\m{\Gamma}}_{\eta}\|_{\b{H} \otimes \b{H}}^{2}, \label{eq:cov:step2:tijj:var} 
\end{align}
hence the first moment of $T_{i}^{lm, l'm'}$ becomes
\begin{align*}
    \b{E}[T_{i}^{lm, l'm'}] = r_{i}(r_{i}-1) \tau_{ll'}^{2} \langle (\m{\Gamma} - \bar{\m{\Gamma}}_{\eta}), e^{\otimes}_{lm, l'm'} \rangle_{\b{H} \otimes \b{H}}.
\end{align*}
On the other hand, the second moment of $T_{i}^{lm, l'm'}$ is given by
\begin{align}\label{eq:cond:exp:cov:decomp}
    \b{E}[(T_{i}^{lm, l'm'})^{2}] = \sum_{\m{J} \in \c{J}^{i}} \b{E} \left[ T_{ij_{1} j'_{1}}^{lm, l'm'} \cdot T_{ij_{2} j'_{2}}^{lm, l'm'} \right],
\end{align}
where $\c{J}^{i}$ is the set of four indices $\m{J} := (j_{1}, j'_{1}, j_{2}, j'_{2})$ given by
\begin{equation*}
    \c{J}^{i} := \{ \m{J} \in \b{N}^{4} : 1 \le j_{1}, j'_{1}, j_{2}, j'_{2} \le r_{i}, j_{1} \neq j'_{1}, j_{2} \neq j'_{2} \}, \quad |\c{J}^{i}| = (r_{i}(r_{i}-1))^{2}.
\end{equation*}
Now, consider a partition of $\c{J}^{i}$ as follows:
\begin{alignat*}{2}
    &\c{J}^{i}_{2} := \{ \m{J} \in \c{J}^{i} : |\{j_{1}, j'_{1}\} \cap \{j_{2}, j'_{2}\}| = 2 \}, \quad &&|\c{J}^{i}_{2}| = 2 r_{i}(r_{i}-1), \\
    &\c{J}^{i}_{1} := \{ \m{J} \in \c{J}^{i} : |\{j_{1}, j'_{1}\} \cap \{j_{2}, j'_{2}\}| = 1 \}, \quad &&|\c{J}^{i}_{1}| = 4 r_{i}(r_{i}-1)(r_{i}-2), \\
    &\c{J}^{i}_{0} := \{ \m{J} \in \c{J}^{i} : |\{j_{1}, j'_{1}\} \cap \{j_{2}, j'_{2}\}| = 0 \}, \quad &&|\c{J}^{i}_{0}| = r_{i}(r_{i}-1)(r_{i}-2)(r_{i}-3).
\end{alignat*}

For $\m{J} \in \c{J}^{i}_{0}$, by independence, 
\begin{equation*}
    \b{E} \left[ T_{ij_{1} j'_{1}}^{lm, l'm'} \cdot T_{ij_{2} j'_{2}}^{lm, l'm'} \right] = \b{E} \left[ T_{ij_{1} j'_{1}}^{lm, l'm'} \right] \b{E} \left[ T_{ij_{2} j'_{2}}^{lm, l'm'} \right] \stackrel{\eqref{eq:cov:step2:tijj:exp}}{=} \tau_{ll'}^{4} \langle (\m{\Gamma} - \bar{\m{\Gamma}}_{\eta}), e^{\otimes}_{lm, l'm'} \rangle_{\b{H} \otimes \b{H}}^{2}.
\end{equation*}
For $\m{J} \in \c{J}^{i}_{1} \cup \c{J}^{i}_{2}$, by the Cauchy-Schwarz inequality, 
\begin{equation*}
    \b{E} \left[ T_{ij_{1} j'_{1}}^{lm, l'm'} \cdot T_{ij_{2} j'_{2}}^{lm, l'm'} \right] = \b{E}[(T_{ijj'}^{lm, l'm'})^{2}] \stackrel{\eqref{eq:cov:step2:tijj:var}}{\le} \frac{\tau_{ll'}^{2} B^{4}}{\omega_{d_{r}, d}^{2}} \|\m{\Gamma} - \bar{\m{\Gamma}}_{\eta}\|_{\b{H} \otimes \b{H}}^{2}.
\end{equation*}
Therefore,
\begin{align*}
    &\Var [T_{i}^{lm, l'm'}] = \b{E} [(T_{i}^{lm, l'm'})^{2}] - ( \b{E} [T_{i}^{lm, l'm'}] )^{2} \\
    &\le r_{i}(r_{i}-1)(4 r_{i}-6) \left[ \frac{\tau_{ll'}^{2} B^{4}}{\omega_{d_{r}, d}^{2}} \|\m{\Gamma} - \bar{\m{\Gamma}}_{\eta}\|_{\b{H} \otimes \b{H}}^{2} - \tau_{ll'}^{4} \langle (\m{\Gamma} - \bar{\m{\Gamma}}_{\eta}), e^{\otimes}_{lm, l'm'} \rangle_{\b{H} \otimes \b{H}}^{2} \right] \\
    &\le 4r_{i}(r_{i}-1)^{2} \frac{\tau_{ll'}^{2} B^{4}}{\omega_{d_{r}, d}^{2}} \|\m{\Gamma} - \bar{\m{\Gamma}}_{\eta}\|_{\b{H} \otimes \b{H}}^{2},
\end{align*}
which results in
\begin{align}\label{eq:cond:exp:cov:var:A}
    (A)_{lm, l'm'} &\le \left( \frac{\omega_{d_{r}, d}^{2}}{n} \right)^{2} \sum_{i=1}^{n} \frac{4r_{i}(r_{i}-1)^{2}}{(r_{i}(r_{i}-1))^{2}} \frac{\tau_{ll'}^{2} B^{4}}{\omega_{d_{r}, d}^{2}} \|\m{\Gamma} - \bar{\m{\Gamma}}_{\eta}\|_{\b{H} \otimes \b{H}}^{2} = \frac{4 \omega_{d_{r}, d}^{2} \tau_{ll'}^{2} B^{4}}{n\overline{r}} \|\m{\Gamma} - \bar{\m{\Gamma}}_{\eta}\|_{\b{H} \otimes \b{H}}^{2} \nonumber \\
    &\le \frac{4 M_{5} B^{4} \omega_{d_{r}, d}^{2}}{n\overline{r}} \eta^{2(\kappa_{2} \wedge 1)}  \tau_{ll'}^{2}. 
\end{align}

Second, to compute $(B)_{lm, l'm'}$ in \eqref{eq:step2:cov:total:var}, we first derive the conditional variance:
\begin{align}\label{eq:step2:cov:cond:var}
    &\Var \left[ \left. \langle \c{Z}^{\otimes}_{n} - \hat{\s{K}}^{\otimes} \bar{\m{\Gamma}}_{\eta}, e^{\otimes}_{lm, l'm'} \rangle_{\b{H} \otimes \b{H}} \right\vert \varphi \right]
    = \Var \left[ \left. \langle \c{Z}^{\otimes}_{n} , e^{\otimes}_{lm, l'm'} \rangle_{\b{H} \otimes \b{H}} \right\vert \varphi \right] \\
    &= \left( \frac{\omega_{d_{r}, d}^{2}}{n} \right)^{2} \sum_{i=1}^{n} \frac{1}{(r_{i}(r_{i}-1))^{2}} \Var \left[ \left. \sum_{1 \le j \neq j' \le r_{i}} \frac{1}{s_{ij}s_{ij'}} \sum_{\substack{1 \le k \le s_{ij} \\ 1 \le k' \le s_{ij'}}} \langle  Z_{iJJ'} \varphi_{iJJ'}^{\otimes} , e^{\otimes}_{lm, l'm'} \rangle_{\b{H} \otimes \b{H}} \right\vert \varphi \right] \nonumber \\
    &\le \left( \frac{\omega_{d_{r}, d}^{2}}{n} \right)^{2} \sum_{i=1}^{n} \frac{1}{(r_{i}(r_{i}-1))^{2}}  \sum_{\m{J} \in \c{J}^{i}} \frac{1}{s_{ij_{1}}s_{ij'_{1}}s_{ij_{2}}s_{ij'_{2}}} \times \\
    &\hspace*{4em} \quad \sum_{\substack{1 \le k_{1} \le s_{ij_{1}}, 1 \le k_{2} \le s_{ij_{2}} \\ 1 \le k'_{1} \le s_{ij'_{1}}, 1 \le k'_{2} \le s_{ij'_{2}}}} \b{E} \left[ \left. Z_{iJ_{1}J'_{1}} Z_{iJ_{2}J'_{2}} \right\vert \varphi  \right] \langle \varphi_{iJ_{1}J'_{1}}^{\otimes} , e^{\otimes}_{lm, l'm'} \rangle_{\b{H} \otimes \b{H}} \langle \varphi_{ij_{2}j'_{2}}^{\otimes} , e^{\otimes}_{lm, l'm'} \rangle_{\b{H} \otimes \b{H}}, \nonumber
\end{align}
where $\m{J} = (j_{1}, j'_{1}, j_{2}, j'_{2})$ denotes a tuple of four indices. Therefore, due to \cref{lem:quad:cond:exp} and the Cauchy-Schwarz inequality,
\begin{align*}
    (B)_{lm, l'm'} 
    &\le \left( \frac{\omega_{d_{r}, d}^{2}}{n} \right)^{2} \sum_{i=1}^{n} \frac{1}{(r_{i}(r_{i}-1))^{2}} [ |\c{J}^{i}_{0}| \omega_{d_{r}, d}^{-4} \langle \m{S}^{\otimes} e^{\otimes}_{lm, l'm'}, e^{\otimes}_{lm, l'm'} \rangle_{\b{H} \otimes \b{H}} \tau_{ll'}^{4} +\\
    &\hspace*{6em} \quad (|\c{J}^{i}_{1}| + |\c{J}^{i}_{2}|) \omega_{d_{r}, d}^{-2} (B^{4} \vertiii{\m{S}^{\otimes}}_{1} + 2 \sigma_{r_{i}}^{2} B^{2} \vertiii{\m{\Gamma}}_{1} + \sigma_{r_{i}}^{4}) \tau_{ll'}^{2} ] \\
    &\le \left( \frac{\omega_{d_{r}, d}^{2}}{n} \right)^{2} \sum_{i=1}^{n} \frac{1}{r_{i}(r_{i}-1)} [ (r_{i}-2)(r_{i}-3) \omega_{d_{r}, d}^{-4} \langle \m{S}^{\otimes} e^{\otimes}_{lm, l'm'}, e^{\otimes}_{lm, l'm'} \rangle_{\b{H} \otimes \b{H}} \tau_{ll'}^{4} +\\
    &\hspace*{6em} \quad (4 r_{i}-6) \omega_{d_{r}, d}^{-2} (B^{4} (\vertiii{\m{S}^{\otimes}}_{1} + \vertiii{\m{\Gamma}}_{1}^{2}) + 2 \sigma_{r_{i}}^{4}) \tau_{ll'}^{2} ] \\
    &\le \frac{1}{n} \langle \m{S}^{\otimes} e^{\otimes}_{lm, l'm'}, e^{\otimes}_{lm, l'm'} \rangle_{\b{H} \otimes \b{H}} \tau_{ll'}^{4}  + \frac{4 \omega_{d_{r}, d}^{2} B^{4} (\vertiii{\m{S}^{\otimes}}_{1} + \vertiii{\m{\Gamma}}_{1}^{2})}{n\overline{r}} \tau_{ll'}^{2} + \frac{8 \omega_{d_{r}, d}^{2} \sigma^{4}}{n \overline{r}^{\sigma^{4}}} \tau_{ll'}^{2}.
\end{align*}

Therefore, \eqref{eq:step2:cov:total:var} becomes
\begin{align*}
    &\Var [ \langle \c{Z}^{\otimes}_{n} - \hat{\s{K}}^{\otimes} \bar{\m{\Gamma}}_{\eta}, e^{\otimes}_{lm, l'm'} \rangle_{\b{H} \otimes \b{H}} ] = (A)_{lm, l'm'} + (B)_{lm, l'm'} \\
    &\le 4\omega_{d_{r}, d}^{2} \left( M_{5} B^{4} \eta^{2(\kappa_{2} \wedge 1)} + B^{4} (\vertiii{\m{S}^{\otimes}}_{1} + \vertiii{\m{\Gamma}}_{1}^{2}) + \sigma^{4} \right) \frac{\tau_{ll'}^{2}}{n \cdot \min(\overline{r},
    \overline{r}^{\sigma^{4}})} + \\
    &\hspace{6em} \langle \m{S}^{\otimes} e^{\otimes}_{lm, l'm'}, e^{\otimes}_{lm, l'm'} \rangle_{\b{H} \otimes \b{H}} \frac{\tau_{ll'}^{4}}{n}.
\end{align*}

Finally, following the same rationale to \eqref{eq:step2:conclud} in the proof of \cref{thm:gen:asym:mean}, the upper bound of \eqref{eq:step2:decomp:cov} is given by
\begin{align*}
    &\b{E} \left[ \| (\s{K}^{\otimes})^{\beta} (\tilde{\m{\Gamma}}_{\eta} - \bar{\m{\Gamma}}_{\eta}) \|_{\b{H} \otimes \b{H}}^{2} \right]
    \le \sum_{l, l' =1}^{\infty} \left( \frac{\tau_{ll'}^{2 \beta}}{\tau_{ll'}^{2} + \eta} \right)^{2} \sum_{m=1}^{N(l)} \sum_{m'=1}^{N(l')} \Var [ \langle \c{Z}^{\otimes}_{n} - \hat{\s{K}}^{\otimes} \bar{\m{\Gamma}}_{\eta}, e^{\otimes}_{lm, l'm'} \rangle_{\b{H} \otimes \b{H}} ] \\
    &\le \frac{4\omega_{d_{r}, d}^{2} \left( M_{5} B^{4} \eta^{2(\kappa_{2} \wedge 1)} + B^{4} (\vertiii{\m{S}^{\otimes}}_{1} + \vertiii{\m{\Gamma}}_{1}^{2}) + \sigma^{4} \right)}{n \cdot \min(\overline{r},
    \overline{r}^{\sigma^{4}})} \vertiii{ (\s{K}^{\otimes} + \eta I)^{-2} (\s{K}^{\otimes})^{2\beta+1} }_{1} \\
    &\hspace*{6em} \quad + \frac{M_{5}}{n} \vertiii{ (\s{K}^{\otimes} + \eta I)^{-2} (\s{K}^{\otimes})^{2\beta+2} }_{\infty} \nonumber \\
    &= O ([n \cdot \min(\overline{r},
    \overline{r}^{\sigma^{4}})]^{-1} \vertiii{ (\s{K}^{\otimes} + \eta I)^{-2} (\s{K}^{\otimes})^{2\beta+1} }_{1} + n^{-1}).
\end{align*}

\item [(Step 3)] For any fixed $\varepsilon > 0$ and any $\beta \in [0, 1/2]$ satisfying $\VERT (\s{K}^{\otimes} + \eta I)^{-2} (\s{K}^{\otimes})^{2\beta+1} \VERT_{1}= o(n \cdot \min(\overline{r}, \overline{r}^{\sigma^{4}}))$, we claim that
\begin{equation*}
    \lim_{n \rightarrow \infty} \sup_{\b{P}_{\c{Y}} \in \c{S}_{2}(\kappa_{2}, M_{3}, M_{4})} \b{P} \left[ \| (\s{K}^{\otimes})^{\beta} (\hat{\m{\Gamma}}_{\eta} - \tilde{\m{\Gamma}}_{\eta}) \|_{\b{H} \otimes \b{H}}^{2} > \varepsilon \gamma^{\otimes}_{n} \right] = 0,
\end{equation*} 
where $\gamma^{\otimes}_{n} := [n \cdot \min(\overline{r}, \overline{r}^{\sigma^{4}})]^{-1} \vertiii{ (\s{K}^{\otimes} + \eta I)^{-2} (\s{K}^{\otimes})^{2\beta+1} }_{1} + n^{-1}$.

As in the proof of \cref{thm:gen:asym:mean}, we have
\begin{align*}
    \hat{\m{\Gamma}}_{\eta} - \tilde{\m{\Gamma}}_{\eta} 
    = (\s{K}^{\otimes} + \eta I)^{-1} (\s{K}^{\otimes} - \hat{\s{K}}^{\otimes})  (\hat{\m{\Gamma}}_{\eta} - \bar{\m{\Gamma}}_{\eta}).
\end{align*}
Consider the following Parseval identity 
\begin{equation*}
    \hat{\m{\Gamma}}_{\eta} - \bar{\m{\Gamma}}_{\eta} = \sum_{l_{2}, m_{2}, l'_{2}, m'_{2}} f_{l_{2}m_{2}, l'_{2}m'_{2}} e^{\otimes}_{l_{2}m_{2}, l'_{2}m'_{2}}, \quad f_{l_{2}m_{2}, l'_{2}m'_{2}} := \langle \hat{\m{\Gamma}}_{\eta} - \bar{\m{\Gamma}}_{\eta}, e^{\otimes}_{l_{2}m_{2}, l'_{2}m'_{2}} \rangle_{\b{H} \otimes \b{H}},
\end{equation*}
to obtain
\begin{align*}
    \langle \hat{\m{\Gamma}}_{\eta} - \tilde{\m{\Gamma}}_{\eta} , e^{\otimes}_{l_{1}m_{1}, l'_{1}m'_{1}} \rangle_{\b{H} \otimes \b{H}} 
    = \frac{1}{\tau_{l_{1}l'_{1}}^{2} + \eta} \sum_{l_{2}, m_{2}, l'_{2}, m'_{2}} f_{l_{2}m_{2}, l'_{2}m'_{2}} V_{(l_{1}, m_{1}, l'_{1}, m'_{1}), (l_{2}, m_{2}, l'_{2}, m'_{2})},
\end{align*}
where $V_{(l_{1}, m_{1}, l'_{1}, m'_{1}), (l_{2}, m_{2}, l'_{2}, m'_{2})} := \tau_{l_{1}l'_{1}}^{2} \delta_{(l_{1}, m_{1}, l'_{1}, m'_{1}), (l_{2}, m_{2}, l'_{2}, m'_{2})} - \langle \hat{\s{K}}^{\otimes} e^{\otimes}_{l_{1}m_{1}, l'_{1}m'_{1}}, e^{\otimes}_{l_{2}m_{2}, l'_{2}m'_{2}} \rangle_{\b{H} \otimes \b{H}}$  are real-valued random variables with zero expectation as $\b{E}[ \hat{\s{K}}^{\otimes} ] =\s{K}^{\otimes}$. This yields, for any $\beta \ge 0$, analogous to the proof of \cref{thm:gen:asym:mean}, 
\begin{align*}
    &\| (\s{K}^{\otimes})^{\beta} (\hat{\m{\Gamma}}_{\eta} - \tilde{\m{\Gamma}}_{\eta}) \|_{\b{H} \otimes \b{H}}^{2} \\
    &\le \| (\s{K}^{\otimes})^{\zeta} (\hat{\m{\Gamma}}_{\eta} - \bar{\m{\Gamma}}_{\eta}) \|_{\b{H} \otimes \b{H}}^{2} \underbrace{\sum_{l_{1}, m_{1}, l'_{1}, m'_{1}} \left( \frac{\tau_{l_{1}l'_{1}}^{2 \beta}}{\tau_{l_{1}l'_{1}}^{2} + \eta} \right)^{2} \sum_{l_{2}, m_{2}, l'_{2}, m'_{2}} \tau_{l_{2}l'_{2}}^{-4 \zeta} V_{(l_{1}, m_{1}, l'_{1}, m'_{1}), (l_{2}, m_{2}, l'_{2}, m'_{2})}^{2}}_{=: \mathcal{X}^{\otimes}_{\zeta, \beta}}.
\end{align*}

Then, following the same rationale of (Step 3) in the proof of \cref{thm:gen:asym:mean}, it only remains to show that $\b{E}[\mathcal{X}^{\otimes}_{\zeta, \beta}] = O(a_{n}^{\otimes})$, where $a_{n}^{\otimes} := (n\overline{r})^{-1} \VERT (\s{K}^{\otimes} + \eta I)^{-2} (\s{K}^{\otimes})^{2\beta+1} \VERT_{1}$. 
To this end, note that
\begin{align*}
    &\b{E} [ V_{(l_{1}, m_{1}, l'_{1}, m'_{1}), (l_{2}, m_{2}, l'_{2}, m'_{2})}^{2} ] = \Var [V_{(l_{1}, m_{1}, l'_{1}, m'_{1}), (l_{2}, m_{2}, l'_{2}, m'_{2})} ] \\
    &= \Var [ \langle \hat{\s{K}}^{\otimes} e^{\otimes}_{l_{1}m_{1}, l'_{1}m'_{1}}, e^{\otimes}_{l_{2}m_{2}, l'_{2}m'_{2}} \rangle_{\b{H} \otimes \b{H}} ] \\
    &= \left( \frac{\omega_{d_{r}, d}^{2}}{n} \right)^{2} \sum_{i=1}^{n} \frac{1}{(r_{i}(r_{i}-1))^{2}} \times \\
    &\hspace{10mm} \Var [ \sum_{1 \le j \neq j' \le r_{i}} \underbrace{\frac{1}{s_{ij}s_{ij'}} \sum_{\substack{1 \le k \le s_{ij} \\ 1 \le k' \le s_{ij'}}} \langle \varphi_{iJJ'}^{\otimes}, e^{\otimes}_{l_{1}m_{1}, l'_{1}m'_{1}} \rangle_{\b{H} \otimes \b{H}} \langle \varphi_{iJJ'}^{\otimes}, e^{\otimes}_{l_{2}m_{2}, l'_{2}m'_{2}} \rangle_{\b{H} \otimes \b{H}}}_{=:S_{ij j'}^{(l_{1}, m_{1}, l'_{1}, m'_{1}), (l_{2}, m_{2}, l'_{2}, m'_{2})}}] \\
    &\le \left( \frac{\omega_{d_{r}, d}^{2}}{n} \right)^{2} \sum_{i=1}^{n} \frac{1}{(r_{i}(r_{i}-1))^{2}} \sum_{\m{J} \in \c{J}^{i}} \Cov \left[S_{ij_{1} j'_{1}}^{(l_{1}, m_{1}, l'_{1}, m'_{1}), (l_{2}, m_{2}, l'_{2}, m'_{2})}, S_{ij_{2} j'_{2}}^{(l_{1}, m_{1}, l'_{1}, m'_{1}), (l_{2}, m_{2}, l'_{2}, m'_{2})} \right] \\
    &= \left( \frac{\omega_{d_{r}, d}^{2}}{n} \right)^{2} \sum_{i=1}^{n} \frac{1}{(r_{i}(r_{i}-1))^{2}} \sum_{\m{J} \in \c{J}^{i}_{1} \cup \c{J}^{i}_{2}} \Cov \left[S_{ij_{1} j'_{1}}^{(l_{1}, m_{1}, l'_{1}, m'_{1}), (l_{2}, m_{2}, l'_{2}, m'_{2})}, S_{ij_{2} j'_{2}}^{(l_{1}, m_{1}, l'_{1}, m'_{1}), (l_{2}, m_{2}, l'_{2}, m'_{2})} \right] \\
    &\le \left( \frac{\omega_{d_{r}, d}^{2}}{n} \right)^{2} \sum_{i=1}^{n} \frac{r_{i}(r_{i}-1)(4 r_{i}-6)}{(r_{i}(r_{i}-1))^{2}} \b{E} \left[S_{ij_{1} j'_{1}}^{(l_{1}, m_{1}, l'_{1}, m'_{1}), (l_{2}, m_{2}, l'_{2}, m'_{2})} \right] ^{2} \\
    &\le \frac{4 \omega_{d_{r}, d}^{4}}{n\overline{r}} \b{E} \left[S_{ij_{1} j'_{1}}^{(l_{1}, m_{1}, l'_{1}, m'_{1}), (l_{2}, m_{2}, l'_{2}, m'_{2})} \right] ^{2}
\end{align*}

Using the Cauchy-Schwarz inequality with
\begin{align*}
    S_{ij j'}^{(l_{1}, m_{1}, l'_{1}, m'_{1}), (l_{2}, m_{2}, l'_{2}, m'_{2})} =S_{ij}^{(l_{1}, m_{1}), (l_{2}, m_{2})} S_{ij'}^{(l'_{1}, m'_{1}), (l'_{2}, m'_{2})},
\end{align*}
where
\begin{equation*}
    S_{ij}^{(l_{1}, m_{1}), (l_{2}, m_{2})} := \frac{1}{s_{ij}} \sum_{k=1}^{s_{ij}} \langle \varphi_{iJ}, e_{l_{1}m_{1}} \rangle_{\b{H}} \langle \varphi_{iJ}, e_{l_{2}m_{2}} \rangle_{\b{H}},
\end{equation*}
we then get
\begin{align*}
    \b{E} [ V_{(l_{1}, m_{1}, l'_{1}, m'_{1}), (l_{2}, m_{2}, l'_{2}, m'_{2})}^{2} ]
    &\le \frac{4 \omega_{d_{r}, d}^{4}}{n\overline{r}} \b{E} \left[S_{ij}^{(l_{1}, m_{1}), (l_{2}, m_{2})} \right] ^{2} \b{E} \left[S_{ij'}^{(l'_{1}, m'_{1}), (l'_{2}, m'_{2})} \right] ^{2} \\
    &\le \frac{4 \omega_{d_{r}, d}^{4}}{n\overline{r}} \b{E}[ \langle \varphi_{iJ}, e_{l_{1}m_{1}} \rangle_{\b{H}}^{2} \langle \varphi_{iJ}, e_{l_{2}m_{2}} \rangle_{\b{H}}^{2}] \b{E}[ \langle \varphi_{iJ'}, e_{l'_{1}m'_{1}} \rangle_{\b{H}}^{2} \langle \varphi_{iJ'}, e_{l'_{2}m'_{2}} \rangle_{\b{H}}^{2}].
\end{align*}

As a result, assuming (A), we obtain
\begin{align*}
    &\sup_{\b{P}_{\c{Y}} \in \c{S}_{2}(\kappa_{2}, M_{3}, M_{4})} \b{E} \mathcal{X}_{\zeta, \beta} 
    = \sum_{l_{1}, m_{1}, l'_{1}, m'_{1}} \left( \frac{\tau_{l_{1}l'_{1}}^{2 \beta}}{\tau_{l_{1}l'_{1}}^{2} + \eta} \right)^{2} \sum_{l_{2}, m_{2}, l'_{2}, m'_{2}} \tau_{l_{2}l'_{2}}^{-4 \zeta} \b{E}[ V_{(l_{1}, m_{1}, l'_{1}, m'_{1}), (l_{2}, m_{2}, l'_{2}, m'_{2})}^{2}] \\
    &\le \frac{4 \omega_{d_{r}, d}^{4}}{n\overline{r}} \sum_{l_{1}, l'_{1}, l_{2}, l'_{2}=1}^{\infty} \left( \frac{\tau_{l_{1}l'_{1}}^{2 \beta}}{\tau_{l_{1}l'_{1}}^{2} + \eta} \right)^{2} \tau_{l_{2}l'_{2}}^{-4 \zeta} \times \\
    &\hspace{5mm} \sum_{m_{1}, m_{2}} \b{E}[ \langle \varphi_{iJ}, e_{l_{1}m_{1}} \rangle_{\b{H}}^{2} \langle \varphi_{iJ}, e_{l_{2}m_{2}} \rangle_{\b{H}}^{2}] \sum_{m'_{1}, m'_{2}} \b{E}[ \langle \varphi_{iJ'}, e_{l'_{1}m'_{1}} \rangle_{\b{H}}^{2} \langle \varphi_{iJ'}, e_{l'_{2}m'_{2}} \rangle_{\b{H}}^{2}] \\
    &\stackrel{\eqref{eq:A4:sum}}{\le} \left( \frac{\tilde{B}}{\omega_{d_{r}, d}} \right)^{2} \frac{4 \omega_{d_{r}, d}^{4}}{n\overline{r}} \sum_{l_{1}, l'_{1}=1}^{\infty} \left( \frac{\tau_{l_{1}l'_{1}}^{2 \beta}}{\tau_{l_{1}l'_{1}}^{2} + \eta} \right)^{2} \tau_{l_{1}}^{2} N(l_{1}) \tau_{l'_{1}}^{2} N(l'_{1}) \sum_{l_{2}, l'_{2}=1}^{\infty} \tau_{l_{2}l'_{2}}^{-4 \zeta} \tau_{l_{2}}^{2} N(l_{2}) \tau_{l'_{2}}^{2} N(l'_{2}) \\
    &= \frac{4 \tilde{B}^{2} \omega_{d_{r}, d}^{2}}{n\overline{r}} \VERT (\s{K}^{\otimes} + \eta I)^{-2} (\s{K}^{\otimes})^{2\beta+1} \VERT_{1} \VERT (\s{K}^{\otimes})^{1-2\zeta} \VERT_{1} = O(a_{n}^{\otimes}),
\end{align*}
since $\VERT (\s{K}^{\otimes})^{1-2\zeta} \VERT_{1} = \VERT \s{K}^{1-2\zeta} \VERT_{1}^{2} < \infty$. This concludes (Step 3).
\item [(Step 4)] The final step is identical to that of \cref{thm:gen:asym:mean}.
\end{enumerate}
\end{proof}

The following corollary dictates that if the decaying speed of the penalty parameter $\eta_{n}$ is suitable for covariance estimation, then so it is for mean estimation.

\begin{corollary}\label{cor:gen:asym:cov}
Consider a sequence of the penalty parameter $\eta_{n} > 0$ converging to $0$. Assume (A) and that there is some $\zeta \in (0, 1/2]$ with $\VERT \s{K}^{1-2\zeta} \VERT_{1} < \infty$ and $\VERT (\s{K}^{\otimes} + \eta_{n} I)^{-2} (\s{K}^{\otimes})^{2\zeta+1} \VERT_{1}=o(n \cdot \min(\overline{r}, \overline{r}^{\sigma^{4}}))$. Define $\hat{\m{\Sigma}}_{\eta_{n}} := \hat{\m{\Gamma}}_{\eta_{n}} - \hat{\m{\mu}}_{\eta_{n}} \otimes \hat{\m{\mu}}_{\eta_{n}}$. Then, for any $\beta \in [0, 1/2]$ satisfying $\VERT (\s{K}^{\otimes} + \eta_{n} I)^{-2} (\s{K}^{\otimes})^{2\beta+1} \VERT_{1}=o(n \cdot \min(\overline{r}, \overline{r}^{\sigma^{4}}))$, it holds that
\begin{align*}
    \lim_{D \rightarrow \infty} \limsup_{n \rightarrow \infty} \sup_{\b{P}_{\c{Y}} \in \c{S}_{2}(\kappa_{2}, M_{3}, M_{4})} \b{P} \left[ \| (\s{K}^{\otimes})^{\beta} (\hat{\m{\Sigma}}_{\eta_{n}}-\m{\Sigma}) \|_{\b{H} \otimes \b{H}}^{2} > D \max(c_{n}^{\otimes}, a_{n}^{\otimes}, n^{-1}) \right] = 0,
\end{align*}
where
\begin{align*}
    c^{\otimes}_{n} := \eta_{n}^{2[(\beta + \kappa_{2}) \wedge 1]}, \quad
    a^{\otimes}_{n} := [n \cdot \min(\overline{r}, \overline{r}^{\sigma^{4}})]^{-1} \vertiii{ (\s{K}^{\otimes} + \eta_{n} I)^{-2} (\s{K}^{\otimes})^{2\beta+1} }_{1}.
\end{align*}    
\end{corollary}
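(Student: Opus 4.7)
The plan is to decompose
\[
    \hat{\m{\Sigma}}_{\eta_n}-\m{\Sigma} \;=\; (\hat{\m{\Gamma}}_{\eta_n}-\m{\Gamma}) \;-\; (\hat{\m{\mu}}_{\eta_n}\otimes \hat{\m{\mu}}_{\eta_n} - \m{\mu}\otimes \m{\mu})
\]
and control the two pieces separately. The covariance piece is handled immediately by \cref{thm:gen:asym:cov}, which delivers the target rate $D\max(c_{n}^{\otimes},a_{n}^{\otimes},n^{-1})$, so the whole task reduces to showing that the quadratic mean piece inherits the same bound.

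For that piece, I would expand $\hat{\m{\mu}}_{\eta_n}\otimes \hat{\m{\mu}}_{\eta_n} - \m{\mu}\otimes \m{\mu} = (\hat{\m{\mu}}_{\eta_n}-\m{\mu})\otimes \hat{\m{\mu}}_{\eta_n} + \m{\mu}\otimes (\hat{\m{\mu}}_{\eta_n}-\m{\mu})$, and exploit the identity $\s{K}^{\otimes}=\s{K}\otimes\s{K}$: a direct computation on the basis $\{e^{\otimes}_{lm,l'm'}\}$ gives $(\s{K}^{\otimes})^{\beta}(u\otimes v)=\s{K}^{\beta}u\otimes\s{K}^{\beta}v$, so that the Hilbert--Schmidt norm factorises and Cauchy--Schwarz yields
\begin{align*}
    \|(\s{K}^{\otimes})^{\beta}(\hat{\m{\mu}}_{\eta_n}\otimes \hat{\m{\mu}}_{\eta_n} - \m{\mu}\otimes \m{\mu})\|_{\b{H}\otimes\b{H}}
    \;\le\; \|\s{K}^{\beta}(\hat{\m{\mu}}_{\eta_n}-\m{\mu})\|_{\b{H}} \big(\|\s{K}^{\beta}\m{\mu}\|_{\b{H}} + \|\s{K}^{\beta}\hat{\m{\mu}}_{\eta_n}\|_{\b{H}}\big).
\end{align*}
On $\c{S}_{2}(\kappa_{2},M_{3},M_{4})$ the deterministic factor satisfies $\|\s{K}^{\beta}\m{\mu}\|_{\b{H}}\le \VERT\s{K}\VERT_{\infty}^{\beta}\sqrt{\VERT\m{\Gamma}\VERT_{1}}\le \VERT\s{K}\VERT_{\infty}^{\beta} M_{4}^{1/4}$ uniformly, and $\|\s{K}^{\beta}\hat{\m{\mu}}_{\eta_n}\|_{\b{H}}$ is bounded in probability by the triangle inequality once $\|\s{K}^{\beta}(\hat{\m{\mu}}_{\eta_n}-\m{\mu})\|_{\b{H}}$ is shown to vanish.

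The last step is to bound $\|\s{K}^{\beta}(\hat{\m{\mu}}_{\eta_n}-\m{\mu})\|_{\b{H}}^{2}$ via \cref{thm:gen:asym:mean} applied with $\nu_n=\eta_n$ and $\kappa_1=\kappa_2$, legitimised by the inclusion $\c{S}_{2}(\kappa_{2},M_{3},M_{4})\subset \c{S}_{1}(\kappa_{2},\sqrt{M_{3}},\sqrt{M_{4}})$ of \cref{prop:srce:cond}. The main obstacle is to transport the hypotheses of \cref{thm:gen:asym:cov} into those of \cref{thm:gen:asym:mean}: using $\tau_{ll'}=\tau_l\tau_{l'}$ and restricting the double sum in \eqref{eq:tr:norm:cov} to the index $l'=1$ yields
\[
    \vertiii{(\s{K}+\eta_n I)^{-2}\s{K}^{2\beta+1}}_{1}\; \lesssim\; \vertiii{(\s{K}^{\otimes}+\eta_n I)^{-2}(\s{K}^{\otimes})^{2\beta+1}}_{1},
\]
and likewise at the smoothness exponent $\zeta$; combined with $\min(\overline{r},\overline{r}^{\sigma^{2}})\ge \min(\overline{r},\overline{r}^{\sigma^{4}})$, both mean-side $o(\cdot)$ conditions follow from their covariance counterparts. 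The resulting mean rate satisfies $c_{n}^{\mathrm{mean}}=\eta_{n}^{2((\beta+\kappa_{2})\wedge 1)}=c_{n}^{\otimes}$ and $a_{n}^{\mathrm{mean}}\lesssim a_{n}^{\otimes}$, so $\|\s{K}^{\beta}(\hat{\m{\mu}}_{\eta_n}-\m{\mu})\|_{\b{H}}^{2}=O_{\b{P}}(\max(c_{n}^{\otimes},a_{n}^{\otimes},n^{-1}))$. Plugging this back, squaring the triangle inequality on $\hat{\m{\Sigma}}_{\eta_n}-\m{\Sigma}$, and absorbing the uniformly bounded prefactor concludes the argument; everything beyond the Schatten-norm comparison is essentially bookkeeping.
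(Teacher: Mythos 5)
Your proposal is correct, and its skeleton is the same as the paper's proof of Corollary S.8: split $\hat{\m{\Sigma}}_{\eta_n}-\m{\Sigma}$ into the $\hat{\m{\Gamma}}_{\eta_n}-\m{\Gamma}$ term (dispatched by Theorem S.7) and the $\hat{\m{\mu}}_{\eta_n}\otimes\hat{\m{\mu}}_{\eta_n}-\m{\mu}\otimes\m{\mu}$ term; then factor $(\s{K}^{\otimes})^{\beta}=\s{K}^{\beta}\otimes\s{K}^{\beta}$ to reduce the quadratic term to $\|\s{K}^{\beta}(\hat{\m{\mu}}_{\eta_n}-\m{\mu})\|_{\b{H}}$; and invoke Theorem S.6 with $\nu_n=\eta_n$, $\kappa_1=\kappa_2$, and the inclusion $\c{S}_2\subset\c{S}_1$ from Proposition 6.7. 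Your tensor-expansion $(\hat{\m{\mu}}-\m{\mu})\otimes\hat{\m{\mu}}+\m{\mu}\otimes(\hat{\m{\mu}}-\m{\mu})$ is a cosmetic variant of the paper's $(\hat{\m{\mu}}-\m{\mu})\otimes(\hat{\m{\mu}}-\m{\mu})+(\hat{\m{\mu}}-\m{\mu})\otimes\m{\mu}+\m{\mu}\otimes(\hat{\m{\mu}}-\m{\mu})$; both yield $\|\text{quadratic piece}\|=O_{\b{P}}(\|\s{K}^{\beta}(\hat{\m{\mu}}_{\eta_n}-\m{\mu})\|_{\b{H}})$.

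The one step where you genuinely diverge is the Schatten-norm comparison used to convert the covariance-side $o(\cdot)$ conditions into mean-side ones. The paper uses the termwise inequality $\tau_l^2\tau_{l'}^2+\eta\le(\tau_l^2+\eta)(\tau_{l'}^2+\eta)$ for $\eta<1$ (its equation S.60), which gives a \emph{squared} bound $\vertiii{(\s{K}+\eta I)^{-2}\s{K}^{2\beta+1}}_1^2\le\vertiii{(\s{K}^{\otimes}+\eta I)^{-2}(\s{K}^{\otimes})^{2\beta+1}}_1$ and hence $a_n^2\lesssim a_n^{\otimes}$. You instead restrict the double sum in $\vertiii{(\s{K}^{\otimes}+\eta I)^{-2}(\s{K}^{\otimes})^{2\beta+1}}_1$ to $l'=1$ and observe that the resulting ratio $N(1)\tau_1^{4\beta+2}(\tau_l^2+\eta)^2/(\tau_l^2\tau_1^2+\eta)^2$ is bounded below by a positive $n$-independent constant, which gives the \emph{first-power} bound $\vertiii{(\s{K}+\eta I)^{-2}\s{K}^{2\beta+1}}_1\lesssim\vertiii{(\s{K}^{\otimes}+\eta I)^{-2}(\s{K}^{\otimes})^{2\beta+1}}_1$ and hence $a_n\lesssim a_n^{\otimes}$. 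Combined with $c_n=c_n^{\otimes}$ and $\min(\overline{r},\overline{r}^{\sigma^{2}})\ge\min(\overline{r},\overline{r}^{\sigma^{4}})$ this delivers $\max(c_n,a_n,n^{-1})\lesssim\max(c_n^{\otimes},a_n^{\otimes},n^{-1})$ directly, which then makes absorbing the quadratic-piece rate into the target rate entirely transparent. In that sense your version is tidier: the paper's squared inequality forces one to track a square root through the probability comparison, whereas yours does not. The only things to check, which you omit, are (i) that the lower bound on the $l'=1$ ratio is indeed uniform in $\eta\in(0,1)$ and $l$, which it is, with constant $N(1)\min(\tau_1^{4\beta+2},\tau_1^{4\beta-2})$; and (ii) the implication $\overline{r}^{\sigma^{4}}\le\overline{r}^{\sigma^{2}}$, which follows from $\sigma_{r}^{2}/\sigma^{2}\ge1$. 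Both are routine.
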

\begin{proof}
In the proof, we may assume that $0 < \eta_{n} < 1$ for any $n \in \b{N}$ as $\eta_{n} > 0$ converges to $0$. We remark that $\c{S}_{2}(\kappa_{2}, M_{3}, M_{4}) \subset \c{S}_{1}(\kappa_{2}, \sqrt{M_{3}}, \sqrt{M_{4}})$ due to \cref{prop:srce:cond}. We claim that
\begin{align*}
    \lim_{D \rightarrow \infty} \limsup_{n \rightarrow \infty} \sup_{\b{P}_{\c{Y}} \in \c{S}_{1}(\kappa_{2}, \sqrt{M_{3}}, \sqrt{M_{4}})} \b{P}_{\c{Y}} \left[ \| \s{K}^{\beta} (\hat{\m{\mu}}_{\eta_{n}}-\m{\mu}) \|_{\b{H}}^{2} > D \max(c_{n}, a_{n}, n^{-1}) \right] = 0,
\end{align*}
where $c_{n} := \eta_{n}^{2[(\beta + \kappa_{2}) \wedge 1]}$ and $a_{n} := [n \cdot \min(\overline{r}, \overline{r}^{\sigma^{2}})]^{-1} \VERT (\s{K} + \eta_{n} I)^{-2} \s{K}^{2\beta+1} \VERT_{1}$.

For any $\beta \ge 0$ and $\eta > 0$, note that
\begin{align*}
    \vertiii{ (\s{K}^{\otimes} + \eta I)^{-2} (\s{K}^{\otimes})^{2\beta+1} }_{1} 
    &\le \vertiii{ (\s{K}^{\otimes} + \eta I)^{-1} (\s{K}^{\otimes})^{\beta+1} }_{\infty}^{2} \vertiii{\s{K}^{\otimes}}_{1} \\
    &\le \vertiii{\s{K}^{\otimes}}_{\infty}^{\beta} \vertiii{\s{K}^{\otimes}}_{1} = \vertiii{\s{K}}_{\infty}^{2\beta} \vertiii{\s{K}}_{2}< \infty,
\end{align*}
is well-defined due to \cref{lem:srce:oper:norm}. On the other hand, for any $0 < \eta < 1$, we have $(\tau_{l}^{2} \tau_{l'}^{2} + \eta) \le (\tau_{l}^{2} + \eta) (\tau_{l'}^{2} + \eta)$, hence
\begin{align}\label{eq:tens:trace:sq}
    \VERT (\s{K}^{\otimes} + \eta I)^{-2} (\s{K}^{\otimes})^{2\beta+1} \VERT_{1} &= \sum_{l, l'=1}^{\infty} N(l) N(l') \frac{\tau_{l}^{4 \beta + 2} \tau_{l'}^{4 \beta + 2}}{(\tau_{l}^{2} \tau_{l'}^{2} + \eta)^{2}} \nonumber \\
    &\ge \left( \sum_{l=1}^{\infty} N(l) \frac{\tau_{l}^{4 \beta + 2}}{(\tau_{l}^{2} + \eta)^{2}} \right)^{2} = \VERT (\s{K} + \eta I)^{-2} \s{K}^{2\beta+1} \VERT_{1}^{2}.
\end{align}
Then applying $\beta_{n}, \eta_{n}$, instead of $\beta$, to \eqref{eq:tens:trace:sq} demonstrates that $\eta_{n}$ satisfies the conditions for not only \cref{thm:gen:asym:cov} but also \cref{thm:gen:asym:mean}, thus the claim holds. Moreover, we also get $c_{n}^{2} = O(c^{\otimes}_{n})$ and $a_{n}^{2} = O(a^{\otimes}_{n})$.

Observe using the triangle inequality that
\begin{align*}
    \| (\s{K}^{\otimes})^{\beta} (\hat{\m{\Sigma}}_{\eta_{n}}-\m{\Sigma}) \|_{\b{H} \otimes \b{H}} 
    &\le \| (\s{K}^{\otimes})^{\beta} (\hat{\m{\Gamma}}_{\eta_{n}}-\m{\Gamma}) \|_{\b{H} \otimes \b{H}} + \| (\s{K}^{\otimes})^{\beta} (\hat{\m{\mu}}_{\eta_{n}} \otimes \hat{\m{\mu}}_{\eta_{n}} -\m{\mu} \otimes \m{\mu}) \|_{\b{H} \otimes \b{H}},
\end{align*}
which leads to
\begin{align*}
    &\b{P} \left[ \| (\s{K}^{\otimes})^{\beta} (\hat{\m{\Sigma}}_{\eta_{n}}-\m{\Sigma}) \|_{\b{H} \otimes \b{H}}^{2} > D \max(c_{n}^{\otimes}, a_{n}^{\otimes}, n^{-1}) \right]  \\
    &\le \b{P} \left[ \| (\s{K}^{\otimes})^{\beta} (\hat{\m{\Gamma}}_{\eta_{n}}-\m{\Gamma}) \|_{\b{H} \otimes \b{H}}^{2} > D \max(c_{n}^{\otimes}, a_{n}^{\otimes}, n^{-1}) \right] \\
    &\hspace{5mm} + \b{P} \left[ \| (\s{K}^{\otimes})^{\beta} (\hat{\m{\mu}}_{\eta_{n}} \otimes \hat{\m{\mu}}_{\eta_{n}} -\m{\mu} \otimes \m{\mu}) \|_{\b{H} \otimes \b{H}}^{2} > D \max(c_{n}^{\otimes}, a_{n}^{\otimes}, n^{-1}) \right] \\
    &\le \b{P} \left[ \| (\s{K}^{\otimes})^{\beta} (\hat{\m{\Gamma}}_{\eta_{n}}-\m{\Gamma}) \|_{\b{H} \otimes \b{H}}^{2} > D \max(c_{n}^{\otimes}, a_{n}^{\otimes}, n^{-1}) \right] \\
    &\hspace{5mm} + \b{P} \left[ \| (\s{K}^{\otimes})^{\beta} (\hat{\m{\mu}}_{\eta_{n}} \otimes \hat{\m{\mu}}_{\eta_{n}} -\m{\mu} \otimes \m{\mu}) \|_{\b{H} \otimes \b{H}} > D \max(c_{n}, a_{n}, n^{-1}) \right].
\end{align*}
Finally,
\begin{align*}
    \| (\s{K}^{\otimes})^{\beta} (\hat{\m{\mu}}_{\eta_{n}} \otimes \hat{\m{\mu}}_{\eta_{n}} -\m{\mu} \otimes \m{\mu}) \|_{\b{H} \otimes \b{H}} &\le \| \s{K}^{\beta} (\hat{\m{\mu}}_{\eta_{n}} -\m{\mu}) \|_{\b{H}}^{2} + 2 \| \s{K}^{\beta} (\hat{\m{\mu}}_{\eta_{n}} -\m{\mu}) \|_{\b{H}} \| \s{K}^{\beta} \m{\mu} \|_{\b{H}} \\
    &= O_{\b{P}} (\| \s{K}^{\beta} (\hat{\m{\mu}}_{\eta_{n}} -\m{\mu}) \|_{\b{H}})
\end{align*}   
and the claim in the beginning of the proof yields the result.
\end{proof}

Again, \cref{thm:gen:asym:cov} and \cref{cor:gen:asym:cov} reveal the trade-off of the convergence rate concerning the decaying speed of the penalty parameter $\eta_{n} > 0$. When $\eta_{n}$ decreases faster, $c_{n}^{\otimes}$ decreases faster but $a_{n}^{\otimes}$ decreases more slowly. Specifically, when $\tau_{l} \asymp l^{-p}$ and $N(l) \asymp l^{q}$ for some $p, q >0$ with $2p > q+1$, it holds for any $\beta \ge 0$ that
\begin{align}\label{eq:tr:norm:rate:cov}
    \VERT (\s{K}^{\otimes} + \eta I)^{-2} (\s{K}^{\otimes})^{2\beta+1} \VERT_{1} &= \sum_{l, l'=1}^{\infty} N(l) N(l') \frac{\tau_{ll'}^{4 \beta + 2}}{(\tau_{ll'}^{2} + \eta)^{2}} = \sum_{l, l'=1}^{\infty}  \frac{N(l) N(l') \tau_{ll'}^{-2(1 - 2\beta)}}{(1 + \eta \tau_{ll'}^{-2})^{2}} \nonumber \\
    &\asymp \sum_{l, l'=1}^{\infty} (l^{2p} l'^{2p})^{\frac{q+2p(1 - 2\beta)}{2p}} (1+ \eta l^{2p} l'^{2p})^{-2}
    \asymp \eta^{-(\frac{2p+q+1}{2p}-2\beta)} \log \left(\frac{1}{\eta} \right) ,
\end{align}
where we used Lemma 2.3 in \cite{lin2000tensor}. Using \eqref{eq:tr:norm:rate:cov}, one may show as in \cite{caponera2022functional} that
\begin{equation}\label{eq:opt:pen:rate:cov}
    a_{n}^{\otimes} = O(\max(c_{n}^{\otimes}, n^{-1})) \quad \Longleftrightarrow \quad  \eta_{n}^{-(\frac{2p+q+1}{2p}-2\beta)} \log \left(\frac{1}{\eta_{n}} \right) = O\left(\max \left[ \overline{r}, (n \cdot \min(\overline{r}, \overline{r}^{\sigma^{4}})) \eta_{n}^{2(\beta + \kappa_{2})} \right] \right),
\end{equation}
is satisfied regardless of the value of $\beta \ge 0$ if we choose
\begin{equation*}
    \eta_{n} \asymp \left( \frac{n \cdot \min(\overline{r}, \overline{r}^{\sigma^{4}})}{\log n} \right)^{-(\frac{2p+q+1}{2p} + 2\kappa_{2})^{-1}}.
\end{equation*}

\begin{proof}[Proof of \cref{thm:cov:asym:L2}]
Take $\beta=1/2$, which leads to 
\begin{equation*}
    \| (\s{K}^{\otimes})^{\beta} (\hat{\m{\Gamma}}_{\eta_{n}}-\m{\Gamma}) \|_{\b{H} \otimes \b{H}}^{2} = \| (\s{P} \otimes \s{P}) (\hat{\m{\Gamma}}_{\eta_{n}}-\m{\Gamma}) \|_{\c{L}_{2} (\b{G} \times \Omega^{d-d_{r}}, W_{d_{r}, d})}^{2},
\end{equation*}
by \eqref{eq:isomet}. Also, choose $\kappa_{2}=0$ and $\eta_{n} \asymp (n \overline{r}/ \log n)^{-(\frac{2p+q+1}{2p})^{-1}}$ so that
\begin{equation*}
    \max(c_{n}^{\otimes}, a_{n}^{\otimes}, n^{-1}) \asymp \max(c_{n}^{\otimes}, n^{-1}) = \max(\eta_{n}, n^{-1}),
\end{equation*}
in view of \eqref{eq:opt:pen:rate:cov}. It remains to check whether the assumptions other than (A) in \cref{thm:gen:asym:cov}, or equivalently \cref{cor:gen:asym:cov}, hold. First, we verify
\begin{align*}
    \VERT (\s{K}^{\otimes} + \eta_{n} I)^{-2} (\s{K}^{\otimes})^{2\beta+1} \VERT_{1} &\stackrel{\eqref{eq:tr:norm:rate:cov}}{\asymp} \eta_{n}^{-(\frac{2p+q+1}{2p}-1)} \log \left(\frac{1}{\eta_{n}} \right) \\
    &\asymp \left( \frac{n \cdot \min(\overline{r}, \overline{r}^{\sigma^{4}})}{\log n} \right)^{(\frac{2p+q+1}{2p}-1)(\frac{2p+q+1}{2p})^{-1}} \log \left( \frac{n \cdot \min(\overline{r}, \overline{r}^{\sigma^{4}})}{\log n} \right) \\
    &= O \left( \left( n \cdot \min(\overline{r}, \overline{r}^{\sigma^{4}}) \right)^{(\frac{2p+q+1}{2p}-1)(\frac{2p+q+1}{2p})^{-1}} \log \left( n \overline{r} \right) \right) \\
    &= o(n \cdot \min(\overline{r}, \overline{r}^{\sigma^{4}})).
\end{align*}
Second, choose $\theta \in (\frac{q+1}{2p},1) \neq \emptyset$ and set $\zeta := (1-\theta)/2 \in (0, 1/2]$. Then, $-2\theta p+q < -1$ so $\VERT \s{K}^{1-2\zeta} \VERT_{1} < \infty$, and
\begin{align*}
    \VERT (\s{K}^{\otimes} + \eta I)^{-2} (\s{K}^{\otimes})^{2\zeta+1} \VERT_{1} &\stackrel{\eqref{eq:tr:norm:rate:cov}}{\asymp} \eta_{n}^{-(\frac{2p+q+1}{2p}-2\zeta)} \log \left(\frac{1}{\eta_{n}} \right) \\
    &\asymp \left( \frac{n \cdot \min(\overline{r}, \overline{r}^{\sigma^{4}})}{\log n} \right)^{(\frac{q+1}{2p}+\theta)(\frac{q+1}{2p}+1)^{-1}} \log \left( \frac{n \cdot \min(\overline{r}, \overline{r}^{\sigma^{4}})}{\log n} \right) \\
    &= o(n \cdot \min(\overline{r}, \overline{r}^{\sigma^{4}})).
\end{align*}
\end{proof}

\begin{proof}[Proof of \cref{thm:cov:asym:RKHS}]
Take $\beta=0$, $\kappa_{2} > 0$, and choose $\eta_{n} \asymp (n \cdot \min(\overline{r}, \overline{r}^{\sigma^{4}})/ \log n)^{-(\frac{2p+q+1}{2p} + 2\kappa_{2})^{-1}}$ so that
\begin{equation*}
    \max(c_{n}^{\otimes}, a_{n}^{\otimes}, n^{-1}) \asymp \max(c_{n}^{\otimes}, n^{-1}) = \max(\eta_{n}^{2\kappa_{2}}, n^{-1}),
\end{equation*}
in view of \eqref{eq:opt:pen:rate:cov}. It remains to check whether the assumptions other than (A) in \cref{thm:gen:asym:cov}, or equivalently \cref{cor:gen:asym:cov}, hold. First, we verify
\begin{align*}
    \VERT (\s{K}^{\otimes} + \eta_{n} I)^{-2} (\s{K}^{\otimes})^{2\beta+1} \VERT_{1} &\stackrel{\eqref{eq:tr:norm:rate:cov}}{\asymp} \eta_{n}^{-\frac{2p+q+1}{2p}} \log \left(\frac{1}{\eta_{n}} \right) \\
    &\asymp \left( \frac{n \cdot \min(\overline{r}, \overline{r}^{\sigma^{4}})}{\log n} \right)^{\frac{2p+q+1}{2p}(\frac{2p+q+1}{2p} + 2\kappa_{2})^{-1}} \log \left( \frac{n \cdot \min(\overline{r}, \overline{r}^{\sigma^{4}})}{\log n} \right) \\
    &= O \left( \left( n \cdot \min(\overline{r}, \overline{r}^{\sigma^{4}}) \right)^{\frac{2p+q+1}{2p}(\frac{2p+q+1}{2p} + 2\kappa_{2})^{-1}} \log \left( n \cdot \min(\overline{r}, \overline{r}^{\sigma^{4}}) \right) \right) \\
    &= o(n \cdot \min(\overline{r}, \overline{r}^{\sigma^{4}})).
\end{align*}
Second, choose $\theta \in (\frac{q+1}{2p},1) \neq \emptyset$ and set $\zeta := (1-\theta)/2 \in (0, 1/2]$. Then, $-2\theta p+q < -1$ so $\VERT \s{K}^{1-2\zeta} \VERT_{1} < \infty$, and
\begin{align*}
    \VERT (\s{K}^{\otimes} + \eta_{n} I)^{-2} (\s{K}^{\otimes})^{2\zeta+1} \VERT_{1} &\stackrel{\eqref{eq:tr:norm:rate:cov}}{\asymp} \eta_{n}^{-(\frac{2p+q+1}{2p}-2\zeta)} \log \left(\frac{1}{\eta_{n}} \right) \\
    &\asymp \left( \frac{n \cdot \min(\overline{r}, \overline{r}^{\sigma^{4}})}{\log n} \right)^{(\frac{q+1}{2p}+ 1+ 2\kappa_{1})^{-1}(\frac{q+1}{2p}+\theta)} \log \left( \frac{n \cdot \min(\overline{r}, \overline{r}^{\sigma^{4}})}{\log n} \right) \\
    &= O \left( \left( n \cdot \min(\overline{r}, \overline{r}^{\sigma^{4}}) \right)^{(\frac{q+1}{2p}+ 1+ 2\kappa_{1})^{-1}(\frac{q+1}{2p}+\theta)} \log \left( n \cdot \min(\overline{r}, \overline{r}^{\sigma^{4}}) \right) \right) \\
    &= o(n \cdot \min(\overline{r}, \overline{r}^{\sigma^{4}})).
\end{align*}
\end{proof}

\subsection{Proofs in Section \ref{sec:cryo}}\label{sec:proof:cryo}
\begin{proof}[Proof of \cref{prop:cryo:cond:exp}]
Using the tower property, the conditional expectation is given by
\begin{align*}
    \b{E}[Z_{ijk} \vert (\tilde{\m{R}}_{ij}, \m{X}_{ijk}) ] &= \b{E}_{\m{R}_{i}}[Z_{ijk} \vert (\m{R}_{i}, \tilde{\m{R}}_{ij}, \m{X}_{ijk}) ] = \b{E}_{\m{R}_{i}} [\langle \m{\mu}, \varphi(\tilde{\m{R}}_{ij} \m{R}_{i}, \m{X}_{ijk}) \rangle_{\b{H}}] \\
    &= \langle \m{\mu}, \overline{\varphi}(\tilde{\m{R}}_{ij}, \m{X}_{ijk}) \rangle_{\b{H}} = \langle \m{\mu}, \overline{\varphi}(I, \m{X}_{ijk}) \rangle_{\b{H}} = \langle \m{\mu}, \overline{\varphi}_{ijk} \rangle_{\b{H}}.
\end{align*}
For $i \neq i'$, the conditional second moment is given by
\begin{align*}
    \b{E}[Z_{ijk} Z_{i'j'k'} \vert (\tilde{\m{R}}_{ij}, \m{X}_{ijk}) , (\tilde{\m{R}}_{i'j'}, \m{X}_{i'j'k'}) ] &= \b{E}[Z_{ijk} \vert (\tilde{\m{R}}_{ij}, \m{X}_{ijk})]  \b{E}[Z_{i'j'k'} \vert (\tilde{\m{R}}_{i'j'}, \m{X}_{i'j'k'}) ] \\
    &= \langle \m{\mu}, \overline{\varphi}_{ijk} \rangle_{\b{H}} \langle \m{\mu}, \overline{\varphi}_{i'j'k'} \rangle_{\b{H}}.
\end{align*}
On the other hand, when $i=i'$, using \eqref{eq:WW:cond:exp}, we get
\begin{align*}
    &\b{E}[Z_{ijk} Z_{ij'k'} \vert (\tilde{\m{R}}_{ij}, \m{X}_{ijk}) , (\tilde{\m{R}}_{ij'}, \m{X}_{ij'k'}) ] \\
    &= \b{E}_{\m{R}_{i}} [Z_{ijk} Z_{ij'k'} \vert (\m{R}_{i}, \tilde{\m{R}}_{ij}, \m{X}_{ijk}) , (\m{R}_{i}, \tilde{\m{R}}_{ij'}, \m{X}_{ij'k'}) ] \\
    &= \b{E}_{\m{R}_{i}} [\langle \m{\Gamma}, \varphi(\tilde{\m{R}}_{ij} \m{R}_{i}, \m{X}_{ijk}) \otimes \varphi(\tilde{\m{R}}_{ij'} \m{R}_{i}, \m{X}_{ijk}) \rangle_{\b{H} \otimes \b{H}}] + \sigma_{r_{i}}^{2} \delta_{jj'} \delta_{kk'}.
\end{align*}
Note that
\begin{align*}
    \b{E}_{\m{R}_{i}} [\varphi(\tilde{\m{R}}_{ij} \m{R}_{i}, \m{X}_{ijk}) \otimes \varphi(\tilde{\m{R}}_{ij'} \m{R}_{i}, \m{X}_{ijk})] = \overline{\varphi \otimes \varphi}(\tilde{\m{R}}_{ij}, \m{X}_{ijk}, \tilde{\m{R}}_{ij'}, \m{X}_{ij'k'}) = \overline{\varphi}_{iJJ'}^{\otimes},
\end{align*}
which yields \eqref{eq:cryo:cond:2nd}. Finally, due to the isotropy of $\overline{\varphi}_{ijk}$, it holds that $\overline{\varphi}_{ijk} = \rho(R) \overline{\varphi}_{ijk}$ for any $R \in \b{G}$, hence
\begin{align*}
    \langle \overline{\m{\mu}}, \overline{\varphi}_{ijk} \rangle_{\b{H}} = \b{E}_{\m{R}} [ \langle \rho(\m{R}) \m{\mu}, \overline{\varphi}_{ijk} \rangle_{\b{H}} ] = \b{E}_{\m{R}} [ \langle \m{\mu}, \rho(\m{R}^{-1}) \overline{\varphi}_{ijk} \rangle_{\b{H}} ] = \langle \m{\mu}, \overline{\varphi}_{ijk} \rangle_{\b{H}},
\end{align*}
using the invariance of the Haar measure. Similarly,
\begin{align*}
    \langle \overline{\m{\Gamma}}, \overline{\varphi}_{iJJ'}^{\otimes} \rangle_{\b{H} \otimes \b{H}} &= \b{E}_{\m{R}} [\langle \rho(\m{R}) \m{\Gamma} \rho(\m{R}^{-1}), \overline{\varphi}_{iJJ'}^{\otimes} \rangle_{\b{H} \otimes \b{H}}] \\
    &= \b{E}_{\m{R}} [\langle  \m{\Gamma} , \rho(\m{R}^{-1}) \overline{\varphi}_{iJJ'}^{\otimes} \rho(\m{R}) \rangle_{\b{H} \otimes \b{H}}]
    = \langle \m{\Gamma}, \overline{\varphi}_{iJJ'}^{\otimes} \rangle_{\b{H} \otimes \b{H}}.
\end{align*}
\end{proof}

\section{Index}\label{sec:list:note}
\begin{itemize}
    \item $\Omega^{d}, \Omega^{d-d_{r}}$: Compact domains of input and output functions.
    \item $W_{0, d}, W_{d_{r}, d}$: Weight functions for input and output spaces.
    \item $w_{0, d}, w_{d_{r}, d}$: Total weights (normalization constants).
    \item $\s{P}$: Sinogram or forward operator.
    \item $\b{G}$: Compact Lie group representing orientations.
    \item $\rho$: Unitary group representation acting on $\b{G}$.
    \item $K$: Mercer kernel on the input space.
    \item $\tilde{K}$: Induced kernel on the output (observation) space.
    \item $\varphi$: Kernel feature map.
    \item $\c{Y}_{i}$: Random function in the RKHS $\b{H}(K)$.
    \item $\boldsymbol{\mu}, \boldsymbol{\Gamma}, \boldsymbol{\Sigma}$: Mean, second-moment, and covariance functions.
    \item $\m{R}_{ij}$: Orientation (viewing direction) for the $j$-th observation of $i$-th function.
    \item $\m{X}_{ijk}$: Spatial location (e.g., detector position) for sample $(i,j,k)$.
    \item $\varepsilon_{ijk}$: Additive noise at observation $(i,j,k)$ with heteroscadestic variance $\sigma_{r_{i}}^{2}$.
    \item $Z_{ijk}$: Observed scalar value at sample $(i,j,k)$.
    \item $\m{\Phi}, \m{\Phi}^{\odot}$: Mean and covariance Gram matrices.
    \item $\nu, \eta$: Tikhonov regularization parameters for mean and second-moment estimation.
    \item $\hat{\m{\alpha}}_{\nu}, \hat{\m{\alpha}}_{\eta}^{\odot}$: Coefficient vectors for the representer theorem solutions.
    \item $\s{T}_{K}$: Integral operator associated with kernel $K$.
    \item $\imath$: Continuous embedding map of RKHS into $\c{L}_{2}$.
    \item $g_{lm}, e_{lm}, \tau_{l}^{2}$: Eigenfunctions and eigenvalues of $\s{T}_{\tilde{K}}$ and $\s{K}$.
    \item $p, q$: Exponents governing eigenvalue decay and multiplicity growth.
    \item $\kappa, \delta, \beta, \zeta$: Source condition parameters for convergence analysis.
    \item $\c{S}$: Class of probability measures satisfying source conditions.
    \item $\gamma$: Bandwidth parameter of the Gaussian kernel.
    \item $\c{J}, \s{J}$: Off-diagonal index set and associated elimination tensor.
    \item $i, j, k, l, m$: Index variables for function, orientation, location, eigenvalue, and multiplicity.
    \item $n, r, s, \b{N}, N(l)$: Cardinalities for indices $i, j, k, l, m$.
    \item $G$: Green's function.
    \item $C_{l}^{\lambda}$: Gegenbauer polynomials.
    \item $\Gamma$: Gamma function.
    \item $\delta_{jj'}$: Kronecker delta.
\end{itemize}

\end{document}